\documentclass[a4paper, 9pt]{article}
\usepackage{graphics}
\usepackage[dvips]{color}

\usepackage{lscape}
\usepackage{latexsym}
\usepackage{amsmath,verbatim}
\usepackage{graphics}
\usepackage{amsthm}
\usepackage{amssymb}
\usepackage{makeidx}
\usepackage{stmaryrd}
\usepackage{multicol}
\usepackage{helvet}
\usepackage{bm}
\usepackage[all]{xy}
\newfont{\Fr}{eufm10}
\newfont{\Sc}{eusm10}
\newfont{\Bb}{msbm10}
\newfont{\Am}{msam10}
\newfont{\am}{msam7}
\numberwithin{equation}{section}
\newtheorem{theorem}{Theorem}[section]
\newtheorem{proposition}[theorem]{Proposition}
\newtheorem{lemma}[theorem]{Lemma}
\newtheorem{corollary}[theorem]{Corollary}
\newtheorem{claim}{Claim}{\bf}{\it}

\newtheorem{ftheorem}{Theorem}{\bf}{\it}
{\bf}{\it}
\theoremstyle{definition}
\newtheorem{definition}[theorem]{Definition}

{\bf}{\rm}

\theoremstyle{remark}
\newtheorem{example}[theorem]{Example}
\newtheorem{remark}[theorem]{Remark}
{\bf}{\it}

\newtheorem{definition and corollary}[theorem]{Definition and Corollary}

\newtheorem{fexample}[ftheorem]{Example}{\it}{\rm}


\newcommand{\h}{\mathfrac{\h}}

\title{PBW bases and KLR algebras\footnote{Correction abridged version. Originally published in {\it Duke Mathematical Journal} 163 no.3 619--663 (2014)}}
\author{Syu \textsc{Kato} \footnote{Department of Mathematics, Kyoto University, Oiwake Kita-Shirakawa Sakyo Kyoto 606-8502, Japan. \tt{E-mail:syuchan@math.kyoto-u.ac.jp}} \footnote{Research supported in part by JSPS Grant-in-Aid for Young Scientists (B) 23-740014.}}

\begin{document}
\maketitle


\begin{abstract}
We generalize Lusztig's geometric construction of the PBW bases of finite quantum groups of type $\mathsf{ADE}$ under the framework of [Varagnolo-Vasserot, J. reine angew. Math. 659 (2011)]. In particular, every PBW basis of such quantum groups is proven to yield a semi-orthogonal collection in the module category of the KLR-algebras. This enables us to prove Lusztig's conjecture on the positivity of the canonical (lower global) bases in terms of the (lower) PBW bases. In addition, we verify Kashiwara's problem on the finiteness of the global dimensions of the KLR-algebras of type $\mathsf{ADE}$.
\end{abstract}

\section*{Introduction}
Canonical/global bases of quantum groups, defined by Lusztig \cite{Lu} and Kashiwara \cite{Ka} subsequently, open up scenery in many areas of mathematics which are visible only through quantum groups \cite{A2,Lu5,Na}. They are certain bases of quantum groups different from the natural quantum analogue of the classical Poincar\'e-Birkhoff-Witt theorem (that are usually referred to as the PBW bases).

Among these, the interaction between canonical/global bases of quantum groups and affine Hecke algebras of type $\mathsf{A}$ (and their cyclotomic quotients) yields many representation-theoretic consequences \cite{A, A2}. It is generalized to more general quantum groups and their representations by Khovanov-Lauda, Rouquier, Varagnolo-Vasserot, Zheng, Webster, and Kang-Kashiwara \cite{KL,R,VV,Z,W, KK} as a categorical counter-part of the theory of canonical/global bases.

More precisely, to each symmetric Kac-Moody algebra $\mathfrak g$, they introduced a series of algebras $R_{\beta}$ (that we call the KLR-algebras) whose simple/projective modules give rise to the upper/lower global bases of the corresponding positive half of the quantum group of $\mathfrak g$. There the emphasis is on the categorification of quantum groups, and their results are strong enough to generalize and categorify Ariki's result \cite{A} in these cases (Lauda-Vazirani \cite{LV} and \cite{VV,KK}).

This story is sufficient to recover deep representation-theoretic properties, without the PBW bases. The main observation of this paper is that the PBW bases still exist in the world of KLR-algebras, with essential new features which are visible only with the KLR-algebras.

To see what we mean by this, we prepare some notations: Let $\mathcal A := \mathbb Z [t ^{\pm 1}]$. Let $\mathfrak g$ be a simple Lie algebra of type $\mathsf{ADE}$, and let $U ^+$ be the positive half of the $\mathcal A$-integral version of the quantum group of $\mathfrak g$ (see e.g. Lusztig \cite{QG} \S 1). Let $Q ^+ := \mathbb Z_{\ge 0} I$, where $I$ is the set of positive simple roots. We have a weight space decomposition $U^+ = \bigoplus _{\beta \in Q ^+} U^+ _{\beta}$. We have the Weyl group $W$ of $\mathfrak g$ with its set of simple reflections $\{ s_i \} _{i \in I}$ and the longest element $w_0$. For each $\beta \in Q^+$, we have a finite set $B ( \infty )_{\beta}$ which parameterizes a pair of distinguished bases $\{ G ^{up} ( b ) \}_{b \in B ( \infty )_{\beta}}$ and $\{ G ^{low} ( b ) \} _{b \in B ( \infty )_{\beta}}$ of $\mathbb Q ( t ) \otimes _{\mathcal A} U^+ _{\beta}$. The Khovanov-Lauda-Rouquier algebra $R_{\beta}$ is a certain graded algebra whose grading is bounded from below with the following properties:
\begin{itemize}
\item The set of isomorphism classes of simple graded $R _{\beta}$-modules (up to grading shifts) is also parameterized by $B ( \infty )_{\beta}$;
\item For each $b \in B ( \infty )_{\beta}$, we have a simple graded $R_{\beta}$-module $L_b$ and its projective cover $P_b$. Let $L_{b'} \left< k \right>$ be the grade $k$ shift of $L_{b'}$, and let $[P_b : L_{b'} \left< k \right>]_0$ be the multiplicity of $L_{b'} \left< k \right>$ in $P_b$ (that is finite). Then, we have
$$G ^{low} ( b ) = \sum_{b' \in B ( \infty )_{\beta}, k \in \mathbb Z} t^k [P_b : L_{b'} \left< k \right>]_0 G^{up} ( b' );$$
\item For each $\beta, \beta' \in Q^+$, there exists an induction functor
$$\star : R_{\beta} \mathchar`-\mathsf{gmod} \times R_{\beta'} \mathchar`-\mathsf{gmod} \ni (M,N) \mapsto M \star N \in R_{\beta + \beta'} \mathchar`-\mathsf{gmod};$$
\item $\mathbf K := \bigoplus _{\beta \in Q^+} \mathbb Q ( t ) \otimes _{\mathcal A} K ( R_{\beta} \mathchar`-\mathsf{gmod} )$ is an associative algebra isomorphic to $\mathbb Q ( t ) \otimes _{\mathcal A} U^+$ with its product inherited from $\star$ (and the $t$-action is a grading shift).
\end{itemize}

As mentioned earlier, Lusztig \cite{Lu} studied the geometric side of the story. By applying the results in \cite{K5}, we first observe the following:

\begin{ftheorem}[Kashiwara's problem $=$ Corollary \ref{Kashiwara}]\label{fKashiwara}
For every $\beta \in Q ^+$, the algebra $R_{\beta}$ has finite global dimension.
\end{ftheorem}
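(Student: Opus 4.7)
The plan is to deduce Theorem A from the semi-orthogonal collection of PBW standard modules constructed in this paper, combined with the geometric input from \cite{K5}. The key reduction is that once every simple graded $R_{\beta}$-module $L_b$ is shown to admit a finite projective resolution, finite global dimension of $R_{\beta}$ follows, because there are only finitely many $L_b$ up to grading shift (as $B(\infty)_{\beta}$ is finite) and every graded projective of $R_{\beta}$ decomposes into grading shifts of the $P_b$'s.

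First I would fix a reduced expression $w_0 = s_{i_1} \cdots s_{i_N}$, giving a convex order $\alpha_1 < \cdots < \alpha_N$ on the positive roots, and form the PBW standard modules
$$\Delta(\mathbf c) := E_{\alpha_1}^{(c_1)} \star \cdots \star E_{\alpha_N}^{(c_N)} \in R_{\beta}\mathchar`-\mathsf{gmod},$$
where each $E_{\alpha_k}$ is the cuspidal module categorifying the root vector associated to $\alpha_k$. The classes $[\Delta(\mathbf c)]$ then recover the lower PBW basis of $U^+_{\beta}$. The semi-orthogonality to be established in the paper asserts
$$\Ext^{\bullet}_{R_{\beta}}(\Delta(\mathbf c), \Delta(\mathbf c')) = 0 \quad \text{for } \mathbf c > \mathbf c'$$
in the lexicographic order induced by the convex order, and that each $\End^{\bullet}_{R_{\beta}}(\Delta(\mathbf c))$ is, up to grading, a tensor product of polynomial rings, hence of finite global dimension.

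Using the unique-simple-head property of PBW modules — namely, $L_b$ is the unique simple quotient of some $\Delta(\mathbf c_b)$, with remaining composition factors of the form $L_{b'}\langle k \rangle$ for $\mathbf c_{b'} > \mathbf c_b$ — I would inductively construct a finite resolution of each $L_b$ by direct summands of the $\Delta(\mathbf c)$'s. The induction terminates because the (finite) set of PBW monomials of weight $\beta$ is totally ordered. Combined with the finiteness of $\mathrm{gldim}\, \End^{\bullet}(\Delta(\mathbf c))$, the Ext-vanishing promotes this PBW resolution to a finite projective resolution of $L_b$ in $R_{\beta}\mathchar`-\mathsf{gmod}$ via the resulting semi-orthogonal decomposition of the bounded derived category $D^b(R_{\beta}\mathchar`-\mathsf{gmod})$ into pieces equivalent to derived categories of finite-global-dimension rings.

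The hard part will be the Ext-vanishing and the endomorphism computation for the PBW standards; these statements form the categorical heart of the paper and rest on the geometric input from \cite{K5}, presumably via a parity/purity argument on Lusztig's quiver varieties or a suitable exceptional collection on their partial resolutions. A secondary but easier step is lifting the filtration-by-standards information from $\mathbf K$ to honest short exact sequences in $R_{\beta}\mathchar`-\mathsf{gmod}$, which the Ext-orthogonality in the correct direction makes essentially automatic.
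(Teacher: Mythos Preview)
Your plan is \emph{not} the paper's route, and as stated it is circular relative to the paper's logical structure. In the paper, Corollary~\ref{Kashiwara} is proved \emph{before} any PBW module is ever defined: one simply verifies the abstract hypotheses $(\spadesuit)$ and $(\clubsuit)$ for the pair $(G_V, E_V^{\Omega})$ (Proposition~\ref{quiverSC}), observes that $\mathcal L_\beta^{\Omega}$ contains every simple equivariant perverse sheaf (Corollary~\ref{allp}), and then invokes the black box Theorem~\ref{absKas} from \cite{K5}. No PBW standard modules, no semi-orthogonality, no Saito reflection functors are needed; the entire argument is a purity/weight check on the convolution algebra $H_\bullet^{G_V}(Z_\beta^{\Omega})$.

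The circularity in your proposal is concrete: the Ext-vanishing you want to use (Theorem~\ref{main} 1)) is proved in the paper \emph{using} finite global dimension. Look at Claim~\ref{vanEQ} in the proof of Theorem~\ref{main}: the passage from $\mathrm{ext}^{\bullet}(\widetilde E^{\mathbf i}_{\mathbf c}, E^{\mathbf i}_{\mathbf c'}) = 0$ to $\mathrm{ext}^{\bullet}(\widetilde E^{\mathbf i}_{\mathbf c}, \widetilde E^{\mathbf i}_{\mathbf c'}) = 0$ explicitly invokes a \emph{finite} projective resolution of $\widetilde E^{\mathbf i}_{\mathbf c}$, which exists precisely because $R_\beta$ already has finite global dimension. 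Likewise the very definition of $\langle\,\cdot\,,\,\cdot\,\rangle_{\mathsf{gEP}}$ as a Laurent series presupposes that the Ext groups eventually vanish.

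That said, your strategy is not wrong in spirit: it is essentially the algebraic approach of McNamara \cite{M} and Brundan--Kleshchev--McNamara \cite{BKM}, who do deduce finite global dimension from a stratification by PBW standards. But for that to work one must establish the Ext-orthogonality and the polynomial endomorphism rings of the $\Delta(\mathbf c)$ \emph{without} assuming finite global dimension in advance---which requires a different argument (Mackey filtrations and cuspidal reductions) than the one in this paper. If you follow the paper's own proofs of the PBW results, you will be assuming what you are trying to prove.
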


This problem is raised by Kashiwara several times in his lectures on KLR algebras. We remark that in type $\mathsf{A}$ case, Theorem \ref{fKashiwara} follows through a Morita equivalence with an affine Hecke algebra of type $\mathsf{A}$ (see e.g. Opdam-Solleveld \cite{OS}).

For quantum groups, a way to construct a (nice) PBW basis depends on an arbitrary sequence $\mathbf i := (i_1, i_2, \cdots, i_{\ell}) \in I ^{\ell}$ corresponding to a reduced expression of $w_0$. Associated to $\mathbf i$, we have a total order $< _{\mathbf i}$ on each $B ( \infty ) _{\beta}$ (see \S \ref{Chap:charPBW}). We define two collections of graded $R_{\beta}$-modules $\{ \widetilde{E} _b ^{\mathbf i} \} _{b \in B ( \infty )_{\beta}}$ and $\{ E _b ^{\mathbf i} \} _{b \in B ( \infty )_{\beta}}$ as follows (cf. Corollary \ref{PBW-Kos}): {\bf 1)} $\widetilde{E} _b ^{\mathbf i}$ is obtained from $P_b$ by annihilating all $L_{b'} \left< k \right>$ with $b' < _{\mathbf i} b$ and $k \ge 0$, and {\bf 2)} $E _b ^{\mathbf i}$ is obtained from $\widetilde{E} _b ^{\mathbf i}$ by annihilating all $L_b \left< k \right>$ with $k > 0$.

Since $R_{\beta}$ is a graded algebra with finite global dimension, we set
$$\left< M, N \right> _{\mathsf{gEP}} := \sum _{i \ge 0} (-1)^i \mathsf{gdim} \, \mathrm{ext} ^i _{R_{\beta}} ( M, N ) \in \mathbb Q (t) \hskip 1mm \text{ for } \hskip 1mm M, N \in R _{\beta} \mathchar`-\mathsf{gmod},$$
where $\mathrm{hom}_{R _{\beta}} ( M, N ) := \bigoplus _{k \in \mathbb Z} \mathrm{Hom} _{R_{\beta} \mathchar`-\mathsf{gmod}} ( M \left< k \right>, N )$.

By construction, we deduce that the graded character expansion coefficient $[M : \widetilde{E} ^{\mathbf i}_{b'}] \in \mathbb Z (\!( t )\!)$ is well-defined for every $M \in R_{\beta} \mathchar`-\mathsf{gmod}$.

The above definitions of $\widetilde{E} ^{\mathbf i} _b$ and $E ^{\mathbf i} _b$ look natural, but not apparently related to a PBW basis of $U ^+$.

\begin{ftheorem}[Orthogonality relation $=$ Theorem \ref{main} and its corollaries]\label{forth} In the above setting, we have:
\begin{enumerate}
\item For $b < _{\mathbf i} b'$, we have $\mathrm{ext} ^{\bullet} _{R_{\beta}} ( E _b ^{\mathbf i}, E _{b'} ^{\mathbf i} ) = \{ 0 \}$;
\item We have
$$\mathrm{ext} ^{i} _{R_{\beta}} (  \widetilde{E} ^{\mathbf i} _{b}, ( E ^{\mathbf i} _{b'} ) ^* ) = \begin{cases} \mathbb C & (b \neq b', i = 0)\\ \{ 0 \} & (otherwise)\end{cases}, \hskip 2mm \text{ and } \left< \widetilde{E} _b ^{\mathbf i}, ( E _{b'} ^{\mathbf i} ) ^* \right> _{\mathsf{gEP}} = \delta _{b,b'};$$
\item The graded $R_{\beta}$-module $\widetilde{E} _b ^{\mathbf i}$ is a self-extension of $E _{b} ^{\mathbf i}$ in the sense that there exists a separable decreasing filtration of $\widetilde{E} _b ^{\mathbf i}$ whose associated graded is a direct sum of grading shifts of $E _{b} ^{\mathbf i}$.
\end{enumerate}
\end{ftheorem}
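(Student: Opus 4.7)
The strategy is to realize the collections $\{E_b^{\mathbf i}\}$ and $\{\widetilde E_b^{\mathbf i}\}$ as the proper standard and standard modules of an affine highest weight structure on $R_\beta\mathchar`-\mathsf{gmod}$ associated to the convex order $<_{\mathbf i}$, with the bridge to Lusztig's PBW basis of $U^+$ provided by the categorification $\mathbf K \cong \mathbb Q(t)\otimes U^+$. Concretely, for each positive root $\beta_k = s_{i_1}\cdots s_{i_{k-1}}(\alpha_{i_k})$ I would first construct a real simple cuspidal module $L(\beta_k) \in R_{\beta_k}\mathchar`-\mathsf{gmod}$. Each $b \in B(\infty)_\beta$ corresponds to a multi-exponent $(n_1,\ldots,n_\ell)$ with $\sum_k n_k\beta_k = \beta$, and I would identify
\[
 E_b^{\mathbf i} \cong L(\beta_\ell)^{\star n_\ell} \star \cdots \star L(\beta_1)^{\star n_1} \langle c \rangle, \qquad \widetilde E_b^{\mathbf i} \cong L(\beta_\ell)^{(n_\ell)} \star \cdots \star L(\beta_1)^{(n_1)},
\]
where $L(\beta_k)^{(n)}$ is the symmetrised (divided-power) convolution and $\langle c\rangle$ is a grading shift placing $L_b$ in degree $0$. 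Matching these intrinsic constructions with the definitions via $P_b$ in the excerpt would come from checking the prescribed set of composition factors and the universal property of the maximal quotient.

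With the identification in hand, Part~1 (semi-orthogonality) would follow from Frobenius reciprocity combined with cuspidality: for $b <_{\mathbf i} b'$, restricting $E_{b'}^{\mathbf i}$ to the parabolic corresponding to the largest root $\beta_k$ appearing in $b'$ produces cuspidal components of weight $\beta_k$, while the largest constituent root of $E_b^{\mathbf i}$ is strictly smaller in the convex order, so no non-zero map can survive. A Mackey-style filtration argument (using Theorem~\ref{fKashiwara} to keep all $\mathrm{ext}^\bullet$ finite) extends the Hom-vanishing to higher $\mathrm{ext}^\bullet$.

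Part~3 reduces to the divided-power property of real simple modules: for each cuspidal $L(\beta_k)$, the natural map $L(\beta_k)^{(n)} \to L(\beta_k)^{\star n}$ is injective and its cokernel is filtered by grading shifts of $L(\beta_k)^{\star n}$. Convolving these filtrations across all roots along $\mathbf i$ produces a decreasing filtration of $\widetilde E_b^{\mathbf i}$ whose associated graded is a direct sum of grading shifts of $E_b^{\mathbf i}$. Part~2 then follows by combining Parts~1 and~3 with the $\ast$-duality, which exchanges standards with costandards and preserves $<_{\mathbf i}$: applying the standard filtration of $\widetilde E_b^{\mathbf i}$ reduces $\mathrm{ext}^\bullet(\widetilde E_b^{\mathbf i},(E_{b'}^{\mathbf i})^*)$ to repeated computations of $\mathrm{ext}^\bullet(E_b^{\mathbf i},(E_{b'}^{\mathbf i})^*)$, which vanishes off the diagonal by Part~1 (or its dual via $\ast$), is one-dimensional in degree zero for $b = b'$ (pairing heads with socles), and zero in higher degrees. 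The Euler pairing identity $\langle \widetilde E_b^{\mathbf i},(E_{b'}^{\mathbf i})^*\rangle_{\mathsf{gEP}} = \delta_{b,b'}$ is then immediate.

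\textbf{Main obstacle.} The crucial ingredient is the real simplicity of the cuspidal modules $L(\beta_k)$ and the consequent divided-power filtration used in Part~3; without it, the best one obtains is a semi-orthogonal collection of proper standards, but not the standard/proper-standard dichotomy required for the delta function on the right of Part~2. For type $\mathsf{ADE}$ the natural route is to import Varagnolo-Vasserot's geometric realization of $R_\beta$ as the $\mathrm{Ext}$-algebra of Lusztig's semisimple complexes on the representation variety of the corresponding quiver, where the decomposition theorem applied to the Grassmannian-type resolution supplies the filtration geometrically and yields the identification with PBW basis elements; this is presumably the content of the input from \cite{K5} that the author invokes.
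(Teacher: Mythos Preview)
Your proposal outlines the cuspidal-module / affine-highest-weight approach of McNamara and Brundan--Kleshchev--McNamara, which the paper itself cites as an independent algebraic proof appearing after submission. The paper's own argument is genuinely different: rather than constructing root modules $L(\beta_k)$ directly and invoking Mackey filtrations, it builds \emph{Saito reflection functors} $\mathbb T_i : R_\beta\mathchar`-\mathsf{gmod} \to R_{s_i\beta}\mathchar`-\mathsf{gmod}$ geometrically from Lusztig's correspondence between ${}_iE_V^\Omega$ and ${}^iE_{V'}^{s_i\Omega}$, and defines $E_{\mathbf c}^{\mathbf i}$ and $\widetilde E_{\mathbf c}^{\mathbf i}$ by iterating $\mathbb T_i$ and $\star$ along the reduced word $\mathbf i$. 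Part~3 then falls out immediately (Lemma~\ref{bPBW}), since each $P_{ci}$ is already a self-extension of $L_{ci}$ and the functors are exact on the relevant subcategories. Part~1 is proved by downward induction on the position $m$ of the first nonzero entry of $\mathbf c$: the adjunction $(\mathbb T_i, \mathbb T_i^*)$ of Theorem~\ref{SRF} transports $\mathrm{ext}^\bullet_{R_\beta}$ to $\mathrm{ext}^\bullet_{R_{w_m\beta}}$, and Frobenius reciprocity for $\star$ peels off the leading factor $P_{c_m i_m}$, reducing to a shorter Lusztig datum. Part~2 follows from Part~1 via the symmetry $\mathrm{ext}^\bullet(M,N^*)\cong\mathrm{ext}^\bullet(N,M^*)$ and a limiting argument over the self-extension filtration. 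Your approach is more elementary once cuspidals are in hand and extends to non-simply-laced types; the paper's approach categorifies Lusztig's braid-group action and makes the link to BGP reflection on quivers explicit. Your ``main obstacle'' paragraph slightly misreads the role of \cite{K5}: that input supplies finite global dimension and the standard-module formalism for the geometric $\mathrm{Ext}$-algebra, not real simplicity of cuspidals---the paper never isolates that property, relying instead on the reflection-functor adjunction throughout.
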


Since we have $\left< P_b, L_{b'} \right>_{\mathsf{gEP}} = \delta_{b,b'}$ by definition, the pairing $\left< \bullet, \bullet \right>_{\mathsf{gEP}}$ is essentially the Lusztig inner form (cf. \cite{QG} 1.2.10--1.2.11). Therefore, Theorem \ref{forth} guarantees that our $\{ \widetilde{E} _b ^{\mathbf i} \} _{b}$, and $\{ E _{b} ^{\mathbf i} \} _b$ must be the categorifications of the lower/upper PBW bases by their characterization. We remark that some of these modules seem to coincide with those obtained by Kleshchev-Ram \cite{KR}, Webster \cite{W}, and Benkart-Kang-Oh-Park \cite{BKOP}.

\begin{ftheorem}[Lusztig's conjecture $=$ Theorem \ref{Lusztig}]\label{fLusztig}
We have $[P_b : \widetilde{E} ^{\mathbf i}_{b'}] = [ E ^{\mathbf i}_{b'} : L_b ]$ for each $b,b' \in B (\infty) _{\beta}$. In particular, we have
$$[P_b : \widetilde{E} ^{\mathbf i}_{b'}] \in \mathbb N [t] \hskip 3mm \text{ for every } \hskip 3mm b,b' \in B (\infty) _{\beta}.$$
\end{ftheorem}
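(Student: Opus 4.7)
The plan is to deduce the identity from the biorthogonality of Theorem \ref{forth}(2) combined with the projectivity of $P_b$, and to derive positivity from elementary graded-module considerations.

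First, I expand $[P_b]$ as a topologically convergent $\mathbb{Z}(\!(t)\!)$-linear combination of the classes $\{[\widetilde{E}^{\mathbf{i}}_{b'}]\}_{b'}$; this is possible because Theorem \ref{forth}(3) expresses each $[\widetilde{E}^{\mathbf{i}}_{b'}]$ as an invertible scalar multiple of $[E^{\mathbf{i}}_{b'}]$, while the upper-triangularity of the PBW classes with respect to the ordering $<_{\mathbf{i}}$ on $B(\infty)_\beta$ is built into their construction from projective covers. Writing
$$[P_b] = \sum_{b'} [P_b : \widetilde{E}^{\mathbf{i}}_{b'}]\,[\widetilde{E}^{\mathbf{i}}_{b'}]$$
and pairing with $(E^{\mathbf{i}}_{b''})^*$ via $\left< \bullet, \bullet \right>_{\mathsf{gEP}}$, Theorem \ref{forth}(2) yields $[P_b : \widetilde{E}^{\mathbf{i}}_{b''}] = \left< P_b, (E^{\mathbf{i}}_{b''})^* \right>_{\mathsf{gEP}}$.

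Second, since $P_b$ is projective and $R_\beta$ has finite global dimension by Theorem \ref{fKashiwara}, all higher $\mathrm{ext}$-groups from $P_b$ vanish, and the Euler pairing collapses to $\mathsf{gdim}\,\mathrm{hom}_{R_\beta}(P_b, (E^{\mathbf{i}}_{b''})^*)$, which records the graded multiplicity $[(E^{\mathbf{i}}_{b''})^* : L_b]$ of $L_b$ in a composition series of $(E^{\mathbf{i}}_{b''})^*$. The contravariant $*$-duality on $R_\beta \mathchar`-\mathsf{gmod}$ is exact, fixes each simple up to a grading normalization, and (as I will verify) preserves the graded composition multiplicities of the PBW module $E^{\mathbf{i}}_{b''}$. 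This gives $[(E^{\mathbf{i}}_{b''})^* : L_b] = [E^{\mathbf{i}}_{b''} : L_b]$ and establishes the claimed identity. Positivity then follows at once: $E^{\mathbf{i}}_{b'}$ is a graded quotient of $P_{b'}$ whose generator lives in degree $0$, so $E^{\mathbf{i}}_{b'}$ is concentrated in non-negative degrees; every composition factor $L_b \left< k \right>$ occurring satisfies $k \ge 0$, and hence $[E^{\mathbf{i}}_{b'} : L_b] \in \mathbb{N}[t]$.

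The principal obstacle is the self-duality / bar-invariance input used to pass between $(E^{\mathbf{i}}_{b''})^*$ and $E^{\mathbf{i}}_{b''}$. My proposed route is to first characterize $E^{\mathbf{i}}_{b''}$ up to isomorphism through the biorthogonality relations of Theorem \ref{forth}(2) (unitriangularity plus the vanishing of $\mathrm{ext}^\bullet$ to the modules labelled by $b < _{\mathbf i} b'$), then exploit the compatibility of the $*$-duality with $\left< \bullet, \bullet \right>_{\mathsf{gEP}}$ up to the bar-involution $t \mapsto t^{-1}$ (which corresponds, under the isomorphism $\mathbf K \cong \mathbb{Q}(t) \otimes_{\mathcal{A}} U^+$, to the bar-involution of the Lusztig inner form), and conclude that $(E^{\mathbf{i}}_{b''})^* \cong E^{\mathbf{i}}_{b''}$ up to a grading shift. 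The required multiplicity identification is then immediate.
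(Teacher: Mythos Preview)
Your first two steps are fine and match the paper's use of Corollary \ref{orth}: pairing with $(E^{\mathbf i}_{b''})^*$ gives $[P_b : \widetilde{E}^{\mathbf i}_{b''}] = \left<P_b, (E^{\mathbf i}_{b''})^*\right>_{\mathsf{gEP}} = [(E^{\mathbf i}_{b''})^* : L_b]$. The trouble is the next step. You want $[(E^{\mathbf i}_{b''})^* : L_b] = [E^{\mathbf i}_{b''} : L_b]$, and you propose to get it from $(E^{\mathbf i}_{b''})^* \cong E^{\mathbf i}_{b''}$. This is false in general. Graded duality together with $L_b^* \cong L_b$ gives only $[(E^{\mathbf i}_{b''})^* : L_b] = \overline{[E^{\mathbf i}_{b''} : L_b]}$; combined with your own positivity claim $[E^{\mathbf i}_{b''} : L_b] \in \mathbb N[t]$ this would force $[E^{\mathbf i}_{b''} : L_b] \in \mathbb N$, which already fails in type $\mathsf A_2$: for $\beta = \alpha_1 + \alpha_2$ and $\mathbf i = (1,2,1)$ the module $E^{\mathbf i}_{(1,0,1)} = L_1 \star L_2$ has graded composition series $L_{\mathsf{hd}} + t\,L_{\mathsf{soc}}$, so the off-diagonal multiplicity is $t$, not a constant. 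Your proposed characterization also breaks: Theorem \ref{forth}(2) pins down $(E^{\mathbf i}_{b'})^*$ by full biorthogonality, whereas $E^{\mathbf i}_{b'}$ is characterized (via Theorem \ref{forth}(1),(3) and Corollary \ref{PBW-Kos}) by the \emph{one-sided} vanishing $\mathrm{ext}^\bullet(\widetilde{E}^{\mathbf i}_b, E^{\mathbf i}_{b'}) = 0$ for $b <_{\mathbf i} b'$ together with head $L_{b'}$. The $*$-duality reverses the direction of this Ext-vanishing and exchanges heads with socles, so it does not send $E^{\mathbf i}_{b'}$ to itself.

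The paper circumvents this with a matrix-inversion argument requiring no comparison of $E^{\mathbf i}_{b'}$ with its dual: start from $\delta_{b,b'} = \left<P_b, L_{b'}^*\right>_{\mathsf{gEP}}$, expand $[P_b]$ in the $\widetilde E^{\mathbf i}$-basis and $[L_{b'}^*]$ in the $(E^{\mathbf i})^*$-basis, and use Corollary \ref{orth} to obtain $\sum_d \overline{[P_b : \widetilde E^{\mathbf i}_d]}\,\overline{[L_{b'} : E^{\mathbf i}_d]} = \delta_{b,b'}$. After applying the bar involution this says the transition matrices $([P_b : \widetilde E^{\mathbf i}_d])_{b,d}$ and $([E^{\mathbf i}_d : L_{b'}])^{-1}_{d,b'}$ coincide, which is the claimed identity. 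Positivity then follows.
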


Theorem \ref{fLusztig} is conjectured by Lusztig as his comment on \cite{Lu} in his webpage. Note that Theorem \ref{fLusztig} is established in Lusztig \cite{Lu} Corollary 10.7 when the reduced expression $\mathbf i$ satisfies the condition so-called ``adapted" (see \S 3).

\begin{fexample}[$\mathfrak g = \mathfrak{sl}_3$]
We have $I = \{ \alpha _1, \alpha _2 \}$. The standard generators $E_1, E_2$ of $U^+$ correspond to projective modules $P_1$ and $P_2$ of $R_{\alpha _1}$ and $R_{\alpha_2}$, respectively. Then, one series of the (lower) PBW basis $\{ \widetilde{E} ^{\mathbf i} _{b} \} _b$ are:
$$P_1 ^{(c_1)} \star Q_{21}^{(c_2)} \star P_2 ^{(c_3)} \hskip 3mm \text{ for } \hskip 3mm c_1,c_2,c_3 \ge 0.$$
Here $X ^{(c)}$ denotes a direct factor of $X \star X \star \cdots \star X$ ($c$ times). Note that $P_1 ^{(c_1)}$, $Q_{21} ^{(c_2)}$, and $P_2 ^{(c_3)}$ are maximal self-extensions of simple modules (this is a general phenomenon). We have a short exact sequence
$$0 \to P _1 \star P _2 \left< 2 \right> \longrightarrow P_2 \star P_1 \longrightarrow Q_{21} \to 0,$$
which is a categorical version of $E_2 E_1 - t^2 E_1 E_2$.
\end{fexample}

The organization of this paper is as follows: In the first section, we collect several results from \cite{K5} needed in the sequel. The second section is the preliminary on the KLR algebra. In the third section, we abstract and categorify Lusztig's arguments in the setting of the Hall algebras \cite{Lu2} to the KLR algebras by utilizing the results of \cite{K5} and the induction theorem imported from \cite{KaL,L-CG3,K1}. This includes categorifications of Saito's reflection actions \cite{S} that we call the Saito reflection functors. In the fourth section, we depart from geometry and utilize the properties of the Saito reflection functors established in the earlier sections to deduce Theorem \ref{forth} and Theorem \ref{fLusztig}.

After submitted the initial version this paper, there appeared another (algebraic) proofs of the main results of this paper by McNamara \cite{M} and Brundan-Kleshchev-McNamara \cite{BKM}, which also covers non-simply laced cases. Thier approach is quite different from that of ours, and one merit of our approach is that it provides a bridge between geometric/algebraic view points, typically seen in the Saito reflection functor used in the proof.

\begin{flushleft}
{\small
{\bf Acknowledgement:} The author is indebted to Masaki Kashiwara for helpful discussions, comments, and lectures on this topic. He is also indebted to Yoshiyuki Kimura for helpful discussions, comments, and pointing out some errors. He is also grateful to Michela Varagnolo for pointing out a reference.\\
This version corrects couple of errors in the published version of this paper, most notably these in Lemma 4.2 2) noticed by Euiyong Park and communicated to the author by Myungho Kim. The author wants to express his thanks to them.}
\end{flushleft}

\begin{flushleft}
{\small {\bf Convention}}
\end{flushleft}

An algebra $R$ is a (not necessarily commutative) unital $\mathbb C$-algebra. A variety $\mathfrak X$ is a separated reduced scheme $\mathfrak X_0$ of finite type over some localization $\mathbb Z_S$ of $\mathbb Z$ specialized to $\mathbb C$. It is called a $G$-variety if we have an action of a connected affine algebraic group scheme $G$ flat over $\mathbb Z_S$ on $\mathfrak X_0$ (specialized to $\mathbb C$). As in \cite{BBD} \S 6 and \cite{BL}, we transplant the notion of weights to the derived category of ($G$-equivariant) constructible sheaves with finite monodromy on $\mathfrak X$. Let us denote by $D^b ( \mathfrak X )$ (resp. $D^+ ( \mathfrak X )$) the bounded (resp. bounded from the below) derived category of the category of constructible sheaves on $\mathfrak X$, and denote by $D^+ _G ( \mathfrak X )$ the $G$-equivariant derived category of $\mathfrak X$. We have a natural forgetful functor $D^+ _G ( \mathfrak X ) \to D^+ ( \mathfrak X )$, whose preimage of $D^b ( \mathfrak X )$ is denoted by $D ^b _G ( \mathfrak X )$. For an object of $D^b_G ( \mathfrak X )$, we may denote its image in $D ^b ( \mathfrak X )$ by the same letter.

Let $\mathsf{vec}$ be the category of $\mathbb Z$-graded vector spaces (over $\mathbb C$) bounded from the below so that its objects have finite-dimensional graded pieces. In particular, for $V = \oplus _{i \gg - \infty} V^i \in \mathsf{vec}$, its graded dimension $\mathsf{gdim} \, V := \sum _{i} t ^i \dim V_i \in \mathbb Z (\!( t )\!)$ makes sense (with $t$ being indeterminant). We define $V \left<m\right>$ by setting $( V \left<m\right> ) _i := V _{i - m}$.

In this paper, a graded algebra $A$ is always a $\mathbb C$-algebra whose underlying space is in $\mathsf{vec}$. Let $A \mathchar`-\mathsf{gmod}$ be the category of finitely generated graded $A$-modules. For $E, F \in A \mathchar`-\mathsf{gmod}$, we define $\hom_{A} ( E, F )$ to be the direct sum of graded $A$-module homomorphisms $\hom _{A} ( E, F )^j$ of degree $j$ ($= \mathrm{Hom} _{A \mathchar`-\mathsf{gmod}} ( E \left< j \right>, F )$). We employ the same notation for extensions (i.e. $\mathrm{ext}_{A} ^i ( E, F ) = \oplus_{j\in \mathbb Z} \mathrm{ext}^{i}_{A} (E,F)^j$). We denote by $\mathsf{Irr} \, A$ the set of isomorphism classes of graded simple modules of $A$, and denote by $\mathsf{Irr} _0 \, A$ the set of isomorphism classes of graded simple modules of $A$ up to grading shifts. Two graded algebras are said to be Morita equivalent if their graded module categories are equivalent. For a graded $A$-module $E$, we denote its head by $\mathsf{hd} \, E$, and its socle by $\mathsf{soc} \, E$.

For $Q (t) \in \mathbb Q (t)$, we set $\overline{Q} ( t ) := Q ( t^{-1} )$. For derived functors $\mathbb R F$ or $\mathbb L F$ of some functor $F$, we represent its arbitrary graded piece (of its homology complex) by $\mathbb R ^{*} F$ or $\mathbb L ^{*} F$, and the direct sum of whole graded pieces by $\mathbb R ^{\bullet} F$ or $\mathbb L ^{\bullet} F$. For example, $\mathbb R ^{*} F \cong \mathbb R ^{*} G$ means that $\mathbb R ^{i} F \cong \mathbb R ^{i} G$ for every $i \in \mathbb Z$, while $\mathbb R ^{\bullet} F \cong \mathbb R ^{\bullet} G$ means that $\bigoplus _{i} \mathbb R ^{i} F \cong \bigoplus _{i} \mathbb R ^{i} G$.

When working on some sort of derived category, we suppress $\mathbb R$ or $\mathbb L$, or the category from the notation for simplicity when there is only small risk of confusion.

\section{Recollection from \cite{K5}}\label{general_Ext}
Let $G$ be a connected reductive algebraic group. Let $\mathfrak X$ be a $G$-variety. Let $\Lambda$ be the labelling set of $G$-orbits of $\mathfrak X$. For $\lambda \in \Lambda$, we denote the corresponding $G$-orbit by $\mathbb O _{\lambda}$. For $\lambda, \mu \in \Lambda$, we write $\lambda \preceq \mu$ if $\mathbb O _{\lambda} \subset \overline{\mathbb O _{\mu}}$. We assume the following property $(\spadesuit)$:

\begin{itemize}
\item[$(\spadesuit)_1$] The set $\Lambda$ is finite. For each $\lambda \in \Lambda$, we fix $x_{\lambda} \in \mathbb O _{\lambda} ( \mathbb C )$;
\item[$(\spadesuit)_2$] For each $\lambda \in \Lambda$, the group $\mathsf{Stab} _G ( x _{\lambda} )$ is connected.
\end{itemize}

We have a (relative) dualizing complex $\omega_{\mathfrak X} := p^! \underline{\mathbb C} \in D^b _{G} ( \mathfrak X )$, where $p : \mathfrak X \to \{\mathrm{pt}\}$ is the $G$-equivariant structure map. We have a dualizing functor
$$\mathbb D : D ^b _G ( \mathfrak X ) ^{op} \ni C^{\bullet} \mapsto \mathcal{H}om ^{\bullet} ( C ^{\bullet}, \omega_{\mathfrak X} ) \in D^b _G ( \mathfrak X ).$$

We have a $\mathbb D$-autodual $t$-structure of $D^b _G ( \mathfrak X )$ whose truncation functor and perverse cohomology functor are denoted by $\tau$ and ${}^p H$, respectively.

For each $\lambda \in \Lambda$, we have a constant local system $\underline{\mathbb C} _{\lambda}$ on $\mathbb O _{\lambda}$. We have inclusions $i_{\lambda} : \{ x_{\lambda} \} \hookrightarrow \mathfrak X$ and $j _{\lambda} : \mathbb O_{\lambda} \hookrightarrow \mathfrak X$. Let $\mathbb C _{\lambda} := ( j _{\lambda} ) _! \underline{\mathbb C} _{\lambda} [ \dim \mathbb O _{\lambda} ]$ and $\mathsf{IC} _{\lambda} := ( j _{\lambda} ) _{!*} \underline{\mathbb C} _{\lambda} [ \dim \mathbb O _{\lambda} ]$, which we regard as objects of $D^b _G ( \mathfrak X )$. We denote by
\begin{align*}
\mathrm{Ext} ^{\bullet} _G ( \bullet, \bullet ) & : D^b _G ( \mathfrak X ) ^{op} \times D^b _G ( \mathfrak X ) \longrightarrow D^+ ( \mathrm{pt} )\\
\mathrm{Ext} ^{\bullet} ( \bullet, \bullet ) & : D ^b ( \mathfrak X ) ^{op} \times D ^b ( \mathfrak X ) \longrightarrow D ^b ( \mathrm{pt} )
\end{align*}
the Ext (as bifunctors) of $D^b_G ( \mathfrak X )$ and $D^b ( \mathfrak X )$, respectively.

For each $\lambda \in \Lambda$, we fix $L _{\lambda} \in D ^b ( \mathrm{pt} )$ as a non-zero graded vector space with a trivial differential which satisfies the self-duality condition $L _{\lambda} \cong L _{\lambda} ^*$. We set
$$\mathcal L := \bigoplus _{\lambda \in \Lambda} L_{\lambda} \boxtimes \mathsf{IC} _{\lambda} \in D _G ^b ( \mathfrak X ).$$
By construction, we find an isomorphism $\mathcal L \cong \mathbb D \mathcal L$.

We form a graded Yoneda algebra
$$A _{(G, \mathfrak X)} = \bigoplus _{i \in \mathbb Z} A ^i _{(G, \mathfrak X)} := \bigoplus _{i \in \mathbb Z} \mathrm{Ext} _G ^i ( \mathcal L, \mathcal L )$$
whose degree is the cohomological degree. We denote by $B _{(G, \mathfrak X)}$ the algebra $A_{(G, \mathfrak X)}$ by taking $\mathcal L = \bigoplus _{\lambda \in \Lambda} \mathsf{IC} _{\lambda}$ (and call it the basic ring of $A_{(G, \mathfrak X)}$). The algebra $B_{(G, \mathfrak X)}$ is Morita equivalent to $A _{(G, \mathfrak X)}$, and hence all the arguments in the below are independent of the choice of $\mathcal L$, which we suppress for simplicity. We also drop $(G, \mathfrak X)$ in case the meaning is clear from the context. It is standard that $\{L_{\lambda}\}_{\lambda \in \Lambda}$ forms a complete collection of graded simple $A$-modules up to grading shifts.

\begin{lemma}[see \cite{K5} 1.2]\label{symm}
For a graded $A$-module $M$, its graded dual $M^*$ is again a graded $A$-module. \hfill $\Box$
\end{lemma}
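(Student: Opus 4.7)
The plan is to promote the self-duality isomorphism $\mathcal L \cong \mathbb D \mathcal L$ to a graded anti-involution $\sigma \colon A \to A^{\mathrm{op}}$, and then use $\sigma$ to twist the canonical right $A$-action on the graded dual of a left module into a left $A$-action.

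First I would construct $\sigma$. Verdier duality supplies, for each $i \in \mathbb Z$, a contravariant map
$$
\mathbb D \colon \mathrm{Ext}^i_G(\mathcal L, \mathcal L) \longrightarrow \mathrm{Ext}^i_G(\mathbb D \mathcal L, \mathbb D \mathcal L).
$$
This map preserves the cohomological degree, because under the identification $\mathrm{Ext}^i = \mathrm{Hom}(-, -[i])$ the two shifts introduced by $\mathbb D$ on the source and the target cancel, and it reverses the order of Yoneda composition, producing an anti-homomorphism of graded rings. Composing with the fixed isomorphism $\mathcal L \cong \mathbb D \mathcal L$ yields $\sigma \colon A \to A^{\mathrm{op}}$, and the natural isomorphism $\mathbb D^2 \cong \mathrm{id}$ then forces $\sigma$ to be an involution.

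Next, for a graded left $A$-module $M$, I would define the graded dual by $(M^*)_i := (M_{-i})^*$ and set
$$
(a \cdot f)(m) := f(\sigma(a) \cdot m) \qquad (a \in A,\ f \in M^*,\ m \in M).
$$
The anti-multiplicativity of $\sigma$ turns this prescription into a bona fide left $A$-action, its degree-preservation makes the action compatible with the grading on $M^*$, and functoriality in $M$ is then automatic.

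The only substantive verifications are the anti-multiplicativity and involutivity of $\sigma$; both are routine consequences of the functoriality of $\mathbb D$ and the canonical isomorphism $\mathbb D^2 \cong \mathrm{id}$, so I do not anticipate any genuine obstacle. The one point that deserves a moment of attention is that the fixed isomorphism $\mathcal L \cong \mathbb D \mathcal L$ should be chosen so that applying $\mathbb D$ to it returns (a scalar multiple of) itself, ensuring $\sigma^2 = \mathrm{id}$ on the nose; this is guaranteed summand by summand by the self-duality of each $\mathsf{IC}_\lambda$ together with the condition $L_\lambda \cong L_\lambda^*$ imposed in the setup.
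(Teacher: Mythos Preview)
Your proposal is correct and is exactly the standard argument: the self-duality $\mathcal L \cong \mathbb D \mathcal L$ together with the contravariance of Verdier duality on Yoneda Ext yields a degree-preserving anti-involution $\sigma$ on $A$, and twisting the natural right action by $\sigma$ gives the left $A$-module structure on $M^*$. The paper does not actually supply a proof here---it simply cites \cite{K5} 1.2---but the argument there is precisely the one you outline, so there is nothing to contrast.
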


For each $\lambda \in \Lambda$, we set
$$P_{\lambda} := \mathrm{Ext} _G ^{\bullet} ( \mathsf{IC} _{\lambda}, \mathcal L ) = \bigoplus _{i \in \mathbb Z} \mathrm{Ext} _G ^i ( \mathsf{IC} _{\lambda}, \mathcal L ).$$
Each $P_{\lambda}$ is a graded projective left $A$-module. By construction, we have
$$A \cong \bigoplus _{\lambda \in \Lambda} L _{\lambda} ^* \boxtimes \mathrm{Ext} _G ^{\bullet} ( \mathsf{IC} _{\lambda}, \mathcal L ) = \bigoplus _{\lambda \in \Lambda} P _{\lambda} \boxtimes L _{\lambda} ^*$$
as left $A$-modules. It is standard that $P_{\lambda}$ is an indecomposable $A$-module whose head is isomorphic to $L _{\lambda}$ (cf. \cite{CG} \S 8.7). We have an idempotent $e_{\lambda} \in A$ so that $P_{\lambda} \cong A e _{\lambda}$ as left graded $A$-modules (up to a grading shift).

For each $\lambda \in \Lambda$, we set
$$\widetilde{K} _{\lambda} := \mathrm{Ext} _G ^{\bullet} ( \mathbb C _{\lambda}, \mathcal L ) \text{ and } K _{\lambda} := H ^{\bullet} i_{\lambda} ^! \mathcal L[ \dim \mathbb O _{\lambda} ].$$
We call $K _{\lambda}$ a standard module, and $\widetilde{K} _{\lambda}$ a dual standard module of $A$.

We regard each $\mathsf{IC} _{\lambda}$ as a simple mixed perverse sheaf (of weight zero) in the category of mixed sheaves on $\mathfrak X$ via \cite{BBD} \S 5 and \S 6, and each $L_{\lambda}$ as a mixed (complex of) vector space of weight zero. I.e. each $L_{\lambda} ^i$ is pure of weight $i$ in the sense that the geometric Frobenius acts by $q ^{i/2} \mathsf{id}$ if we switch the base field to the algebraic closure of a finite field of cardinality $q$. It follows that the algebra $A$ acquires a (mixed) weight structure.

We consider the following property $(\clubsuit)$:
\begin{enumerate}
\item[$(\clubsuit)_1$] The algebra $A$ is pure of weight $0$;
\item[$(\clubsuit)_2$] For each $\lambda \in \Lambda$, the perverse sheaf $\mathsf{IC} _{\lambda}$ is pointwise pure;
\end{enumerate}

\begin{theorem}[\cite{K5} 3.5]\label{absKas} Assume the properties $(\spadesuit)$ and $(\clubsuit)$. Then, the algebra $A$ has finite global dimension. \hfill $\Box$
\end{theorem}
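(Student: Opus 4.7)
The plan is to exhibit $A$ (equivalently, its Morita equivalent basic version $B_{(G,\mathfrak X)}$) as an affine quasi-hereditary algebra with standard modules $K_\lambda$ and dual standard modules $\widetilde{K}_\lambda$, indexed by the finite poset $(\Lambda, \preceq)$, and then read off finite global dimension from this highest-weight structure. The geometric input will enter in two places: the recollement of $D^b_G(\mathfrak X)$ attached to the stratification $\mathfrak X = \bigsqcup_{\lambda} \mathbb O_{\lambda}$, and the weight-theoretic purity $(\clubsuit)$; the connectedness assumption $(\spadesuit)_2$ ensures that only the trivial local system appears on each orbit, so no multiplicities coming from $\pi_1$-representations clutter the bookkeeping.

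First I would analyze how $K_\lambda$ and $\widetilde{K}_\lambda$ interpolate between $L_\lambda$ and $P_\lambda$. The canonical surjection of perverse sheaves $\mathbb C_\lambda \twoheadrightarrow \mathsf{IC}_\lambda$ has kernel supported on strata $\mathbb O_\mu$ with $\mu \prec \lambda$; applying $\mathrm{Ext}^\bullet_G(-, \mathcal L)$ and inducting along $\preceq$ should produce a filtration of each $P_\lambda$ whose associated graded is a sum of grading shifts of $\widetilde{K}_\mu$ for $\mu \succeq \lambda$, with $\widetilde{K}_\lambda$ appearing once on top. A Verdier-dual argument expresses the radical of $K_\lambda$ as a module whose composition factors are $L_\mu$ with $\mu \prec \lambda$. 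The role of $(\clubsuit)$ at this point is decisive: $(\clubsuit)_1$ forces the weight filtration on $\mathrm{Ext}^\bullet_G(\mathcal L, \mathcal L)$ to coincide with its cohomological grading, while $(\clubsuit)_2$ makes the various $\mathrm{RHom}_G(\mathsf{IC}_\lambda, \mathsf{IC}_\mu)$ formal and weight-pure, and these two properties together promote the a priori long exact sequences and spectral sequences into genuine weight-graded direct sum decompositions. In particular, one should obtain the Ext-orthogonality
$$\mathrm{ext}^{i}_{A}(K_\lambda, \widetilde{K}_\mu) \;\cong\; \delta_{\lambda, \mu}\, \delta_{i, 0}\, \mathbb C$$
(up to a grading shift in the non-vanishing case), which is the defining feature of a highest-weight pair $(K, \widetilde{K})$.

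With the $\widetilde{K}$-filtration of projectives and this Ext-orthogonality secured, a standard (affine) quasi-hereditary argument finishes the job: one resolves each $L_\lambda$ first by $K_\lambda$ and inducts on $\preceq$ to resolve the resulting kernel, whose composition factors live strictly below $\lambda$; since $\Lambda$ is finite, this produces a finite projective resolution of length bounded, say, by twice the length of the longest $\preceq$-chain in $\Lambda$, yielding a uniform bound on the projective dimensions of the simples and hence finite global dimension of $A$. The main obstacle I anticipate is the passage from ``filtered'' to ``split'' in the middle paragraph: one must genuinely use the mixed weight machinery of \cite{BBD} together with the pointwise purity of each $\mathsf{IC}_\lambda$ to upgrade spectral sequence filtrations into actual direct sum decompositions, which is where most of the content of $(\clubsuit)$ is spent and where the techniques of \cite{K5} do the heavy lifting.
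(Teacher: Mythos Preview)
Your proposal is essentially correct and matches the strategy of \cite{K5} as far as one can reconstruct it from the surrounding material (Theorem~\ref{KS}, Proposition~\ref{Kquotients}, Corollary~\ref{isomKext}); the paper itself does not give an independent proof but simply imports the result. One small slip: in the paper's convention the composition factors of $K_\lambda$ (and of $\widetilde K_\lambda$) are $L_\mu$ with $\mu \succeq \lambda$, so the kernel of $K_\lambda \twoheadrightarrow L_\lambda$ has factors strictly \emph{above} $\lambda$, not below---your downward induction should therefore start from the maximal (open) strata, while the $\widetilde K$-filtration of $P_\lambda$ runs through $\mu \preceq \lambda$ and the companion induction bounding $\mathrm{pd}\,\widetilde K_\lambda$ starts from the minimal (closed) strata; once both directions are in place the ``$2\cdot(\text{longest chain})$'' bound you quote is exactly what comes out.
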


For $M \in A \mathchar`-\mathsf{gmod}$ and $i \in \mathbb Z$, we define
\begin{align*}
& [ M : L_{\lambda} \left< i \right> ]_0 := \dim \, \mathrm{Hom} _{A\mathchar`-\mathsf{gmod}} ( P_{\lambda} \left< i \right>, M ) \in \mathbb Z \hskip 2mm \text{ and }\\
& [ M : L_{\lambda} ] := \mathsf{gdim} \, \mathrm{hom} _A ( P_{\lambda}, M ) \in \mathbb Z (\!(t)\!).
\end{align*}

We have $[M : L_{\lambda}] = \sum _{i \in \mathbb Z} [ M : L_{\lambda} \left< i \right> ]_0 t^i \in \mathbb Z (\!(t)\!)$.

\begin{theorem}[\cite{K5} 1.6]\label{KS} Assume the properties $(\spadesuit)$ and $(\clubsuit)$:
\begin{enumerate}
\item We have
$$[\widetilde{K} _{\lambda} : L _{\mu} ] = 0 = [K _{\lambda} : L _{\mu} ] \hskip 2mm \text{ for } \lambda \not\preceq \mu \hskip 2mm \text{ and } \hskip 3mm [K _{\lambda} : L _{\lambda} ] = 1;$$
\item For each $\mu \not\preceq \lambda$, we have
$$\mathrm{ext} _A ^{\bullet} ( \widetilde{K} _{\lambda}, \widetilde{K} _{\mu} ) = \{ 0 \} \hskip 3mm \text{ and } \hskip 3mm \mathrm{ext} _A ^{\bullet} ( K _{\lambda}, K _{\mu} ) = \{ 0 \};$$
\item For each $\lambda \in \Lambda$, we have
$$\widetilde{K} _{\lambda} \cong P _{\lambda} / \Bigl(  \sum _{\mu \prec \lambda} A e_{\mu} P _{\lambda} \Bigr);$$
\item Each $\widetilde{K} _{\lambda}$ is a successive self-extension of $K_{\lambda}$. In addition, we have
$$[\widetilde{K} _{\lambda} : L _{\lambda} ] = \mathsf{gdim} \, H ^{\bullet} _{\mathsf{Stab} _G ( x_{\lambda} )} ( \mathrm{pt} ).$$
\end{enumerate}
\end{theorem}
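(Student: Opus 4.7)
The plan is to transport each of the four claims to the $G$-equivariant derived category $D^b_G(\mathfrak{X})$ via the contravariant functor $\Phi \colon F \mapsto \mathrm{Ext}^\bullet_G(F, \mathcal{L})$, which sends $\mathsf{IC}_\lambda \mapsto P_\lambda$ and $\mathbb{C}_\lambda \mapsto \widetilde{K}_\lambda$, and then to resolve the resulting geometric questions by standard adjunctions $(j_\lambda)_! \dashv j_\lambda^!$ together with the support calculus for the orbit stratification. The key running observation is that, since orbits are locally closed and $G$-invariant, $\mathbb{O}_\mu \cap \overline{\mathbb{O}_\nu}$ is either empty or equal to $\mathbb{O}_\mu$, so the order $\preceq$ precisely controls which $!$- or $\ast$-restrictions vanish.

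For part (1) I would compute $e_\mu M = \mathrm{hom}_A(P_\mu, M)$ directly on the geometric side: one has $e_\mu \widetilde{K}_\lambda \cong L_\mu \otimes \mathrm{Ext}^\bullet_G(\mathbb{C}_\lambda, \mathsf{IC}_\mu)$ and $e_\mu K_\lambda \cong L_\mu \otimes H^\bullet i_\lambda^!\mathsf{IC}_\mu[\dim \mathbb{O}_\lambda]$, both of which vanish for $\lambda \not\preceq \mu$ because $\mathsf{IC}_\mu$ is supported on $\overline{\mathbb{O}_\mu}$ and meets $\mathbb{O}_\lambda$ trivially; the normalization $[K_\lambda:L_\lambda]=1$ follows since $\mathsf{IC}_\lambda$ restricts to $\underline{\mathbb{C}}_\lambda[\dim\mathbb{O}_\lambda]$ on its smooth open stratum, forcing $i_\lambda^!\mathsf{IC}_\lambda[\dim\mathbb{O}_\lambda] \cong \underline{\mathbb{C}}_{x_\lambda}$. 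Part (3) then follows by applying $\Phi$ to the canonical perverse surjection $\mathbb{C}_\lambda \twoheadrightarrow \mathsf{IC}_\lambda$, whose kernel is supported on $\overline{\mathbb{O}_\lambda} \setminus \mathbb{O}_\lambda$ and hence a successive extension of $\mathsf{IC}_\mu$ with $\mu \prec \lambda$: the resulting surjection $P_\lambda \twoheadrightarrow \widetilde{K}_\lambda$ has kernel contained in $\sum_{\mu \prec \lambda} A e_\mu P_\lambda$, and (1) rules out any further drop so the containment is an equality. For part (4), adjunction together with connectedness of $\mathsf{Stab}_G(x_\lambda)$ from $(\spadesuit)_2$ (which trivialises the monodromy on the stalk at $x_\lambda$) yields
$$\widetilde{K}_\lambda \;\cong\; \mathrm{Ext}^\bullet_{\mathsf{Stab}_G(x_\lambda)}(\underline{\mathbb{C}}, i_\lambda^!\mathcal{L}[\dim\mathbb{O}_\lambda]) \;\cong\; H^\bullet_{\mathsf{Stab}_G(x_\lambda)}(\mathrm{pt}) \otimes K_\lambda$$
as graded vector spaces, and the equivariant $H^\bullet_{\mathsf{Stab}_G(x_\lambda)}(\mathrm{pt})$-action supplies the decreasing self-extension filtration with associated graded a sum of copies of $K_\lambda$.

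The hard part will be part (2). Here I would exploit purity $(\clubsuit)$ to promote the DG algebra $\mathrm{End}^\bullet_G(\mathcal{L})$ to a formal one in the sense of Deligne, so that $\mathrm{ext}^\bullet_A(\widetilde{K}_\lambda, \widetilde{K}_\mu) \cong \mathrm{Ext}^\bullet_G(\mathbb{C}_\mu, \mathbb{C}_\lambda)$ (the direction flipped because $\Phi$ is contravariant). Then adjunction gives $\mathrm{Ext}^\bullet_G(\mathbb{C}_\mu, \mathbb{C}_\lambda) = \mathrm{Ext}^\bullet_G(\underline{\mathbb{C}}_\mu[\dim\mathbb{O}_\mu], j_\mu^!(j_\lambda)_!\underline{\mathbb{C}}_\lambda[\dim\mathbb{O}_\lambda])$, which vanishes whenever $\mathbb{O}_\mu \cap \overline{\mathbb{O}_\lambda} = \emptyset$, i.e.\ $\mu \not\preceq \lambda$; the analogous vanishing for the $K$-case follows either via Verdier duality or directly from the long exact Ext sequences attached to the filtration of (4) combined with the $\widetilde{K}$-case. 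The bulk of the technical work therefore lies in upgrading the pure weight-zero structure to an honest derived equivalence that is compatible with all the functorial operations $(j_\lambda)_!$, $j_\lambda^!$, etc., following the Beilinson-Ginzburg-Soergel Koszul pattern adapted to the stratified equivariant setting.
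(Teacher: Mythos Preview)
The paper does not actually prove this theorem: Section~1 is entitled ``Recollection from \cite{K5}'' and Theorem~\ref{KS} is imported wholesale from \cite{K5}~1.6 with no argument supplied. So there is no in-paper proof to compare against.

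That said, your outline is essentially the one underlying the cited reference. Parts (1) and (4) are exactly the support/adjunction computations you describe; note that in (4) the tensor splitting $\widetilde{K}_\lambda \cong H^\bullet_{\mathsf{Stab}_G(x_\lambda)}(\mathrm{pt}) \otimes K_\lambda$ requires the collapse of the Leray spectral sequence for the fibration $G/\mathsf{Stab}_G(x_\lambda) \to BG$, and this is where $(\clubsuit)_2$ enters. For part (2) you correctly locate the crux in formality of the Yoneda algebra under $(\clubsuit)_1$, which is precisely the mechanism of \cite{K5}; once one has $\mathrm{ext}^\bullet_A(\Phi F,\Phi G)\cong \mathrm{Ext}^\bullet_G(G,F)$, the vanishing for $\mu\not\preceq\lambda$ is the support argument you give.

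One correction worth flagging for (3): $\mathbb{C}_\lambda=(j_\lambda)_!\underline{\mathbb{C}}_\lambda[\dim\mathbb{O}_\lambda]$ is in general only an object of ${}^pD^{\le 0}$, not a perverse sheaf, so ``perverse surjection $\mathbb{C}_\lambda\twoheadrightarrow\mathsf{IC}_\lambda$'' is not quite right. What you have is the canonical morphism $\mathbb{C}_\lambda\to\mathsf{IC}_\lambda$ with cone supported on $\overline{\mathbb{O}_\lambda}\setminus\mathbb{O}_\lambda$; applying $\Phi=\mathrm{Ext}^\bullet_G(-,\mathcal{L})$ then yields a \emph{long} exact sequence, and to extract the short exact sequence $0\to\ker\to P_\lambda\to\widetilde{K}_\lambda\to 0$ with $\ker\subset\sum_{\mu\prec\lambda}Ae_\mu P_\lambda$ you again need the purity hypotheses to force degeneration. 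So (3) is not as elementary as your sketch suggests: it too leans on $(\clubsuit)$, not just (1).
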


For $M \in A \mathchar`-\mathsf{gmod}$ and $N \in A \mathchar`-\mathsf{gmod}$, we define its graded Euler-Poincar\'e characteristic as:
\begin{equation}
\left< M, N \right> _{\mathsf{gEP}} := \sum _{i \ge 0} (-1)^i \mathsf{gdim} \, \mathrm{ext} ^i _A ( M, N ) \in \mathbb Z (\!( t )\!).\label{gepdef}
\end{equation}

Let $j : \mathfrak Y \hookrightarrow \mathfrak X$ be the inclusion of an open $G$-stable subvariety. We form a graded algebra
$$A_{(G, \mathfrak Y)} := \mathrm{Ext} ^{\bullet} _{G} ( j ^* \mathcal L, j ^* \mathcal L ).$$

\begin{lemma}[\cite{K5} 4.4]\label{localKsc}
Let $j : \mathfrak Y \hookrightarrow \mathfrak X$ be the inclusion of an open $G$-stable subvariety. Then, $\mathfrak Y$ satisfies the conditions $(\spadesuit)$ and $(\clubsuit)$ if $\mathfrak X$ does. \hfill $\Box$
\end{lemma}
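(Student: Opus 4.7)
The plan is to verify each of $(\spadesuit)$ and $(\clubsuit)$ for $\mathfrak Y$ by exploiting how open $G$-stable inclusions interact with stalks and weights. Both parts of $(\spadesuit)$ are essentially formal: the $G$-orbits in $\mathfrak Y$ form a subset $\Lambda_{\mathfrak Y} \subset \Lambda$, inheriting finiteness and allowing us to reuse the base points $x_\lambda$; the stabilizers $\mathsf{Stab}_G(x_\lambda)$ are unchanged and hence still connected.

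A key preliminary observation is that $\mathfrak Y$ is upward-closed under $\preceq$: if $\mathbb O_\mu \subset \mathfrak Y$ and $\mu \preceq \lambda$, then openness forces $\mathfrak Y$ to meet $\mathbb O_\lambda$, and $G$-stability then yields $\mathbb O_\lambda \subset \mathfrak Y$. Equivalently, $\mathfrak Z := \mathfrak X \setminus \mathfrak Y$ is downward-closed, so for every $\mathbb O_\lambda \subset \mathfrak Z$ one has $\overline{\mathbb O_\lambda} \subset \mathfrak Z$, and therefore $j^* \mathsf{IC}_\lambda = 0$. Since intermediate extension commutes with open restriction, $j^*\mathcal L$ is precisely the $\mathcal L$-complex built intrinsically on $\mathfrak Y$ with the same $L_\lambda$. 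From this identification $(\clubsuit)_2$ is immediate: the stalks of $j^*\mathsf{IC}_\lambda$ at points of $\mathfrak Y$ agree with those of $\mathsf{IC}_\lambda$, so pointwise purity is preserved.

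The technical heart is $(\clubsuit)_1$. Letting $i : \mathfrak Z \hookrightarrow \mathfrak X$ denote the closed complement, the adjunction $(j^*, Rj_*)$ combined with the localization triangle $i_* i^! \mathcal L \to \mathcal L \to Rj_* j^* \mathcal L \xrightarrow{+1}$ yields the long exact sequence
\[
\cdots \to \mathrm{Ext}^i_G(\mathcal L, i_* i^! \mathcal L) \to A^i_{(G,\mathfrak X)} \to A^i_{(G,\mathfrak Y)} \to \mathrm{Ext}^{i+1}_G(\mathcal L, i_* i^! \mathcal L) \to \cdots,
\]
whose middle term is pure of weight $i$ by hypothesis. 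I reduce by induction on the number of $G$-orbits in $\mathfrak Z$ to the case where $\mathfrak Z = \mathbb O_\lambda$ is a single closed orbit (each removal of the closed orbit of $\mathfrak Z$ is itself an open $G$-stable inclusion). In the reduced situation, $\mathrm{Ext}^k_G(\mathcal L, i_* i^! \mathcal L) = \mathrm{Ext}^k_{G,\mathbb O_\lambda}(i^*\mathcal L, i^!\mathcal L)$ descends via the identification $\mathbb O_\lambda \cong G / \mathsf{Stab}_G(x_\lambda)$ (with connected stabilizer) to $H^k_{\mathsf{Stab}_G(x_\lambda)}(\mathrm{pt};V)$ for a pure graded vector space $V$ assembled from the stalks of $\mathcal L$ at $x_\lambda$. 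Pointwise purity $(\clubsuit)_2$ together with the purity of the classifying-space cohomology of a connected group then force this to be pure of weight $k$; Deligne's strictness theorem (\cite{BBD}) implies that $A^i_{(G,\mathfrak Y)}$ is pure of weight $i$.

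The main obstacle is the purity of $\mathrm{Ext}^k_{G,\mathbb O_\lambda}(i^*\mathcal L, i^!\mathcal L)$ on a single orbit: the weight shift introduced by $i^!$ (involving the relative dualizing complex of $\mathbb O_\lambda \hookrightarrow \mathfrak X$) must be balanced against the pointwise-pure stalks carried by $i^*\mathcal L$ and the degree-equals-weight cohomology of $B\mathsf{Stab}_G(x_\lambda)$.
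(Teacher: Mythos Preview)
The paper does not prove this lemma here; it is imported verbatim from \cite{K5}~4.4 (the $\Box$ indicates no proof is given). So there is no in-paper argument to compare against, and your attempt must be judged on its own.

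Your treatment of $(\spadesuit)$ and $(\clubsuit)_2$ is correct. The gap is in the last step for $(\clubsuit)_1$. You set up the long exact sequence
\[
\cdots \to B^i \to A^i_{(G,\mathfrak X)} \to A^i_{(G,\mathfrak Y)} \to B^{i+1} \to \cdots,
\qquad B^k := \mathrm{Ext}^k_G(\mathcal L, i_*i^!\mathcal L),
\]
and your argument that each $B^k$ is pure of weight $k$ (reduction to a single orbit, pointwise purity of $i_\lambda^*\mathcal L$ and $i_\lambda^!\mathcal L \cong \mathbb D\,i_\lambda^*\mathcal L$, purity of $H^\bullet_{\mathsf{Stab}_G(x_\lambda)}(\mathrm{pt})$) is essentially sound. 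But purity of $B^\bullet$ and of $A^\bullet_{(G,\mathfrak X)}$ does \emph{not} force purity of $A^\bullet_{(G,\mathfrak Y)}$: the sequence only exhibits $A^i_{(G,\mathfrak Y)}$ as an extension of a subobject of $B^{i+1}$ (pure of weight $i+1$) by a quotient of $A^i_{(G,\mathfrak X)}$ (pure of weight $i$), so a priori its weights lie in $\{i,i+1\}$. Strictness of the weight filtration says the maps are strict for $W_\bullet$, hence the graded pieces $\mathrm{gr}^W_k$ still form an exact sequence; it does not collapse a genuine two-step filtration on $A^i_{(G,\mathfrak Y)}$. Nor is there an a priori upper weight bound to invoke: for pure $j^*\mathcal L$ of weight $0$ one gets $\mathrm{Ext}^i_G(j^*\mathcal L,j^*\mathcal L)$ of weights $\ge i$, not $\le i$.

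The missing input is that the restriction $A_{(G,\mathfrak X)} \to A_{(G,\mathfrak Y)}$ is \emph{surjective} in each degree (equivalently, the connecting map $A^i_{(G,\mathfrak Y)} \to B^{i+1}$ vanishes). This is precisely Proposition~\ref{Kquotients} ($=$ \cite{K5}~4.3), which in \cite{K5} precedes the present lemma. Granting surjectivity, $A^i_{(G,\mathfrak Y)}$ is a quotient of the pure-weight-$i$ space $A^i_{(G,\mathfrak X)}$ and hence pure of weight $i$; your analysis of $B^\bullet$ is then not needed for $(\clubsuit)_1$.
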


\begin{proposition}[\cite{K5} 4.3, 4.5]\label{Kquotients}
Let $i : \mathbb O_{\lambda} \hookrightarrow \mathfrak X$ be the inclusion of a closed $G$-orbit $($with $\lambda \in \Lambda)$, and let $j : \mathfrak Y \hookrightarrow \mathfrak X$ be its complement. Then, we have an isomorphism $A_{(G, \mathfrak X)} / ( A_{(G, \mathfrak X)}e_{\lambda}A_{(G, \mathfrak X)} ) \stackrel{\cong}{\longrightarrow} A _{(G, \mathfrak Y)}$. \hfill $\Box$
\end{proposition}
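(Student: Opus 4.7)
The strategy is to construct a natural surjective graded algebra homomorphism $\phi : A_{(G, \mathfrak{X})} \to A_{(G, \mathfrak{Y})}$ induced by the restriction functor $j^{*}$, and identify its kernel with the two-sided ideal $A_{(G, \mathfrak{X})} e_{\lambda} A_{(G, \mathfrak{X})}$. Functoriality of $j^{*} : D^{b}_{G}(\mathfrak{X}) \to D^{b}_{G}(\mathfrak{Y})$ produces $\phi$; equivalently, via the $(j^{*}, j_{*})$-adjunction, $\phi$ is post-composition with the unit $\mathcal{L} \to j_{*} j^{*} \mathcal{L}$.

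Because $\mathsf{IC}_{\lambda}$ is supported on $\mathbb{O}_{\lambda}$, which is disjoint from $\mathfrak{Y}$, we have $j^{*} \mathsf{IC}_{\lambda} = 0$. Splitting $\mathcal{L} = (L_{\lambda} \boxtimes \mathsf{IC}_{\lambda}) \oplus \mathcal{L}'$, the idempotent $e_{\lambda}$ is the projector onto the first summand, and $A e_{\lambda} A$ is precisely the set of morphisms in $\mathrm{Ext}^{\bullet}_{G}(\mathcal{L}, \mathcal{L})$ factoring through some grading shift of $\mathsf{IC}_{\lambda}$. All such morphisms are killed by $j^{*}$, yielding $A e_{\lambda} A \subseteq \ker \phi$ and an induced graded algebra map $\bar{\phi} : A / A e_{\lambda} A \to A_{(G, \mathfrak{Y})}$.

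To show $\bar\phi$ is an isomorphism, I apply $\mathrm{Ext}^{\bullet}_{G}(\mathcal{L}, -)$ to the recollement triangle
\[
i_{*} i^{!} \mathcal{L} \to \mathcal{L} \to j_{*} j^{*} \mathcal{L} \to i_{*} i^{!} \mathcal{L}[1].
\]
The pointwise purity hypothesis $(\clubsuit)_{2}$ makes $i^{!} \mathcal{L}$ a pure equivariant complex on the smooth orbit $\mathbb{O}_{\lambda}$, and the connected-stabilizer hypothesis $(\spadesuit)_{2}$ trivializes all local systems on $\mathbb{O}_{\lambda}$; the decomposition theorem therefore forces $i^{!} \mathcal{L}$ to be a direct sum of shifts of $\underline{\mathbb{C}}_{\lambda}[\dim \mathbb{O}_{\lambda}]$, so that $i_{*} i^{!} \mathcal{L}$ is a direct sum of shifts of $\mathsf{IC}_{\lambda}$. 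By the resulting long exact sequence, the kernel of $\phi$ equals the image of $\mathrm{Ext}^{\bullet}_{G}(\mathcal{L}, i_{*} i^{!} \mathcal{L}) \to \mathrm{Ext}^{\bullet}_{G}(\mathcal{L}, \mathcal{L})$, and any morphism in this image is a sum of compositions through shifts of $\mathsf{IC}_{\lambda}$, hence lies in $A e_{\lambda} A$. Combined with the global purity hypothesis $(\clubsuit)_{1}$, which gives $A$ a weight structure that rules out non-vanishing connecting maps in the long exact sequence, this yields simultaneously $\ker \phi = A e_{\lambda} A$ and the surjectivity of $\phi$.

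The main obstacle is controlling the structure of $i^{!} \mathcal{L}$; both $(\spadesuit)_{2}$ and $(\clubsuit)_{2}$ are essential here, since without the decomposition into shifts of $\underline{\mathbb{C}}_{\lambda}[\dim \mathbb{O}_{\lambda}]$ the kernel of $\phi$ could contain morphisms not of the form required to lie in $A e_{\lambda} A$, and weight-purity is what prevents the connecting homomorphisms from obstructing surjectivity.
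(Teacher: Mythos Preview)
The paper does not prove this proposition; it is quoted as a result from \cite{K5} and marked with $\Box$. There is therefore nothing in the present paper to compare your argument against, so let me simply assess your proof on its own terms.

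Your overall strategy is the standard and correct one: the restriction map $j^{*}$ gives a graded algebra homomorphism $\phi$, the inclusion $A e_{\lambda} A \subseteq \ker\phi$ is immediate from $j^{*}\mathsf{IC}_{\lambda}=0$, and the recollement triangle $i_{*}i^{!}\mathcal{L}\to\mathcal{L}\to j_{*}j^{*}\mathcal{L}\to{}$ controls both the kernel and the cokernel of $\phi$ via the induced long exact sequence in $\mathrm{Ext}^{\bullet}_{G}(\mathcal{L},-)$.

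Two points deserve more care. First, calling the splitting of $i^{!}\mathcal{L}$ ``the decomposition theorem'' is a misnomer: the decomposition theorem concerns proper pushforwards, whereas what you are using is that a \emph{pointwise pure} equivariant complex on a single orbit with connected stabilizer splits into shifts of the constant sheaf. This is true, but it is a formality statement (pure complexes are semisimple in the mixed setting) rather than the decomposition theorem, and one should be mindful that the splitting is required in $D^{b}_{G}(\mathbb{O}_{\lambda})$, not merely $D^{b}(\mathbb{O}_{\lambda})$. Second, your sentence suggesting that purity is needed for \emph{both} $\ker\phi=Ae_{\lambda}A$ and surjectivity is slightly misleading: exactness alone gives $\ker\phi=\mathrm{image}\bigl(\mathrm{Ext}^{\bullet}_{G}(\mathcal{L},i_{*}i^{!}\mathcal{L})\to A\bigr)$, and once you have the splitting of $i_{*}i^{!}\mathcal{L}$ this image lies in $Ae_{\lambda}A$ regardless of the connecting maps. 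Purity (of $A$, of $A_{(G,\mathfrak{Y})}$ via Lemma~\ref{localKsc}, and of $\mathrm{Ext}^{\bullet}_{G}(\mathcal{L},i_{*}i^{!}\mathcal{L})$ via $(\clubsuit)_{2}$) is what forces the connecting homomorphism to vanish, since it is a Frobenius-equivariant map between pure spaces of different weights; this is what yields surjectivity of $\phi$. With these clarifications your argument goes through.
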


\begin{corollary}[\cite{K5} 4.2, 4.3, 4.5]\label{isomKext}
Let $j : \mathfrak Y \hookrightarrow \mathfrak X$ be the inclusion of an open $G$-stable subvariety. We have
$$\mathrm{ext} ^{*}_{A} ( A, L_{\mu} ) \cong \mathrm{ext} ^{*} _{A} ( A_{(G, \mathfrak Y)}, L_{\mu} )\label{isomext}$$
for every $\mu \in \Lambda$ so that $\mathbb O_{\mu} \subset \mathfrak Y$. \hfill $\Box$
\end{corollary}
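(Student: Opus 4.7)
The plan is to reduce, via iterated use of Proposition \ref{Kquotients}, to the single-orbit vanishing statement $\mathrm{ext}^*_A(Ae_\lambda A, L_\mu) = 0$ for $\mathbb O_\lambda$ a closed orbit in $\mathfrak X \setminus \mathfrak Y$ and $\mathbb O_\mu \subset \mathfrak Y$. To set up the reduction, first pick any closed orbit $\mathbb O_\lambda \subset \mathfrak X \setminus \mathfrak Y$ (possible because $\Lambda$ is finite), set $\mathfrak X' := \mathfrak X \setminus \mathbb O_\lambda$, and use Lemma \ref{localKsc} to see that $\mathfrak X'$ again satisfies $(\spadesuit)$ and $(\clubsuit)$. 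Iterating yields a filtration $\mathfrak X = \mathfrak X_0 \supsetneq \mathfrak X_1 \supsetneq \cdots \supsetneq \mathfrak X_N = \mathfrak Y$ in which each $\mathfrak X_k \setminus \mathfrak X_{k+1}$ is a single closed orbit $\mathbb O_{\lambda_k}$ in $\mathfrak X_k$; combined with Proposition \ref{Kquotients}, this identifies $A_{(G,\mathfrak Y)} \cong A/I$ where $I$ is the two-sided ideal of $A$ generated by $\{e_{\lambda_k}\}_{k=1}^N$. The short exact sequence $0 \to I \to A \to A_{(G,\mathfrak Y)} \to 0$ yields a long exact sequence for $\mathrm{ext}^*_A(-, L_\mu)$; since $A$ is free over itself, one has $\mathrm{ext}^i_A(A, L_\mu) = \delta_{i,0} L_\mu$, so the corollary reduces to $\mathrm{ext}^*_A(I, L_\mu) = 0$ for every $\mu$ with $\mathbb O_\mu \subset \mathfrak Y$.

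Filter $I$ by $I_k := \sum_{j \leq k} A e_{\lambda_j} A$; each successive quotient $I_k/I_{k-1}$ is a quotient of $Ae_{\lambda_k}A$, so by the resulting short exact sequences it suffices to prove $\mathrm{ext}^*_A(Ae_\lambda A, L_\mu) = 0$ whenever $\mathbb O_\lambda$ is closed in $\mathfrak X \setminus \mathfrak Y$ and $\mathbb O_\mu \subset \mathfrak Y$ (in particular $\mu \neq \lambda$). Since $L_\mu$ is simple of type $\mu \neq \lambda$, one has $e_\lambda L_\mu = 0$. At the $\mathrm{Hom}$-level: for any $A$-linear $\phi : Ae_\lambda A \to L_\mu$ and any $y \in A$ we have $\phi(e_\lambda y) = e_\lambda \phi(e_\lambda y) \in e_\lambda L_\mu = 0$; as $Ae_\lambda A$ is generated by $\{e_\lambda y\}$ as a left $A$-module, $\phi = 0$. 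For higher Ext, the plan is to build a projective resolution of $Ae_\lambda A$ whose terms are direct sums of grading shifts of $P_\lambda = Ae_\lambda$: begin from the canonical surjection $Ae_\lambda \otimes_{\mathbb C} e_\lambda A \twoheadrightarrow Ae_\lambda A$, take a graded free resolution of $e_\lambda A$ as an $e_\lambda A e_\lambda$-module, and apply $Ae_\lambda \otimes_{e_\lambda A e_\lambda} -$. Theorem \ref{absKas} ensures termination in finite length; applying $\mathrm{hom}_A(-, L_\mu)$ annihilates every term because $\mathrm{ext}^i_A(P_\lambda, L_\mu) = 0$ for $\lambda \neq \mu$ or $i > 0$.

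The main obstacle is justifying that the construction above actually produces a resolution of $Ae_\lambda A$, i.e., showing that the multiplication map $Ae_\lambda \otimes^L_{e_\lambda A e_\lambda} e_\lambda A \to Ae_\lambda A$ is a quasi-isomorphism --- equivalently, that $e_\lambda$ is a stratifying idempotent. I expect this to follow from the recollement naturally associated to the closed/open decomposition $\mathbb O_\lambda \hookrightarrow \mathfrak X \hookleftarrow \mathfrak X \setminus \mathbb O_\lambda$, in combination with the purity $(\clubsuit)$ and connectedness $(\spadesuit)_2$: under these hypotheses $e_\lambda A e_\lambda \cong \mathrm{End}_{\mathbb C}(L_\lambda) \otimes H^\bullet_{\mathsf{Stab}_G(x_\lambda)}(\mathrm{pt})$ is polynomial (cf.\ Theorem \ref{KS}(4)), and the purity of $\mathcal L$ forces $Ae_\lambda$ and $e_\lambda A$ to be free over this polynomial ring, so the derived tensor product agrees with the ordinary image.
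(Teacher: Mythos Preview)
The paper itself offers no proof beyond the citation to [K5], so there is nothing to compare your approach against directly; your strategy via stratifying idempotents is indeed in the spirit of what that reference establishes. There is, however, a genuine gap in your reduction.

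The filtration step does not work as written. From ``$I_k/I_{k-1}$ is a quotient of $Ae_{\lambda_k}A$'' together with $\mathrm{ext}^*_A(Ae_{\lambda_k}A,L_\mu)=0$ you cannot conclude $\mathrm{ext}^*_A(I_k/I_{k-1},L_\mu)=0$: the long exact sequence in the first variable brings in the kernel $Ae_{\lambda_k}A\cap I_{k-1}$, and nothing you have said controls it. Moreover, for $k\ge 1$ the orbit $\mathbb O_{\lambda_k}$ is closed only in $\mathfrak X_k$, not in $\mathfrak X$; so the corner $e_{\lambda_k}Ae_{\lambda_k}$ computed in $A$ is $\mathrm{Ext}^\bullet_G(\mathsf{IC}_{\lambda_k},\mathsf{IC}_{\lambda_k})$ for a non-closed orbit and need not be polynomial, and your resolution of $Ae_{\lambda_k}A$ by copies of $P_{\lambda_k}$ is not available over $A$.

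The correct organisation is to iterate rather than filter. Prove the single-step case ($N=1$, $\lambda$ closed in $\mathfrak X$): your final paragraph shows $e_\lambda$ is stratifying in $A$, which yields not merely $\mathrm{ext}^*_A(Ae_\lambda A,L_\mu)=0$ but the stronger consequence that $\mathrm{ext}^*_A(M,N)\cong\mathrm{ext}^*_{A_1}(M,N)$ for all $A_1$-modules $M,N$ (full faithfulness of $D^b(A_1\text{-}\mathsf{gmod})\hookrightarrow D^b(A\text{-}\mathsf{gmod})$). Now invoke Lemma~\ref{localKsc} to run the same argument on $(\mathfrak X_1,\lambda_1)$, obtaining $\mathrm{ext}^*_{A_1}\cong\mathrm{ext}^*_{A_2}$ on $A_2$-modules, and continue. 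Chaining these gives $\mathrm{ext}^*_A(A_{(G,\mathfrak Y)},L_\mu)\cong\mathrm{ext}^*_{A_{(G,\mathfrak Y)}}(A_{(G,\mathfrak Y)},L_\mu)$, which is the assertion. Your last paragraph, read as a justification of the single step only, is on the right track; the point that still needs an independent argument is the injectivity of the multiplication map $Ae_\lambda\otimes_{e_\lambda Ae_\lambda}e_\lambda A\to Ae_\lambda A$, which in [K5] is extracted from the geometric recollement for the closed embedding $\mathbb O_\lambda\hookrightarrow\mathfrak X$ rather than from freeness alone.
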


\section{Quivers and the KLR algebras}
Let $\Gamma = (I, \Omega)$ be an oriented graph with the set of its vertex $I$ and the set of its oriented edges $\Omega$. Here $I$ is fixed, and $\Omega$ might change so that the underlying graph $\Gamma_0$ of $\Gamma$ is a fixed Dynkin diagram of type $\mathsf{ADE}$. We refer $\Omega$ as the orientation of $\Gamma$. We form a path algebra $\mathbb C [\Gamma]$ of $\Gamma$.

For $h \in \Omega$, we define $h' \in I$ to be the source of $h$ and $h'' \in I$ to be the sink of $h$. We denote $i \leftrightarrow j$ for $i, j \in I$ if and only if there exists $h \in \Omega$ such that $\{ h', h'' \} = \{ i, j \}$. A vertex $i \in I$ is called a sink of $\Gamma$ if $h' \neq i$ for every $h \in \Omega$. A vertex $i \in I$ is called a source of $\Gamma$ if $h'' \neq i$ for every $h \in \Omega$.

Let $Q^+$ be the free abelian semi-group generated by $\{ \alpha _i \} _{i \in I}$, and let $Q ^+ \subset Q$ be the free abelian group generated by $\{ \alpha _i \} _{i \in I}$. We sometimes identify $Q$ with the root lattice of type $\Gamma_0$ with a set of its simple roots $\{ \alpha _i \} _{i \in I}$. Let $W = W ( \Gamma_0 )$ denote the Weyl group of type $\Gamma_0$ with a set of its simple reflections $\{ s_i \} _{i \in I}$. The group $W$ acts on $Q$ via the above identification. Let $R^+ := W \{ \alpha _i \} _{i \in I} \cap Q^+$ be the set of positive roots of a simple Lie algebra with its Dynkin diagram $\Gamma_0$.

An $I$-graded vector space $V$ is a vector space over $\mathbb C$ equipped with a direct sum decomposition $V = \bigoplus _{i \in I} V_i$.

Let $V$ be an $I$-graded vector space. For $\beta \in Q ^+$, we declare $\underline{\dim} \, V = \beta$ if and only if $\beta = \sum _{i \in I} ( \dim V _i ) \alpha_i$. We call $\underline{\dim} \, V$ the dimension vector of $V$. Form a vector space
$$E_{V} ^{\Omega} := \bigoplus _{h \in \Omega} \mathrm{Hom} _{\mathbb C} ( V _{h'}, V _{h''} ).$$
We set $G _V := \prod_{i \in I} \mathop{GL} ( V _{i} )$. The group $G_V$ acts on $E_V ^{\Omega}$ through its natural action on $V$. The space $E_V ^{\Omega}$ can be identified with the based space of $\mathbb C [\Gamma]$-modules with its dimension vector $\beta$. Let $\mathtt M _{i}$ be a unique $\mathbb C [\Gamma]$-module (up to an isomorphism) with $\underline{\dim} \, \mathtt M _{i} = \alpha _i$.

For each $k \ge 0$, we consider a sequence $\mathbf m = ( m_1,m_2,\ldots, m_{k} ) \in I ^{k}$. We abbreviate this as $\mathsf{ht} ( \mathbf m ) = k$. We set $\mathsf{wt} ( \mathbf m ) := \sum _{j = 1} ^k \alpha _{m_j} \in Q ^+$. For $\beta = \mathsf{wt} ( \mathbf m ) \in Q ^+$, we set $\mathsf{ht} \, \beta = k$. For a sequence $\mathbf m' := (m_1',\ldots, m'_{k'}) \in I ^{k'}$, we set
$$\mathbf m + \mathbf m' := ( m_1,\ldots, m_k, m_1',\ldots, m_{k'}') \in I ^{k+k'}.$$
For $i \in I$ and $k \ge 0$, we understand that $ki = (i,\ldots,i) \in I^k$.

For each $\beta \in Q ^+$, we set $Y ^{\beta}$ to be the set of all sequences $\mathbf m$ such that $\mathsf{wt} ( \mathbf m ) = \beta$. For each $\beta \in Q^+$ with $\mathsf{ht} \, \beta = n$ and $1 \le i < n$, we define an action of $\{\sigma_i\}_{i=1}^{n-1}$ on $Y^{\beta}$ as follows: For each $1 \le i < n$ and $\mathbf m = ( m_1, \ldots, m_{n} ) \in Y ^{\beta}$, we set
$$\sigma_i \mathbf m := ( m_1,\ldots, m_{i-1}, m_{i+1}, m_i, m_{i+2},\ldots, m_n).$$ 
It is clear that $\{\sigma_i\}_{i=1}^{n-1}$ generates a $\mathfrak S_n$-action on $Y^{\beta}$. In addition, $\mathfrak S_n$ naturally acts on a set of integers $\{ 1,2,\ldots, n \}$.

\begin{definition}[Khovanov-Lauda \cite{KL}, Rouquier \cite{R}]\label{KLR}
Let $\beta \in Q^+$ so that $n = \mathsf{ht} \, \beta$. We define the KLR algebra $R _{\beta}$ as a unital algebra generated by the elements $\kappa _1, \ldots, \kappa_n$, $\tau_1, \ldots, \tau_{n-1}$, and $e ( \mathbf m )$ $(\mathbf m \in Y ^{\beta})$ subject to the following relations:
\begin{enumerate}
\item $\deg \kappa _i e ( \mathbf m ) = 2$ for every $i$, and
$$\deg \tau _i e ( \mathbf m ) = \begin{cases}-2 & (m _{i} = m_{i+1}) \\1 & (m_i \leftrightarrow m_{i+1}) \\ 0 & (otherwise) \end{cases};$$
\item $[ \kappa _i, \kappa _j ] = 0$, $e ( \mathbf m ) e ( \mathbf m' ) = \delta _{\mathbf m, \mathbf m'} e ( \mathbf m )$, and $\sum _{\mathbf m \in Y ^{\beta}} e ( \mathbf m ) = 1$;
\item $\tau _i e ( \mathbf m ) = e ( \sigma _i \mathbf m ) \tau _i e ( \mathbf m )$, and $\tau _i \tau _j e ( \mathbf m ) = \tau _j \tau _i e ( \mathbf m )$ for $|i-j|>1$;
\item $\tau _i ^2 e ( \mathbf m ) = Q _{\mathbf m, i} ( \kappa _i, \kappa _{i+1} ) e ( \mathbf m )$;
\item For each $1 \le i < n$, we have
\begin{align*}
 \tau_{i+1} \tau_i \tau_{i+1} e ( \mathbf m ) - & \tau_i \tau_{i+1} \tau_i e ( \mathbf m )\\
& = \begin{cases} \frac{Q _{\mathbf m, i} ( \kappa _{i+2}, \kappa _{i+1} ) - Q _{\mathbf m, i} ( \kappa _i, \kappa _{i+1} )}{\kappa _{i+2} - \kappa _i} e ( \mathbf m )& (m_{i+2} = m_i) \\ 0 & (\text{otherwise}) \end{cases};
\end{align*}
\item $\tau_i \kappa _k e ( \mathbf m )- \kappa _{\sigma_i (k)} \tau_i e ( \mathbf m ) = \begin{cases} - e ( \mathbf m ) & (i=k, m_i = m_{i+1}) \\ e ( \mathbf m ) & (i=k-1, m_i = m_{i+1}) \\ 0 & (\text{otherwise})\end{cases}$.
\end{enumerate}
Here we set $h_{\mathbf m, i} := \# \{ h \in \Omega \mid h' = m_i, h'' = m_{i+1}\}$ and
$$Q _{\mathbf m, i} ( u,v ) = \begin{cases} 1 & (m_i \neq m _{i+1}, m_i \not\leftrightarrow m_{i+1}) \\
(-1)^{h_{\mathbf m, i}} ( u - v ) & (m_i \leftrightarrow m_{i+1}) \\ 0 & (\text{otherwise}) \end{cases},$$
where $u,v$ are indeterminants. \hfill $\Box$
\end{definition}

\begin{remark}
Note that the algebra $R_{\beta}$ a priori depends on the orientation $\Omega$ through $Q _{\mathbf m, i} ( u,v )$. Since the graded algebras $R_{\beta}$ are known to be mutually isomorphic for any two choices of $\Omega$ $($cf. Theorem \ref{VV}$)$, we suppress this dependence in the below.
\end{remark}

For an $I$-graded vector space $V$ with $\underline{\dim} \, V = \beta$, we define
\begin{align*}
F ^{\Omega} _{\beta} := & \Biggl\{ ( \{ F_j \} _{j=0} ^{\mathsf{ht} \beta }, x ) \Biggl| {\small \begin{matrix}  x \in E_{V} ^{\Omega}. \text{ For each $0 < j \le \mathsf{ht} \beta $,}\\
F _j \subset V \text{ is an $I$-graded vector subspace,}\\ F _{j+1} \subsetneq F_j \text{, and satisfies } x F _{j} \subset F _{j+1}. \end{matrix} }\Biggr\} \hskip 5mm \text{and}\\
\mathcal B ^{\Omega} _{\beta} := & \Biggl\{ \{ F_j \} _{j=0} ^{\mathsf{ht} \beta } \Biggl| {\small 
F _j \subset V \text{ is an $I$-graded vector subspace s.t. } F _{j+1} \subsetneq F_{j}. }\Biggr\}.
\end{align*}
We have a projection
$$\varpi _{\beta} ^{\Omega} : F ^{\Omega} _{\beta} \ni ( \{ F_j \} _{j=0} ^{\mathsf{ht} \beta }, x ) \mapsto \{ F_j \} _{j=0} ^{\mathsf{ht} \beta } \in \mathcal B ^{\Omega} _{\beta},$$
which is $G_V$-equivariant. For each $\mathbf m \in Y ^{\beta}$, we have a connected component
$$F ^{\Omega} _{\mathbf m} := \{( \{ F_j \} _{j=0} ^{\mathsf{ht} \beta }, x ) \in F ^{\Omega} _{\beta} \mid \underline{\dim} \, F_{j} / F_{j+1} = \alpha _{m_{j+1}} \hskip 2mm \forall j \} \subset F ^{\Omega} _{\beta},$$
that is smooth of dimension $d _{\mathbf m} ^{\Omega}$. We set $\mathcal B ^{\Omega} _{\mathbf m} := \varpi _{\beta} ^{\Omega} ( F ^{\Omega} _{\mathbf m} )$, that is an irreducible component of $\mathcal B ^{\Omega} _{\beta}$. Let
$$\pi ^{\Omega} _{\mathbf m} : F ^{\Omega} _{\mathbf m} \ni ( \{ F_j \} _{j=0} ^{\mathsf{ht} \beta }, x ) \mapsto x \in E ^{\Omega} _{V}$$
be the second projection that is also $G_V$-equivariant. The map $\pi ^{\Omega} _{\mathbf m}$ is projective, and hence
$$\mathcal L _{\mathbf m} ^{\Omega} := (\pi ^{\Omega} _{\mathbf m}) _! \, \underline{\mathbb C} \, [ d _{\mathbf m} ^{\Omega} ]$$
decomposes into a direct sum of (shifted) irreducible perverse sheaves with their coefficients in $D ^b ( \mathrm{pt} )$ (Gabber's decomposition theorem, \cite{BBD} 6.2.5). We set $\mathcal L ^{\Omega} _{\beta} := \bigoplus _{\mathbf m \in Y ^{\beta}} \mathcal L _{\mathbf m} ^{\Omega}$. Let $e ( \mathbf m )$ be the idempotent in $\mathrm{End} ( \mathcal L ^{\Omega} _{\beta} )$ so that $e ( \mathbf m )  \mathcal L ^{\Omega} _{\beta} =  \mathcal L ^{\Omega} _{\mathbf m}$. Since $\pi ^{\Omega} _{\mathbf m}$ is projective, we conclude that $\mathbb D \mathcal L _{\mathbf m} ^{\Omega} \cong \mathcal L _{\mathbf m} ^{\Omega}$ for each $\mathbf m \in Y ^{\beta}$, and hence
\begin{equation}
\mathbb D \mathcal L ^{\Omega} _{\beta} \cong \mathcal L ^{\Omega} _{\beta}. \label{L-auto}
\end{equation}

\begin{theorem}[Varagnolo-Vasserot \cite{VV}]\label{VV}
Under the above settings, we have an isomorphism of graded algebras:
$$R _{\beta} \cong \bigoplus _{i \in \mathbb Z} \mathrm{Ext} ^i _{G_V} ( \mathcal L ^{\Omega} _{\beta}, \mathcal L ^{\Omega} _{\beta} ).$$
In particular, the RHS does not depend on the choice of an orientation $\Omega$ of $\Gamma_0$.
\end{theorem}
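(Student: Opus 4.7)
The plan is to construct an explicit algebra homomorphism $\Phi: R_\beta \to \bigoplus_i \mathrm{Ext}^i_{G_V}(\mathcal{L}^\Omega_\beta, \mathcal{L}^\Omega_\beta)$ by assigning geometric classes to the KLR generators of Definition \ref{KLR}, verify the defining relations, and then upgrade $\Phi$ to an isomorphism. The starting point is to unfold the right-hand side as a convolution algebra. Set $\widetilde{F}^\Omega_\beta := \bigsqcup_{\mathbf{m} \in Y^\beta} F^\Omega_{\mathbf{m}}$ with $\pi := \bigsqcup_\mathbf{m} \pi^\Omega_\mathbf{m} : \widetilde{F}^\Omega_\beta \to E^\Omega_V$. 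Since each $\pi^\Omega_\mathbf{m}$ is projective, a standard Chriss--Ginzburg-style argument identifies the Ext algebra with the $G_V$-equivariant Borel--Moore homology of the Steinberg-type variety $\widetilde{F}^\Omega_\beta \times_{E^\Omega_V} \widetilde{F}^\Omega_\beta$ equipped with its convolution product, up to degree shifts by the $d^\Omega_\mathbf{m}$.

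Next I assign geometric classes to the generators. The idempotent $e(\mathbf{m})$ becomes the fundamental class of the diagonal $F^\Omega_\mathbf{m} \hookrightarrow F^\Omega_\mathbf{m} \times_{E^\Omega_V} F^\Omega_\mathbf{m}$. The polynomial generator $\kappa_j e(\mathbf{m})$ is the first $G_V$-equivariant Chern class of the tautological line bundle $F_{j-1}/F_j$ on $F^\Omega_\mathbf{m}$. For $\tau_i e(\mathbf{m})$ I take the class of the correspondence in $F^\Omega_\mathbf{m} \times_{E^\Omega_V} F^\Omega_{\sigma_i \mathbf{m}}$ consisting of pairs of flags that agree outside the $i$-th step: when $m_i \neq m_{i+1}$ this locus is smooth of the expected dimension, while when $m_i = m_{i+1}$ the incidence is a $\mathbb{P}^1$-bundle and one compensates with an appropriate Chern-class factor. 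With these choices, the idempotent identities, the commutations in (2)--(3), and the distant cases of (4)--(6) follow immediately from the geometry.

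The substantive input is three local calculations. The quadratic relation $\tau_i^2 e(\mathbf{m}) = Q_{\mathbf{m}, i}(\kappa_i, \kappa_{i+1}) e(\mathbf{m})$ reduces, by pullback to the two-step partial flag bundle over $E^\Omega_V$, to a normal-bundle/excess-intersection computation; the sign $(-1)^{h_{\mathbf{m}, i}}$ emerges from counting the arrows of $\Omega$ between $m_i$ and $m_{i+1}$, which orient the incidence locus inside $E^\Omega_V$. The braid-type relation (5) is verified on the three-step partial flag bundle: the difference $\tau_{i+1}\tau_i\tau_{i+1} - \tau_i\tau_{i+1}\tau_i$ is realized as a $\mathbb{P}^1$-fiber integration that outputs the prescribed divided difference of $Q_{\mathbf{m},i}$ precisely when $m_{i+2} = m_i$ and vanishes otherwise. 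Relation (6) is the standard Leibniz identity for divided differences acting on Chern classes. Finally, to conclude that $\Phi$ is an isomorphism I exhibit faithful polynomial representations of both sides on $\bigoplus_\mathbf{m} H^\bullet_{G_V}(F^\Omega_\mathbf{m})$ and match graded dimensions via a cell-type stratification of $\widetilde{F}^\Omega_\beta \times_{E^\Omega_V} \widetilde{F}^\Omega_\beta$ by relative position of flags, using purity implicit in (\ref{L-auto}) to prevent higher perverse terms from contributing. The main obstacle is the sign in the quadratic relation: the coefficient $(-1)^{h_{\mathbf{m}, i}}$ is orientation-sensitive and has to match the geometry of $E^\Omega_V$ on the nose, which requires a careful local model rather than a purely formal argument.
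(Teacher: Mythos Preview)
The paper does not prove this theorem: it is stated as a result of Varagnolo--Vasserot and cited to \cite{VV} without any argument given here. There is therefore nothing in the paper to compare your proposal against, beyond the remark in the proof of Proposition~\ref{quiverSC} that the identification $H_\bullet^{G_V}(Z^\Omega_\beta) \cong \mathrm{Ext}^\bullet_{G_V}(\mathcal{L}^\Omega_\beta, \mathcal{L}^\Omega_\beta)$ is taken from \cite{VV}~1.8(b), 2.23 and \cite{CG}~8.6.7.

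Your sketch is broadly the strategy carried out in \cite{VV}: pass to equivariant Borel--Moore homology of the Steinberg variety, assign explicit convolution classes to the KLR generators, check the relations by local computations on two- and three-step partial flag pieces, and then prove bijectivity by comparing against the PBW-type basis of Theorem~\ref{PBW} via a stratification of $Z^\Omega_\beta$ by relative position. One small inaccuracy: the self-duality isomorphism (\ref{L-auto}) is Verdier self-duality of $\mathcal{L}^\Omega_\beta$, not a purity statement, so it does not by itself control degeneration of the cell spectral sequence; the purity input actually used (both in \cite{VV} and in Proposition~\ref{quiverSC} here) comes from the affine-bundle stratification of $Z^\Omega_\beta$ over $(\mathcal{B}^\Omega_\beta)^2$. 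Also, in the case $m_i = m_{i+1}$ the class assigned to $\tau_i$ in \cite{VV} is essentially the fundamental class of the incidence correspondence (a Demazure-type operator), not a Chern-class-corrected class; the degree $-2$ comes from the codimension of the incidence, not from an inserted factor. These are details of implementation rather than of strategy, and do not affect the overall correctness of your outline.
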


For each $\mathbf m, \mathbf m' \in Y ^{\beta}$, we set
$$R _{\mathbf m, \mathbf m'} := e ( \mathbf m ) R _{\beta} e ( \mathbf m' ) =  \bigoplus _{i \in \mathbb Z} \mathrm{Ext} ^i _{G_V} ( \mathcal L ^{\Omega} _{\mathbf m'}, \mathcal L ^{\Omega} _{\mathbf m} ).$$

We set $S _{\beta} \subset R _{\beta}$ to be a subalgebra which is generated by $e ( \mathbf m )$ ($\mathbf m \in Y^{\beta}$) and $\kappa_1,\ldots, \kappa _n$.

For each $\beta_1, \beta _2 \in Q_+$ with $\mathsf{ht} \, \beta _1 = n_1$ and $\mathsf{ht} \, \beta _2 = n_2$, we have a natural inclusion:
$$
\xymatrix@=0pt{
& R _{\beta _1} \boxtimes R _{\beta _2} & \ni & e ( \mathbf m ) \boxtimes e ( \mathbf m' ) & \mapsto & e ( \mathbf m + \mathbf m' ) & \in & R _{\beta_1 + \beta _2}\\
& R_{\beta _1} \boxtimes 1 & \ni & \kappa_i \boxtimes 1, \tau _i \boxtimes 1 & \mapsto & \kappa _i, \tau_i & \in & R _{\beta_1 + \beta _2}\\
& 1 \boxtimes R _{\beta_2} & \ni & 1 \boxtimes \kappa _i, 1 \boxtimes \tau _i & \mapsto & \kappa _{i+n_1}, \tau _{i+n_1} & \in & R _{\beta_1 + \beta _2}
}.
$$
This defines an exact functor
$$\star : R_{\beta_1} \boxtimes R _{\beta _2} \mathchar`-\mathsf{gmod} \ni M _1 \boxtimes M _2 \mapsto R _{\beta _1 + \beta _2} \otimes _{R_{\beta_1} \boxtimes R _{\beta _2}} ( M _1 \boxtimes M _2 ) \in R _{\beta_1 + \beta_2} \mathchar`-\mathsf{gmod}.$$
It is straight-forward to see that $\star$ restricts to an exact functor in the category of graded projective modules:
$$\star : R_{\beta_1} \boxtimes R _{\beta _2} \mathchar`-\mathsf{proj} \ni M _1 \boxtimes M _2 \mapsto R _{\beta _1 + \beta _2} \otimes _{R_{\beta_1} \boxtimes R _{\beta _2}} ( M _1 \boxtimes M _2 ) \in R _{\beta_1 + \beta_2} \mathchar`-\mathsf{proj}.$$
It is straight-forward to define an analogous functor
$$\star : \bigotimes_{i = 1}^{n} R_{\beta_i} \mathchar`-\mathsf{gmod} \rightarrow R _{\beta} \mathchar`-\mathsf{gmod}$$
whenever $\beta = \sum_{i = 1}^n \beta_i$.

If $i \in I$ is a source of $\Gamma$ and $f = ( f_h ) _{h \in \Omega} \in E ^{\Omega}_V$, then we define
$$\epsilon^*_i ( f ) := \dim \ker \bigoplus_{h \in \Omega, h' = i} f _h \le \dim V _i.$$
If $i \in I$ is a sink of $\Gamma$ and $f = ( f_h ) _{h \in \Omega} \in E ^{\Omega}_V$, then we define
$$\epsilon_i ( f ) := \dim \mathrm{coker} \bigoplus_{h \in \Omega, h'' = i} f _h \le \dim V _i.$$
Each of $\epsilon^*_i ( f )$ or $\epsilon _i ( f )$ do not depend on the choice of a point in a $G_V$-orbit. Hence, $\epsilon_i$ or $\epsilon^*_i$ induces a function on $E ^{\Omega}_V$ that is constant on each $G_V$-orbit, and a function on the set of isomorphism classes of simple $G_V$-equivariant perverse sheaves on $E ^{\Omega}_V$ through a unique open dense $G_V$-orbit of its support whenever $i$ is a source or a sink.

\begin{proposition}[Lusztig \cite{Lu3} 6.6]\label{KScrys}
For each $i \in I$, the functions $\epsilon_i$ and $\epsilon_i ^*$ descend to functions on the set of isomorphism classes of simple graded $R_{\beta}$-modules $($up to degree shift$)$.
\end{proposition}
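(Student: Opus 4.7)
My plan is to use the Varagnolo--Vasserot isomorphism (Theorem \ref{VV}) to transport these orbit-level invariants from the perverse sheaf side to the module side. First, note that for any fixed $i \in I$ one can always choose an orientation $\Omega$ of $\Gamma_0$ making $i$ into a source (resp.\ a sink): orient every edge incident to $i$ outward (resp.\ inward), and orient the remaining edges arbitrarily. With such an $\Omega$, the quantity $\epsilon^*_i(f)$ or $\epsilon_i(f)$ is manifestly $G_V$-invariant, since $G_V$ acts by change of basis on each $V_j$ and hence preserves the dimension of $\ker \bigoplus_{h'=i} f_h$ (resp.\ $\mathrm{coker} \bigoplus_{h''=i} f_h$). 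Thus $\epsilon_i^*$ and $\epsilon_i$ descend to functions on $G_V$-orbits in $E_V^{\Omega}$, and therefore to functions on isomorphism classes of simple $G_V$-equivariant perverse sheaves on $E_V^{\Omega}$, by evaluation on the unique open dense orbit inside the support of each such sheaf.

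Next, I apply Theorem \ref{VV}: the algebra $R_\beta$ is identified with $A_{(G_V, E_V^{\Omega})}$ in the notation of Section \ref{general_Ext}, with $\mathcal{L} = \mathcal{L}^{\Omega}_{\beta}$. Under this identification, simple graded $R_\beta$-modules (up to grading shift) correspond bijectively to those simple $G_V$-equivariant perverse sheaves on $E_V^{\Omega}$ that arise as summands of $\mathcal{L}^{\Omega}_{\beta}$, i.e.\ the modules $L_\lambda$ of Section \ref{general_Ext}. Composing this bijection with the orbit-evaluation function from the previous paragraph yields the desired function on $\mathsf{Irr}_0 \, R_\beta$.

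The main obstacle is to verify that this construction does not depend on the particular orientation $\Omega$ chosen: the set $\mathsf{Irr}_0 \, R_\beta$ is intrinsic (the algebras $R_\beta$ attached to different orientations are isomorphic, cf.\ the Remark after Definition \ref{KLR}), and two different orientations that both render $i$ a source (or a sink) may give \emph{a priori} different orbit stratifications of $E_V^{\Omega}$. For this, I rely on Lusztig's construction of the canonical basis via Fourier--Deligne transform, which produces a bijection between the sets of simple $G_V$-equivariant perverse sheaves on $E_V^{\Omega}$ and on $E_V^{\Omega'}$ intertwining the parametrizations by $B(\infty)_\beta$; this is the same bijection induced by the isomorphism of $R_\beta$'s. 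Since $\epsilon_i^*$ (resp.\ $\epsilon_i$) depends only on the restriction of $f$ to the edges incident to $i$, which are oriented identically in any two such $\Omega$ and $\Omega'$, and the Fourier--Deligne transform is the identity on this $i$-local data, the function is preserved under change of orientation. This gives a well-defined function on $\mathsf{Irr}_0 \, R_\beta$, as asserted.
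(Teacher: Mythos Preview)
Your argument is essentially correct and takes a somewhat different route from the paper. The paper's proof is a bare citation to Lusztig \cite{Lu3} 6.6 together with the observation that $\epsilon_i^*$ is handled by reversing the order of the convolution product; Lusztig's argument there characterizes $\epsilon_i$ intrinsically in terms of the induction (convolution) functor, which is manifestly orientation-independent and is exactly the description that reappears as Theorem \ref{crys}. You instead transport the geometric definition directly through Theorem \ref{VV} and then argue that the Fourier--Deligne transform relating two admissible orientations preserves the value of $\epsilon_i$ on the dense orbit of the support. Your route has the virtue of being self-contained on the geometric side and of not invoking the induction-functor machinery at this early stage.

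The one place where your argument is imprecise is the sentence ``the Fourier--Deligne transform is the identity on this $i$-local data, the function is preserved under change of orientation.'' The Fourier transform is an operation on complexes of sheaves, not on points, so it is not literally ``the identity'' on anything orbit-wise. What makes the step go through is the following: writing $E_V^{\Omega} = E_i \oplus E_{\neq i}^{\Omega}$ (edges touching $i$ versus the rest), the closed locus $\{\epsilon_i \ge k\}$ has the product form $Z_k \times E_{\neq i}^{\Omega}$ with $Z_k \subset E_i$ independent of $\Omega$, and the Fourier transform along $E_{\neq i}$ commutes with pushforward along the closed embedding $Z_k \times E_{\neq i}^{\Omega} \hookrightarrow E_V^{\Omega}$. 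Hence $\mathrm{supp}\,\mathsf{IC}_b \subset Z_k \times E_{\neq i}^{\Omega}$ if and only if $\mathrm{supp}\,\mathcal F(\mathsf{IC}_b) \subset Z_k \times E_{\neq i}^{\Omega'}$, and since $\epsilon_i$ of the open orbit in the support is precisely the largest $k$ for which this containment holds, it is preserved. Once you insert this, your proof is complete.
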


\begin{proof}
Note that \cite{Lu3} 6.6 considers only $\epsilon_i$, but $\epsilon_i ^*$ is obtained by swapping the order of the convolution operation.
\end{proof}

\begin{theorem}[Khovanov-Lauda \cite{KL}, Rouquier \cite{R}, Varagnolo-Vasserot \cite{VV}]\label{VVR} In the above setting, we have:
\begin{enumerate}
\item For each $i \in I$ and $n \ge 0$, $R_{n \alpha _i}$ has a unique indecomposable projective module $P _{n i}$ up to grading shifts;
\item The functor $\star$ induces a $\mathbb Z [ t^{\pm 1} ]$-algebra structure on
$$\mathbf K := \bigoplus _{\beta \in Q^+} K ( R _{\beta} \mathchar`-\mathsf{proj} );$$
\item The algebra $\mathbf K$ is isomorphic to the integral form $U ^+$ of the positive part of the quantized enveloping algebra of type $\Gamma_0$ by identifying $[P_{ni}]$ with the $n$-th divided power of a Chevalley generator of $U ^+$;
\item The above isomorphism identifies the classes of indecomposable graded projective $R_{\beta}$-modules $(\beta \in Q ^+)$ with an element of the lower global basis of $U ^+$ in the sense of $\cite{Ka}$;
\item There exists a set $B ( \infty ) = \bigsqcup _{\beta \in Q ^+} B ( \infty )_{\beta}$ that parameterizes indecomposable projective modules of $\bigoplus _{\beta \in Q ^+} R _{\beta} \mathchar`-\mathsf{gmod}$. This identifies the functions $\epsilon _i, \epsilon ^*_i$ $(i \in I)$ with the corresponding functions on $B ( \infty )$.
\end{enumerate}
\end{theorem}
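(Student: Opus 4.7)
My plan is to assemble the five assertions in sequence, leaning principally on the geometric realisation provided by Theorem \ref{VV} together with the nilHecke algebra model at a single vertex. For (1), I would observe that when $\beta = n \alpha_i$ and $\Gamma_0$ is of type $\mathsf{ADE}$ (so there is no loop at $i$), the space $E_V^\Omega$ reduces to a point for $V$ concentrated at $i$ with $\dim V_i = n$, while $F^\Omega_{n\alpha_i}$ is the full flag variety of $V_i$. Theorem \ref{VV} then identifies $R_{n\alpha_i}$ with $\mathrm{Ext}^\bullet_{GL_n}(\mathcal L,\mathcal L)$ for the pushforward of the shifted constant sheaf from this flag variety, i.e.\ with the nilHecke algebra of rank $n$. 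This algebra is Morita-equivalent to the graded local ring $\mathbb C[\kappa_1,\dots,\kappa_n]^{\mathfrak S_n}$, hence has a single indecomposable graded projective up to grading shift, which we name $P_{ni}$. For (2), exactness of $\star$ on $R_\beta\mathchar`-\mathsf{proj}$ lets it descend to $K$-theory; associativity of the resulting product follows from the compatible inclusions $R_{\beta_1}\boxtimes R_{\beta_2}\boxtimes R_{\beta_3}\hookrightarrow R_{\beta_1+\beta_2+\beta_3}$, and the grading shift $\langle 1\rangle$ supplies the $t$-action.

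For (3) and (4), my strategy is to feed this geometric realisation into Lusztig's construction. The divided-power identity $[P_{ni}]=[P_i]^{(n)}$ comes from the nilHecke computation $P_i^{\star n}\cong P_{ni}\langle\cdot\rangle^{\oplus[n]!}$, and the quantum Serre relations among the $[P_i]$ follow from exhibiting short exact sequences in $R_\beta\mathchar`-\mathsf{proj}$ whose alternating classes vanish, in the spirit of the $\mathfrak{sl}_3$ example of the introduction. For (4), Theorem \ref{VV} identifies the indecomposable graded projectives with the simple perverse summands of $\mathcal L^\Omega_\beta$; by Lusztig's geometric construction of the canonical basis of $U^+$ and the Varagnolo-Vasserot matching theorem, this collection coincides with Kashiwara's lower global basis, yielding both the isomorphism $\mathbf K \cong U^+$ and the basis identification.

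For (5), I would take $B(\infty)_\beta$ to be the set of isomorphism classes of simple perverse summands of $\mathcal L^\Omega_\beta$ (equivalently, of indecomposable graded projective $R_\beta$-modules up to grading shift), and Proposition \ref{KScrys} then supplies well-defined functions $\epsilon_i,\epsilon_i^*$ on $B(\infty)$ matching the crystal statistics. The main obstacle is the identification in (4) of indecomposable projective classes with Kashiwara's lower global basis: formal arguments with the nilHecke algebra supply the algebra structure and the generators, but the bar-invariance and integrality properties which characterise the lower global basis rely crucially on the purity and decomposition-theorem inputs built into the geometric model. This is precisely the content of the Varagnolo-Vasserot theorem, and my proposal relies on importing it directly.
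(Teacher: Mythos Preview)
Your proposal is correct and aligned with the paper's approach: the paper's own proof is simply a list of citations (\cite{KL} 2.2 3) for (1), \cite{KL} 3.4 for (2)--(3), \cite{VV} 4.4 for (4), and Proposition~\ref{KScrys} for (5)), and you have essentially sketched the content behind those citations rather than just naming them. The logical dependencies are the same---nilHecke for (1), the induction product for (2)--(3), the Varagnolo--Vasserot geometric matching for (4), and Proposition~\ref{KScrys} for (5)---so there is no substantive divergence.
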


\begin{proof}
The first assertion is \cite{KL} 2.2 3), the second and the third assertions are \cite{KL} 3.4, and the fourth assertion is \cite{VV} 4.4. Based on this, the fifth follows from Proposition \ref{KScrys}. See also Theorem \ref{crys} in the below.
\end{proof}

\begin{remark}
The coincidence of the lower global basis and the canonical basis is proved by Lusztig \cite{Lu4} and Grojnowski-Lusztig \cite{GL}. We freely utilize this identification in the below.
\end{remark}

\begin{proposition}\label{quiverSC}
In the above setting, the conditions $(\spadesuit)$ and $(\clubsuit)$ are satisfied.
\end{proposition}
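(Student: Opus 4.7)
The plan is to verify the four conditions $(\spadesuit)_1$, $(\spadesuit)_2$, $(\clubsuit)_1$, $(\clubsuit)_2$ in turn; each reduces to a classical input applied to the geometric setup of \S 2. For $(\spadesuit)_1$, the $G_V$-orbits on $E_V^\Omega$ are in natural bijection with isomorphism classes of $\mathbb C[\Gamma]$-modules of dimension vector $\beta$, and since $\Gamma_0$ is of type $\mathsf{ADE}$, Gabriel's theorem identifies the indecomposables with $R^+$; the finitely many Krull-Schmidt decompositions of total dimension $\beta$ then exhaust $\Lambda$. For $(\spadesuit)_2$, the stabilizer $\mathsf{Stab}_{G_V}(x_\lambda)$ at a point corresponding to a module $M$ is $\mathrm{Aut}_{\mathbb C[\Gamma]}(M)$, which is the unit group of the finite-dimensional associative $\mathbb C$-algebra $\mathrm{End}_{\mathbb C[\Gamma]}(M)$. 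This unit group is a non-empty Zariski-open subvariety of the affine space $\mathrm{End}_{\mathbb C[\Gamma]}(M)$ (cut out by the non-vanishing of the determinant of left multiplication), and a non-empty open subset of an irreducible variety is connected.

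I would deal with $(\clubsuit)_1$ and $(\clubsuit)_2$ simultaneously via the decomposition theorem. Each $F_\mathbf m^\Omega$ is smooth of dimension $d_\mathbf m^\Omega$ and $\pi_\mathbf m^\Omega$ is proper, so by \cite{BBD} 5.4.1/6.2.5 the pushforward $\mathcal L_\mathbf m^\Omega = (\pi_\mathbf m^\Omega)_!\,\underline{\mathbb C}\,[d_\mathbf m^\Omega]$ is pure of weight zero, and hence so is $\mathcal L_\beta^\Omega$. Consequently, the graded Yoneda algebra $\mathrm{Ext}^\bullet_{G_V}(\mathcal L_\beta^\Omega, \mathcal L_\beta^\Omega)$ between pure complexes of weight zero is itself pure of weight zero under the paper's convention, giving $(\clubsuit)_1$. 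For $(\clubsuit)_2$, connectedness of stabilizers (established just above) implies that every simple $G_V$-equivariant perverse sheaf on $E_V^\Omega$ has trivial $G_V$-equivariant monodromy on its support, hence is of the form $\mathsf{IC}_\lambda$ for some $\lambda \in \Lambda$. Lusztig's geometric realization of the canonical basis (underlying Theorem \ref{VVR}) exhibits each such $\mathsf{IC}_\lambda$ as a direct summand of some $\mathcal L_\mathbf m^\Omega$, and summands of pointwise pure complexes are pointwise pure.

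The most delicate step is arguably the weight bookkeeping in $(\clubsuit)_1$: one needs to verify that the cohomological $i$-th piece of the Yoneda algebra sits in Deligne weight $i$, so that the normalization ``$L_\lambda^i$ pure of weight $i$'' adopted in \S 1 applies verbatim. This is purely formal given the purity of $\mathcal L_\beta^\Omega$, but it is the only point in the argument where one must engage directly with the mixed structure rather than invoking a black-box result such as Gabriel's theorem or the decomposition theorem.
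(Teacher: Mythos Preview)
Your treatment of $(\spadesuit)_1$ and $(\spadesuit)_2$ is correct and matches the paper's.

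For $(\clubsuit)_1$ there is a real gap. The step ``$\mathcal L_\beta^\Omega$ is pure of weight zero, hence $\mathrm{Ext}^\bullet_{G_V}(\mathcal L_\beta^\Omega,\mathcal L_\beta^\Omega)$ is pure of weight zero'' is not purely formal. From the weight estimates in \cite{BBD} one obtains only a one-sided bound (roughly, $\mathrm{Ext}^i$ between pure-of-weight-zero complexes has Frobenius weights $\ge i$); the opposite inequality fails in general, since $E_V^\Omega$ is affine and global sections along a non-proper map do not preserve upper weight bounds. The paper therefore argues via geometry rather than formal weight yoga: it identifies the Ext algebra with the $G_V$-equivariant Borel--Moore homology of the Steinberg-type variety $Z^\Omega_\beta := F^\Omega_\beta \times_{E_V^\Omega} F^\Omega_\beta$ (using \cite{VV} 1.8(b), 2.23 or \cite{CG} 8.6.7), and then shows that $Z^\Omega_\beta$ admits a filtration by closed subvarieties whose successive differences are affine bundles over connected components of $\mathcal B^\Omega_\beta$. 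This affine paving forces purity of $H_\bullet^{G_V}(Z^\Omega_\beta)$ directly. Your last paragraph in fact flags exactly the step that fails; it is the place where genuine input beyond the decomposition theorem is needed.

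For $(\clubsuit)_2$ you are conflating two notions of purity. The decomposition theorem tells you $\mathcal L_{\mathbf m}^\Omega$ is a \emph{pure complex} (a sum of shifts of simple perverse sheaves of the correct weight), but $(\clubsuit)_2$ asks for \emph{pointwise} purity of each $\mathsf{IC}_\lambda$, i.e.\ purity of every stalk. A pure perverse sheaf need not have pure stalks at points of the boundary of its support; equivalently, purity of $(\pi_{\mathbf m}^\Omega)_!\underline{\mathbb C}$ does not give purity of $H^\bullet((\pi_{\mathbf m}^\Omega)^{-1}(x))$ for each $x$. That fibrewise statement is precisely the nontrivial content of Lusztig \cite{Lu} 10.6, which the paper cites without reproving. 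Note also that the assertion ``every $\mathsf{IC}_\lambda$ occurs as a summand of some $\mathcal L_{\mathbf m}^\Omega$'' is Corollary~\ref{allp} in the paper's logic and is deduced \emph{from} Proposition~\ref{quiverSC}, so you should not invoke it here (you could cite Lusztig's original result instead, but you would still need pointwise purity of $\mathcal L_{\mathbf m}^\Omega$, which is again \cite{Lu}~\S10).
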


\begin{proof}
The condition $(\spadesuit) _1$ is the Gabriel theorem (on the classification of finite algebras, applied to $\mathbb C [ \Gamma ]$). The condition $(\spadesuit) _2$ follows by the fact that $\mathsf{Stab} _G (x_{\lambda})$ is the automorphism group of a $\mathbb C [\Gamma]$-module $\mathtt M$, which must be an open dense part of a linear subspace.

We set $Z ^{\Omega} _{\beta} := F ^{\Omega} _{\beta} \times_{E_{V} ^{\Omega}} F ^{\Omega} _{\beta}$. By \cite{VV} 1.8 (b) and 2.23 (or \cite{CG} 8.6.7), we have an isomorphism $H _{\bullet} ^{G_V} ( Z ^{\Omega} _{\beta} ) \cong \mathrm{Ext} _{G_V} ^{\bullet} ( \mathcal L ^{\Omega} _{\beta}, \mathcal L ^{\Omega} _{\beta} )$ as graded algebras (here we warn that the grading on the LHS is imported from the RHS, and is {\it not} the standard one; cf. \cite{VV} 1.9). Since $G_V$ is a reductive group, we know that each $G_V$-orbit of $( \mathcal B _{\beta} ^{\Omega} )^2$ is an affine bundle over a connected component of $\mathcal B _{\beta} ^{\Omega}$ (see e.g. \cite{CG} \S 3.4). By \cite{VV} 2.11, each fiber of the $G_V$-equivariant map $Z ^{\Omega} _{\beta} \to (\mathcal B _{\beta} ^{\Omega}) ^2$ induced from $\varpi^{\Omega}_{\beta}$ is a vector space. Therefore, we conclude that $Z ^{\Omega} _{\beta}$ is a union of finite increasing sequence of closed subvarieties
$$\emptyset = Z ^{\Omega} _{\beta,0} \subsetneq  Z ^{\Omega} _{\beta,1} \subsetneq  Z ^{\Omega} _{\beta,2} \subsetneq \cdots \subsetneq Z ^{\Omega} _{\beta,\ell}= Z ^{\Omega} _{\beta},$$
where each $Z ^{\Omega} _{\beta,j} \backslash Z ^{\Omega} _{\beta,j-1}$ is an affine bundle over a connected component of $\mathcal B _{\beta} ^{\Omega}$. This implies the purity of $H _{\bullet} ^{G_{V}} ( Z ^{\Omega} _{\beta} )$, and hence $(\clubsuit)_1$ follows.

The condition $(\clubsuit)_2$ is Lusztig \cite{Lu} 10.6.
\end{proof}

\begin{corollary}[Lusztig \cite{Lu}]\label{allp}
Every simple $G_{V}$-equivariant perverse sheaf on $E ^{\Omega} _V$ appears as a non-zero direct summand of $\mathcal L ^{\Omega} _{\beta}$ up to a degree shift.
\end{corollary}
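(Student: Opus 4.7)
The plan is a counting argument: the number of distinct simple perverse-sheaf summands of $\mathcal{L}^{\Omega}_{\beta}$ and the total number of simple $G_V$-equivariant perverse sheaves on $E^{\Omega}_V$ will both be computed, and a comparison will force every simple perverse sheaf to appear as a summand.

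First, I would invoke Proposition \ref{quiverSC}, which establishes $(\spadesuit)$ for $(G_V, E^{\Omega}_V)$. Since every stabilizer $\mathsf{Stab}_{G_V}(x_{\lambda})$ is connected by $(\spadesuit)_2$, the simple $G_V$-equivariant perverse sheaves on $E^{\Omega}_V$ are precisely $\{\mathsf{IC}_{\lambda}\}_{\lambda \in \Lambda}$, one for each $G_V$-orbit. By Gabriel's theorem (already underlying $(\spadesuit)_1$ in the proof of Proposition \ref{quiverSC}), $\Lambda$ is in bijection with the Kostant partitions of $\beta$, i.e., multisets of positive roots of $\Gamma_0$ summing to $\beta$.

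Next, I would apply the basic-ring formalism of Section \ref{general_Ext} to the semisimple complex $\mathcal{L}^{\Omega}_{\beta}$. By the Varagnolo-Vasserot isomorphism (Theorem \ref{VV}), $R_{\beta} \cong \mathrm{Ext}^{\bullet}_{G_V}(\mathcal{L}^{\Omega}_{\beta}, \mathcal{L}^{\Omega}_{\beta})$, and by the standard Yoneda analysis for Ext-algebras of semisimple equivariant complexes (cf.\ \cite{CG} \S 8.7), the set $\mathsf{Irr}_0 R_{\beta}$ is in bijection with the set of isomorphism classes of simple perverse sheaves that occur (up to degree shift) as a non-zero direct summand of $\mathcal{L}^{\Omega}_{\beta}$. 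By Theorem \ref{VVR}(5), $|\mathsf{Irr}_0 R_{\beta}| = |B(\infty)_{\beta}|$, and by Theorem \ref{VVR}(3) this equals $\dim_{\mathbb{Q}(t)} U^+_{\beta}$. The classical PBW theorem for $U^+$ of type $\mathsf{ADE}$ now gives that $\dim_{\mathbb{Q}(t)} U^+_{\beta}$ equals the number of Kostant partitions of $\beta$, matching $|\Lambda|$. Thus the set of distinct $\mathsf{IC}_{\lambda}$ that appear in $\mathcal{L}^{\Omega}_{\beta}$ has cardinality $|\Lambda|$, and since it is a subset of $\{\mathsf{IC}_{\lambda}\}_{\lambda \in \Lambda}$ it must coincide with the whole set.

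The main subtle point I expect is justifying cleanly the bijection between $\mathsf{Irr}_0 R_{\beta}$ and the distinct simple perverse summands of $\mathcal{L}^{\Omega}_{\beta}$ in the graded/mixed setting; this rests on Gabber's decomposition for $\mathcal{L}^{\Omega}_{\beta}$ together with the purity $(\clubsuit)_1$ verified in Proposition \ref{quiverSC}, which together ensure that the Ext-algebra really does "see" every summand as an irreducible and no extra simples emerge. Everything else is a bookkeeping match between cardinalities guaranteed by Gabriel's theorem and the PBW theorem.
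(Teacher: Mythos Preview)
Your proposal is correct and follows essentially the same counting argument as the paper: reduce the assertion to the equality $\#\,\mathsf{Irr}_0 R_{\beta} = \#(G_V \backslash E^{\Omega}_V)$ via Proposition~\ref{quiverSC} and Theorem~\ref{VV}, then verify this equality using Gabriel's theorem and the PBW dimension of $U^+_{\beta}$ (the paper phrases the latter step via Ringel's Hall algebra bijection rather than Theorem~\ref{VVR}(3), but the content is the same). One minor remark: the bijection between $\mathsf{Irr}_0 R_{\beta}$ and the distinct simple perverse summands of $\mathcal L^{\Omega}_{\beta}$ does not actually require the purity $(\clubsuit)_1$ you flag as subtle---it follows directly from Gabber's decomposition theorem and the vanishing of $\mathrm{Hom}$ between non-isomorphic simple perverse sheaves.
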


\begin{proof}
By Proposition \ref{quiverSC} and Theorem \ref{VV}, we deduce that the assertion is equivalent to $\# \mathsf{Irr} _0 R_{\beta} = \# ( G_V \backslash E ^{\Omega} _V )$. This follows from a standard bijection between the set of isomorphism classes of indecomposable $\mathbb C [\Gamma]$-modules and a basis of $U ^+$ \`a la Ringel \cite{Ri} (or a consequence of the Gabriel theorem).
\end{proof}

\begin{theorem}[Kashiwara's problem]\label{Kashiwara}
The algebra $R_{\beta}$ has finite global dimension.
\end{theorem}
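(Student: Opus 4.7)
The plan is to deduce this as a direct consequence of the abstract framework recalled in Section 1, specifically Theorem \ref{absKas}, applied to the geometric setup $(G,\mathfrak X) = (G_V, E^\Omega_V)$ used to realize the KLR algebra through Varagnolo--Vasserot.

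First, I would record that by Theorem \ref{VV} we have a graded algebra isomorphism
$$R_\beta \;\cong\; \mathrm{Ext}^\bullet_{G_V}(\mathcal L^\Omega_\beta, \mathcal L^\Omega_\beta),$$
so $R_\beta$ is an instance of the algebra $A_{(G,\mathfrak X)}$ of Section 1, where the role of the generator $\mathcal L$ is played by $\mathcal L^\Omega_\beta$. By Corollary \ref{allp}, $\mathcal L^\Omega_\beta$ contains every simple $G_V$-equivariant perverse sheaf on $E^\Omega_V$ (up to shift) as a nonzero direct summand. Hence $R_\beta$ is Morita equivalent to the basic ring $B_{(G_V, E^\Omega_V)}$, and in particular has the same global dimension.

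Next, I would invoke Proposition \ref{quiverSC}, which verifies that the geometric setup $(G_V, E^\Omega_V)$ satisfies both hypotheses $(\spadesuit)$ and $(\clubsuit)$: finiteness and connectedness of stabilizers of orbit representatives come from the Gabriel theorem and from stabilizers being open dense subsets of linear spaces, while purity of the algebra follows from the affine paving of the Steinberg-type variety $Z^\Omega_\beta$ and pointwise purity of $\mathsf{IC}_\lambda$ is \cite{Lu} 10.6.

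With both $(\spadesuit)$ and $(\clubsuit)$ in hand, Theorem \ref{absKas} of \cite{K5} applies and gives that $A_{(G_V, E^\Omega_V)}$, hence $R_\beta$, has finite global dimension. There is no genuine obstacle left: the entire content of the theorem is packaged into Proposition \ref{quiverSC}, Corollary \ref{allp} and Theorem \ref{absKas}, and the proof amounts to assembling these ingredients. If any subtlety arises, it would be a bookkeeping point about the Morita equivalence preserving the global dimension in the graded setting, but this is standard.
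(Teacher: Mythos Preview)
Your proposal is correct and follows essentially the same route as the paper: the paper's own proof is the one-liner ``Apply Theorem \ref{absKas} to (\ref{L-auto}), Proposition \ref{quiverSC}, and Corollary \ref{allp},'' which is exactly the assembly you describe. The only cosmetic difference is that the paper invokes the self-duality (\ref{L-auto}) to see that $\mathcal L^\Omega_\beta$ fits the framework of \S1 directly (the multiplicity spaces $L_\lambda$ are self-dual), whereas you pass to the basic ring via Morita equivalence; both are equivalent ways of making the same point.
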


\begin{proof}
Apply Theorem \ref{absKas} to (\ref{L-auto}), Proposition \ref{quiverSC}, and Corollary \ref{allp}.
\end{proof}

Thanks to Corollary \ref{allp} and Theorem \ref{VVR} 5), we have an identification $B ( \infty ) _{\beta} \cong G _V \backslash E ^{\Omega} _V$, where $V$ is an $I$-graded vector space with $\underline{\dim} \, V = \beta$. By regarding $G_V \backslash E ^{\Omega} _{V}$ as the space of $\mathbb C [\Gamma]$-modules with its dimension vector $\beta$, each $b \in B ( \infty ) _{\beta}$ gives rise to (an isomorphism class of) a $\mathbb C [\Gamma]$-module $\mathtt M _b$. Let us denote by $\mathbb O ^{\Omega} _b$ the $G_V$-orbit of $E_V ^{\Omega}$ corresponding to $b \in B ( \infty ) _{\beta}$. Each $b \in B ( \infty ) _{\beta}$ defines an indecomposable graded projective module $P_b$ of $R_{\beta}$ with simple head $L_{b}$ that is isomorphic to its graded dual $L_b ^*$ (see \S 1).

The standard module $K_b$ and the dual standard module $\widetilde{K}_b$ in \S 1 depends on the choice of $\Omega$ since the Fourier transform interchanges the closure relations. Therefore, we denote by $K ^{\Omega}_b$ (resp. $\widetilde{K} ^{\Omega}_b$) the standard module (resp. the dual standard module) of $L_b$ arising from $E^{\Omega}_V$.

\begin{example}\label{one-root}
If $\beta = m \alpha _i$ for $m \ge 1$ and $i \in I$, then the set $B ( \infty ) _{m \alpha _i}$ is a singleton. Let $L_{mi}$ and $P_{mi}$ be unique simple and projective graded modules of $R_{m \alpha _i}$ up to grading shifts, respectively. The standard module $K_{mi}$ and the dual standard module $\widetilde{K}_{mi}$ do not depend on the choice of $\Omega$ in this case. We have $L_{mi} \cong K_{mi}$ and $P _{mi} \cong \widetilde{K} _{mi}$, and
$$[\widetilde{K}_{mi} : K _{mi}] = \mathsf{gdim} \, \mathbb C [x_1,\cdots,x_m] ^{\mathfrak S_m}.$$
\end{example}

Let $\mathcal Q _{\beta} ^{\Omega}$ be the fullsubcategory of $D _{G_V} ^b ( E ^{\Omega} _{V} )$ consisting all complexes whose direct summands are degree shifts of that of $\mathcal L ^{\Omega} _{\beta}$.

Let $\beta \in Q^+$ with $\mathsf{ht} \, \beta = n$. Let $\le _B$ be the Bruhat order of $\mathfrak S_{n}$ with respect to the set of simple reflections $\{ \sigma_1, \sigma_2, \ldots, \sigma _{n-1} \}$. For each $w \in \mathfrak S_{n}$ and its reduced expression
$$w = \sigma _{j_1} \sigma _{j_2} \cdots \sigma _{j_{L}},$$
we set $\tau_w := \tau _{j_1} \tau_{j_2} \cdots \tau _{j_L}$. Note that $\tau_w$ {\it depends} on the choice of a reduced expression.

\begin{theorem}[Poincar\'e-Birkoff-Witt theorem \cite{KL} 2.7]\label{PBW}
We have equalities as vector spaces:
$$R_{\beta} = \bigoplus _{w \in \mathfrak S_n, \mathbf m \in Y ^{\beta}} \tau_w S _{\beta} e ( \mathbf m ) = \bigoplus _{w \in \mathfrak S_n, \mathbf m \in Y ^{\beta}} S _{\beta} \tau_w e ( \mathbf m ),$$
regardless the choices of $\tau_w$. \hfill $\Box$
\end{theorem}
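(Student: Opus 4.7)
The plan is to establish the asserted decomposition in two steps, spanning and linear independence, and then address the independence from the choice of reduced expression.

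For the spanning claim, I would start from the observation that, thanks to relations (2), (3), and (6) of Definition \ref{KLR}, every monomial in the generators $\kappa_i$, $\tau_j$, and $e(\mathbf m)$ can be rewritten (after choosing a total order) so that all idempotents sit on the right, all polynomial generators $\kappa_i$ in the middle, and all braid generators $\tau_j$ on the left. The only obstruction to declaring the resulting word a $\tau_w$-monomial is the appearance of non-reduced subexpressions among the $\tau$'s. Whenever such a non-reduced subexpression appears, the quadratic relation (4), $\tau_i^2 e(\mathbf m) = Q_{\mathbf m,i}(\kappa_i,\kappa_{i+1}) e(\mathbf m)$, together with the braid relation (5), expresses the offending factor modulo strictly shorter $\tau$-words with coefficients in $S_\beta$. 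An induction on total degree in the $\tau$'s then reduces every element to the desired normal form $\tau_w S_\beta e(\mathbf m)$; the same argument with left/right roles swapped (using relation (6) to carry $\kappa$'s across $\tau$'s to the other side) yields the second presentation $S_\beta \tau_w e(\mathbf m)$.

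For linear independence, I would invoke the geometric identification provided by Theorem \ref{VV}: $R_\beta \cong \bigoplus_i \mathrm{Ext}^i_{G_V}(\mathcal L^\Omega_\beta, \mathcal L^\Omega_\beta) \cong H_\bullet^{G_V}(Z^\Omega_\beta)$ (as graded vector spaces), where $Z^\Omega_\beta = F^\Omega_\beta \times_{E^\Omega_V} F^\Omega_\beta$. The proof of Proposition \ref{quiverSC} already exhibits a filtration of $Z^\Omega_\beta$ by affine bundles over connected components of $(\mathcal B^\Omega_\beta)^2$, one piece for each pair $(\mathbf m, \mathbf m')$ together with a Bruhat-type index, yielding a graded decomposition whose total graded dimension matches $\sum_{w,\mathbf m} t^{2\ell(w)} \cdot \mathsf{gdim}\, S_\beta e(\mathbf m)$. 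Combined with the spanning result, a dimension count forces the sum to be direct. (Alternatively, one could appeal to the faithful polynomial representation of Khovanov--Lauda in which $\tau_i$ acts as a Demazure-type operator, and check that $\{\tau_w s e(\mathbf m)\}$ act by linearly independent operators; the geometric route is more in keeping with the framework of this paper.)

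Finally, the claim that the statement holds \emph{regardless of the choice of $\tau_w$} follows from a standard consequence of the braid relation (5): if $\sigma_{j_1}\cdots\sigma_{j_L}$ and $\sigma_{i_1}\cdots\sigma_{i_L}$ are two reduced expressions for $w$, then Matsumoto's theorem writes one as a sequence of braid moves away from the other, and each application of (5) changes the corresponding product of $\tau$'s only by a sum of terms whose $\tau$-length is strictly smaller, with coefficients in $S_\beta$. Thus both choices of $\tau_w$ represent the same element modulo $\bigoplus_{v <_B w} S_\beta \tau_v S_\beta e(\mathbf m)$, and the top-length component of the filtration associated to the length function on $\mathfrak S_n$ is canonically determined; the two spans in the theorem therefore coincide for any choice. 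The main obstacle is the linear independence step, since spanning is essentially a normal-form computation, whereas independence genuinely requires either the geometric input from Theorem \ref{VV} or the construction of an explicit faithful module of the correct total dimension.
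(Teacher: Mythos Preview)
The paper gives no proof of this theorem at all: the trailing $\Box$ indicates it is quoted verbatim from Khovanov--Lauda \cite{KL}, Theorem~2.7, and the result is used as a black box. In \cite{KL} the proof proceeds exactly via your ``alternative'' route: one constructs an explicit action of $R_\beta$ on $\bigoplus_{\mathbf m\in Y^\beta}\mathbb C[\kappa_1,\dots,\kappa_n]e(\mathbf m)$ in which $\tau_i$ acts by a Demazure-type divided-difference operator, verifies the defining relations, and then checks that the putative basis elements act by linearly independent operators (by a leading-term argument). Your spanning and Matsumoto arguments are correct and are essentially the same as in \cite{KL}.

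Your primary geometric route for linear independence is a genuinely different argument and can be made to work, but two points need care. First, the graded-dimension formula $t^{2\ell(w)}\cdot\mathsf{gdim}\,S_\beta e(\mathbf m)$ is wrong: by Definition~\ref{KLR}(1) the degree of $\tau_i e(\mathbf m)$ is $-2$, $1$, or $0$ depending on $\mathbf m$, so the contribution of $\tau_w e(\mathbf m)$ is not $2\ell(w)$ but a signed count along the strand diagram; the affine-bundle dimensions on the geometric side match this corrected count, not yours. Second, invoking Theorem~\ref{VV} as stated risks circularity, since Varagnolo--Vasserot's identification of the Ext-algebra with $R_\beta$ already uses the PBW theorem to compare dimensions. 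The non-circular version of your argument is: spanning gives an \emph{upper} bound on each graded piece of $R_\beta$; the stratification of $Z^\Omega_\beta$ computes $\mathsf{gdim}\,H^{G_V}_\bullet(Z^\Omega_\beta)$ exactly; and the map $R_\beta\to H^{G_V}_\bullet(Z^\Omega_\beta)$ constructed in \cite{VV} is shown there to be surjective independently of PBW. These three facts together force equality and yield both PBW and Theorem~\ref{VV} simultaneously. This packaging is cleaner in the present geometric framework, while the polynomial-representation proof of \cite{KL} has the advantage of being entirely elementary and valid for arbitrary symmetrizable types.
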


Let $\beta \in Q ^+$ so that $\mathsf{ht} \, \beta = n$. For each $i \in I$ and $k \ge 0$, we set
\begin{align*}
Y ^{\beta} _{k,i} & := \{ \mathbf m = (m_j) \in Y ^{\beta} \mid m _1 = \cdots = m _k = i \} \text{ and}\\
Y ^{\beta,*} _{k,i} & := \{ \mathbf m = (m_j) \in Y ^{\beta} \mid m _{n} = \cdots = m _{n-k+1} = i \}.
\end{align*}
In addition, we define two idempotents of $R_{\beta}$ as:
$$e_i ( k ) := \sum _{\mathbf m \in Y ^{\beta}_{k,i}} e ( \mathbf m ), \hskip 2mm \text{ and } \hskip 2mm e_i^* ( k ) := \sum _{\mathbf m \in Y ^{\beta,*}_{k,i}} e ( \mathbf m ).$$

\begin{theorem}[Lusztig \cite{Lu3} \S 6, Lauda-Vazirani \cite{LV} 2.5.1]\label{crys}
Let $\beta \in Q_+$ and $i \in I$. For each $b \in B ( \infty ) _{\beta}$ and $i \in I$, we have
\begin{align*}
\epsilon_i ( b ) & = \max \{ k \!\mid e_i ( k ) L _b \neq \{ 0 \} \} \text{ and }\\
\epsilon_i^* ( b ) & = \max \{ k \!\mid e_i^* ( k ) L _b \neq \{ 0 \} \}.
\end{align*}
Moreover, $e_i ( \epsilon_i ( b ) ) L_b$ and $e^*_i ( \epsilon_i^* ( b ) ) L_b$ are irreducible $R_{\epsilon_i ( b ) \alpha _i} \boxtimes R _{\beta - \epsilon_i ( b ) \alpha _i}$-module and $R _{\beta - \epsilon_i^* ( b ) \alpha \i} \boxtimes R _{\epsilon_i^* ( b ) \alpha _i}$-module, respectively. In addition, if we have distinct $b,b' \in B ( \infty )_{\beta}$ so that $\epsilon _i ( b ) = k = \epsilon _i ( b' )$ with $k \ge 0$, then $e_{i} ( k ) L _b$ and $e_i ( k ) L_{b'}$ are not isomorphic as an $R_{k \alpha _i} \boxtimes R _{\beta - k \alpha _i}$-module. \hfill $\Box$
\end{theorem}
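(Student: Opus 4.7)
The plan is to establish Theorem~\ref{crys} via the geometric framework of Theorem~\ref{VV}, in the spirit of Lusztig~\cite{Lu3}. The two halves of the statement (for $\epsilon_i$ and $\epsilon_i^*$) are symmetric under reversing the order of the convolution $\star$, so I focus on $\epsilon_i^*$. Since by Theorem~\ref{VV} the graded algebra $R_\beta$ (and hence the isomorphism class of each simple $L_b$) is independent of the orientation, I am free to fix $\Omega$ making $i$ a source of $\Gamma$; then $\epsilon_i^*$ is defined on $E_V^\Omega$ and is constant on each $G_V$-orbit $\mathbb{O}_b^\Omega$.

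The first substantive step is to identify the geometric support of $e_i^*(k)\mathcal{L}_\beta^\Omega = \bigoplus_{\mathbf{m} \in Y_{k,i}^{\beta,*}} \mathcal{L}_{\mathbf{m}}^\Omega$. For $\mathbf{m} \in Y_{k,i}^{\beta,*}$, the bottom piece $F_{n-k}$ of any flag in $F_{\mathbf{m}}^\Omega$ is a $k$-dimensional subspace of $V_i$; the constraint $xF_{n-k} \subset F_{n-k+1}$ combined with the absence of loops in an $\mathsf{ADE}$-graph forces $f_h(F_{n-k}) = 0$ for every $h \in \Omega$ with $h'=i$, whence $F_{n-k} \subset \ker \bigoplus_{h \in \Omega,\, h'=i} f_h$ and $\epsilon_i^* \geq k$ on the image of $\pi_{\mathbf{m}}^\Omega$. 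Conversely, any $f$ with $\epsilon_i^*(f) \geq k$ admits such a flag. Thus the images of the $\pi_{\mathbf{m}}^\Omega$ for $\mathbf{m} \in Y_{k,i}^{\beta,*}$ collectively cover the closed locus $\{\epsilon_i^* \geq k\} \subset E_V^\Omega$. Via Varagnolo-Vasserot, $e_i^*(k)L_b = \mathrm{Ext}^\bullet(\mathsf{IC}_b,\, e_i^*(k)\mathcal{L}_\beta^\Omega)$ is nonzero iff $\overline{\mathbb{O}_b^\Omega}$ meets $\{\epsilon_i^* \geq k\}$, which by orbit-constancy of $\epsilon_i^*$ and closedness of this locus is equivalent to $\epsilon_i^*(b) \geq k$. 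This yields the maximality statement.

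For the bimodule irreducibility of $e_i^*(\epsilon_i^*(b))L_b$ and the non-isomorphism statement, I would pass to a transversal slice. Setting $k := \epsilon_i^*(b)$, the orbit $\mathbb{O}_b^\Omega$ is open in $\overline{\mathbb{O}_b^\Omega} \cap \{\epsilon_i^* = k\}$. Sending $f$ to the pair $(\bar f, K)$, where $K := \ker \bigoplus_{h'=i} f_h \in \mathrm{Gr}_k(V_i)$ and $\bar f$ is the induced quiver representation on $V/K$, defines a smooth $G_V$-equivariant fibration of $\{\epsilon_i^* = k\}$ over $E_{V'}^\Omega \times \mathrm{Gr}_k(V_i)$ with connected affine fibers. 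Pulling back and pushing forward along this correspondence identifies the restriction $\mathsf{IC}_b|_{\{\epsilon_i^* = k\}}$ with $\mathsf{IC}_{b'} \boxtimes \mathsf{IC}_{\mathrm{Gr}_k(V_i)}$ (up to a shift) for a unique $b' \in B(\infty)_{\beta - k\alpha_i}$. Since $R_{k\alpha_i}$ admits a unique simple module up to grading shift (Example~\ref{one-root}), this gives the bimodule irreducibility with $e_i^*(k)L_b \cong L_{b'} \boxtimes L_{ki}$. The non-isomorphism for distinct $b,b'$ with $\epsilon_i^*(b)=k=\epsilon_i^*(b')$ then follows because the correspondence is invertible on the open stratum: $b$ is reconstructed from $b'$ as the unique orbit whose closure maps onto the corresponding stratum of the fibration.

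The main obstacle is the transversal-slice analysis of the previous paragraph: one must verify that the smooth fibration does not cause $\mathsf{IC}_b$ to split into several simple summands after restriction, so that the decomposition theorem yields a single simple perverse sheaf on $E_{V'}^\Omega$. This simplicity-preservation is precisely the geometric content of \cite{Lu3} \S 6 and its algebraic reformulation in \cite{LV} 2.5.1, and it is also the place where the hypothesis $k = \epsilon_i^*(b)$ is essential: for smaller $k$ the generic fiber of the would-be correspondence is non-trivially stratified, and $e_i^*(k)L_b$ is only filtered by simples rather than being simple itself.
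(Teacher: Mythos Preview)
The paper does not give a proof of Theorem~\ref{crys}; it is stated with a $\Box$ and attributed to \cite{Lu3} \S 6 and \cite{LV} 2.5.1. So there is no ``paper's proof'' to compare against, and your sketch must stand on its own.

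Your argument for the maximality statement contains a genuine error. The identity $e_i^*(k)L_b = \mathrm{Ext}^\bullet(\mathsf{IC}_b,\, e_i^*(k)\mathcal L_\beta^\Omega)$ is false: the right-hand side is $e_i^*(k)P_b$, not $e_i^*(k)L_b$. The simple module $L_b$ is the \emph{multiplicity space} of $\mathsf{IC}_b$ inside $\mathcal L_\beta^\Omega$, so $e_i^*(k)L_b \neq 0$ is equivalent to $\mathsf{IC}_b$ occurring as a direct summand of some $\mathcal L_{\mathbf m}^\Omega$ with $\mathbf m \in Y^{\beta,*}_{k,i}$, not merely to a non-vanishing Ext. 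More seriously, the asserted equivalence ``$\overline{\mathbb O_b^\Omega}$ meets $\{\epsilon_i^* \ge k\}$ $\Longleftrightarrow$ $\epsilon_i^*(b)\ge k$'' is wrong in general: the kernel dimension $\epsilon_i^*$ is lower-semicontinuous (it jumps \emph{up} under degeneration), so the closure of \emph{any} orbit contains the origin, where $\epsilon_i^* = \dim V_i$. Thus ``closure meets $\{\epsilon_i^*\ge k\}$'' holds whenever $\dim V_i \ge k$, irrespective of $\epsilon_i^*(b)$. Your support computation for $\mathcal L_{\mathbf m}^\Omega$ with $\mathbf m\in Y^{\beta,*}_{k,i}$ is correct and does give one implication ($e_i^*(k)L_b\neq 0 \Rightarrow \mathbb O_b^\Omega\subset\{\epsilon_i^*\ge k\}\Rightarrow \epsilon_i^*(b)\ge k$), but the converse --- that $\mathsf{IC}_b$ actually \emph{appears} as a summand once $\epsilon_i^*(b)\ge k$ --- is not a support statement and needs a real argument (either Lusztig's analysis of the restriction functor in \cite{Lu3} \S 6, or the algebraic Frobenius-reciprocity argument of \cite{LV}).

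Your transversal-slice picture for the irreducibility and injectivity assertions is in the right spirit, and you correctly flag that the simplicity-preservation under the smooth correspondence is the substantive content borrowed from \cite{Lu3,LV}. One small correction: the map $f \mapsto (\bar f, K)$ does not land in $E_{V'}^\Omega \times \mathrm{Gr}_k(V_i)$ globally, since $V' = V/K$ varies with $K$; rather $\{\epsilon_i^* = k\}$ is a $G_V$-equivariant bundle over $\mathrm{Gr}_k(V_i)$ whose fibre over $K$ is the open locus $\{\epsilon_i^* = 0\}$ inside the vector space $\{f : K \subset \ker\} \cong E_{V/K}^\Omega$.
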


\begin{lemma}\label{wt-e}
Let $i \in I$ and let $b_1, b_2 \in B ( \infty )$. Let $L_b$ denote an irreducible constituent of $L_{b_1} \star L_{b_2}$. In case $\epsilon_i ( b_1 ) > 0 = \epsilon_i (b_2)$, then we have $\epsilon_i ( b_1 ) \ge \epsilon_i ( b )$. In case $\epsilon_i ( b_1 ) = 0 = \epsilon_i ( b_2 )$, then we have $\epsilon_i ( b ) = 0$. The same is true if we replace $\epsilon_i$ with $\epsilon^*_i$ and $b_1$ with $b_2$.
\end{lemma}

\begin{proof}
By \cite{KL} \S 2.6, we deduce that the $e ( \mathbf m) ( L_{b_1} \star L_{b_2} ) \neq \{ 0 \}$ implies that $\mathbf m$ is obtained by the shuffle of $\mathbf m_1$ and $\mathbf m_2$ so that $e ( \mathbf m_1 ) L_{b_1} \neq \{ 0 \} \neq e ( \mathbf m_2 ) L_{b_1}$. This yield all the assertions by their definitions.
\end{proof}

\section{Saito reflection functors}
Keep the setting of the previous section. Let $\Omega _i$ be the set of edges $h \in \Omega$ with $h'' = i$ or $h' = i$. Let $s_i \Omega _i$ be a collection of edges obtained from $h \in \Omega _i$ by setting $(s_i h)' = h''$ and $(s_i h)'' = h'$. We define $s_i \Omega := ( \Omega \backslash \Omega _i ) \cup s _i \Omega _i$ and set $s_i \Gamma := ( I, s_i \Omega )$. Note that $\Gamma_0 = ( s_i \Gamma ) _0$.

Let $w_0 \in W$ be the longest element. Choose a reduced expression
$$w_0 = s_{i_1} s_{i_2} \cdots s _{i _{\ell}}.$$
We denote by $\mathbf i := ( i_1, \ldots, i_{\ell} ) \in I ^{\ell}$ the data recording this reduced expression. We say $\mathbf i$ is adapted to $\Omega$ (or $\Gamma$) if each $i_k$ is a sink of $s_{i_{k-1}} \cdots s _{i_1} \Gamma$.

Let $V$ be an $I$-graded vector space with $\underline{\dim} \, V = \beta$. For a sink $i$ of $\Gamma$, we define
$${}_i E _V ^{\Omega} := \bigr\{ ( f _h ) _{h \in \Omega} \in E _V ^{\Omega} \mid \mathrm{coker} ( \bigoplus _{h \in \Omega, h'' = i} f _h : \bigoplus _{h'} V _{h'} \to V _i ) =  \{ 0 \} \bigl\}.$$
For a source $i$ of $\Gamma$, we define
$${}^i E _V ^{\Omega} := \bigr\{ ( f _h ) _{h \in \Omega} \in E _V ^{\Omega} \mid \mathrm{ker} ( \bigoplus _{h \in \Omega, h' = i} f _h : V _i \to \bigoplus _{h''} V _{h''} ) =  \{ 0 \} \bigl\}.$$

Let $\Omega$ be an orientation of $\Gamma$ so that $i \in I$ is a sink. Let $\beta \in Q ^+ \cap s _i Q ^+$. Let $V$ and $V'$ be $I$-graded vector spaces with $\underline{\dim} \, V = \beta$ and $\underline{\dim} \, V' = s_i \beta$, respectively. We fix an isomorphism $\phi : \oplus _{j \neq i} V_j \stackrel{\cong}{\longrightarrow} \oplus _{j \neq i} V'_j$ as $I$-graded vector spaces. We define:
$${}_i Z _{V,V'}^{\Omega} := \Biggl\{ \{ ( f _h ) _{h \in \Omega}, ( f' _h ) _{h \in s_i \Omega}, \psi \} \Biggl| {\small \begin{matrix} ( f _h ) \in {}_i E _V^{\Omega}, ( f'_h ) \in {}^i E _{V'} ^{s_i \Omega}, \\
\phi f_h = f'_{h} \phi \text{ for } h \not\in \Omega_i \\ \psi : V_i' \stackrel{\cong}{\longrightarrow} \mathrm{ker} ( \bigoplus _{h \in \Omega_i} f_h : \bigoplus _h V_{h'} \to V_i )\end{matrix}} \Biggr\}.$$

We have a diagram:
\begin{equation}
\xymatrix{E ^{\Omega} _V & {}_i E ^{\Omega} _V \ar@{_{(}->}[l] _{j_V}& {}_i Z _{V,V'}^{\Omega} \ar@{->>}[r]^{p^i_{V'}} \ar@{->>}[l] _{q^i _{V}} & {}^i E ^{s_i \Omega} _{V'} \ar@{^{(}->}[r] ^{\jmath _{V'}}& E ^{s_i \Omega} _{V'} \hskip 5mm.}\label{df}
\end{equation}
If we set
$$G_{V,V'} := \mathop{GL} ( V_i ) \times \mathop{GL} ( V'_i ) \times \prod _{j\neq i} \mathop{GL} ( V_j ) \cong \mathop{GL} ( V_i ) \times \mathop{GL} ( V'_i ) \times \prod _{j\neq i} \mathop{GL} ( V'_j ),$$
then the maps $p^i_{V'}$ and $q^i_{V}$ are $G_{V,V'}$-equivariant.

\begin{proposition}[Lusztig \cite{Lu2}]\label{bd}
The morphisms $p ^i_V$ and $q ^i _{V}$ in $(\ref{df})$ are $\mathrm{Aut} ( V _i )$-torsor and $\mathrm{Aut} ( V' _i )$-torsor, respectively. \hfill $\Box$
\end{proposition}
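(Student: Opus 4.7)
The plan is to exhibit each fiber of $q^i_V$ and $p^i_{V'}$ as a space of linear isomorphisms between two vector spaces of a common dimension, on which the relevant general linear group acts simply transitively; Zariski-local triviality (and hence the torsor structure) will then follow because the subbundles describing these fibers have constant rank on the base.

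The starting point is the dimension identity
$$\dim V'_i = \sum_{h \in \Omega_i}\dim V_{h'} - \dim V_i,$$
which follows from $\underline{\dim}\, V' = s_i\beta$ together with the sink condition on $i$ (since all edges at $i$ satisfy $h'' = i$). Combined with the defining surjectivity of $\bigoplus_{h\in\Omega_i} f_h$ on ${}_iE_V^\Omega$ and the injectivity of $\bigoplus_{h\in s_i\Omega_i} f'_h$ on ${}^iE_{V'}^{s_i\Omega}$, this forces $\ker(\oplus f_h)$ and $\mathrm{im}(\oplus f'_{s_ih})$ to each have dimension exactly $\dim V'_i$.

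For $q^i_V$, I fix $(f_h)\in {}_iE_V^\Omega$, set $K:=\ker(\bigoplus_{h\in\Omega_i} f_h)$, and identify the fiber with $\mathrm{Iso}(V'_i,K)$. The components $(f'_h)$ for $h\notin\Omega_i$ are uniquely fixed by the compatibility $\phi f_h = f'_h \phi$, while for $h\in\Omega_i$ each $f'_{s_ih}$ is forced (by the implicit compatibility packaged in ${}_iZ$) to be the composition $V'_i \xrightarrow{\psi} K \hookrightarrow \bigoplus V_{h'} \twoheadrightarrow V_{h'}$; the injectivity required by ${}^iE_{V'}^{s_i\Omega}$ is automatic because $\psi$ is an isomorphism onto $K$. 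Since $\mathrm{Iso}(V'_i,K)$ is a right $\mathrm{Aut}(V'_i)$-torsor under precomposition, the claim for $q^i_V$ follows. The analysis of $p^i_{V'}$ is dual: fixing $(f'_h)$, the subspace $W:=\mathrm{im}(\oplus f'_{s_ih})\subset\bigoplus V_{h'}$ has dimension $\dim V'_i$, and the remaining free datum $\oplus f_h:\bigoplus V_{h'}\twoheadrightarrow V_i$ is precisely a surjection with kernel $W$; such surjections form an $\mathrm{Aut}(V_i)$-torsor under postcomposition, and $\psi$ is then canonically determined as the isomorphism $V'_i\xrightarrow{\sim} W = \ker(\oplus f_h)$.

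The main obstacle will be pinning down the implicit compatibility in the definition of ${}_iZ^\Omega_{V,V'}$ that ties the triple $((f_h),(f'_h),\psi)$ together: one must verify that, in the presence of the injectivity of $\oplus f'_{s_ih}$, requiring $\psi$ to be an isomorphism onto $\ker(\oplus f_h)$ is equivalent to the preprojective-type relation $(\oplus f_h)\circ(\oplus f'_{s_ih})=0$ together with the coincidence of $\psi$ with the map $V'_i \to K$ induced by $\oplus f'_{s_ih}$. Once this bookkeeping is settled, the torsor conclusion is immediate from the standard fact that the set of linear isomorphisms between two finite-dimensional vector spaces of the same dimension is a torsor over the automorphism group of either, and Zariski-local trivializations of $K$ and $W$ (which exist because these subbundles have constant rank on the open loci ${}_iE_V^\Omega$ and ${}^iE_{V'}^{s_i\Omega}$) give the required local triviality of the principal bundles.
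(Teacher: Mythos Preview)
The paper gives no proof of this proposition, merely citing Lusztig \cite{Lu2}; your argument is correct and is precisely the standard one found there. You are also right to flag the ambiguity in the paper's definition of ${}_iZ^\Omega_{V,V'}$: as literally written, the components $(f'_{s_ih})_{h\in\Omega_i}$ are not visibly tied to $\psi$, which would make the fiber of $q^i_V$ too large. The intended reading (and Lusztig's) is exactly the one you supply, namely that $\bigoplus_{h\in\Omega_i} f'_{s_ih} : V'_i \to \bigoplus_h V_{h'}$ is the composite of $\psi$ with the inclusion of the kernel, so that $\psi$ and the $f'_{s_ih}$ carry the same information; with this convention your identification of each fiber with a space of linear isomorphisms, and hence with a torsor under the relevant $\mathrm{GL}$, goes through verbatim.
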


When $\beta = \underline{\dim} \, V$, we set
$${} _i R _{\beta} ^{\Omega} := \mathrm{Ext} ^{\bullet} _{G_V} ( j_V ^* \mathcal L ^{\Omega} _{V}, j_V ^* \mathcal L ^{\Omega} _{V} ) \hskip 2mm \text{ and } \hskip 2mm {} ^i R _{s_i \beta} ^{s_i \Omega} := \mathrm{Ext} ^{\bullet} _{G_{V'}} ( \jmath _{V'}^* \mathcal L ^{s_i \Omega} _{V'}, \jmath _{V'} ^* \mathcal L ^{s_i \Omega} _{V'} )$$
for the time being (see Corollary \ref{indep}).

\begin{lemma}\label{i-quotients}
We have an algebra isomorphism ${} _i R _{\beta} ^{\Omega} \cong R_{\beta} / ( R_{\beta} e_i ( 1 ) R_{\beta} )$. Similarly, the algebra ${} ^i R _{s_i \beta} ^{s_i \Omega}$ is isomorphic to $R_{s_i \beta} / ( R_{s_i \beta} e_i ^* ( 1 ) R_{s_i \beta} )$. 
\end{lemma}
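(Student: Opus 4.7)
The plan is to construct the natural graded algebra homomorphism
\[
\phi \colon R_\beta = \mathrm{Ext}^\bullet_{G_V}(\mathcal L^\Omega_\beta, \mathcal L^\Omega_\beta) \longrightarrow \mathrm{Ext}^\bullet_{G_V}(j_V^*\mathcal L^\Omega_\beta, j_V^*\mathcal L^\Omega_\beta) = {}_i R^\Omega_\beta
\]
induced by the restriction functor $j_V^*$, and to identify $\mathrm{Ker}(\phi)$ with $R_\beta e_i(1) R_\beta$. Since $i$ is a sink, the closed complement $\mathfrak Z := E^\Omega_V \setminus {}_i E^\Omega_V$ is the $G_V$-stable union of those orbits $\mathbb O^\Omega_b$ with $\epsilon_i(b) \geq 1$. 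Stratifying $\mathfrak Z$ by orbits and peeling them off one at a time in an order refining the closure order, iterated application of Proposition \ref{Kquotients} (whose hypotheses propagate by Lemma \ref{localKsc} and Proposition \ref{quiverSC}) shows that $\phi$ is surjective and that, after passing to the Morita-equivalent basic ring, $\mathrm{Ker}(\phi)$ is the two-sided ideal generated by the primitive idempotents $\tilde e_b \in R_\beta$ corresponding to those $b$ with $\epsilon_i(b) \geq 1$.

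The inclusion $R_\beta e_i(1) R_\beta \subseteq \mathrm{Ker}(\phi)$ then follows from a direct support computation on the summands of $e_i(1)\mathcal L^\Omega_\beta$: for any $\mathbf m = (i, m_2, \ldots, m_n) \in Y^\beta_{1,i}$ and any point $(\{F_j\}, x) \in F^\Omega_{\mathbf m}$, one has $xV \subseteq F_1$ with $F_1 \cap V_i$ of codimension one in $V_i$; since $i$ is a sink, this forces $\bigoplus_{h''=i} f_h$ to be non-surjective, so $x \in \mathfrak Z$. Consequently $\mathrm{supp}(\mathcal L^\Omega_{\mathbf m}) \subseteq \mathfrak Z$, whence $j_V^*\mathcal L^\Omega_{\mathbf m} = 0$ and $\phi(e(\mathbf m)) = 0$. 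Summing over $\mathbf m \in Y^\beta_{1,i}$ gives $\phi(e_i(1)) = 0$ and therefore a surjection $\overline\phi \colon R_\beta / R_\beta e_i(1) R_\beta \twoheadrightarrow {}_i R^\Omega_\beta$.

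The main obstacle is the reverse inclusion $\mathrm{Ker}(\phi) \subseteq R_\beta e_i(1) R_\beta$, i.e., showing that each $\tilde e_b$ with $\epsilon_i(b) \geq 1$ can be written as $x\cdot e_i(1)\cdot y$ for some $x, y \in R_\beta$. I would decompose the graded projective left $R_\beta$-module $R_\beta e_i(1)$ into its indecomposable summands: the standard identification $\mathrm{hom}_{R_\beta}(R_\beta e_i(1), L_b) \cong e_i(1) L_b$ combined with Theorem \ref{crys} shows that the multiplicity of $P_b$ in $R_\beta e_i(1)$ is nonzero precisely when $\epsilon_i(b) \geq 1$. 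Choosing a split inclusion $P_b \hookrightarrow R_\beta e_i(1)$ together with its retraction and tracking $\tilde e_b$ through the resulting factorization yields the desired expression $\tilde e_b = x\cdot e_i(1)\cdot y$, completing the first claim. The second assertion is analogous, with $\jmath_{V'}$ replacing $j_V$, $i$ taken to be a source of $s_i\Omega$ rather than a sink of $\Omega$, and $e_i^*(1)$ replacing $e_i(1)$.
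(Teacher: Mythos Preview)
Your proof is correct and follows essentially the same route as the paper's: both arguments peel off the closed orbits in $E^\Omega_V \setminus {}_iE^\Omega_V$ one at a time via Lemma~\ref{localKsc} and Proposition~\ref{Kquotients} to identify ${}_iR^\Omega_\beta$ with $R_\beta/R_\beta e R_\beta$ for the idempotent $e$ projecting onto the simples $L_b$ with $\epsilon_i(b)\ge 1$, and then identify $R_\beta e R_\beta$ with $R_\beta e_i(1) R_\beta$. The paper compresses the second step into a citation of Proposition~\ref{KScrys}, Theorem~\ref{VVR}~5), and Theorem~\ref{crys} (which together say that $\mathbb O_b\subset\mathfrak Z$ iff $e_i(1)L_b\neq 0$, whence the two idempotents kill the same simples and hence generate the same trace ideal), whereas you spell out both inclusions explicitly---the support computation for $e_i(1)\in\ker\phi$ and the Krull--Schmidt decomposition of $R_\beta e_i(1)$ for the reverse---which is a welcome unpacking of what the paper leaves implicit.
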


\begin{proof}
The maps $j_V$ and $\jmath _{V'}$ are $G_V$- and $G_{V'}$-equivariant open embeddings, respectively. Therefore, we apply Lemma \ref{localKsc} and Proposition \ref{Kquotients} repeatedly to deduce ${} _i R _{\beta} ^{\Omega} \cong R_{\beta} / ( R_{\beta} e R_{\beta} )$, where $e \in R_{\beta}$ is a degree zero idempotent so that $e L_{b} = L_{b}$ ($\mathbb O_b ^{\Omega} \not\subset \mathrm{Im} \, j_V$) or $\{ 0 \}$ ($\mathbb O_b ^{\Omega} \subset \mathrm{Im} \, j_V$). By Proposition \ref{KScrys}, Theorem \ref{VVR} 5), and Theorem \ref{crys}, we conclude $R _{\beta} e R_{\beta} = R _{\beta} e _i ( 1 ) R _{\beta}$, which proves the first assertion. The case of ${} ^i R _{s_i \beta} ^{s_i \Omega}$ is similar, and we omit the detail.
\end{proof}

\begin{corollary}
The set of isomorphism classes of graded simple modules of ${} _i R _{\beta} ^{\Omega}$ and ${} ^i R _{s_i \beta} ^{s_i \Omega}$ are $\{ L_b \left< j \right>\} _{\epsilon _i ( b ) = 0, j \in \mathbb Z}$ and $\{ L_b \left< j \right>\} _{\epsilon ^* _i ( b ) = 0, j \in \mathbb Z}$, respectively. \hfill $\Box$
\end{corollary}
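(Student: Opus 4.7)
The plan is to derive this corollary as a direct combination of Lemma \ref{i-quotients} with Theorem \ref{crys}, via the standard fact that the simple modules over a quotient $A/J$ by a two-sided (graded) ideal $J$ are precisely the simple $A$-modules on which $J$ acts trivially.

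First I would invoke Lemma \ref{i-quotients} to identify ${}_iR^{\Omega}_{\beta}$ with $R_{\beta}/(R_{\beta}e_i(1)R_{\beta})$. A graded simple module over this quotient is the same as a graded simple $R_{\beta}$-module annihilated by the two-sided ideal $R_{\beta}e_i(1)R_{\beta}$. Since $L_b$ is simple, we have $R_{\beta}\cdot L_b = L_b$, so $R_{\beta}e_i(1)R_{\beta}\cdot L_b = R_{\beta}\cdot(e_i(1)L_b)$ vanishes if and only if $e_i(1)L_b = 0$ (for if $e_i(1)L_b$ were nonzero, simplicity of $L_b$ would force $R_{\beta}\cdot(e_i(1)L_b) = L_b \neq 0$).

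Next I would connect the vanishing $e_i(1)L_b = 0$ to the crystal datum $\epsilon_i$. By definition, $Y^{\beta}_{k,i}\subseteq Y^{\beta}_{1,i}$ for every $k\ge 1$, and the $e(\mathbf m)$ are pairwise orthogonal, so $e_i(k) = e_i(k)\,e_i(1)$ for all $k\ge 1$. Consequently $e_i(k)L_b \subseteq e_i(1)L_b$, and therefore $e_i(1)L_b = 0$ is equivalent to $e_i(k)L_b = 0$ for every $k\ge 1$. By Theorem \ref{crys}, this in turn is equivalent to $\epsilon_i(b)=0$. Combining these steps yields the claimed parametrization of $\mathsf{Irr}\,{}_iR^{\Omega}_{\beta}$.

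The statement for ${}^iR^{s_i\Omega}_{s_i\beta}$ is entirely parallel: Lemma \ref{i-quotients} gives the quotient presentation using $e_i^*(1)$ in place of $e_i(1)$, the same argument about ideals generated by idempotents and simple modules applies verbatim, and the containments $Y^{\beta,*}_{k,i}\subseteq Y^{\beta,*}_{1,i}$ together with the starred half of Theorem \ref{crys} yield the equivalence $e_i^*(1)L_b = 0 \iff \epsilon_i^*(b)=0$. There is essentially no obstacle in the argument; the only mildly non-formal ingredient is the compatibility $e_i(k)=e_i(k)e_i(1)$ (and its starred analogue), which is immediate from the definitions of the idempotents $e_i(k)$ and $e_i^*(k)$.
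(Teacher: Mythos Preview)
Your proof is correct and is exactly the natural argument the paper leaves implicit: the paper records no proof for this corollary (marking it with $\Box$), treating it as an immediate consequence of Lemma~\ref{i-quotients} together with Theorem~\ref{crys}, and you have spelled that out accurately. The only minor remark is that the identification of which simples are killed by $R_\beta e_i(1)R_\beta$ is already essentially established inside the paper's proof of Lemma~\ref{i-quotients} (via the geometric description of the idempotent $e$ and the condition $\mathbb O_b^\Omega \subset \mathrm{Im}\, j_V$), so one could alternatively cite that directly; your route through the idempotent relation $e_i(k)=e_i(k)e_i(1)$ and the statement of Theorem~\ref{crys} is equally valid and perhaps cleaner.
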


\begin{corollary}\label{indep}
The algebras ${} _i R _{\beta} ^{\Omega}$ and ${} ^i R _{s_i \beta} ^{s_i \Omega}$ do not depend on the choice of $\Omega$. \hfill $\Box$
\end{corollary}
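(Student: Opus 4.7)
The plan is to leverage Lemma \ref{i-quotients}, which already does the geometric work: it identifies
$${}_i R_\beta^\Omega \;\cong\; R_\beta / ( R_\beta\, e_i(1)\, R_\beta ) \qquad \text{and} \qquad {}^i R_{s_i\beta}^{s_i\Omega} \;\cong\; R_{s_i\beta} / ( R_{s_i\beta}\, e_i^*(1)\, R_{s_i\beta} ).$$
So the entire question reduces to showing that the right-hand sides depend only on $\beta$ and $i$, not on $\Omega$.

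First, I would invoke the remark following Definition \ref{KLR} (based on \cite{R} \S 3.2.4 and Theorem \ref{VV}): for any two orientations $\Omega, \Omega'$ of $\Gamma_0$, the graded algebras $R_\beta^\Omega$ and $R_\beta^{\Omega'}$ are isomorphic. The key additional point I need is that the standard such isomorphism can be taken to send the combinatorial idempotent $e(\mathbf m)$ to $e(\mathbf m)$ for every $\mathbf m \in Y^\beta$. This is clear from the presentation in Definition \ref{KLR}: the generators $e(\mathbf m)$ and $\kappa_j$ and the relations involving only them are independent of $\Omega$, and only the polynomials $Q_{\mathbf m, i}$ and hence the $\tau_j$'s are twisted by the orientation; the isomorphism of \cite{R} adjusts the $\tau_j$'s by suitable polynomials in the $\kappa_j$'s while fixing each $e(\mathbf m)$.

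Since the idempotent $e_i(1) = \sum_{\mathbf m \in Y^\beta_{1,i}} e(\mathbf m)$ is defined purely from the combinatorics of sequences in $Y^\beta$, it is preserved under this isomorphism. Consequently, the two-sided ideal $R_\beta\, e_i(1)\, R_\beta$ is mapped isomorphically onto its analogue for any other orientation, and the quotients are canonically isomorphic. The same argument applied to $e_i^*(1)$ handles ${}^i R_{s_i\beta}^{s_i\Omega}$.

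There is essentially no obstacle here: the only thing to be careful about is the naturality of the isomorphism between KLR algebras across orientations, i.e. that it can be chosen to fix the idempotents $e(\mathbf m)$. Once that is observed, the corollary is immediate from Lemma \ref{i-quotients}.
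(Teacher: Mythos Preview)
Your proposal is correct and matches the paper's approach: the paper regards this corollary as immediate from Lemma \ref{i-quotients} (hence the bare $\Box$), and your argument spells out precisely why---the quotient descriptions $R_\beta/(R_\beta e_i(1) R_\beta)$ and $R_{s_i\beta}/(R_{s_i\beta} e_i^*(1) R_{s_i\beta})$ involve only the orientation-independent algebra $R_\beta$ and the combinatorially defined idempotents. Your additional remark that the isomorphism between KLR algebras for different orientations fixes each $e(\mathbf m)$ is the one point the paper leaves implicit, and it is good that you made it explicit.
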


\begin{proposition}\label{TM}
In the setting of Proposition \ref{bd}, two graded algebras ${} _i R _{\beta} ^{\Omega}$ and ${} ^i R _{s_i \beta} ^{s_i \Omega}$ are Morita equivalent to each other. In addition, this Morita equivalence is independent of the choice of $\Omega$ $($as long as $i$ is a sink$)$.
\end{proposition}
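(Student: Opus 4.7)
The plan is to produce a derived equivalence from the correspondence (\ref{df}) and then descend it to a graded Morita equivalence of the Ext-algebras. By Proposition \ref{bd}, $q^i_V$ is a torsor for the connected group $\mathrm{Aut}(V_i')$ and $p^i_{V'}$ is a torsor for $\mathrm{Aut}(V_i)$, both acting through $G_{V,V'}$. Pullback along a torsor for a connected group induces an equivalence of equivariant derived categories compatible with the perverse $t$-structure (up to a cohomological shift by the fiber dimension). Composing these, I would obtain an equivalence
\[
\Phi_i:D^b_{G_V}({}_iE_V^\Omega)\xrightarrow{\sim}D^b_{G_{V'}}({}^iE_{V'}^{s_i\Omega}),
\]
which I would regard as a geometric avatar of Saito's reflection at $i$.

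The next step is to transport the semisimple complex $j_V^*\mathcal L_\beta^\Omega$ across $\Phi_i$. By Lemma \ref{i-quotients} and the ensuing corollary, the simple summands of $j_V^*\mathcal L_\beta^\Omega$ are labelled by $\{b\in B(\infty)_\beta:\epsilon_i(b)=0\}$, while those of $\jmath_{V'}^*\mathcal L_{s_i\beta}^{s_i\Omega}$ are labelled by $\{b'\in B(\infty)_{s_i\beta}:\epsilon_i^*(b')=0\}$. Since $\Phi_i$ is an equivalence preserving the perverse $t$-structure, it induces a bijection between these two sets of simple equivariant perverse sheaves up to shift; this bijection is the categorical incarnation of Saito's reflection on the crystal (invoking Proposition \ref{KScrys} and Theorem \ref{crys} to identify the labelling sets). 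Taking graded Yoneda $\mathrm{Ext}$ on both ends of (\ref{df}) therefore supplies a graded Morita equivalence between ${}_iR_\beta^\Omega$ and ${}^iR_{s_i\beta}^{s_i\Omega}$, via the common intermediate algebra $\mathrm{Ext}^\bullet_{G_{V,V'}}((q^i_V)^*j_V^*\mathcal L_\beta^\Omega,(q^i_V)^*j_V^*\mathcal L_\beta^\Omega)$.

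For the independence of $\Omega$ (subject to $i$ remaining a sink), I would observe that the data defining the correspondence ${}_iZ_{V,V'}^\Omega$ at edges not meeting $i$ enters only through the identification $\phi f_h=f_h'\phi$. Any two orientations $\Omega,\Omega'$ both having $i$ as a sink differ by reversing arrows disjoint from $i$; such a reversal induces, by Varagnolo--Vasserot's Fourier-type identification underlying Theorem \ref{VV}, a $G_{V,V'}$-equivariant identification of the two diagrams (\ref{df}) compatible with the torsor projections, and hence identifies the resulting Morita equivalences. This is also consistent with Corollary \ref{indep}, which already asserts that the two endpoint algebras are intrinsic.

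The hardest part will be pinning down that $\Phi_i$ sends the distinguished semisimple complex on one side to the one on the other, up to shifts of simple summands -- i.e.\ that the induced bijection on simple perverse sheaves really coincides with Saito's reflection on the crystal. The orbit structure of ${}_iE_V^\Omega$ must be related to that of ${}^iE_{V'}^{s_i\Omega}$ through the torsor fibers, and one must keep careful track of the cohomological shift $\dim\mathrm{Aut}(V_i)-\dim\mathrm{Aut}(V_i')$ contributed by the two torsor pullbacks in order to match gradings correctly on the two sides. Once this matching and bookkeeping are settled, the Morita equivalence and its canonical character follow formally from the torsor pullback equivalences.
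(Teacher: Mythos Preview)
Your approach is correct and essentially identical to the paper's: both use the torsor pullbacks of Proposition~\ref{bd} (via \cite{BL} 2.2.5) to obtain equivalences $D^b_{G_V}({}_iE_V^\Omega)\cong D^b_{G_{V,V'}}({}_iZ_{V,V'}^\Omega)\cong D^b_{G_{V'}}({}^iE_{V'}^{s_i\Omega})$, and both treat the independence of $\Omega$ by compatibility of the Fourier transforms along edges away from $i$ with the diagram~(\ref{df}).

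The one point worth flagging is that your ``hardest part'' is not actually needed for this proposition. You do not have to show that $\Phi_i$ carries the specific complex $j_V^*\mathcal L_\beta^\Omega$ to $\jmath_{V'}^*\mathcal L_{s_i\beta}^{s_i\Omega}$ (with its particular multiplicities), nor that the induced bijection on simples agrees with Saito's crystal reflection. The paper sidesteps this entirely by passing to the \emph{basic rings}: the torsor equivalences give a bijection between simple equivariant perverse sheaves on the two ends (equation~(\ref{Scorr})), hence an isomorphism $B_{(G_V,{}_iE_V^\Omega)}\cong B_{(G_{V'},{}^iE_{V'}^{s_i\Omega})}$ of graded algebras (equation~(\ref{Acorr})). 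Since ${}_iR_\beta^\Omega$ and ${}^iR_{s_i\beta}^{s_i\Omega}$ are each Morita equivalent to their respective basic rings (by Corollary~\ref{allp}), the Morita equivalence follows immediately, with no need to identify which bijection of simples one has obtained. The identification of this bijection with $T_i$ is recorded separately after the proposition and used only later (Theorem~\ref{SRF}). Working with the basic ring also dissolves your grading worry: the shifts $\dim\mathop{GL}(V_i)$ and $\dim\mathop{GL}(V_i')$ in~(\ref{Scorr}) cancel when comparing $\mathrm{Ext}^\bullet$ of simple perverse sheaves on the two sides.
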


\begin{proof}
First, note that the maps $j_V, \jmath_{V'}$ are open embeddings. In particular, $j_V ^* \mathcal L ^{\Omega} _{V}$ and $\jmath _{V'}^* \mathcal L ^{s_i \Omega} _{V'}$ are again direct sums of shifted equivariant perverse sheaves. By Proposition \ref{bd} and \cite{BL} 2.2.5, we have equivalences
$$D _{G_V}^b ( {}_i E ^{\Omega} _V ) \stackrel{( q^i_V )^*}{\longrightarrow} D _{G_{V,V'}}^b ( {}_i Z ^{\Omega} _{V,V'} ) \stackrel{( p^i_{V'} )^*}{\longleftarrow} D _{G_{V'}}^b ( {}^i E ^{s_i \Omega} _{V'} ).$$
In addition, a simple $G_{V,V'}$-equivariant perverse sheaf $\mathcal L$ on ${}_i Z_{V,V'} ^{\Omega}$ admits isomorphisms
\begin{equation}
( q^i_V )^* \left( {}_i \mathcal L \, [\dim \mathop{GL} (V'_i)] \right) \cong \mathcal L \cong ( p^i_{V'} )^*  \left( {}^i \mathcal L \, [\dim \mathop{GL} (V_i)] \right),\label{Scorr}
\end{equation}
where ${}_i \mathcal L$ and ${}^i \mathcal L$ are simple $G_V$- and $G_{V'}$-equivariant perverse sheaves on ${}_i E ^{\Omega} _V$ and ${}^i E ^{s_i \Omega} _{V'}$, respectively. These induce isomorphisms of algebras:
\begin{equation}
B_{( G_V, {}_i E ^{\Omega} _V )} \cong B _{(G_{V,V'}, {}_i Z ^{\Omega} _{V,V'})} \cong B _{(G_{V'}, {}^i E ^{s_i \Omega} _{V'} )}.\label{Acorr}
\end{equation}
Therefore, $B_{( G_V, {}_i E ^{\Omega} _V )}$ and $B _{(G_{V'}, {}^i E ^{s_i \Omega} _{V'} )}$ are Morita equivalent to the algebras in the assertion by Corollary \ref{allp}, which implies the first assertion.

We prove the second assertion. For any two orientations $\Omega$ and $\Omega'$ which have $i$ as a common sink, we have Fourier transforms $\mathcal F ^{\Omega}$ and $\mathcal F ^{s_i \Omega}$ so that $\mathcal F ^{\Omega} ( \mathcal L ^{\Omega} _{\beta} ) = \mathcal L ^{\Omega'} _{\beta}$ and $\mathcal F ^{s_i \Omega} ( \mathcal L ^{s_i \Omega} _{s_i \beta} ) = \mathcal L ^{s_i \Omega'} _{s_i \beta}$. Since $\Omega _i = \Omega _i'$, these two Fourier transforms are induced by the pairing between direct summands $E \subset E_V^{\Omega}$ and $E^* \subset E_V ^{\Omega'}$ which can be identified with those of $E_{V'}^{s_i \Omega}$ and $E_{V'} ^{s_i \Omega'}$ in (\ref{df}) via $\phi$. Since the diagram (\ref{df}) is the product of a vector space and the contribution from $\Omega_i$, we conclude that two pairs of sheaves $( \mathcal L ^{\Omega} _{\beta}, \mathcal L ^{s_i \Omega} _{s_i \beta})$ and $( \mathcal L ^{\Omega'} _{\beta}, \mathcal L ^{s_i \Omega'} _{s_i \beta})$ are exchanged by $\mathcal F ^{\Omega}$ and $\mathcal F ^{s_i \Omega}$ commuting with the diagram (\ref{df}). This identifies the Morita equivalences obtained by $\Omega$ and $\Omega'$ as required.
\end{proof}


The maps $q^i_V$ and $p^i_{V'}$ give rise to a correspondence between orbits. For each $b \in B ( \infty )_{s_i \beta}$, we denote by $T_i ( b ) \in B ( \infty ) _{\beta} \sqcup \{ \emptyset \}$ the element so that $( p ^i _{V'} )^{-1} ( \mathbb O _b ^{s _i \Omega} ) \cong ( q^i _V )^{-1} ( \mathbb O _{T _i ( b )} ^{\Omega})$ (we understand that $T _i ( b ) = \emptyset$ if $\mathbb O ^{s _i \Omega} _{b} \not\subset \mathrm{Im} \, p ^i _{V'}$). Note that $T_i ( b ) = \emptyset$ if and only if $\epsilon ^* _i ( b ) > 0$. In addition, we have $\epsilon _i ( T_i ( b ) ) = 0$ if $T_i ( b ) \neq \emptyset$. We set $T_i ^{-1} (b') := b$ if $b' = T_i ( b ) \neq \emptyset$.

Thanks to Corollary \ref{indep}, we can drop $\Omega$ or $s_i \Omega$ from ${} _i R _{\beta} ^{\Omega}$ and ${} ^i R _{\beta} ^{s _i \Omega}$. We define a left exact functor
$$\mathbb T^* _i : R _{\beta} \mathchar`-\mathsf{gmod} \longrightarrow \!\!\!\!\! \rightarrow {} _i R _{\beta}\mathchar`-\mathsf{gmod} \stackrel{\cong}{\longrightarrow} {}^i R_{s_i \beta} \mathchar`-\mathsf{gmod} \hookrightarrow R_{s_{i} \beta} \mathchar`-\mathsf{gmod},$$
where the first functor is $\mathrm{Hom} _{R_{\beta}} ( {} _i R_{\beta}, \bullet )$, the second functor is Proposition \ref{TM}, and the third functor is the pullback. Similarly, we define a right exact functor
$$\mathbb T _i : R _{\beta} \mathchar`-\mathsf{gmod} \longrightarrow \!\!\!\!\! \rightarrow {} ^i R _{\beta}\mathchar`-\mathsf{gmod} \stackrel{\cong}{\longrightarrow} {}_i R_{s_i \beta} \mathchar`-\mathsf{gmod} \hookrightarrow R_{s_{i} \beta} \mathchar`-\mathsf{gmod},$$
where the first functor is ${}^i R_{\beta} \otimes_{R_{\beta}} \bullet$. We call these functors the Saito reflection functors (cf. \cite{S}). By the latter part of Proposition \ref{TM}, we see that these functors are independent of the choices involved.

Let $i \in I$. We define $R _{\beta} \mathchar`-\mathsf{gmod} _i$ (resp. $R _{\beta} \mathchar`-\mathsf{gmod} ^{i}$) to be the fullsubcategory of $R _{\beta} \mathchar`-\mathsf{gmod}$ so that each simple subquotient is of the form $L_b \left< k \right>$ ($k \in \mathbb Z$) with $b \in B ( \infty )_{\beta}$ that satisfies $\epsilon _i ( b ) = 0$ (resp. $\epsilon _i^* ( b ) = 0$). In addition, for each $i \neq j \in I$, we define $R _{\beta} \mathchar`-\mathsf{gmod} _{j} ^i := R _{\beta} \mathchar`-\mathsf{gmod} ^{i} \cap R _{\beta} \mathchar`-\mathsf{gmod} _{j}$.

\begin{theorem}[Saito reflection functors]\label{SRF}
Let $i \in I$. We have:
\begin{enumerate}
\item Assume that $i$ is a source of $\Omega$. For each $b \in B ( \infty )_{\beta}$, we have
$$\mathbb T_i K _b ^{\Omega} = \begin{cases} K _{T_i(b)} ^{s_i \Omega} & (\epsilon^*_i (b) = 0)\\ \{0\} & (\epsilon^*_i (b) > 0)\end{cases};$$
\item For each $b \in B ( \infty )_{\beta}$, we have
$$\mathbb T_i L _b = \begin{cases} L_{T_i(b)} & (\epsilon^*_i (b) = 0)\\ \{0\} & (\epsilon^*_i (b) > 0)\end{cases}, \text{ and } \hskip 3mm \mathbb T_i^* L _b = \begin{cases} L_{T_i^{-1}(b)} & (\epsilon_i (b) = 0)\\ \{0\} & (\epsilon_i (b) > 0)\end{cases};$$
\item The functors $( \mathbb T_i, \mathbb T^*_i)$ form an adjoint pair;
\item For each $M \in R _{\beta} \mathchar`-\mathsf{gmod} ^i$ and $N \in R_{s_i \beta} \mathchar`-\mathsf{gmod} _i$, we have
$$\mathrm{ext} _{R_{s_{i} \beta}} ^{*} ( \mathbb T _i M, N ) \cong \mathrm{ext} _{R_{\beta}} ^{*} ( M, \mathbb T^* _i N );$$
\item Let $i \neq j \in I$. For each $\beta \in Q^+$ and $m \ge 0$, we have
$$\mathbb T_i ( P_{mj} \star M ) \cong ( \mathbb T_i P_{mj} ) \star \mathbb T_i M$$
as graded $R_{s_i ( \beta + m \alpha _j )}$-modules for every $M \in R _{\beta} \mathchar`-\mathsf{gmod} _j$.
\end{enumerate}
\end{theorem}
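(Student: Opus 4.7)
The plan is to handle the parts in the order (3), (4), (2), (1), (5), since each subsequent part builds on the earlier ones. For (3), the adjointness assembles from three standard adjunctions: the initial step of $\mathbb{T}_i$, namely ${}^i R_\beta \otimes_{R_\beta} -$, is left adjoint to the restriction along $R_\beta \twoheadrightarrow {}^i R_\beta$ provided by Lemma \ref{i-quotients}; dually, $\mathrm{Hom}_{R_{s_i \beta}}({}_i R_{s_i \beta}, -)$ is right adjoint to the restriction $R_{s_i \beta} \twoheadrightarrow {}_i R_{s_i \beta}$; and the Morita equivalence of Proposition \ref{TM} is, being an equivalence, adjoint on both sides. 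Composing yields $(\mathbb{T}_i, \mathbb{T}_i^*)$. For (4), the key observation is that on $M \in R_\beta \mathchar`-\mathsf{gmod}^i$ the unit $M \to {}^i R_\beta \otimes_{R_\beta} M$ is an isomorphism (since by Theorem \ref{crys} $e_i^*(1)$ annihilates every composition factor of $M$, hence $M$ itself), and symmetrically for $N \in R_{s_i \beta} \mathchar`-\mathsf{gmod}_i$; an iteration of Corollary \ref{isomKext} applied to the open immersion ${}^i E^\Omega_V \hookrightarrow E^\Omega_V$ then upgrades the underlying $\mathrm{Hom}$-comparison to an identification of the higher $\mathrm{ext}^*$.

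For (2), the vanishing is immediate: when $\epsilon_i^*(b) > 0$, Theorem \ref{crys} gives $e_i^*(1) L_b \neq \{0\}$, so $L_b \notin R_\beta \mathchar`-\mathsf{gmod}^i$ and hence ${}^i R_\beta \otimes_{R_\beta} L_b = \{0\}$. When $\epsilon_i^*(b) = 0$, the module $L_b$ corresponds via Proposition \ref{TM} to the simple equivariant perverse sheaf on ${}^i E^\Omega_V$ whose open orbit is $\mathbb{O}_b^\Omega \cap {}^i E^\Omega_V$, and the torsor correspondence of Proposition \ref{bd} sends this to the simple perverse sheaf on ${}_i E^{s_i \Omega}_{V'}$ associated with $\mathbb{O}_{T_i(b)}^{s_i \Omega} \cap {}_i E^{s_i \Omega}_{V'}$ by the very definition of $T_i$, producing $L_{T_i(b)}$. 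The argument for $\mathbb{T}_i^*$ is symmetric, using $T_i^{-1}$. For (1), the composition factors of both $\widetilde{K}_b^\Omega$ and $K_b^\Omega$ are of the form $L_{b'}$ with $b' \succeq b$ (i.e., $\mathbb{O}_b^\Omega \subseteq \overline{\mathbb{O}_{b'}^\Omega}$), by Theorem \ref{KS}. When $\epsilon_i^*(b) = 0$ and $i$ is a source of $\Omega$, upper-semicontinuity of $\epsilon_i^*$ on $E^\Omega_V$ forces $\epsilon_i^*(b') \leq \epsilon_i^*(b) = 0$ for all such $b'$; hence $\widetilde{K}_b^\Omega$ and $K_b^\Omega$ both lie in $R_\beta \mathchar`-\mathsf{gmod}^i$. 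Since $q^i_V$ and $p^i_{V'}$ are smooth of constant relative dimension, both $!$- and $*$-pullbacks transport cleanly through the correspondence, carrying the standard module attached to $\mathbb{O}_b^\Omega$ to the standard module attached to $\mathbb{O}_{T_i(b)}^{s_i \Omega}$.

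Part (5) is the main obstacle. The plan is to realize the induction $P_{mj} \star M$ geometrically via a correspondence recording an additional $m \alpha_j$-block inserted into the $I$-graded flag, in the spirit of the induction theorems of \cite{KaL,L-CG3,K1}. For $j \neq i$, enlarging at vertex $j$ is independent of the surjectivity/injectivity condition at $i$ defining the open subsets ${}_i E^\Omega_V$ and ${}^i E^{s_i \Omega}_{V'}$; this yields a commutative diagram of $G$-varieties fitting the induction correspondence together with the BGP-type correspondence of Proposition \ref{bd}. The delicate point is tracking the equivariant structures and the degree shifts through this composite diagram, but once commutativity is verified, the claimed isomorphism $\mathbb{T}_i(P_{mj} \star M) \cong \mathbb{T}_i P_{mj} \star \mathbb{T}_i M$ follows by comparing the induced actions on the relevant $\mathrm{Ext}$-algebras.
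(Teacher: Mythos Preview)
Your treatment of parts (1)--(4) is essentially what the paper does, up to a harmless reordering: the paper proves (1) first and deduces (2) as an immediate consequence, while you argue (2) directly via the torsor correspondence and then (1) via upper-semicontinuity. The adjunction (3) and the $\mathrm{ext}$-comparison (4) match the paper's use of Lemma~\ref{i-quotients} and Corollary~\ref{isomKext}.

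Part (5), however, is where your proposal has a genuine gap. Your plan is to commute the BGP-type correspondence with the induction correspondence directly at the level of varieties, on the grounds that ``enlarging at vertex $j$ is independent of the surjectivity/injectivity condition at $i$.'' This independence claim is false in the interesting case $i \leftrightarrow j$: if there is an edge between $i$ and $j$, then modifying $V_j$ changes the very map whose kernel or cokernel defines ${}^i E^\Omega_V$ or ${}_i E^{s_i\Omega}_{V'}$. The paper in fact treats the cases $i \not\leftrightarrow j$ and $i \leftrightarrow j$ separately (see the proofs of Corollary~\ref{j-ind} and Lemma~\ref{i-factor}), and the adjacent case requires real work. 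More structurally, you have not produced any map between the two functors $F_1 = (\mathbb T_i P_{mj}) \star \mathbb T_i(-)$ and $F_2 = \mathbb T_i(P_{mj} \star -)$; a commutative diagram of varieties does not by itself yield a natural transformation of module-theoretic functors, because $\mathbb T_i$ begins with the non-exact quotient ${}^i R_\beta \otimes_{R_\beta}(-)$.

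The paper's route is substantially different. It first establishes the isomorphism on standard modules via the Induction Theorem (Theorem~\ref{induction} and Corollary~\ref{j-ind}), then constructs a natural transformation $F_1 \to F_2$ by showing that the composite algebra map $R_{m s_i\alpha_j} \boxtimes R_{s_i\beta} \to {}_i R_{m s_i\alpha_j + s_i\beta}$ factors through ${}_i R_{m s_i\alpha_j} \boxtimes {}_i R_{s_i\beta}$; this factoring is exactly the content of Lemma~\ref{i-factor}, which controls the heads of $\mathbb T_i K_{mj} \star L_b$ when $\epsilon_i(b) > 0$. Finally, the natural transformation is shown to be an isomorphism by a graded-character comparison relying on Lemma~\ref{indpro}. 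None of these ingredients is visible in your sketch, and the Induction Theorem in particular is a nontrivial input you would still need even if your diagram chase could be made precise.
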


\begin{remark}
The proof of Theorem \ref{SRF} is given by two parts, namely 1)--4) and 5). We warn that the proof of the latter part rests on the earlier part.
\end{remark}

\begin{proof}[Proof of Theorem \ref{SRF} 1)--4)]
We prove the first assertion. The subset ${}^i E _V ^{\Omega} \subset E _V ^{\Omega}$ (with $\underline{\dim} \, V = \beta$) is open. Therefore, Theorem \ref{KS} 1) asserts that $[K _b ^{\Omega} : L _{b'}] \neq 0$ only if $\epsilon _i ^* ( b' ) = 0$ whenever $\epsilon _i ^* ( b ) = 0$. It follows that ${}^i R_{\beta} \otimes_{R_{\beta}} K _b ^{\Omega} \cong K _b ^{\Omega}$ as a vector space if $\epsilon ^* _i ( b ) = 0$, and $\{ 0 \}$ otherwise. This gives rise to a standard module of ${}^i R _{\beta}$ by Lemma \ref{localKsc}, and thus it gives a standard module of ${}_i R_{s_i \beta}$ by Proposition \ref{TM}. Note that the subset ${}_i E _{V'} ^{s_i \Omega} \subset E _{V'} ^{s_i \Omega}$ (with $\underline{\dim} \, V' = s_i \beta$) is also open. Therefore, we use Lemma \ref{i-quotients} to deduce the first assertion.

The second assertion is immediate from the first assertion and the construction of $\mathbb T_i$ and $\mathbb T_i^*$.

We prove the third assertion. By Lemma \ref{i-quotients}, we know that $\mathbb T_i$ factors through the functor giving the maximal quotient which is a ${} ^i R _{\beta}$-module, while $\mathbb T_i^*$ factors through the functor giving the maximal submodule which is an ${}_i R_{\beta}$-module. Therefore, the third assertion follows by the Morita equivalence ${} ^i R_{\beta} \mathchar`-\mathsf{gmod} \cong {}_i R_{s_i \beta} \mathchar`-\mathsf{gmod}$ for every $\beta \in Q^+ \cap s_i Q^+$.

For the fourth assertion, notice that $R_{\beta}$- and $R_{s_i \beta}$-action on $M$ and $N$ factors through ${}^i R_{\beta}$ and ${}_i R_{s_i \beta}$, respectively. It follows that ${}^i R_{\beta} \otimes _{R_{\beta}} M \cong M$, ${}_i R_{s_i \beta} \otimes _{R_{s_i \beta}} \mathbb T_i M \cong \mathbb T_i M$, and $\mathrm{Hom} _{R_{s_i \beta}} ( {} _i R_{s_i \beta}, N ) \cong N$. By Lemma \ref{i-quotients} and Corollary \ref{isomKext}, we deduce that each indecomposable projective ${}_i R_{s_i \beta}$-module ${} _i P$ admits an $R_{s_i \beta}$-graded projective resolution
$$\cdots \to P_2 \to P_1 \to P_0 \to {}_i P \to 0$$
so that $P_0$ is indecomposable and ${}_i R_{s_i\beta} \otimes_{R_{s_i\beta}} P_k = \{ 0 \}$ for $k \ge 1$. Therefore, we have
$$\mathrm{ext} _{R_{s_i \beta}} ^{*} ( M, N ) \cong \mathrm{ext} _{{}_i R_{s_i \beta}} ^{*} ( M, N ),$$
where we regard $M, N$ as ${}_i R_{s_i\beta}$-modules via Proposition \ref{TM} (here we treat the Morita equivalence as an isomorphism for simplicity). Applying the same argument for ${}^i R_{\beta}$ (again for $M$), we conclude the result.
\end{proof}

\begin{lemma}\label{indpro}
Let $i \in I$. For each $\beta \in Q^+$, $m \ge 0$, and an indecomposable graded projective ${}_i R_{\beta}$-module $P$, the module $P _{mi} \star P$ is an $R_{\beta + m \alpha_i}$-module with simple head.
\end{lemma}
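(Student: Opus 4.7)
The plan is to use the induction--restriction adjunction for the convolution functor $\star$ together with the crystal--theoretic results of Theorem \ref{crys}. First, by Lemma \ref{i-quotients}, the indecomposable graded projective ${}_iR_\beta$-module $P$, regarded as an $R_\beta$-module, has simple head $L_b$ for a unique $b \in B(\infty)_\beta$ with $\epsilon_i(b)=0$, and every composition factor of $P$ is of the form $L_{b''}\langle k \rangle$ with $\epsilon_i(b'')=0$. In particular, $P$ is annihilated by the two-sided ideal $R_\beta e_i(1) R_\beta$, so the outer tensor product $P_{mi} \boxtimes P$ is an indecomposable $R_{m\alpha_i} \boxtimes R_\beta$-module with simple head $L_{mi} \boxtimes L_b$ whose second tensor factor is an ${}_iR_\beta$-module.

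For an arbitrary simple $L_{b'}$ of $R_{\beta+m\alpha_i}$, Frobenius reciprocity gives
\[
\hom_{R_{\beta+m\alpha_i}}(P_{mi} \star P, L_{b'}) \cong \hom_{R_{m\alpha_i}\boxtimes R_\beta}(P_{mi} \boxtimes P, \mathrm{Res}(L_{b'})).
\]
Since $P$ is annihilated by $e_i(1)$, any nonzero morphism factors through the maximal $R_{m\alpha_i} \boxtimes {}_iR_\beta$-submodule of $\mathrm{Res}(L_{b'})$. I analyze the $e_i$-filtration $e_i(m) L_{b'} \supseteq e_i(m+1) L_{b'} \supseteq \cdots$ of the restriction and check, layer by layer, that a composition factor of the form $L_{mi} \boxtimes L_b$ with $\epsilon_i(b)=0$ can only come from the top stratum $e_i(m)L_{b'}/e_i(m+1)L_{b'}$; the deeper layers $e_i(k)L_{b'}/e_i(k+1)L_{b'}$ (for $k>m$) only contribute composition factors of the form $L_{mi} \boxtimes L_{\tilde b}$ with $\epsilon_i(\tilde b)=k-m>0$ in view of the idempotent description of each stratum.

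Applying Theorem \ref{crys} to this top stratum when $\epsilon_i(b')=m$ gives $e_i(m)L_{b'} \cong L_{mi} \boxtimes L_{b^*}$ irreducibly for a unique $b^* \in B(\infty)_\beta$ with $\epsilon_i(b^*)=0$, and the injectivity of the assignment $b' \mapsto b^*$ forces both $b^*=b$ and uniqueness of $b'$. Multiplicity one with a unique grading shift follows because $\mathrm{End}(L_{mi} \boxtimes L_b) \cong \mathbb C$ is concentrated in degree $0$. Existence of such $b'$ is automatic, since $P_{mi} \star P$ is nonzero by exactness of $\star$ and therefore has a nonzero head. The principal obstacle, and the step demanding the most care, is the layer-by-layer analysis of the $e_i$-filtration ruling out the deeper contributions, which I will handle by a direct computation using the $e(\mathbf m)$-decomposition of $\mathrm{Res}(L_{b'})$ under $R_{m\alpha_i}\boxtimes R_\beta$ together with the crystal description of simple restrictions from Theorem \ref{crys}.
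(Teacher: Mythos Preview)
Your overall strategy---Frobenius reciprocity followed by Theorem \ref{crys}---is exactly the paper's. The difference, and the gap, lies in the step that forces $\epsilon_i(b')=m$ for any simple quotient $L_{b'}$ of $P_{mi}\star P$.

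The paper handles this on the \emph{induced} side: by the PBW Theorem \ref{PBW}, $P_{mi}\star P$ decomposes as $\bigoplus_w \tau_w\,(P_{mi}\boxtimes P)$ over minimal coset representatives $w\in\mathfrak S_{n+m}/(\mathfrak S_m\times\mathfrak S_n)$. Since $e_i(1)P=0$, every weight $\mathbf m'$ occurring in $P$ has $m'_1\neq i$; applying any such $w$ to $mi+\mathbf m'$ never produces more than $m$ leading $i$'s. Hence $e_i(m+1)(P_{mi}\star P)=0$, so $\epsilon_i(b')\le m$. This is one line from PBW.

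You instead try to argue on the \emph{restricted} side via the ``$e_i$-filtration'' $e_i(m)L_{b'}\supseteq e_i(m+1)L_{b'}\supseteq\cdots$. The problem is that these subspaces are \emph{not} $R_{m\alpha_i}\boxtimes R_\beta$-submodules: the generator $\tau_1$ of the $R_\beta$-factor acts as $\tau_{m+1}$ in $R_{\beta+m\alpha_i}$, and for $\mathbf m\in Y^{\beta+m\alpha_i}_{m+1,i}$ with $m_{m+2}\neq i$ it sends $e(\mathbf m)$ to $e(\sigma_{m+1}\mathbf m)\notin Y^{\beta+m\alpha_i}_{m+1,i}$. So the ``layers'' $e_i(k)L_{b'}/e_i(k+1)L_{b'}$ are not modules, and the phrase ``composition factors of a layer'' has no meaning as stated. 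Your promised ``direct computation using the $e(\mathbf m)$-decomposition'' does not explain how you will repair this; in particular, you have not excluded the possibility that $\mathrm{Res}(L_{b'})$ with $\epsilon_i(b')>m$ has a composition factor $L_{mi}\boxtimes L_{\tilde b}$ with $\epsilon_i(\tilde b)=0$. The cleanest fix is to abandon the filtration on the restriction and instead use PBW on $P_{mi}\star P$ as the paper does; once $e_i(m+1)(P_{mi}\star P)=0$ is established, your remaining paragraph (invoking Theorem \ref{crys} and the injectivity of $b'\mapsto b^*$) goes through verbatim.
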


\begin{proof}
By the Frobenius reciprocity, we have
\begin{equation}
\mathrm{hom} _{R_{\beta + m \alpha _i}} ( P _{m i} \star P, L _{b} ) \cong \mathrm{hom} _{R_{m \alpha _i} \boxtimes R_{\beta}} ( P _{m i} \boxtimes P, L _{b} )\label{FRind}
\end{equation}
for every $b \in B ( \infty ) _{\beta + m \alpha _i}$. Assume that the above space is non-zero to deduce the uniqueness of $b$ and the one-dimensionality of $(\ref{FRind})$. Choose $d \in B ( \infty )_{\beta}$ so that $L_{d}$ is the unique simple quotient of $P$. We have $\epsilon _i ( d ) = 0$ by assumption. By Theorem \ref{PBW}, we have $e ( \mathbf m) (P _{m i} \star P ) \neq \{ 0 \}$ only if there exist a minimal length representative $w \in \mathfrak S_{( \mathsf{ht} \, \beta + m )} / ( \mathfrak S_m \times \mathfrak S _{\mathsf{ht} \, \beta} )$ and $\mathbf m' \in Y ^{\beta}$ so that $e ( \mathbf m' ) P \neq \{ 0 \}$ and $\mathbf m = w ( mi + \mathbf m' )$. Since $\mathbf m' \not\in Y ^{\beta}_{1,i}$, we deduce $\epsilon _{i} ( b ) \le m$. Thus, if (\ref{FRind}) is non-trivial, then we have $\epsilon _i ( b ) = m$ and $w = 1$. Now Theorem \ref{crys} forces $e _i ( m ) L_b \cong L_{mi} \boxtimes L_{d}$. Therefore, $P_{m i} \star P$ has at most one quotient, which completes the proof.
\end{proof}

\begin{theorem}[Induction theorem]\label{induction}
Let $V (i)$ be $I$-graded vector spaces with $\underline{\dim} \, V (i) = \beta _i$, and $b_i \in B ( \infty )_{\beta_i}$ for $i = 1,2$. Let $b \in B ( \infty )_{\beta_1 + \beta _2}$ so that $\mathtt M _{b} \cong \mathtt M _{b_1} \oplus \mathtt M_{b_2}$ as $\mathbb C [\Gamma]$-modules. Assume the following condition $(\star)$:
\begin{itemize}
\item[$(\star)_0$] $M_{b_1'}$ is not a quotient of $M_b$ for every $b_1 \neq b_1' \in B ( \infty ) _{\beta_1}$, and $M_{b_2'}$ is not a submodule of $M_b$ for every $b_2 \neq b_2' \in B ( \infty ) _{\beta_2}$;
\item[$(\star)_1$] $\mathrm{Ext} ^1 _{\mathbb C [\Gamma]} ( \mathtt M_{b_1}, \mathtt M_{b_2} ) = \{ 0 \}$.
\end{itemize}
We have an isomorphism $K_{b_1} ^{\Omega} \star K_{b_2} ^{\Omega} \cong K _{b} ^{\Omega}$ as an ungraded $R_{\beta_1 + \beta_2}$-module.

In addition, if $\mathtt M_{b}$ canonically determines the factor $\mathtt M _{b_2}$ as a vector subspace, then $(\star)_0$ and $(\star)_1$ implies
$$K_{b_1} ^{\Omega} \star K_{b_2} ^{\Omega} \cong K _{b} ^{\Omega}$$
as a graded $R_{\beta_1 + \beta_2}$-module.
\end{theorem}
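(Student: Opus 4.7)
The plan is to realise both sides as costalks of a convolution sheaf on $E_V^\Omega$ and to use the representation-theoretic hypotheses $(\star)_0$ and $(\star)_1$ to identify the relevant geometric fibre. Set $V=V(1)\oplus V(2)$ and form the standard induction diagram (cf.\ \cite{VV,KaL,L-CG3,K1})
$$E^\Omega_{V(1)}\times E^\Omega_{V(2)}\ \xleftarrow{\;\kappa\;}\ Z\ \xrightarrow{\;\sigma\;}\ E^\Omega_V,$$
classifying pairs $(x,U)$ of a representation $x$ on $V$ together with an $x$-stable $I$-graded subspace $U\subset V$ of dimension $\beta_2$, with $\sigma(x,U)=x$ and $\kappa(x,U)$ the pair of induced representations on $V/U$ and $U$; here $\kappa$ is smooth and $\sigma$ is proper. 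The induction theorem imported from \cite{KaL,L-CG3,K1} identifies, up to the appropriate cohomological shift,
$$K^\Omega_{b_1}\star K^\Omega_{b_2}\;\cong\;\mathrm{Ext}^\bullet_{G_V}\bigl(\sigma_!\kappa^*(\mathbb C^\Omega_{b_1}\boxtimes\mathbb C^\Omega_{b_2}),\ \mathcal L^\Omega_{\beta_1+\beta_2}\bigr).$$

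By $(\star)_1$, every short exact sequence $0\to\mathtt M_{b_2}\to\mathtt N\to\mathtt M_{b_1}\to 0$ splits, so $\mathtt N\cong\mathtt M_b$; hence $\sigma(\kappa^{-1}(\mathbb O^\Omega_{b_1}\times\mathbb O^\Omega_{b_2}))\subset\mathbb O^\Omega_b$ and $\sigma_!\kappa^*(\mathbb C^\Omega_{b_1}\boxtimes\mathbb C^\Omega_{b_2})$ is supported on $\overline{\mathbb O^\Omega_b}$. By proper base change, its $!$-costalk at $x_b$ is the compactly supported cohomology of
$$F_{x_b}\ =\ \sigma^{-1}(x_b)\cap\kappa^{-1}(\mathbb O^\Omega_{b_1}\times\mathbb O^\Omega_{b_2})\ =\ \{\,U\subset\mathtt M_b\,:\,U\cong\mathtt M_{b_2}\text{ and }\mathtt M_b/U\cong\mathtt M_{b_1}\,\}.$$
By $(\star)_0$, any $x_b$-stable subspace of dimension $\beta_2$ is automatically isomorphic to $\mathtt M_{b_2}$ (otherwise some $\mathtt M_{b_2'}$ with $b_2'\ne b_2$ would be a submodule of $\mathtt M_b$), and dually the quotient is isomorphic to $\mathtt M_{b_1}$; thus $F_{x_b}$ equals the full quiver Grassmannian of $x_b$-stable $I$-graded subspaces of dimension $\beta_2$ in $\mathtt M_b$.

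The same variety $F_{x_b}$ enters the flag realisation $\mathcal L^\Omega_{\beta_1+\beta_2}\cong\bigoplus_{\mathbf m}(\pi^\Omega_{\mathbf m})_!\underline{\mathbb C}[d^\Omega_{\mathbf m}]$ underlying $K_b^\Omega$: every flag over $\mathtt M_b$ factors through its dimension-$\beta_2$ step, and by $(\star)_0$ this step together with the induced flags on the sub and on the quotient reproduces the Lusztig data on $\mathtt M_{b_1}$ and on $\mathtt M_{b_2}$ separately. Matching these two descriptions yields the ungraded isomorphism $K^\Omega_{b_1}\star K^\Omega_{b_2}\cong K^\Omega_b$. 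The identification carries a grading shift governed by the cohomology of $F_{x_b}$, and this shift vanishes precisely when $F_{x_b}$ is a single point---which is the content of the hypothesis that $\mathtt M_b$ canonically determines its $\mathtt M_{b_2}$-subspace---giving the graded isomorphism in the second part of the theorem.

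The main obstacle will be pinning down the shift conventions so that the algebraic induction $\star$ and the sheaf-level convolution $\sigma_!\kappa^*$ match on the nose, and handling the equivariant bookkeeping in the flag realisation identification. Granting these compatibilities, the representation-theoretic hypotheses $(\star)_0$ and $(\star)_1$ drive the geometric comparison, and the rest of the argument is a standard proper-base-change computation.
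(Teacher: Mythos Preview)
Your overall strategy---use the convolution diagram, invoke $(\star)_1$ to confine the support of $\sigma_!\kappa^*(\mathbb C_{b_1}\boxtimes\mathbb C_{b_2})$ to $\overline{\mathbb O_b}$, and use $(\star)_0$ to identify the fibre over $x_b$---is exactly the one the paper follows. Your reading of the graded refinement (the shift vanishes when the fibre is a point) also matches the paper's final line. But there is a genuine gap in the step you label ``matching these two descriptions''.

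The claim that ``every flag over $\mathtt M_b$ factors through its dimension-$\beta_2$ step'' is false: a flag of type $\mathbf m\in Y^{\beta}$ over $x_b$ has intermediate dimension vectors $\beta-\sum_{j\le k}\alpha_{m_j}$, and $\beta_2$ appears among these only when $\mathbf m=\mathbf m^1+\mathbf m^2$ for some $\mathbf m^i\in Y^{\beta_i}$. So your matching argument at best compares the $e(\mathbf m^1+\mathbf m^2)$-pieces of the two sides, while both $K_b^\Omega$ and $K_{b_1}^\Omega\star K_{b_2}^\Omega$ have nonzero $e(\mathbf m)$-components for many other $\mathbf m$ (indeed $R_\beta$ is free of rank $\binom{n}{n_1}$ over $R_{\beta_1}\boxtimes R_{\beta_2}$, so the induced module spreads out). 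The paper does not attempt a direct fibre-by-fibre identification. Instead it (i)~proves that the convolution $\mathbb C_{b_1}\odot\mathbb C_{b_2}$ is $H^\bullet(\mathcal P)[d]\boxtimes\mathbb C_b$ for a partial flag variety $\mathcal P$ (your $F_{x_b}$), (ii)~uses a purity argument to degenerate a Leray spectral sequence and produce a \emph{surjection} $K_{b_1}\star K_{b_2}\langle d\rangle\twoheadrightarrow K_b$, and then (iii)~establishes the equality of dimensions via torus localisation on the Springer-type fibres together with the rank-$\binom{n}{n_1}$ freeness (Lemma~\ref{ind-dim-est}). Step~(iii) is precisely what compensates for the flags that do not factor through a $\beta_2$-step, and it is not visible in your sketch.

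A secondary point: your displayed identification $K_{b_1}^\Omega\star K_{b_2}^\Omega\cong\mathrm{Ext}^\bullet_{G_V}(\sigma_!\kappa^*(\mathbb C_{b_1}\boxtimes\mathbb C_{b_2}),\mathcal L_\beta)$ is not something the cited references hand you in this setting; note also that $K_b$ is defined via the \emph{non}-equivariant costalk $i_b^!\mathcal L_\beta$, whereas the equivariant $\mathrm{Ext}_{G_V}$ would naturally produce $\widetilde K$'s. The paper circumvents this by working with the sheaf $\mathcal Ext$ and its costalk (Proposition~\ref{reduce}) rather than asserting a global module isomorphism at the outset. This is more than a shift-convention issue.
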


Before proving Theorem \ref{induction}, we present some of its consequences. The proof of Theorem \ref{induction} itself is given at the end of this section.

\begin{corollary}\label{i-ind}
Suppose that $i$ is a sink of $\Omega$. Let $m \ge 0$. For each $\beta \in Q^+$ and $b \in B ( \infty ) _{\beta}$ with $\epsilon _i ( b ) = 0$, the module $K_{mi} \star K_{b} ^{\Omega}$ is an indecomposable graded $R_{\beta + m \alpha_i}$-module isomorphic to $K_{b'} ^{\Omega}$ with $\mathtt M _{b'} \cong \mathtt M _i ^{\oplus m} \oplus \mathtt M _b$.
\end{corollary}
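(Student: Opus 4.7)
The plan is to obtain this as a direct application of Theorem~\ref{induction}: set $\beta_1 = m\alpha_i$ and $\beta_2 = \beta$, take $b_1$ to be the unique element of $B(\infty)_{m\alpha_i}$ guaranteed by Example~\ref{one-root} (so $\mathtt M_{b_1} \cong \mathtt M_i^{\oplus m}$), put $b_2 = b$, and let the index $b \in B(\infty)_{\beta_1+\beta_2}$ of the theorem play the role of the $b'$ in the corollary, characterised by $\mathtt M_{b'} \cong \mathtt M_i^{\oplus m} \oplus \mathtt M_b$. Once the hypotheses $(\star)_0$ and $(\star)_1$ are in place, the ``In addition'' clause of the theorem will even supply the $\mathbb Z$-grading, and indecomposability of $K_{b'}^{\Omega}$ will be a formal consequence.

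Verifying $(\star)_1$ is immediate: since $i$ is a sink of $\Omega$, the simple $\mathbb C [\Gamma]$-module $\mathtt M_i$ has no outgoing arrows, hence is projective, so $\mathrm{Ext}^1_{\mathbb C [\Gamma]}(\mathtt M_i^{\oplus m}, \mathtt M_b) = 0$. The first clause of $(\star)_0$ is vacuous because $B(\infty)_{m\alpha_i}$ is a singleton. The main content of the argument, and the only place the hypothesis $\epsilon_i(b)=0$ enters, is the second clause of $(\star)_0$, which I would deduce from the stronger assertion that $\mathtt M_b$ is the \emph{unique} subrepresentation of $\mathtt M_{b'}$ of dimension vector $\beta$. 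For such a subrepresentation $N \subset \mathtt M_i^{\oplus m} \oplus \mathtt M_b$, comparing dimensions component-by-component already forces $N_j = (\mathtt M_b)_j$ for $j \neq i$. Since $i$ is a sink, every arrow entering the $i$-th component of $\mathtt M_{b'}$ originates in the $\mathtt M_b$ summand, so stability of $N$ gives $N_i \supset \sum_{h : h''=i} \mathrm{Im}\bigl((\mathtt M_b)_h\bigr)$; the hypothesis $\epsilon_i(b)=0$ identifies this sum with the whole $(\mathtt M_b)_i$, and the dimension match then forces $N = \mathtt M_b$.

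This same argument shows that $\mathtt M_b \subset \mathtt M_{b'}$ is canonically determined as a vector subspace, so the ``In addition'' clause of Theorem~\ref{induction} upgrades the isomorphism to a graded one $K_{mi} \star K_b^{\Omega} \cong K_{b'}^{\Omega}$. For indecomposability, I would argue that $K_{b'}^{\Omega}$ has simple head $L_{b'}$: the dual standard $\widetilde{K}_{b'}^{\Omega}$ is a quotient of the indecomposable projective $P_{b'}$ by Theorem~\ref{KS}(3), hence has simple head $L_{b'}$, and $K_{b'}^{\Omega}$ arises as a further quotient via the top step of its self-extension filtration (Theorem~\ref{KS}(4)), so inherits the same head; any finitely generated graded module over $R_{\beta+m\alpha_i}$ with simple head is indecomposable. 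The one delicate point, which I expect to be the main obstacle, is the submodule-uniqueness step used for $(\star)_0$: it uses genuinely both that $i$ is a sink (so that incoming arrows to the $i$-th node come only from $\mathtt M_b$) and that $\epsilon_i(b)=0$ (so that these arrows already fill $(\mathtt M_b)_i$)—dropping either ingredient allows spurious subrepresentations and the argument collapses.
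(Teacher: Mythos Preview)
Your proof is correct and follows the same strategy as the paper: both apply Theorem~\ref{induction} with $b_1$ the unique element of $B(\infty)_{m\alpha_i}$ and $b_2=b$, verify $(\star)_0$, $(\star)_1$, and the canonical-subspace condition, and read off the graded isomorphism.

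There is one small organizational difference worth noting. The paper checks the second clause of $(\star)_0$ \emph{without} invoking $\epsilon_i(b)=0$: since $i$ is a sink, any submodule $N\subset \mathtt M_{b'}$ of dimension vector $\beta$ has quotient $\mathtt M_{b'}/N\cong \mathtt M_i^{\oplus m}$, which is projective and hence splits off, and Krull--Schmidt (packaged as the ``comparison of socles'') forces $N\cong \mathtt M_b$. The hypothesis $\epsilon_i(b)=0$ is then used only for the canonical-determination clause, via the observation that $\mathtt M_i$ is not a direct summand of $\mathtt M_b$. You instead prove the stronger statement --- uniqueness of $N$ as a \emph{subspace} --- in one stroke using $\epsilon_i(b)=0$, which simultaneously handles $(\star)_0$ and the ``In addition'' clause. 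Your route is slightly more economical; the paper's route makes clearer that $(\star)_0$ itself does not need the full hypothesis. Your explicit treatment of indecomposability via the simple head of $K_{b'}^\Omega$ is also a welcome addition, as the paper leaves this implicit.
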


\begin{proof}
By Example \ref{one-root}, we deduce that the first part of $(\star)_0$ is a void condition. Every irreducible subquotient of a $\mathbb C [\Gamma]$-module isomorphic to $\mathtt M_i$ is in its socle. Hence, the second part of $(\star)_0$ follows by the comparison of the socles. Since $i$ is a sink, we have no extension of $\mathtt M_{i} ^{\oplus m}$ by $\mathtt M _b$, which is $(\star)_1$. We write $\beta = k \alpha _i + \sum _{j \neq i} k _j \alpha _j$. Since $\epsilon_i ( b ) = 0$, $\mathtt M _{i}$ is not a direct summand of $\mathtt M _{b}$. In particular, $M_{b}$ is canonically determined by $M_{b'}$ as its direct factor. Applying Theorem \ref{induction} yields the result.
\end{proof}

\begin{corollary}\label{j-ind}
Assume that $i$ is a source and $j$ is a sink of $\Omega$. Let $\beta \in Q^+$. For each $m \ge 0$ and $b \in B ( \infty ) _{\beta}$ such that $\epsilon _j ( b ) = 0$, we have
$$\mathbb T_i ( K_{mj} \star K_b ^{\Omega} ) \cong ( \mathbb T_i K_{mj} ) \star \mathbb T_i K_b ^{\Omega}$$
as graded $R_{s_i ( \beta + m \alpha _j)}$-modules.
\end{corollary}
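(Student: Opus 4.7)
The plan is to transport the isomorphism of Corollary \ref{i-ind} through the functor $\mathbb T_i$ using Theorem \ref{SRF} 1), and then recognize the resulting convolution of standards in the orientation $s_i \Omega$ by applying Theorem \ref{induction} once more.

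First I reduce to the generic case $i \neq j$ and $\epsilon_i^*(b) = 0$. If $i = j$ then $i$ is an isolated vertex of $\Gamma$ and the claim collapses to a trivial identity (both sides vanish, as $\epsilon_i^*(mi) = m$). Assume $i \neq j$. Corollary \ref{i-ind} (with $j$ in the role of the sink) supplies a unique $b' \in B(\infty)_{\beta + m\alpha_j}$ with $\mathtt M_{b'} \cong \mathtt M_j^{\oplus m} \oplus \mathtt M_b$ and a graded isomorphism $K_{mj} \star K_b^\Omega \cong K_{b'}^\Omega$. Since $V_i = 0$ for $V$ of dimension $m\alpha_j$, $\epsilon_i^*(mj) = 0$; additivity of $\epsilon_i^*$ on direct sums then gives $\epsilon_i^*(b') = \epsilon_i^*(b)$. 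If this is positive, Theorem \ref{SRF} 1) yields $\mathbb T_i K_b^\Omega = 0 = \mathbb T_i K_{b'}^\Omega$ and both sides of the claim vanish. Henceforth assume $\epsilon_i^*(b) = 0$.

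Applying $\mathbb T_i$ together with Theorem \ref{SRF} 1) to each factor, the claim becomes the graded isomorphism
$$K_{T_i(mj)}^{s_i\Omega} \star K_{T_i(b)}^{s_i\Omega} \cong K_{T_i(b')}^{s_i\Omega}$$
of $R_{s_i(\beta + m\alpha_j)}$-modules. I would prove this by a second application of Theorem \ref{induction}, this time in the orientation $s_i\Omega$, with $b_1 = T_i(mj)$, $b_2 = T_i(b)$, and $b = T_i(b')$. The decomposition $\mathtt M_{T_i(b')}^{s_i\Omega} \cong \mathtt M_{T_i(mj)}^{s_i\Omega} \oplus \mathtt M_{T_i(b)}^{s_i\Omega}$ holds because the bijection $T_i$ on orbits is realized geometrically by the classical BGP/Saito reflection on quiver representations satisfying $\epsilon_i^* = 0$, and that reflection carries direct sums to direct sums. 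Condition $(\star)_1$ transports from its orientation-$\Omega$ counterpart $\mathrm{Ext}^1_{\mathbb C[\Gamma]}(\mathtt M_j^{\oplus m}, \mathtt M_b) = 0$ (used in the proof of Corollary \ref{i-ind} because $j$ is a sink of $\Omega$) via the fact that BGP reflection induces an equivalence between the relevant hereditary subcategories and hence preserves $\mathrm{Ext}^1$.

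The conditions $(\star)_0$ transfer similarly: the BGP equivalence identifies sub- and quotient-representations of $\mathtt M_{T_i(b')}^{s_i\Omega}$ without an $\mathtt M_i$-summand with sub- and quotient-representations of $\mathtt M_{b'}^\Omega$ of complementary dimension vector, and the verification from Corollary \ref{i-ind} (resting on $B(\infty)_{m\alpha_j}$ being a singleton and $\mathtt M_b$ having no $\mathtt M_j$-summand by $\epsilon_j(b) = 0$) rules out the wrong iso-classes $b_1', b_2'$. The canonical determination of $\mathtt M_{T_i(b)}^{s_i\Omega}$ as a subspace of $\mathtt M_{T_i(b')}^{s_i\Omega}$, needed for the graded version, transfers by the same functorial reasoning. \emph{The main technical obstacle} is the careful verification of $(\star)_0$ for sub- and quotient-modules of $\mathtt M_{T_i(b')}^{s_i\Omega}$ that acquire an $\mathtt M_i$-direct-summand (and so fall outside the image of the BGP equivalence); these must be excluded by a dimension-vector argument using the fact that the reflected module has $\epsilon_i = 0$.
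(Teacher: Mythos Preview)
Your overall strategy matches the paper's: use Corollary \ref{i-ind} to write $K_{mj}\star K_b^\Omega\cong K_{b'}^\Omega$, transport through $\mathbb T_i$ via Theorem \ref{SRF} 1), and then establish $K_{T_i(mj)}^{s_i\Omega}\star K_{T_i(b)}^{s_i\Omega}\cong K_{T_i(b')}^{s_i\Omega}$ by a second appeal to Theorem \ref{induction}. The difference is tactical. You propose to \emph{transport} the hypotheses $(\star)_0$, $(\star)_1$ and the canonical-subspace condition through the BGP equivalence, whereas the paper verifies them directly in $s_i\Omega$, splitting into the case $i\not\leftrightarrow j$ (where $j$ remains a sink of $s_i\Gamma$, $\mathbb T_i K_{mj}\cong K_{mj}^{s_i\Omega}$, and Corollary \ref{i-ind} applies again) and the case $i\leftrightarrow j$ (where the relevant factor is $\mathtt M_{i,j}^{\oplus m}$ and $(\star)_0$, $(\star)_1$, and the canonical subspace $V'$ are each checked by hand).

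The gap in your approach is exactly the one you flag, and it is not closed by the argument you suggest. The BGP reflection is only an equivalence between the subcategories with $\epsilon_i^*=0$ on the $\Omega$ side and $\epsilon_i=0$ on the $s_i\Omega$ side; submodules and quotients need not stay in these subcategories. Your proposed fix (``dimension-vector argument using $\epsilon_i(T_i(b'))=0$'') does handle the \emph{quotient} half of $(\star)_0$: since $i$ is a sink of $s_i\Omega$, $\mathrm{Hom}(\mathtt M_{T_i(b')},\mathtt M_i)=0$ is equivalent to $\epsilon_i(T_i(b'))=0$, so no quotient of $\mathtt M_{T_i(b')}$ can have an $\mathtt M_i$-summand. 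But it fails for the \emph{submodule} half: when $i$ is a sink, $\mathtt M_i$ embeds into any module with nonzero $i$-component, so a module with $\epsilon_i=0$ typically has submodules of dimension vector $s_i\beta$ possessing $\mathtt M_i$-summands, and these lie outside the BGP image. The paper excludes them in the $i\leftrightarrow j$ case by the observation that any indecomposable $\mathbb C[s_i\Gamma]$-module admitting a nonzero map to $\mathtt M_i$ or $\mathtt M_{i,j}$ is itself isomorphic to $\mathtt M_i$ or $\mathtt M_{i,j}$, combined with the fact (from \cite{Lu} 4.4(c) and $\epsilon_j(b)=0$) that $\mathtt M_{T_i(b)}$ has neither as a summand. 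The canonical-subspace condition is likewise not preserved by BGP in any obvious way; the paper writes down an explicit formula for $V'_k$. So your outline is correct, but completing it forces the same case analysis the paper carries out.
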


\begin{proof}
By Corollary \ref{i-ind}, we see that $K_{mj} \star K_b ^{\Omega} \cong K _{b'} ^{\Omega}$, where $\mathtt M_{b'} \cong \mathtt M_j ^{\oplus m} \oplus \mathtt M_b$. By \cite{Lu} 4.4 (c), we deduce that $T_i ( b' ) \neq \emptyset$ if and only if $T_i ( b ) \neq \emptyset$. Since a standard module is generated by its simple head, we deduce that $\mathbb T_i ( K_{mj} \star K_b ^{\Omega} ) = \{ 0 \}$ if $\epsilon _i^* ( b ) > 0$, and it is isomorphic to $K_{T_i ( b' )} ^{s_i \Omega}$ if $\epsilon^* _i ( b ) = 0$.

Since $i \neq j$, we always have $\mathbb T_i K _{mj} \neq \{ 0 \}$. Therefore, we conclude that the RHS is non-zero if and only if the LHS is non-zero. Thus, it suffices to show that the RHS is isomorphic to $K _{T_i ( b')} ^{s_i \Omega}$.

If we have $i \not\leftrightarrow j$, then $j$ is a sink of $s_i \Gamma$. By $\epsilon _j ( b ) = 0$ and the assumption, we deduce that $\mathtt M_{T_i ( b )}$ also do not contain $\mathtt M_j$ in this case. Hence, we deduce $\epsilon _j ( T_i ( b ) ) = 0$. In addition, we have $\mathbb T_i K_{mj} ^{\Omega} \cong K _{mj} ^{s_i \Omega}$. Therefore, we apply Corollary \ref{i-ind} to deduce that the RHS is $K _{T_i ( b')} ^{s_i \Omega}$.

Assume that we have $i \leftrightarrow j$. Let $\mathtt M_{i,j}$ be a unique indecomposable $\mathbb C [s_i \Gamma]$-module with $\underline{\dim} \, \mathtt M_{i,j} = \alpha _i + \alpha _j$ (up to an isomorphism). By $\epsilon _j ( b ) = 0$ and {\it loc. cit.} 4.4 (c), we conclude that $\mathtt M_{T_i ( b )}$ does not contain $\mathtt M _i, \mathtt M _{i,j}$ as its direct factor. By assumption, $i$ is a sink of $s_i \Gamma$ and $j$ is a source of an edge from $j$ to $i$, but is a source of no other edges. This particularly implies that $\mathtt M_{i}$ is the socle of $\mathtt M_{i,j}$. Therefore, we conclude the first half of $(\star)_0$ in Theorem \ref{induction}. If an indecomposable $\mathbb C[s_i \Gamma]$-module contains $\mathtt M_i$ or $\mathtt M_{i,j}$ as its subquotient, then it must be a submodule. If an indecomposable $\mathbb C [s_i \Gamma]$-module has a non-zero homomorphism to $\mathtt M_i$ or $\mathtt M_{i,j}$, then it must be isomorphic to either $\mathtt M_i$ or $\mathtt M_{i,j}$. These imply the latter half of $(\star)_0$ in Theorem \ref{induction}. In addition, we have
$$\mathrm{Ext} ^1 _{\mathbb C [s_i \Gamma]} ( \mathtt M_{i,j} ^{\oplus m}, \mathtt M _{T_i ( b )}) = \{ 0 \}.$$
Therefore, we conclude $(\star)_1$ in Theorem \ref{induction}. Let $h_* \in s_i \Omega$ be the unique edge so that $h_*' = j, h _*'' = i$. For a representation $(f_h) _{h \in s _i \Omega}$ on $V = \bigoplus _{i \in I} V_i$ isomorphic to $\mathtt M _{i,j} ^{\oplus m} \oplus \mathtt M _{T_i ( b )}$, we set
$$V' _k := \begin{cases} V _k & (k \neq i, j)\\ \mathrm{Im} \bigl( \bigoplus _{h \in s_i \Omega, h'' = i} f_h \oplus \bigoplus _{h \in s_i \Omega, h'' = j} f_{h_*} f_h \bigr) & (k = i) \\
\mathrm{Im} \bigoplus _{h \in s_i \Omega, h'' = j} f_h + f_{h_*}^{-1} ( V'_i ) + \ker f _{h_*} & (k = j)\end{cases}.$$
Then, the space $V' \subset V$ defines a canonical $\mathbb C [s_i \Gamma]$-submodule $\mathtt M'$ on $V'$ so that $\mathtt M' \cong \mathtt M _{T_i ( b )}$. Therefore, we conclude that $( \mathbb T_i K_{mj} ) \star \mathbb T_i K_b ^{\Omega} \cong K _{T_i ( b' )} ^{s_i \Omega}$ as required.
\end{proof}

\begin{lemma}\label{i-factor}
Let $i,j \in I$ be distinct vertices, $m \ge 0$, and $\beta \in Q ^+$. For each $b \in B ( \infty )$ so that $\epsilon _i ( b ) > 0$, the module $\mathbb T_i K_{m j} \star L_b$ has simple head that is isomorphic to $L_{b'}$ with $\epsilon _i ( b' ) > 0$ up to grading shifts.
\end{lemma}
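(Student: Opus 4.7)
The plan is to combine Frobenius reciprocity for $\star$ with a Mackey-type analysis of the idempotents $e_i(k)$ and the crystal-theoretic description of simple modules from Theorem \ref{crys}. Writing $M := \mathbb T_i K_{mj} \star L_b$ and $m' := \epsilon_i(b) > 0$, Theorem \ref{crys} supplies a unique $\tilde b \in B(\infty)_{\beta - m'\alpha_i}$ with $\epsilon_i(\tilde b) = 0$ and $e_i(m')L_b \cong L_{m'i} \boxtimes L_{\tilde b}$. Frobenius reciprocity furnishes a canonical surjection $L_{m'i} \star L_{\tilde b} \twoheadrightarrow L_b$, and convolving from the left yields a surjection
$$\mathbb T_i K_{mj} \star L_{m'i} \star L_{\tilde b} \twoheadrightarrow M,$$
so simple quotients of $M$ must arise among those of the three-fold convolution on the left.

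Since every composition factor of $\mathbb T_i K_{mj}$ satisfies $\epsilon_i = 0$ (the target category of $\mathbb T_i$ is of this form) and $\epsilon_i(\tilde b) = 0$, the Mackey formula for $e_i(k)$ applied to the three-fold convolution forces every left-most $i$ to originate from the central factor $L_{m'i}$. Consequently $e_i(k)$ vanishes for $k > m'$, while the leading term at $k = m'$ is $L_{m'i} \boxtimes (\mathbb T_i K_{mj} \star L_{\tilde b})$ up to a computable grading shift. Applying Theorem \ref{crys} to read off $\epsilon_i$ of a simple, any simple quotient $L_{b'}$ of $M$ with $\epsilon_i(b') = m'$ is pinned down by the condition $e_i(m')L_{b'} \cong L_{m'i} \boxtimes L_c$ for some simple quotient $L_c$ of $\mathbb T_i K_{mj} \star L_{\tilde b}$; in particular the positivity $\epsilon_i(b') = m' > 0$ is automatic once the head is located.

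The problem then reduces to showing that $\mathbb T_i K_{mj} \star L_{\tilde b}$ has simple head for $\tilde b$ in the $\epsilon_i = 0$ sector. I would attack this by choosing $\Omega$ so that $i$ is a source, identifying $\mathbb T_i K_{mj}$ with the standard module $K^{s_i\Omega}_{T_i(mj)}$ via Theorem \ref{SRF} 1) (so its head is the single simple $L_{T_i(mj)}$), and applying the adjunction $(\mathbb T_i, \mathbb T_i^*)$ from Theorem \ref{SRF} 3)--4). The Frobenius identity
$$\mathrm{hom}(\mathbb T_i K_{mj} \star L_{\tilde b}, L_c) \cong \mathrm{hom}(\mathbb T_i K_{mj} \boxtimes L_{\tilde b}, \mathrm{Res}(L_c))$$
together with Theorem \ref{crys}'s control of $\mathrm{Res}(L_c)$ should then reduce the count of possible $L_c$ to a single ``canonical merging'' of the heads of the two factors.

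The main obstacle lies in this uniqueness step: a convolution of two simples can in general fail to have simple head, so I have to exploit the ``two-sided $\epsilon_i = 0$'' feature (on composition factors of $\mathbb T_i K_{mj}$ and on $\tilde b$) to force a unique merging pattern in the crystal calculation. Once this is settled, the previous paragraph's Mackey/crystal identification propagates simplicity of the head back to $M$, and the positivity $\epsilon_i(b') = m' > 0$ follows without further effort.
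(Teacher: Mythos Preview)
Your reduction via Frobenius reciprocity and the Mackey analysis of $e_i(k)$ on the triple convolution $\mathbb T_i K_{mj}\star L_{m'i}\star L_{\tilde b}$ is reasonable, and the bound $\epsilon_i(b')\le m'$ that comes out of it is correct. The problem is exactly where you say it is: the step ``$\mathbb T_i K_{mj}\star L_{\tilde b}$ has simple head'' is the entire content of the lemma, and you have not proved it. Your proposed attack through the adjunction $(\mathbb T_i,\mathbb T_i^*)$ does not close the gap: once you observe that $e_i(1)(\mathbb T_i K_{mj}\star L_{\tilde b})=0$ so that the module lies in ${}_iR_\gamma\text{-}\mathsf{gmod}$, the natural move is to write it as $\mathbb T_i(N)$ and study $N$. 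But identifying $N$ with $K_{mj}\star\mathbb T_i^*L_{\tilde b}$ (or anything else with an evident simple head) is precisely Theorem~\ref{SRF}~5), which in the paper is proved \emph{after} and \emph{using} Lemma~\ref{i-factor}. So the adjunction route, as sketched, is circular. There is also a secondary issue: your argument only controls simple quotients $L_{b'}$ with $\epsilon_i(b')=m'$, and you have not excluded the possibility of additional simple quotients with $0\le\epsilon_i(b')<m'$; the bound $\epsilon_i(b')\le m'$ alone does not give uniqueness of the head or positivity of $\epsilon_i(b')$.

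The paper's proof avoids all of this by working geometrically rather than combinatorially. It replaces $L_b$ by the standard module $K_b^{s_i\Omega}$ (which surjects onto $L_b$) and uses the Induction Theorem (Theorem~\ref{induction}) together with Corollaries~\ref{i-ind} and~\ref{j-ind} to identify $\mathbb T_i K_{mj}\star K_b^{s_i\Omega}$ with another standard module $K_{b'}^{s_i\Omega}$; standard modules have simple heads by Theorem~\ref{KS}. In the case $i\leftrightarrow j$ this requires a rank-two commutation $K_i\star\mathbb T_iK_j\cong\mathbb T_iK_j\star K_i$ (up to shift), checked by hand, to slide the factors into the form covered by the induction theorem. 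The value $\epsilon_i(b')$ is then read off directly from the quiver representation $\mathtt M_{b'}\cong\mathtt M_{i,j}^{\oplus m}\oplus\mathtt M_b$ (or $\mathtt M_j^{\oplus m}\oplus\mathtt M_b$ in the non-adjacent case). The key input you are missing is this geometric identification of the convolution as a standard module; nothing in the crystal/Mackey toolkit available at this point of the paper produces the required simple-head statement without it.
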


\begin{proof}
We first consider the case $i \not\leftrightarrow j$. We assume that both $i$ and $j$ are sink. We have $\mathbb T_i K _{m j} \cong K _{m j}$. By Theorem \ref{induction}, we further deduce an isomorphism $K _{m j} \star K _{p j} \cong K _{(m+p) j}$ for $p \ge 0$. Together with Corollary \ref{i-ind} and the induction-by-stage argument, we conclude that $K_{m j} \star L_b$ has simple head $L_{b'}$. Moreover, we have $\mathtt M _{b'} \cong \mathtt M_{j} ^{\oplus m} \oplus \mathtt M_{b}$. Therefore, we have $\epsilon _i ( b ) > 0$ if and only if $\epsilon _i ( b') > 0$ since $\epsilon _i$ counts the number of direct summand isomorphic to $\mathtt M_i$ by our assumption on $\Omega$.

Now we consider the case $i \leftrightarrow j$. We rearrange $\Omega$ so that $j$ is a sink of $\Omega$ and $i$ is sink of $s_i \Omega$, and employ the same notation as in the proof of Corollary \ref{j-ind}. We have a decomposition
$$\mathtt M _b \cong \mathtt M _{i} ^{\oplus p} \oplus \mathtt M_{i,j} ^{\oplus q} \oplus \mathtt M _{d} \hskip 3mm \text{ with } \hskip 3mm d \in B ( \infty ) _{\beta - p \alpha _i - q s _i \alpha _j}$$
as $\mathbb C [s_i \Gamma]$-modules so that $\mathtt M _d$ does not contain $\mathtt M_i$ or $\mathtt M_{i,j}$ as its direct summand. Here we have $d = T_i ( f )$ with $\epsilon_j ( f ) = 0$ by \cite{Lu} 4.4 (c). We set $d' \in B ( \infty ) _{m s_i \alpha _j + \beta}$ so that $\mathtt M _{d'} \cong \mathtt M _{i,j} ^{\oplus m} \oplus \mathtt M _b$. Thanks to Corollary \ref{i-ind} and Corollary \ref{j-ind}, we have
$$K _b ^{s_i \Omega} \cong K _{p i} \star ( \mathbb T_i K _{q j} ) \star K _{d} ^{s_i \Omega}.$$
By Corollary \ref{i-ind}, we deduce that $K _{i} \star \mathbb T_i K _{j}$ is isomorphic to a standard module of $R_{2 \alpha _i + \alpha _j}$. Since the orbit corresponding to $K _{i} \star \mathbb T_i K _{j}$ is open dense, we deduce that $K _{i} \star \mathbb T_i K _{j}$ is simple. By inspection, we find that $\# \mathsf{Irr} _0 R_{2\alpha _i + \alpha _j} = 2$ and each of simple graded $R_{\alpha _i + 2 \alpha _j}$-module has dimension $3$. Hence, $\mathbb T_i K_{j} \star K _{i}$ must be simple. By a weight comparison argument, we deduce that $K _{i} \star \mathbb T_i K _{j} \cong \mathbb T_i K _{j} \star K _{i} \left< 1 \right>$. By Theorem \ref{induction}, we deduce that
$$( \mathbb T_i K_{r j} ) \star ( \mathbb T_i K_{s j} ) \cong \mathbb T_i K _{(r+s)j} \hskip 5mm \text{for every }r,s \ge 0.$$
Hence, we deduce $K _{p i} \star \mathbb T_i K _{mj} \cong \mathbb T_i K _{mj} \star K _{pi}$ up to grading shifts by induction.

Therefore, the induction-by-stage implies that the ungraded $R_{\beta + m s_i \alpha _j}$-module
$$\mathbb T_i K_{m j} \star K_b ^{s_i \Omega} \cong \mathbb T_i K_{m j} \star K _{p i} \star ( \mathbb T_i K _{q j} ) \star K _{d} ^{s_i \Omega} \cong K _{p i} \star ( \mathbb T_i K _{(m+q) j} ) \star K _{d} ^{s_i \Omega} \cong K _{d'} ^{s_i \Omega}$$
has simple head $L_{b'}$ with $\epsilon _i ( b' ) = p > 0$ as desired.
\end{proof}

\begin{lemma}\label{premon}
For each $\beta_1,\beta_2 \in Q^+$, we have a canonical surjection
$$\mathbb T_i ( M_1 \star M_2 ) \longrightarrow \!\!\!\!\! \rightarrow ( \mathbb T_i M_1 ) \star ( \mathbb T_i M_2 )$$
as graded $R_{s_i ( \beta_1 + \beta_2 )}$-modules for every $M_1 \in R _{\beta_1} \mathchar`-\mathsf{gmod}$ and $M_2 \in R _{\beta_2} \mathchar`-\mathsf{gmod}$.
\end{lemma}

\begin{proof}
Put $\beta := \beta_1 + \beta_2$. The induction functor $\star$ is represented by the $(R_{\beta}, R_{\beta_1} \boxtimes R_{\beta_2})$-bimodule $R_{\beta} e_1$, where $e_1$ is an idempotent. The Saito reflection functor $\mathbb T_i$ factors through the quotient by $R_{\beta} e^*_i ( 1 ) R_{\beta}$. Therefore, the two compositions are realized as
$$R_{\beta} e_1 / R_{\beta} e^*_i ( 1 ) R_{\beta} e_1 \hskip 3mm \text{and} \hskip 3mm R_{\beta} e_1 \otimes \left( ( R_{\beta_1} / R_{\beta_1} e^*_i ( 1 ) R_{\beta_1} ) \boxtimes ( R_{\beta_2} / R_{\beta_2} e^*_i ( 1 ) R_{\beta_2} ) \right) e_1,$$
respectively. By Lemma \ref{wt-e}, we know that an irreducible direct summand of the head $L_b$ of the induction of two simple modules $L_{b_1}$ and $L_{b_2}$ satisfies $\epsilon^*_i (b)> 0$ only if $\epsilon^*_i ( b _1 ) > 0$ or $\epsilon^*_i ( b_2 ) > 0$. Therefore, the RHS annihilates more simple modules than these from the LHS, and hence we obtain a surjection as required.
\end{proof}

\begin{proof}[Proof of Theorem \ref{SRF} 5)]
We choose an orientation $\Omega$ so that $i$ is a source and $j$ is a sink. Let $F_1 := ( \mathbb T_i P_{mj} ) \star ( \mathbb T_i \bullet )$ and $F_2 := \mathbb T_i ( P_{mj} \star \bullet )$ be two functors from $R _{\beta} \mathchar`-\mathsf{gmod}_j$ to $R_{s_i (\beta + m \alpha _j )} \mathchar`-\mathsf{gmod}$. Both of them are exact on $R _{\beta} \mathchar`-\mathsf{gmod} _j ^i$. Therefore, taking successive quotients of the isomorphisms in Corollary \ref{j-ind} (cf. Theorem \ref{SRF} 1)) yield
\begin{equation}
\mathbb T_i ( K_{mj} \star L_b ) \cong ( \mathbb T_i K_{mj} ) \star \mathbb T_i L_b\label{psimple}
\end{equation}
as a graded $R_{s_i ( \beta + m \alpha _j)}$-module for every $b \in B ( \infty )_{\beta}$ such that $\epsilon _j ( b ) = 0$. By Lemma \ref{premon}, we have a natural transformation
$$F_2 = \mathbb T_i ( P_{mj} \star \bullet ) \longrightarrow ( \mathbb T_i P_{mj} ) \star \mathbb T_i \bullet = F_1.$$
Thanks to Lemma \ref{indpro}, we see that $F_2$ sends an indecomposable projective module of ${}_j R_{\beta}$ (regarded as an $R_{\beta}$-module) to a module with simple head (or zero). The image of this simple head survives under this natural transformation by $(\ref{psimple})$. This forces two functors $F_1$ and $F_2$ to be isomorphic on projective objects of $R _{\beta} \mathchar`-\mathsf{gmod} _j$ by the comparison of their graded characters. Therefore, we conclude that they are isomorphic.
\end{proof}

The rest of this section is devoted to the proof of Theorem \ref{induction}. During the proof of Theorem \ref{induction}, we omit $\Omega$ from the notation. We set $\beta := \beta _1 + \beta _2$, and $V := V(1) \oplus V(2)$. We set $n = \mathsf{ht} \, \beta$, and $n_i := \mathsf{ht} \, \beta _i$ for $i=1,2$. We write $\beta _i = \sum _{j \in I} d _i ( j ) \alpha _j$ for $i = \emptyset, 1, 2$.

We recall the convolution operation from Lusztig \cite{Lu}.

We consider two varieties with natural $G_V$-actions:
\begin{align*}
\mathrm{Gr}_{V(1),V(2)} ( V ) & := \Bigl\{ ( F, x, \psi_{1}, \psi _{2} ) \Bigl| {\footnotesize \begin{matrix} F \subset V \text{ : $I$-graded vector subspace} \\ x \in E_V, \text{ s.t. } x F \subset F \\\psi _1 : V/F \cong V(1), \psi_2 : F \cong V(2)  \end{matrix}} \Bigr\},\\
\mathrm{Gr} _{\beta_1,\beta_2} ( V ) & := \Bigl\{ ( F, x ) \Bigl| {\footnotesize \begin{matrix} F \subset V \text{ : $I$-graded vector subspace} \\ x \in E_V, \text{ s.t. } x F \subset F \\ \underline{\dim} \, F = \beta _2  \end{matrix}} \Bigr\}.
\end{align*}
We have a $G_{V(1)} \times G_{V(2)}$-torsor structure $\vartheta : \mathrm{Gr}_{V(1),V(2)} ( V ) \longrightarrow \mathrm{Gr} _{\beta_1,\beta_2} ( V )$ given by forgetting $\psi_1$ and $\psi _2$. We have two maps
\begin{align*}
\mathsf{p}  :  \, & \mathrm{Gr} _{\beta_1,\beta_2} ( V ) \ni ( F, x ) \mapsto x \in E _V \text{ and }\\
\mathsf{q}  :  \, & \mathrm{Gr} _{V(1),V(2)} ( V ) \ni ( F, x, \psi_1, \psi_2) \mapsto ( \psi_1 ( x \!\!\! \mod F ), \psi_2 ( x \! \mid _F ) ) \in E_{V(1)} \oplus E_{V(2)}.
\end{align*}
Notice that $\vartheta$ and $\mathsf{q}$ are smooth of relative dimensions $\dim G_{V(1)} + \dim G _{V(2)}$ and $\frac{1}{2} ( \dim G_{V} + \dim G_{V(1)} + \dim G_{V(2)} ) + \sum _{h \in \Omega} d _1 ( h' ) d _2 ( h'' )$, respectively. The map $\mathsf{p}$ is projective. We set $N_{\beta_1,\beta_2} ^{\beta} := \frac{1}{2} ( \dim G_{V} - \dim G_{V(1)} - \dim G_{V(2)} ) + \sum _{h \in \Omega} d _1 ( h' ) d _2 ( h'' )$. For $G_{V(i)}$-equivariant constructible sheaves $\mathcal F_i$ on $E_{V(i)}$ for $i = 1,2$, we define their convolution products as
$$\mathcal F_1 \odot \mathcal F_2 := \mathsf{p} _! \mathcal F _{12} [N_{\beta_1,\beta_2} ^{\beta}], \text{ where } \vartheta^* \mathcal F_{12} \cong \mathsf{q} ^* ( \mathcal F_1 \boxtimes \mathcal F_2 ) \text{ in } D^b _{G_V} ( \mathrm{Gr}  _{V(1),V(2)} ( V ) ).$$

We return to the proof of Theorem \ref{induction}. Let us fix objects $\mathbb C _{b_1, b_2}, \mathcal L _{\mathbf m^1, \mathbf m ^2}$ of $D _{G_V} ^b ( \mathrm{Gr} _{\beta_1, \beta_2} ( V ) )$  ($\mathbf m^1 \in Y ^{\beta_1}$ and $\mathbf m^2 \in Y ^{\beta_2}$) so that we have isomorphisms 
$$\vartheta ^* ( \mathbb C _{b_1, b_2} ) \cong \mathsf{q} ^* ( \mathbb C _{b_1} \boxtimes \mathbb C _{b_2} ) [N_{\beta_1, \beta_2}^{\beta}] \text{ and } \vartheta ^* \mathcal L _{\mathbf m^1, \mathbf m^2} \cong \mathsf{q} ^* ( \mathcal L _{\mathbf m^1} \boxtimes \mathcal L _{\mathbf m^2} ) [N_{\beta_1, \beta_2}^{\beta}].$$

\begin{lemma}\label{preorbit}
In the above settings, we have:
\begin{enumerate}
\item the variety $\mathsf{p}^{-1} ( \mathbb O _b )$ is a single $G_V$-orbit;
\item the map $\mathsf{p} : \mathsf{p}^{-1} ( \mathbb O _b ) \to \mathbb O_b$ is a $\mathcal P$-fibration, where $\mathcal P$ is a suitable partial flag variety.
\end{enumerate}
\end{lemma}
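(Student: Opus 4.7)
The plan is to handle the two assertions in sequence, using $(\star)_0$ and $(\star)_1$ to exhibit the fiber $\mathsf{p}^{-1}(x)$ as a projective homogeneous variety for $\mathrm{Aut}_{\mathbb C[\Gamma]}(\mathtt M_b)$, which by a standard Borel-fixed-point argument must be a partial flag variety.

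For assertion 1, fix $x \in \mathbb O_b$. Since $G_V$ acts transitively on $\mathbb O_b$, it suffices to show that the stabilizer $\mathsf{Stab}_{G_V}(x) = \mathrm{Aut}_{\mathbb C[\Gamma]}(\mathtt M_b)$ acts transitively on $\mathsf{p}^{-1}(x)$, which parametrizes subrepresentations $F \subset \mathtt M_b$ with $\underline{\dim}\, F = \beta_2$. Condition $(\star)_0$ forces $F \cong \mathtt M_{b_2}$ and $\mathtt M_b/F \cong \mathtt M_{b_1}$, while $(\star)_1$ makes the short exact sequence $0 \to F \to \mathtt M_b \to \mathtt M_b/F \to 0$ split, so each $F$ appears as the $\mathtt M_{b_2}$-summand in some decomposition $\mathtt M_b \cong \mathtt M_{b_1} \oplus \mathtt M_{b_2}$. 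Given two such $F_1, F_2$, composing the two splittings with the identity on the $\mathtt M_{b_1}$- and $\mathtt M_{b_2}$-summands produces a $\mathbb C[\Gamma]$-automorphism of $\mathtt M_b$ sending $F_1$ to $F_2$, establishing transitivity and hence assertion 1.

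For assertion 2, writing $H := \mathsf{Stab}_{G_V}(x,F)$, the restricted map $\mathsf{p}: G_V/H \to G_V/\mathsf{Stab}_{G_V}(x)$ is a $G_V$-equivariant fibration with fiber $\mathrm{Aut}_{\mathbb C[\Gamma]}(\mathtt M_b)/\mathrm{Aut}_{\mathbb C[\Gamma]}(\mathtt M_b, F)$. Since $\mathsf{p}$ is projective by construction of $\mathrm{Gr}_{\beta_1, \beta_2}(V)$, and $\mathrm{Aut}_{\mathbb C[\Gamma]}(\mathtt M_b)$ is a connected linear algebraic group (connectedness coming from $(\spadesuit)_2$, cf. Proposition \ref{quiverSC}), this fiber is a projective homogeneous variety. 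Decomposing $\mathtt M_b = \bigoplus_j V_j^{\oplus m_j}$ into isotypic components and using that every indecomposable $\mathbb C[\Gamma]$-module in type $\mathsf{ADE}$ has endomorphism ring $\mathbb C$, the reductive Levi quotient of $\mathrm{Aut}_{\mathbb C[\Gamma]}(\mathtt M_b)$ is $L = \prod_j \mathrm{GL}_{m_j}$ with unipotent radical $U$. Borel's fixed-point theorem provides a $U$-fixed point in the projective fiber; transitivity then forces $U$ to act trivially, so the action descends to $L$, exhibiting the fiber as $L/P$ for a parabolic subgroup $P \subset L$, that is, a partial flag variety $\mathcal P$.

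The main conceptual step is the Borel-type argument in assertion 2: ruling out the possibility that the fiber is, say, an affine-space bundle over a partial flag variety hinges on the projectivity of $\mathsf{p}$. A minor technical check is that the splitting chosen in assertion 1 is automatically compatible with the $I$-grading, since all morphisms involved are $\mathbb C[\Gamma]$-linear.
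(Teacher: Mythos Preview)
Your argument is correct and follows essentially the same route as the paper's proof. A few remarks on presentation:

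For part 1, you are slightly more explicit than the paper in invoking $(\star)_1$ to split the short exact sequence; the paper's phrasing (``$(\star)_0$ asserts that the image of every two inclusions $\mathtt M_{b_2} \subset \mathtt M_b$ are transformed by $\mathrm{Aut}_{\mathbb C[\Gamma]}(\mathtt M_b)$'') leaves the splitting implicit. Your version is clearer.

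For part 2, your strategy and the paper's coincide: show the fiber is a projective homogeneous space for the connected group $\mathrm{Aut}_{\mathbb C[\Gamma]}(\mathtt M_b)$, then pass to the reductive quotient. One phrasing issue: the sentence ``transitivity then forces $U$ to act trivially'' is not quite the right justification. What actually makes $U$ act trivially is its \emph{normality} in $\mathrm{Aut}_{\mathbb C[\Gamma]}(\mathtt M_b)$: if $gH$ is a $U$-fixed point then $U \subset gHg^{-1}$, and normality gives $U = g^{-1}Ug \subset H$, whence $U$ acts trivially on $\mathrm{Aut}_{\mathbb C[\Gamma]}(\mathtt M_b)/H$. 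The paper is equally terse here (``Therefore, we deduce $U_b \subset H_b$''), so this is not a gap so much as a point both proofs leave to the reader. Your additional identification of the Levi quotient as $\prod_j \mathrm{GL}_{m_j}$ is more concrete than the paper's abstract ``connected reductive group,'' but not logically necessary.
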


\begin{proof}
We have $\mathtt M _b \cong \mathtt M _{b_1} \oplus \mathtt M_{b_2}$ by assumption. The condition $(\star)_0$ asserts that the image of every two inclusions $\mathtt M _{b_2} \subset \mathtt M_b$ are transformed by $\mathrm{Aut} _{\mathbb C[\Gamma]} ( \mathtt M_b )$. Here we have $\mathrm{Aut} _{\mathbb C[\Gamma]} ( \mathtt M_b ) \cong \mathsf{Stab}_{G_V} ( x _b )$ for $x_b \in \mathbb O _b ( \mathbb C )$. Therefore, $\mathsf{p} ^{-1} ( \mathbb O _{b} )$ is a single $G_V$-orbit, which is the first assertion. Since $\mathsf{p}$ is projective, we conclude that $\mathsf{p} ^{-1} ( \mathbb O _{b} ) \to \mathbb O _b$ is projective. By $(\spadesuit)_2$, the group $\mathsf{Stab}_{G_V} ( x _b )$ is connected. Let $U_b$ denote the unipotent radical of $\mathsf{Stab}_{G_V} ( x _b )$. Since we have $\mathsf{p} ^{-1} ( \mathbb O _{b} ) \cong G_V / H_b$ with $H_b \subset \mathsf{Stab}_{G_V} ( x _b )$, the fiber of $\mathsf{p}$ is isomorphic to $\mathsf{Stab}_{G_V} ( x _b ) / H_b$, that is projective. Therefore, we deduce $U_b \subset H _b$ and the inclusion
$$H_b / U_b \subset \mathsf{Stab}_{G_V} ( x _b ) / U_b$$
must be a parabolic subgroup (of a connected reductive group). Therefore, we set $\mathcal P$ to be their quotient to deduce the second part of the result.
\end{proof}

\begin{corollary}\label{D-shift}
We have
$$\mathbb C _{b_1} [\dim \mathbb O_{b_1}] \odot \mathbb C _{b_2} [\dim \mathbb O _{b_2}] \cong D [d] \boxtimes \mathbb C _{b} [\dim \mathbb O_b],$$
where $D \cong H ^{\bullet} ( \mathcal P, \mathbb C )$ by a suitable partial flag variety $\mathcal P$ with its dimension $d$.
\end{corollary}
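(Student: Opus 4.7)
My plan is to unwind the convolution explicitly and then apply Lemma~\ref{preorbit}. Since $\mathbb C_{b_i} = (j_{b_i})_! \underline{\mathbb C}_{\mathbb O_{b_i}}[\dim \mathbb O_{b_i}]$, pulling back along $\mathsf q$ and descending along the $G_{V(1)} \times G_{V(2)}$-torsor $\vartheta$ identifies $\mathcal F_{12}$ with the extension by zero of a constant sheaf, placed in degree $-(\dim \mathbb O_{b_1} + \dim \mathbb O_{b_2})$, on the locally closed subset
$$ S := \vartheta(\mathsf q^{-1}(\mathbb O_{b_1} \times \mathbb O_{b_2})) = \bigl\{ (F,x) \in \mathrm{Gr}_{\beta_1,\beta_2}(V) \mid \mathtt M(x|_F) \cong \mathtt M_{b_2},\ \mathtt M(x\,\mathrm{mod}\,F) \cong \mathtt M_{b_1} \bigr\}. $$

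Next, I would identify $S$ with $\mathsf p^{-1}(\mathbb O_b)$. For any $(F,x) \in S$, the module $\mathtt M_x$ is an extension of $\mathtt M_{b_1}$ by $\mathtt M_{b_2}$, which splits by $(\star)_1$ to give $\mathtt M_x \cong \mathtt M_b$; hence $\mathsf p(S) \subseteq \mathbb O_b$. Since $S$ is $G_V$-stable, contains at least one point above $\mathbb O_b$ arising from the natural summand $\mathtt M_{b_2} \subset \mathtt M_b$, and $\mathsf p^{-1}(\mathbb O_b)$ is a single $G_V$-orbit by Lemma~\ref{preorbit}~(1), the reverse inclusion follows, so $S = \mathsf p^{-1}(\mathbb O_b)$. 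In particular, $\mathsf p|_S : S \to \mathbb O_b$ is proper as a base change of $\mathsf p$, and by Lemma~\ref{preorbit}~(2) it is a Zariski-locally trivial fibration with fiber a partial flag variety $\mathcal P$.

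Finally, I would compute the pushforward directly. Factoring $\mathsf p|_S$ as $S \to \mathbb O_b \xhookrightarrow{j_b} E_V$ and applying proper base change for the smooth proper $\mathcal P$-fibration yields $(\mathsf p|_S)_! \underline{\mathbb C}_S \cong H^{\bullet}(\mathcal P, \mathbb C) \otimes \underline{\mathbb C}_{\mathbb O_b}$; incorporating the external shift $[N_{\beta_1,\beta_2}^{\beta}]$ then produces
$$ \mathbb C_{b_1} \odot \mathbb C_{b_2} \cong H^{\bullet}(\mathcal P, \mathbb C) \otimes \mathbb C_b \bigl[ \dim \mathbb O_{b_1} + \dim \mathbb O_{b_2} + N_{\beta_1,\beta_2}^{\beta} - \dim \mathbb O_b \bigr]. $$
A bookkeeping calculation with the relative dimensions of $\mathsf q$ and $\vartheta$ gives $\dim S = \dim \mathbb O_{b_1} + \dim \mathbb O_{b_2} + N_{\beta_1,\beta_2}^{\beta}$, while the fibration structure gives $\dim S = \dim \mathbb O_b + \dim \mathcal P$, so the internal shift equals $d := \dim \mathcal P$. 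Setting $D := H^{\bullet}(\mathcal P, \mathbb C)$ delivers the claimed isomorphism. The main technical point is the shift bookkeeping through the descent along $\vartheta$; everything else is a direct consequence of Lemma~\ref{preorbit}.
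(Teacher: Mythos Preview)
Your approach is essentially the same as the paper's: both identify the support of the convolution with $\mathsf p^{-1}(\mathbb O_b)$ via $(\star)_1$ and Lemma~\ref{preorbit}, then read off the pushforward along the $\mathcal P$-fibration. Your shift bookkeeping is more explicit than the paper's, which simply records the isomorphism $D' \cong H^\bullet(\mathcal P,\mathbb C)[d]$ as a consequence of Lemma~\ref{preorbit}(2).

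One step deserves more care. You assert that Lemma~\ref{preorbit}(2) gives a \emph{Zariski-locally trivial} fibration and then that proper base change yields $(\mathsf p|_S)_! \underline{\mathbb C}_S \cong H^\bullet(\mathcal P,\mathbb C) \otimes \underline{\mathbb C}_{\mathbb O_b}$. The lemma only produces a $G_V$-equivariant map with fiber $\mathcal P$; it does not establish Zariski-local triviality, and proper base change alone gives local systems $R^i(\mathsf p|_S)_!\underline{\mathbb C}_S$ on $\mathbb O_b$, not a priori constant ones. The paper handles this by observing that these are $G_V$-equivariant local systems on the orbit $\mathbb O_b$, hence trivial because $\mathsf{Stab}_{G_V}(x_b)$ is connected (condition $(\spadesuit)_2$); it phrases this via the decomposition theorem. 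Inserting the appeal to $(\spadesuit)_2$ at that point closes the gap, and your argument then goes through.
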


\begin{proof}
Thanks to $(\star)_1$, we deduce that $\vartheta ( \mathsf q^{-1} ( \mathbb O_{b_1} \times \mathbb O _{b_2} ))$ is contained in a single $G_V$-orbit. This, together with Lemma \ref{preorbit}, implies that the stalk of the LHS vanishes outside of $\mathbb O_b$. In addition, every direct summand of $\mathsf{p} _* \mathbb C _{b_1,b_2} \! \mid _{\mathbb O_b}$, viewed as a shifted $G_V$-equivariant local system (which in turn follows by \cite{BBD} 5.4.5 or 6.2.5), must be a trivial local system by $(\spadesuit)_2$. Therefore, we conclude that $\mathbb C _{b_1} [\dim \mathbb O_{b_1}] \odot \mathbb C _{b_2} [\dim \mathbb O _{b_2}] \cong D' \boxtimes \mathbb C _{b} [\dim \mathbb O_b]$ with a graded vector space $D'$. The isomorphism $D' \cong H ^{\bullet} ( \mathcal P, \mathbb C )[d]$ is by Lemma \ref{preorbit} 2).
\end{proof}

We return to the proof of Theorem \ref{induction}. In the below (during this section), we freely use the notation from Corollary \ref{D-shift}.

Thanks to Corollary \ref{allp}, $\mathcal L _{\beta_1}$ and $\mathcal L _{\beta_2}$ contains $\mathsf{IC}_{b_1}$ and $\mathsf{IC} _{b_2}$, respectively. We have
$$\mathcal L _{\mathbf m^1} \odot \mathcal L _{\mathbf m^2} \cong \mathcal L _{\mathbf m^1 + \mathbf m^2}$$
by construction. Thanks to $(\star)_1$ and \cite{BBD} 5.4.5 or 6.2.5, $\mathsf{IC} _b$ appears in $\mathcal L _{\mathbf m^1 + \mathbf m^2}$ up to a grading shift if the following condition $(\diamond)$ hold:
\begin{itemize}
\item[$(\diamond)$] $\mathsf{IC}_{b_i}$ appears in $\mathcal L _{\mathbf m ^i}$ for $i = 1,2$.
\end{itemize}

We set $\mathbf m := \mathbf m^1 + \mathbf m ^2$. Let $x_b \in \mathbb O_b$ be a point and let $i _b : \{ x_b \} \hookrightarrow E_V$ be the inclusion.

\begin{lemma}\label{indgen}
Assume that $(\diamond)$ holds. Then, the subspace
\begin{align*}
\imath _b^! \mathcal{E}xt ^{\bullet} _{D^b ( E_V )} ( \mathbb C _{b_1} \odot \mathbb C _{b_2}, \mathcal L _{\mathbf m^1 + \mathbf m^2} ) \subset & \, \imath _b^! \mathcal{E}xt ^{\bullet} _{D^b ( E_V )} ( \mathbb C _{b_1} \odot \mathbb C _{b_2}, \mathcal L _{\beta} )\\
\cong \mathcal{E}xt ^{\bullet} _{D^b ( \mathrm{pt} )} ( \imath _b^* ( \mathbb C _{b_1} \odot \mathbb C _{b_2} ), \imath _b^! \mathcal L _{\beta} ) \cong  & \, D^* [ - d ] \boxtimes K _{b} \left< 2 \dim \mathbb O_b \right>
\end{align*}
is a generating subspace as an $R_{\beta}$-module.
\end{lemma}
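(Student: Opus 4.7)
The plan is to unfold the isomorphism in the statement by combining Corollary \ref{D-shift} with standard adjunctions, and then reduce generation to a straightforward observation about the head of $K_b$.

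Using Corollary \ref{D-shift} to write $\mathbb C_{b_1}\odot\mathbb C_{b_2}\cong D[d]\boxtimes\mathbb C_b$ and then applying the adjunction $(j_b)_!\dashv j_b^!$ in the first factor together with the definition $K_b=H^\bullet i_b^!\mathcal L_\beta[\dim\mathbb O_b]$, the subspace of the statement is identified with $D^*[-d]\boxtimes S\langle 2\dim\mathbb O_b\rangle$ for a graded subspace $S\subset K_b$, namely the image of $\mathrm{Ext}^\bullet(\mathbb C_b,\mathcal L_{\mathbf m^1+\mathbf m^2})\to\mathrm{Ext}^\bullet(\mathbb C_b,\mathcal L_\beta)$ after passing to the stalk at $x_b$. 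The $R_\beta$-action factors entirely through the $K_b$-factor, with $D^*[-d]$ a trivially-acted graded multiplicity space, so generation of the full target reduces to showing $R_\beta\cdot S=K_b$.

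By Theorem \ref{KS}(1), we have $[K_b:L_b]=1$ and $[K_b:L_\mu]=0$ unless $b\preceq\mu$. Since $K_b$ is a successive quotient of $\widetilde K_b$ by Theorem \ref{KS}(4), and $\widetilde K_b$ is in turn a quotient of the projective cover $P_b$ of $L_b$ by Theorem \ref{KS}(3), the module $K_b$ has simple head $L_b$. Hence $K_b$ is generated as an $R_\beta$-module by any lift of a nonzero vector in $L_b\cong K_b/\mathrm{rad}\,K_b$, and the lemma reduces to showing that $S\twoheadrightarrow L_b$ is surjective.

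This is precisely where $(\diamond)$ enters. The $L_b$-head of $K_b$ corresponds, under the identification $K_b=H^\bullet i_b^!\mathcal L_\beta[\dim\mathbb O_b]$, to the $\mathsf{IC}_b$-isotypic summand of $\mathcal L_\beta$: the other summands $\mathsf{IC}_\mu$ either contribute to proper subquotients of the standard filtration (when $\mu\succ b$) or vanish on $\mathbb O_b$ (when $\mu\not\succeq b$). By $(\diamond)$, $\mathsf{IC}_b$ is a direct summand of $\mathcal L_{\mathbf m^1+\mathbf m^2}$ up to grading shift, so $S$ contains this head-component, giving the required surjection. The main obstacle is bookkeeping: carefully tracking the shifts $[-d]$ and $\langle 2\dim\mathbb O_b\rangle$ through the adjunction, and verifying that the $\mathsf{IC}_b$-summand in $\mathcal L_{\mathbf m^1+\mathbf m^2}$ genuinely projects onto the top $L_b$ of the standard filtration on $K_b$ rather than being absorbed into a deeper layer.
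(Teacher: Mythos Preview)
Your proof is correct and follows essentially the same approach as the paper: establish the isomorphism via Corollary~\ref{D-shift} and standard adjunctions, observe that $K_b$ has simple head $L_b$, and then deduce generation from the fact that $(\diamond)$ forces $\mathsf{IC}_b$ to appear in $\mathcal L_{\mathbf m^1+\mathbf m^2}$, which in the paper's phrasing amounts to $e(\mathbf m)L_b\neq 0$. You are somewhat more explicit than the paper in justifying that $K_b$ has simple head (via Theorem~\ref{KS}(3),(4)), whereas the paper simply asserts this and cites \cite{KS}~3.1.13 for the adjunction step.
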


\begin{proof}
The isomorphism parts of the assertion follow by \cite{KS} 3.1.13 and Corollary \ref{D-shift}. By $(\diamond)$ and $(\star)_1$, we conclude that $\mathcal L _{b_1, b_2}$ contains an irreducible perverse sheaf supported on $\mathrm{Supp} \, \mathbb C_{b_1, b_2}$. Thanks to \cite{BBD} 5.4.5 or 6.2.5, we conclude that $\mathcal L _{\mathbf m^1} \odot \mathcal L _{\mathbf m^2}$ contains $\mathsf{IC} _{b}$. Therefore, the head $L_b$ of $K_b$ satisfies $e ( \mathbf m ) L_b \neq \{ 0 \}$, which proves the assertion.
\end{proof}

We set $\mathbb O _{b} ^{\uparrow} \subset E _{V}$ to be the union of $G_{V}$-orbits which contains $\mathbb O _{b}$ in its closure. Let $j _{b} ^{\uparrow} : \mathbb O _{b} ^{\uparrow} \hookrightarrow E _{V}$ be its inclusion.

\begin{proposition}\label{reduce}
We have a canonical isomorphism
$$\mathcal{E}xt ^{\bullet} _{D^b ( E_V )} ( \mathbb C _{b_1} \odot \mathbb C _{b_2}, \mathcal L _{\mathbf m^1 + \mathbf m^2} ) \cong \mathsf{p}_* \mathcal{E}xt ^{\bullet} _{D^b (\mathrm{Gr} _{\beta_1, \beta_2} (V))} ( \mathbb C_{b_1,b_2}, D^* \boxtimes \mathcal L _{\mathbf m^1, \mathbf m^2} )$$
in the bounded derived category of constructible sheaves on $E_V$.
\end{proposition}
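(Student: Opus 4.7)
The plan is to reduce the assertion to a computation on $\mathrm{Gr}_{\beta_1,\beta_2}(V)$ by applying the proper adjunction $(\mathsf{p}_!, \mathsf{p}^!)$, and to extract the twist $D^*$ via proper base change around $\mathsf{p}$. Since $\mathsf{p}$ is projective we have $\mathsf{p}_! = \mathsf{p}_*$, and the sheaf-level adjunction
$$\mathcal{H}om_{E_V^\Omega}(\mathsf{p}_!\mathcal{F}, \mathcal{G}) \cong \mathsf{p}_* \mathcal{H}om_{\mathrm{Gr}_{\beta_1,\beta_2}(V)}(\mathcal{F}, \mathsf{p}^!\mathcal{G})$$
is available. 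Taking $\mathcal{F} = \mathbb{C}_{b_1,b_2}[N_{\beta_1,\beta_2}^\beta]$, so that $\mathsf{p}_!\mathcal{F} = \mathbb{C}_{b_1}\odot\mathbb{C}_{b_2}$, and $\mathcal{G} = \mathcal{L}_{\mathbf{m}^1+\mathbf{m}^2}$, rewrites the LHS of the proposition as
$$\mathsf{p}_* \, \mathcal{E}xt_{\mathrm{Gr}_{\beta_1,\beta_2}(V)}\bigl(\mathbb{C}_{b_1,b_2}, \, \mathsf{p}^!\mathcal{L}_{\mathbf{m}^1+\mathbf{m}^2}[-N_{\beta_1,\beta_2}^\beta]\bigr).$$

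Next I would identify $\mathsf{p}^!\mathcal{L}_{\mathbf{m}^1+\mathbf{m}^2}$. Factor the flag projection as $\pi^\Omega_{\mathbf{m}^1+\mathbf{m}^2} = \mathsf{p} \circ \tilde\pi$, where $\tilde\pi \colon F^\Omega_{\mathbf{m}^1+\mathbf{m}^2} \to \mathrm{Gr}_{\beta_1,\beta_2}(V)$ remembers the $n_2$-step of a flag. A K\"unneth/descent argument along the torsor $\vartheta$, together with the two projections $\pi^\Omega_{\mathbf{m}^i}$ ($i=1,2$), yields $\tilde\pi_!\underline{\mathbb{C}}[d_{\mathbf{m}^1+\mathbf{m}^2}] \cong \mathcal{L}_{\mathbf{m}^1,\mathbf{m}^2}[N_{\beta_1,\beta_2}^\beta]$, hence the convolution identity $\mathcal{L}_{\mathbf{m}^1+\mathbf{m}^2} \cong \mathsf{p}_*\mathcal{L}_{\mathbf{m}^1,\mathbf{m}^2}[N_{\beta_1,\beta_2}^\beta]$. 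Proper base change around the Cartesian square
$$\begin{array}{ccc}
\mathrm{Gr}_{\beta_1,\beta_2}(V) \times_{E_V^\Omega} \mathrm{Gr}_{\beta_1,\beta_2}(V) & \xrightarrow{\pi_1} & \mathrm{Gr}_{\beta_1,\beta_2}(V) \\
\pi_2 \downarrow\ & & \ \downarrow \mathsf{p} \\
\mathrm{Gr}_{\beta_1,\beta_2}(V) & \xrightarrow{\mathsf{p}} & E_V^\Omega
\end{array}$$
then gives $\mathsf{p}^!\mathsf{p}_*\mathcal{L}_{\mathbf{m}^1,\mathbf{m}^2} \cong (\pi_1)_*\pi_2^!\mathcal{L}_{\mathbf{m}^1,\mathbf{m}^2}$.

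The twist $D^*$ originates from the fiber geometry of $\mathsf{p}$. By Corollary \ref{D-shift} (via Lemma \ref{preorbit} and $(\star)_1$) the support of $\mathbb{C}_{b_1,b_2}$ lies in $\overline{\mathsf{p}^{-1}(\mathbb{O}_b)}$, over which $\mathsf{p}$ is a $\mathcal{P}$-bundle with $H^\bullet(\mathcal{P}, \mathbb{C}) \cong D$. Restricted to the preimage of this locus, $\pi_2$ is a smooth morphism with fiber $\mathcal{P}$, so $\pi_2^! \simeq \pi_2^*[2\dim\mathcal{P}]$, and the pushforward $(\pi_1)_*$ integrates along the fiber to produce a factor of $D^* = H^\bullet(\mathcal{P}, \mathbb{C})^*$ by Verdier duality on the smooth projective variety $\mathcal{P}$. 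Plugging this identification into the expression obtained from the adjunction gives the claimed isomorphism.

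The main obstacle is the careful tracking of the cohomological shifts and the verification that the base change inserts a \emph{clean} graded vector space factor of $D^*$ rather than some non-trivial equivariant local system. This relies on the connectedness of stabilizers $(\spadesuit)_2$ and on the triviality of the relevant equivariant local systems used in the proof of Corollary \ref{D-shift}; without these inputs the fiber integration could, in principle, carry a nontrivial monodromy that would ruin the box-product form of the right-hand side.
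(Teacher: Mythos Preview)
Your opening move --- the sheaf-level adjunction $\mathcal{H}om(\mathsf{p}_! \mathcal{F}, \mathcal{G}) \cong \mathsf{p}_* \mathcal{H}om(\mathcal{F}, \mathsf{p}^! \mathcal{G})$ --- is exactly the paper's first step. After that the two arguments diverge.

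The paper does not pass through the fiber product $\mathrm{Gr}_{\beta_1,\beta_2}(V) \times_{E_V^\Omega} \mathrm{Gr}_{\beta_1,\beta_2}(V)$ at all. Instead it restricts to the open set $\mathbb{O}_b^\uparrow \subset E_V^\Omega$ (the union of orbits containing $\mathbb{O}_b$ in their closure) and its preimage $\mathcal{G}$, where $\mathcal{O}_b := \mathsf{p}^{-1}(\mathbb{O}_b)$ becomes a \emph{closed} orbit; then it localizes once more to $\mathcal{O}_b$ via the adjunction for the closed inclusion $\jmath_b$. At that point the only map left in play is the smooth $\mathcal{P}$-fibration $p' \colon \mathcal{O}_b \to \mathbb{O}_b$, so $\jmath_b^! p^! = (p')^! (i_b')^! = (p')^*[2d]\,(i_b')^!$, and Corollary~\ref{D-shift} supplies the factor $D^* \cong D[2d]$. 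The final line simply undoes the two restrictions.

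Your fiber-product route can be made to work, but Step~4 as written is where it leaks. You write ``restricted to the preimage of this locus, $\pi_2$ is smooth with fiber $\mathcal{P}$, so $\pi_2^! \simeq \pi_2^*[2d]$''. The map $\pi_2$ is not smooth; what you actually need is $(j_b')^! \pi_2^! = (\pi_2')^! j_b^! = (\pi_2')^*[2d]\, j_b^!$, followed by $(\pi_1')_*(\pi_2')^* = (p')^*(p')_*$ via base change over $\mathbb{O}_b$. You are then left needing $(p')^*(p')_*\mathcal{F}[2d] \cong D^* \otimes \mathcal{F}$ for $\mathcal{F} = j_b^! \mathcal{L}_{\mathbf{m}^1,\mathbf{m}^2}$, and this is \emph{not} automatic: the projection formula would give it only if $\mathcal{F}$ were pulled back along $p'$, which is not given. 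One has to invoke that $\mathcal{O}_b$ is a single $G_V$-orbit with connected stabilizer (so equivariant cohomology sheaves are constant), together with the triviality of the local system $p'_* \underline{\mathbb{C}}$ recorded in Corollary~\ref{D-shift}. Once you supply those inputs, your computation collapses to the paper's; the detour through $\mathrm{Gr} \times_{E_V} \mathrm{Gr}$ buys nothing beyond the paper's two-step restriction to $\mathcal{O}_b$.
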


\begin{proof}
During this proof, we repeatedly use the local form of the Verdier duality (see e.g. \cite{KS} 3.1.10, or \cite{SGA4} Expos\'e XVIII 3.1.10). We have
$$\mathcal{E}xt ^{\bullet} _{D^b ( E_V )} ( \mathbb C _{b_1} \odot \mathbb C _{b_2}, \mathcal L _{\mathbf m^1 + \mathbf m^2} ) \cong \mathsf{p}_* \mathcal{E}xt ^{\bullet} _{D^b ( \mathrm{Gr} _{\beta_1, \beta _2} ( V ) )} ( \mathbb C _{b_1, b_2}, \mathsf{p} ^! \mathcal L _{\mathbf m^1 + \mathbf m^2} ).$$
Consider the Cartesian diagram
$$\xymatrix{\mathrm{Gr} _{\beta_1, \beta _2} ( V ) \ar[r]^{\hskip 5mm \mathsf{p}} & E _V \\\mathcal G \ar[r]^{p}\ar@{^{(}->}[u] _{\jmath _b ^{\uparrow}} & \mathbb O _b ^{\uparrow} \ar@{^{(}->}[u] _{j_b ^{\uparrow}}}.$$
Note that $\jmath _b ^{\uparrow}$ is an open embedding since $\mathsf{p}$ is continuous. It follows that
\begin{align*}
& \mathsf{p}_* \mathcal{E}xt ^{\bullet} _{D^b ( \mathrm{Gr} _{\beta_1, \beta _2} ( V ) )} ( \mathbb C _{b_1, b_2}, \mathsf{p} ^! \mathcal L _{\mathbf m^1 + \mathbf m^2} ) & \\
& \cong (j _{b} ^{\uparrow}) _* p_* \mathcal{E}xt ^{\bullet} _{D^b ( \mathcal{G} )} ( (\jmath _b ^{\uparrow}) ^* \mathbb C _{b_1, b_2}, (\jmath _b ^{\uparrow}) ^! \mathsf{p} ^! \mathcal L _{\mathbf m^1 + \mathbf m^2} ) & (\mathbb C_{b_1, b_2} \cong (\jmath _b ^{\uparrow})_! (\jmath _b ^{\uparrow}) ^* \mathbb C_{b_1, b_2})\\
& \cong (j _{b} ^{\uparrow}) _* p_* \mathcal{E}xt ^{\bullet} _{D^b ( \mathcal{G} )} ( (\jmath _b ^{\uparrow}) ^* \mathbb C _{b_1, b_2}, p^! ( j _b ^{\uparrow}) ^! \mathcal L _{\mathbf m^1 + \mathbf m^2} ) & (j _b ^{\uparrow} \circ p = \mathsf{p} \circ \jmath _b ^{\uparrow}).
\end{align*}
In addition, $(\jmath _b ^{\uparrow}) ^* \mathbb C _{b_1, b_2}$ is a local system supported on the closed $G_V$-orbit $\mathcal O _b$ of $\mathcal G$. Let us denote by $\jmath _b : \mathcal O _b \hookrightarrow \mathcal G$ the inclusion. We have $(\jmath _b ^{\uparrow}) ^* \mathbb C _{b_1, b_2} \cong ( \jmath _b )_! \mathbb C [\dim \mathcal O _b]$. Thus, we deduce
\begin{align*}
& (j _{b} ^{\uparrow}) _* p_* \mathcal{E}xt ^{\bullet} _{D^b (\mathcal G)} ( (\jmath _b ^{\uparrow}) ^* \mathbb C _{b_1, b_2}, p^! ( j _b ^{\uparrow}) ^! \mathcal L _{\mathbf m^1 + \mathbf m^2} ) & \\
& \cong (j _{b} ^{\uparrow}) _* p'_* \mathcal{E}xt ^{\bullet} _{D^b (\mathcal O_b)} ( \mathbb C [\dim \mathcal O _b], \jmath_b ^! p^! ( j _b ^{\uparrow}) ^! \mathcal L _{\mathbf m^1 + \mathbf m^2} ) & ((\jmath _b ^{\uparrow}) ^* \mathbb C _{b_1, b_2} \cong ( \jmath _b )_! \mathbb C [\bullet])\\
& \cong (j _{b} ^{\uparrow}) _* p'_*\mathcal{E}xt ^{\bullet} _{D^b (\mathcal O_b)} ( \mathbb C [\dim \mathcal O _b],  D^* \boxtimes \jmath_b ^! ( \jmath_b ^{\uparrow} ) ^! \mathcal L _{\mathbf m^1, \mathbf m^2} ) & (\text{Corollary \ref{D-shift}})\\
& \cong \mathsf{p}_* \mathcal{E}xt ^{\bullet} _{D^b (\mathrm{Gr} _{\beta_1, \beta_2} (V))} ( \mathbb C_{b_1,b_2}, D^* \boxtimes \mathcal L _{\mathbf m^1, \mathbf m^2} ), &
\end{align*}
where $p' : \mathcal O _b \to \mathbb O _b ^{\uparrow}$ is the restriction of $p$. Since all the maps are canonically defined, composing all the isomorphisms yield the result.
\end{proof}

We return to the proof of Theorem \ref{induction}. Taking account into the fact $\mathsf{p} ^{-1} ( x_b ) \cong \mathcal P$, we have an isomorphism
$$D^* \left< d \right> \boxtimes e ( \mathbf m ^1 + \mathbf m ^2 ) K _b \cong \, \mathbb H ^{\bullet}i_b^! \mathcal{E}xt ^{\bullet} _{D^b ( E_V )} ( \mathbb C _{b_1} \odot \mathbb C _{b_2}, \mathcal L _{\mathbf m^1 + \mathbf m^2} )[2 \dim \mathbb O _b]$$
and a spectral sequence arising from the base change (applied to $i_b$ and $\mathsf{p}$)
\begin{align*}
E_2 := D^* \otimes H ^{\bullet} ( \mathcal P ) & \otimes \left( e ( \mathbf m ^1 ) K _{b_1} \boxtimes e ( \mathbf m ^2 ) K_{b_2} \right)\\
& \Rightarrow \mathbb H ^{\bullet} i_b^! \mathsf{p}_* \mathcal{E}xt ^{\bullet} _{D^b (\mathrm{Gr} _{\beta_1, \beta_2} (V))} ( \mathbb C_{b_1,b_2}, D^* \boxtimes \mathcal L _{\mathbf m^1, \mathbf m^2} ) [2 \dim \mathbb O _b],
\end{align*}
where we used the fact that $\dim \mathsf{p}^{-1} ( \mathbb O_b ) - \dim \mathsf{p} ^{-1} ( x_b ) = \dim \mathbb O_b$ in the degree shift of the second spectral sequence. Here the modules $K _{b_1}, K_{b_2}$, and $K_b$ are pure of weight $0$ by \cite{Lu} 10.6 (see the proof of Proposition \ref{quiverSC} for a bit precise account). By Lemma \ref{preorbit} 2), we deduce that $H^{\bullet} ( \mathcal P )$ is also pure. Therefore, the spectral sequence $E_2$ degenerates at the $E_2$-stage. By factoring out the effect of $D^*$, we conclude that
$$e ( \mathbf m ^1 + \mathbf m ^2 ) K_b \cong H ^{\bullet} ( \mathcal P ) \boxtimes \left( e ( \mathbf m^1 ) K _{b_1} \boxtimes e ( \mathbf m ^2 ) K_{b_2} \right)\left< -d \right>.$$
This induces an inclusion as $R_{\mathbf m^1, \mathbf m^1} \boxtimes R_{\mathbf m^2, \mathbf m^2}$-modules
$$\varphi _{\mathbf m^1, \mathbf m^2} : \left( e ( \mathbf m^1 ) K _{b_1} \boxtimes e ( \mathbf m ^2 ) K_{b_2} \right)\left< d \right> \hookrightarrow e ( \mathbf m ^1 + \mathbf m ^2 ) K_b.$$
The module $e ( \mathbf m^1 + \mathbf m^2 ) K_b$ admits an $R_{\mathbf m^1 + \mathbf m^2,\mathbf m^1 + \mathbf m^2}$-module structure with simple head thanks to Theorem \ref{KS} 3). This extends the $R_{\mathbf m^1, \mathbf m^1} \boxtimes R_{\mathbf m^2, \mathbf m^2}$-module structure. Recall that for each $i=\emptyset,1,2$, the simple head of $K_{b_{i}}$ as an irreducible $R_{\beta _i}$-module is realized as the coefficient vector space of $\mathsf{IC} _{b_i}$ inside $\mathcal L _{\beta _i}$ (see \S 1), and its weight $e ( \mathbf m^i )$-part is that of $\mathcal L _{\mathbf m^i}$ (see \S 2). (Note that this sheaf-theoretic interpretation gives a splitting of $L_{b_i}$ to $K_{b_i}$ as vector spaces for each $i = \emptyset,1,2$.) By \cite{BBD} 5.4.5 or 6.2.5 and Corollary \ref{D-shift}, the complex $H ^{\bullet} (\mathcal P) [d] \boxtimes \mathsf{IC} _b$ is a direct summand of $\mathsf{IC} _{b_1} \odot \mathsf{IC} _{b_2}$. Therefore, the above interpretation implies that the unique simple quotients $L_{b_1}$ and $L_{b_2}$ of $K_{b_1}$ and $K_{b_2}$ satisfy
$$\varphi _{\mathbf m^1, \mathbf m^2} ( H^{\bullet} ( \mathcal P ) \otimes \left( e ( \mathbf m^1 ) L_{b_1} \boxtimes e ( \mathbf m^2 ) L_{b_2} \right) ) \left< - d \right> \subset e ( \mathbf m ) L_{b} \subset e ( \mathbf m ) K_{b}$$
as vector subspaces, where $L_b$ is the simple head of $K_{b}$. In addition, these inclusions are non-zero if $\mathbf m^1$ and $\mathbf m^2$ satisfies $(\diamond)$. Since we can choose $\mathbf m^1$ and $\mathbf m^2$ so that $(\diamond)$ is satisfied, we have a surjective map of graded $R_{\beta}$-modules:
$$K _{b_1} \star K _{b_2} \left< d \right> \longrightarrow \!\!\!\!\! \rightarrow K _b.$$

\begin{lemma}\label{ind-dim-est} In the above settings, we have
$$\dim K _b = \dim \left( K _{b_1} \star K _{b_2} \right).$$
\end{lemma}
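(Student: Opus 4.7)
The plan is to prove the surjection $K_{b_1}\star K_{b_2}\langle d\rangle\twoheadrightarrow K_b$ constructed just above is an isomorphism by matching total (ungraded) dimensions.

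First, by the Poincar\'e--Birkhoff--Witt theorem (Theorem \ref{PBW}) and the standard analysis of the induction functor (cf.\ \cite{KL} Proposition 2.16), the right $R_{\beta_1}\boxtimes R_{\beta_2}$-module $R_\beta e_{\beta_1,\beta_2}$ is free of rank $\binom{n}{n_1}$, where $n=\mathsf{ht}\,\beta$ and $n_i=\mathsf{ht}\,\beta_i$. Consequently
\[
\dim(K_{b_1}\star K_{b_2})=\binom{n}{n_1}\cdot \dim K_{b_1}\cdot \dim K_{b_2}.
\]
On the geometric side, unwinding the definition $K_b=H^\bullet i_b^!\mathcal L_\beta^\Omega[\dim\mathbb O_b^\Omega]$ using $\mathcal L_\beta^\Omega=\bigoplus_{\mathbf m}(\pi_\mathbf m^\Omega)_!\underline{\mathbb C}[d_\mathbf m^\Omega]$ and proper base change yields $\dim K_b=\chi\bigl(\bigsqcup_{\mathbf m\in Y^\beta}(\pi_\mathbf m^\Omega)^{-1}(x_b)\bigr)$, and analogously for $K_{b_1}$ and $K_{b_2}$.

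I next intend to match these two formulae via $\mathbb C^\times$-localization on the geometric side. By $(\star)_1$ the module $\mathtt M_b$ splits as $\mathtt M_{b_1}\oplus \mathtt M_{b_2}$, so we obtain a cocharacter $\mathbb C^\times\hookrightarrow \mathrm{Aut}(\mathtt M_b)=\mathrm{Stab}_{G_V}(x_b)$ acting trivially on $\mathtt M_{b_1}$ and by scalars on $\mathtt M_{b_2}$. This induces a $\mathbb C^\times$-action on the proper variety $\bigsqcup_{\mathbf m}(\pi_\mathbf m^\Omega)^{-1}(x_b)$ whose fixed locus consists precisely of those flags $F_\bullet$ respecting the splitting, i.e.\ with $F_k=(F_k\cap \mathtt M_{b_1})\oplus (F_k\cap \mathtt M_{b_2})$ for every $k$. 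Such a fixed flag is equivalent to the data of a flag in $\mathtt M_{b_1}$ of some type $\mathbf m^1\in Y^{\beta_1}$, a flag in $\mathtt M_{b_2}$ of some type $\mathbf m^2\in Y^{\beta_2}$, and an interleaving $w\in \mathfrak S_n/(\mathfrak S_{n_1}\times \mathfrak S_{n_2})$ recording at each step whether the next $\alpha_{m_k}$-reduction happens in the first or the second summand; since $x_b$ preserves each summand (the splitting in $(\star)_1$ being $\mathbb C[\Gamma]$-linear), any such triple assembles to an $x_b$-stable flag in $\mathtt M_b$ of the expected type $\mathbf m=w(\mathbf m^1+\mathbf m^2)$. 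Hence the fixed locus equals $\bigsqcup_{w,\mathbf m^1,\mathbf m^2}(\pi_{\mathbf m^1}^\Omega)^{-1}(x_{b_1})\times (\pi_{\mathbf m^2}^\Omega)^{-1}(x_{b_2})$, with Euler characteristic $\binom{n}{n_1}\dim K_{b_1}\dim K_{b_2}$.

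Since the Euler characteristic of a proper complex algebraic variety equals that of any algebraic torus fixed locus (Bialynicki-Birula), this last quantity equals $\dim K_b$, which then matches $\dim(K_{b_1}\star K_{b_2})$ computed via PBW above. The main step that needs care is the interleaving bijection in the middle paragraph, which is forced by the elementary fact that a $\mathbb C^\times$-stable subspace of $\mathtt M_b$ is precisely the direct sum of its intersections with the two weight components $\mathtt M_{b_1}$ and $\mathtt M_{b_2}$.
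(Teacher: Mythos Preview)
Your approach is essentially the same as the paper's---both reduce to the PBW freeness $\dim(K_{b_1}\star K_{b_2})=\binom{n}{n_1}\dim K_{b_1}\dim K_{b_2}$ and then compute $\dim K_b$ by torus localization on the Springer-type fibers $\bigsqcup_{\mathbf m}(\pi_{\mathbf m}^{\Omega})^{-1}(x_b)$. The paper chooses a \emph{maximal} torus $T\subset\mathsf{Stab}_{G_V}(x_b)$ and uses the purity of the fibers (Lusztig \cite{Lu} 10.6) to degenerate the equivariant-to-ordinary spectral sequence, identifying $\dim H_\bullet(\text{fiber})$ with $\dim H_\bullet(\text{fiber}^{T})$; since the $T$-fixed locus decomposes along indecomposable summands of $\mathtt M_b$, the desired count drops out. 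You instead pick the single $\mathbb C^\times$ coming from the splitting $\mathtt M_b=\mathtt M_{b_1}\oplus\mathtt M_{b_2}$ and argue via Euler characteristics and the interleaving bijection. This is a legitimate and slightly more elementary variant.

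There is one genuine gap you should close. The sentence ``unwinding the definition \ldots\ yields $\dim K_b=\chi(\ldots)$'' is not correct as written: proper base change and the self-duality $\mathbb D\mathcal L_\beta^\Omega\cong\mathcal L_\beta^\Omega$ give $\dim K_b=\sum_{\mathbf m}\dim H^\bullet\bigl((\pi_{\mathbf m}^\Omega)^{-1}(x_b)\bigr)$, the \emph{total} Betti number, not the alternating sum. To replace $\dim H^\bullet$ by $\chi$ you need the odd cohomology of these fibers to vanish (and likewise for the fibers over $x_{b_1},x_{b_2}$). This follows from pointwise purity (Proposition~\ref{quiverSC}, i.e.\ \cite{Lu} 10.6) together with the decomposition theorem, which forces the stalk cohomology of $\mathcal L_\beta^\Omega$ to be concentrated in even degrees; but it is an extra input, exactly parallel to the purity invoked in the paper's proof to degenerate the spectral sequence. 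Once you insert that justification, your argument goes through.
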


\begin{proof}
In this proof, $i$ denotes either $\emptyset, 1$, or $2$. Let us choose a point $x_{b_i} \in \mathbb O _{b_i} \subset E_{V (i)}$. Let $T_i$ be a maximal torus of $\mathsf{Stab} _{G_{V(i)}} x _{b_i}$. Choose $\mathbf m ^i \in Y ^{\beta _i}$. Thanks to the purity of each module (Lusztig \cite{Lu} 10.6), we deduce that the spectral sequence
$$H ^{\bullet} _{T_i} ( \mathrm{pt} ) \otimes H _{\bullet} ( \pi _{\mathbf m^i}^{-1} ( x_{b_i} ) ) \Rightarrow H _{\bullet} ^{T_i} ( \pi _{\mathbf m^i}^{-1} ( x_{b_i} ) )$$
degenerates at the $E_2$-stage. Here the RHS have the same $H ^{\bullet} _{T_i} ( \mathrm{pt} )$-rank as that of $H _{\bullet}^{T_i} ( \pi _{\mathbf m^i}^{-1} ( x_b ) ^{T_i} ) $. Therefore, we have
$$\dim H _{\bullet} ( \pi _{\mathbf m^i}^{-1} ( x_b ) ) = \dim H _{\bullet} ( \pi _{\mathbf m^i}^{-1} ( x_{b_i} ) ^{T_i} ).$$
By Theorem \ref{PBW}, we deduce that $R_{\beta}$ is a free $R_{\beta_1} \boxtimes R_{\beta _2}$-module of rank $\frac{n!}{n_1! n_2!}$. Hence, it is enough to show
\begin{align*}
\sum _{\mathbf m \in Y ^{\beta}} \dim H _{\bullet} ( \pi _{\mathbf m}^{-1} ( x_b ) ^{T} ) & \\
= \frac{n!}{n_1! n_2!} & \sum _{\scriptsize \begin{matrix}\mathbf m ^1 \in Y ^{\beta_1}\\ \mathbf m^2 \in Y ^{\beta_2}\end{matrix}} ( \dim H _{\bullet} ( \pi _{\mathbf m ^1}^{-1} ( x_{b_1} ) ^{T_1} ) ) ( \dim H _{\bullet} ( \pi _{\mathbf m ^2}^{-1} ( x_{b_2} ) ^{T_2} ) ).
\end{align*}
This follows by a simple counting since $E_{V(i)} ^{T _i}$ decomposes into the product of varieties corresponding to each indecomposable module.
\end{proof}

We return to the proof of Theorem \ref{induction}. Lemma \ref{ind-dim-est} asserts that
$$K _{b_1} \star K _{b_2} \left< d \right> \cong K _b$$
as graded $R_{\beta}$-modules. This completes the proof of Theorem \ref{induction} except for the last assertion. The last assertion follows since the assumption implies that $\mathsf p_{b_1,b_2}$ is birational onto its image, and hence $d = 0$.

\section{Characterization of the PBW bases}\label{Chap:charPBW}
Keep the setting of the previous section. For a reduced expression $\mathbf i$ of $w_0$ and a sequence of non-negative integers $\mathbf c := ( c_{1}, c_{2}, \ldots, c _{\ell} ) \in \mathbb Z _{\ge 0} ^{\ell}$, we call the pair $( \mathbf i, \mathbf c )$ a Lusztig datum, and we call $\mathbf c$ an $\mathbf i$-Lusztig datum. For a Lusztig datum $( \mathbf i, \mathbf c )$, we define
$$\mathsf{wt} ( \mathbf i, \mathbf c ) := \sum _{k = 1} ^{\ell} c_k \gamma ^{(k)} _{\mathbf i}, \hskip 3mm \text{ where } \hskip 3mm \gamma ^{(k)} _{\mathbf i} := s_{i_{1}} \cdots s_{i_{k-1}} \alpha _{i_k}.$$
For two $\mathbf i$-Lusztig data $\mathbf c$ and $\mathbf c'$, we define $\mathbf c < _{\mathbf i} \mathbf c'$ as: There exists $0 \le k < \ell$ so that
$$c_1 = c_1', c_2 = c_2', \ldots, c_k = c_k' \hskip 2mm \text{ and } \hskip 2mm c_{k+1} > c_{k+1}'.$$
Associated to each Lusztig datum $( \mathbf i, \mathbf c )$, we define the lower PBW-module $\widetilde{E} ^{\mathbf i} _{\mathbf c}$ as:
\begin{equation}
\widetilde{E} ^{\mathbf i} _{\mathbf c} := P_{c_1 i_1} \star \mathbb T_{i_1} \left( P _{c_2 i_2} \star \mathbb T_{i_2} \left( P _{c_3 i_3} \star \cdots \mathbb T_{i_{\ell-1}} P_{c_{\ell} i_{\ell}} \right) \cdots \right).\label{PBW-01}
\end{equation}
Similarly, we define the corresponding upper PBW-module $E ^{\mathbf i} _{\mathbf c}$ as:
\begin{equation}
E ^{\mathbf i} _{\mathbf c} := L_{c_1 i_1} \star \mathbb T_{i_1} \left( L _{c_2 i_2} \star \mathbb T_{i_2} \left( L _{c_3 i_3} \star \cdots \mathbb T_{i_{\ell-1}} L _{c_{\ell} i_{\ell}} \right) \cdots \right).\label{PBW-02}
\end{equation}
By construction, it is clear that $E ^{\mathbf i} _{\mathbf c}$ is a quotient of $\widetilde{E} ^{\mathbf i} _{\mathbf c}$.

\begin{remark}
The modules $E ^{\mathbf i} _{\mathbf c}$ and $\widetilde{E} ^{\mathbf i} _{\mathbf c}$ are identified with $\widetilde{K} ^{\Omega} _{b}$ and $K ^{\Omega} _{b}$ from \S 2 when the reduced expression $\mathbf i$ is adapted to $\Omega$ (cf. Corollary \ref{PBW-Kos}). In addition, we have $P_{c i} = \widetilde{K} _{c i}$ and $L_{c i} = K _{c i}$ for each $c \in \mathbb Z_{\ge 0}$ and $i \in I$ by Example \ref{one-root}.
\end{remark}

\begin{lemma}\label{bPBW}
For each Lusztig datum $( \mathbf i, \mathbf c )$, we have:
\begin{enumerate}
\item $\widetilde{E} ^{\mathbf i} _{\mathbf c}$ and $E ^{\mathbf i} _{\mathbf c}$ are $R_{\mathsf{wt} ( \mathbf i, \mathbf c )}$-modules;
\item $\widetilde{E} ^{\mathbf i} _{\mathbf c}$ and $E ^{\mathbf i} _{\mathbf c}$ are modules with simple heads if they are non-zero;
\item $\widetilde{E} ^{\mathbf i} _{\mathbf c} \neq \{ 0 \}$ if and only if $E ^{\mathbf i} _{\mathbf c} \neq \{ 0 \}$.
\end{enumerate}
\end{lemma}

\begin{proof}
Since $\mathbb T_i$ is a functor sending an $R_{\beta}$-module to an $R_{s_i \beta}$-module (possibly zero), the first assertion is immediate. The functor $\mathbb T_i$ also preserves the simple head property (provided if it does not annihilate the whole module) by construction. Therefore, we apply Lemma \ref{indpro} repeatedly to deduce the simple head property of $\widetilde{E} ^{\mathbf i} _{\mathbf c}$ and $E ^{\mathbf i} _{\mathbf c}$ from that of $P_{c _{k} i_{k}}$ ($1 \le k \le \ell$), which is the second assertion. By construction, $E ^{\mathbf i} _{\mathbf c}$ contains the head of $\widetilde{E} ^{\mathbf i} _{\mathbf c}$, and hence the third assertion.
\end{proof}

\begin{theorem}[Lusztig \cite{Lu}]\label{adapted}
Assume that the reduced expression $\mathbf i$ is adapted to $\Omega$. Then, we have $E ^{\mathbf i} _{\mathbf c} \neq \{ 0 \}$ for every $\mathbf i$-Lusztig datum. Moreover, the set of $\mathbf i$-Lusztig data is in bijection with $B ( \infty )$ as:
$$\mathbf c \mapsto \mathsf{hd} \, E ^{\mathbf i} _{\mathbf c} \cong L _b \hskip 3mm \text{ for } \hskip 3mm b \in B ( \infty ).$$
\end{theorem}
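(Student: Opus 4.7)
The plan is to proceed by induction on the length $\ell$ of $\mathbf{i}$, establishing the stronger statement that for every reduced expression $\mathbf{i}=(i_1,\ldots,i_\ell)$ adapted to $\Omega$ (not necessarily of $w_0$) and every $\mathbf{i}$-Lusztig datum $\mathbf{c}$, we have $\widetilde{E}^{\mathbf{i}}_{\mathbf{c}}\cong\widetilde{K}^{\Omega}_{b}$ and $E^{\mathbf{i}}_{\mathbf{c}}\cong L_b$ (up to grading shift), where $b\in B(\infty)$ corresponds under Gabriel's theorem to the $\mathbb{C}[\Gamma]$-module $\mathtt{M}_b\cong\bigoplus_{k=1}^\ell\mathtt{M}(\gamma^{(k)}_{\mathbf{i}})^{\oplus c_k}$, with $\mathtt{M}(\gamma)$ the unique indecomposable of dimension vector $\gamma$. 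When $\mathbf{i}$ is a reduced expression of $w_0$, the roots $\{\gamma^{(k)}_{\mathbf{i}}\}_k$ exhaust $R^+$, so Gabriel's theorem yields the desired bijection $\mathbf{c}\mapsto b$ with $B(\infty)$.

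For the inductive step, $i_1$ is a sink of $\Omega$ and the tail $\mathbf{i}'=(i_2,\ldots,i_\ell)$ is adapted to $s_{i_1}\Omega$. Writing $\widetilde{E}^{\mathbf{i}}_{\mathbf{c}}=P_{c_1i_1}\star\mathbb{T}_{i_1}\widetilde{E}^{\mathbf{i}'}_{\mathbf{c}'}$ with $\mathbf{c}'=(c_2,\ldots,c_\ell)$ and applying induction to $\widetilde{E}^{\mathbf{i}'}_{\mathbf{c}'}\cong\widetilde{K}^{s_{i_1}\Omega}_{b'}$, I reduce to understanding $P_{c_1i_1}\star\mathbb{T}_{i_1}\widetilde{K}^{s_{i_1}\Omega}_{b'}$. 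The key observation is that since $s_{i_1}\gamma^{(k-1)}_{\mathbf{i}'}=\gamma^{(k)}_{\mathbf{i}}$ is positive for each $k\ge 2$, no $\gamma^{(k-1)}_{\mathbf{i}'}$ equals $\alpha_{i_1}$; hence $\epsilon^*_{i_1}(b')=0$ and, by Theorem \ref{KS}(1), the same holds for every composition factor of $\widetilde{K}^{s_{i_1}\Omega}_{b'}$. Consequently the action factors through ${}^iR_{s_{i_1}\beta}$, making $\mathbb{T}_{i_1}$ exact on this filtration, and Theorem \ref{SRF}(1) yields $\mathbb{T}_{i_1}\widetilde{K}^{s_{i_1}\Omega}_{b'}$ as a successive self-extension of $K^{\Omega}_{T_{i_1}(b')}$, with $T_{i_1}(b')$ corresponding by the BGP-type dictionary (\cite{Lu} 4.4(c)) to the $\mathbb{C}[\Gamma]$-module $\bigoplus_{k\ge 2}\mathtt{M}(\gamma^{(k)}_{\mathbf{i}})^{\oplus c_k}$.

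To close the induction, write $P_{c_1i_1}\cong\widetilde{K}_{c_1i_1}$ as a successive self-extension of $K_{c_1i_1}=L_{c_1i_1}$ (Example \ref{one-root}); since $\epsilon_{i_1}(T_{i_1}(b'))=0$ and $i_1$ is a sink of $\Omega$, Corollary \ref{i-ind} gives $K_{c_1i_1}\star K^{\Omega}_{T_{i_1}(b')}\cong K^{\Omega}_b$ with $\mathtt{M}_b$ as asserted (using that $\mathtt{M}_{i_1}$ is $\mathbb{C}[\Gamma]$-projective at a sink, so the sequence $0\to\mathtt{M}_{T_{i_1}(b')}\to\mathtt{M}_b\to\mathtt{M}_{i_1}^{\oplus c_1}\to 0$ splits canonically). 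By exactness of $\star$ in both arguments, $\widetilde{E}^{\mathbf{i}}_{\mathbf{c}}$ is therefore a successive self-extension of $K^{\Omega}_b$; a graded multiplicity comparison for $L_b$ --- matching the product of the two contributions against the formula of Theorem \ref{KS}(4) via the factorization $\mathrm{Aut}_{\mathbb{C}[\Gamma]}(\mathtt{M}_b)\sim GL(c_1)\times\mathrm{Aut}(\mathtt{M}_{T_{i_1}(b')})$ modulo a unipotent radical --- combined with Theorem \ref{KS}(3) pins this down to $\widetilde{E}^{\mathbf{i}}_{\mathbf{c}}\cong\widetilde{K}^{\Omega}_b$, whence $E^{\mathbf{i}}_{\mathbf{c}}\cong L_b$ by Lemma \ref{bPBW}(3--4). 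The main obstacle is the synchronization of the Saito functor $\mathbb{T}_{i_1}$ with BGP reflection at the level of standard modules, which is precisely the combined content of Theorem \ref{SRF}(1) and the induction-theorem input of Corollary \ref{i-ind}.
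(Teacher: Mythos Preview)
Your approach is essentially the same as the paper's: both arguments combine Theorem~\ref{SRF}(1) (the Saito functor sends standard modules to standard modules) with Corollary~\ref{i-ind} (the induction $K_{mi}\star K^{\Omega}_b\cong K^{\Omega}_{b'}$ at a sink) and then invoke Gabriel's theorem. The paper simply iterates these two results along the nested definition (\ref{PBW-02}) to identify $E^{\mathbf i}_{\mathbf c}$ with the standard module $K^{\Omega}_b$ attached to $\mathtt M_b\cong\bigoplus_k \mathtt M_{(k)}^{\oplus c_k}$, and reads off the head.

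There is one genuine error in your strengthened statement: the claim $E^{\mathbf i}_{\mathbf c}\cong L_b$ is false in general. What the inductive use of Theorem~\ref{SRF}(1) and Corollary~\ref{i-ind} actually produces is $E^{\mathbf i}_{\mathbf c}\cong K^{\Omega}_b$, the \emph{standard} module, which is typically not simple (by Theorem~\ref{KS}(1) it may contain composition factors $L_{b''}$ for $b\prec b''$). Your deduction ``whence $E^{\mathbf i}_{\mathbf c}\cong L_b$ by Lemma~\ref{bPBW}(3--4)'' does not follow from those lemmas. Fortunately this does not damage the proof of Theorem~\ref{adapted}, which only asks for $\mathsf{hd}\,E^{\mathbf i}_{\mathbf c}\cong L_b$; that is immediate from $E^{\mathbf i}_{\mathbf c}\cong K^{\Omega}_b$.

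Your detour through $\widetilde{E}^{\mathbf i}_{\mathbf c}\cong\widetilde{K}^{\Omega}_b$ and the multiplicity comparison via the Levi decomposition of $\mathrm{Aut}_{\mathbb C[\Gamma]}(\mathtt M_b)$ is correct and recovers the content of the Remark following (\ref{PBW-02}), but it is not needed for the theorem at hand: the paper works directly on the $E$-side with standard modules and avoids this extra step.
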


\begin{proof}
Since $\mathbf i$ is adapted, we deduce that a module $\mathbb T_{i_1} \mathbb T_{i_2} \cdots \mathbb T_{i_{k-1}} L _{c_k i_k}$ is simple and it corresponds to an indecomposable $\mathbb C [\Gamma]$-module $\mathtt M_{(k)}$ with $\underline{\dim} \, \mathtt M_{(k)} = \gamma _{\mathbf i} ^{(k)}$ (\cite{Lu} 4.7). We apply Corollary \ref{i-ind} and Theorem \ref{SRF} 1) repeatedly to construct a module with its simple head corresponding to the quiver representation $\mathtt M _{(1)}^{\oplus c_1} \oplus \cdots \oplus \mathtt M _{(\ell)} ^{\oplus c_{\ell}}$. Now the Gabriel theorem yields the result.
\end{proof}

\begin{definition}[$2$-move, $3$-move, \cite{Lu} 2.3]
We say that two Lusztig data $(\mathbf i, \mathbf c)$ and $(\mathbf i', \mathbf c')$ are connected by a $2$-move if
\begin{enumerate}
\item there exists $1 \le k < \ell$ so that $i_k = i_{k+1}', i_{k+1} = i_k'$, $i_k \not\leftrightarrow i_{k+1}$, and $i_l = i_l'$ for every $l \neq k, k+1$;
\item we have $c_k = c_{k+1}', c_{k+1} = c_k'$, and $c_l = c_l'$ for every $l \neq k, k+1$.
\end{enumerate}
We say that $(\mathbf i, \mathbf c)$ and $(\mathbf i', \mathbf c')$ are connected by a $3$-move if
\begin{enumerate}
\item there exists $1 < k < \ell$ so that $i_{k-1} = i_{k+1} = i_{k}'$, $i_{k} = i'_{k-1} = i_{k+1}'$, $i_{k} \leftrightarrow i_{k+1}$, and $i_l = i_l'$ for every $l \neq k-1, k, k+1$;
\item we have $c_l = c_l'$ for every $l \neq k-1, k, k+1$, and
$$( c_{k-1}', c_{k}', c_{k+1}' ) = ( c_{k} + c_{k+1} - c_{0}, c_{0} , c_{k-1} + c_{k} - c_{0} ) \text{ for } c_{0} := \min \{ c_{k-1}, c_{k+1} \}.$$
\end{enumerate}
\end{definition}

\begin{lemma}\label{2me}
For two Lusztig data $( \mathbf i, \mathbf c)$ and $( \mathbf i', \mathbf c')$ which are connected by a $2$-move, we have $E ^{\mathbf i} _{\mathbf c} \cong E ^{\mathbf i'} _{\mathbf c'}$.
\end{lemma}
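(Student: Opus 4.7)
The plan is to expand both sides via Lemma \ref{bPBW}(2) as ordered star products $E^{\mathbf i}_{\mathbf c} \cong M_1^{\mathbf i} \star \cdots \star M_{\ell}^{\mathbf i}$ with $M_l^{\mathbf i} := (\mathbb T_{i_1}\cdots \mathbb T_{i_{l-1}}) L_{c_l i_l}$, and similarly for $\mathbf i'$. A $2$-move at position $k$ leaves $M_l$ unchanged for $l \leq k-1$, swaps the roles of $i_k, i_{k+1}$ (and of $c_k, c_{k+1}$) for $l \in \{k, k+1\}$, and for $l \geq k+2$ only reverses the internal order of $\mathbb T_{i_k}$ and $\mathbb T_{i_{k+1}}$ appearing inside the prefix $\mathbb T_{i_1}\cdots \mathbb T_{i_{l-1}}$. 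Thus I reduce the lemma to two claims: (a) the graded isomorphism $M_k^{\mathbf i}\star M_{k+1}^{\mathbf i} \cong M_k^{\mathbf i'}\star M_{k+1}^{\mathbf i'}$; and (b) the graded isomorphism $\mathbb T_{i_k}\mathbb T_{i_{k+1}} N \cong \mathbb T_{i_{k+1}}\mathbb T_{i_k} N$ for every simple module $N$ of the form $\mathbb T_{i_{k+2}}\cdots \mathbb T_{i_{l-1}} L_{c_l i_l}$ with $l \geq k+2$.

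For (a), the factorization argument used in the proof of Lemma \ref{bPBW}(2) yields
\[
M_k^{\mathbf i}\star M_{k+1}^{\mathbf i} \cong (\mathbb T_{i_1}\cdots\mathbb T_{i_{k-1}})\bigl(L_{c_k i_k} \star \mathbb T_{i_k} L_{c_{k+1} i_{k+1}}\bigr),
\]
and an analogous formula for the $\mathbf i'$ side. Since $i_k \not\leftrightarrow i_{k+1}$, we have $s_{i_k}\alpha_{i_{k+1}} = \alpha_{i_{k+1}}$; combined with the fact that $B(\infty)_{c_{k+1}\alpha_{i_{k+1}}}$ is a singleton (Example \ref{one-root}), Theorem \ref{SRF}(2) forces $\mathbb T_{i_k} L_{c_{k+1} i_{k+1}} \cong L_{c_{k+1} i_{k+1}}$ as graded modules, and symmetrically $\mathbb T_{i_{k+1}} L_{c_k i_k} \cong L_{c_k i_k}$. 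Hence (a) reduces to $L_{c_k i_k}\star L_{c_{k+1} i_{k+1}} \cong L_{c_{k+1} i_{k+1}} \star L_{c_k i_k}$. Using $L_{c i} = K_{ci}$ from Example \ref{one-root}, I apply Theorem \ref{induction} in both orders with $b_1, b_2$ corresponding to $\mathtt M_{i_k}^{\oplus c_k}$ and $\mathtt M_{i_{k+1}}^{\oplus c_{k+1}}$: condition $(\star)_0$ is vacuous because $B(\infty)_{c\alpha_i}$ is a singleton for each $i$ and $c$; $(\star)_1$ holds because $\mathrm{Ext}^1_{\mathbb C[\Gamma]}(\mathtt M_{i_k}, \mathtt M_{i_{k+1}}) = 0$ when $i_k \not\leftrightarrow i_{k+1}$; and the decomposition $\mathtt M_b \cong \mathtt M_{i_k}^{\oplus c_k}\oplus \mathtt M_{i_{k+1}}^{\oplus c_{k+1}}$ is canonical because the two summands have disjoint $I$-support. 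Both star products are therefore graded-isomorphic to the single standard module $K_b$ (with no grading shift).

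For (b), Theorem \ref{SRF}(2) implies that each composition applied to a simple module $L_b$ produces at most a simple module indexed by $T_{i_k}T_{i_{k+1}}(b)$ or $T_{i_{k+1}}T_{i_k}(b)$; the equality of these crystal-level bijections and the matching of their domains of definition when $i_k \not\leftrightarrow i_{k+1}$ is Lusztig \cite{Lu} 4.4 (c). To upgrade this to a graded isomorphism of modules, I would choose an orientation $\Omega$ in which both $i_k$ and $i_{k+1}$ are sinks (possible since the vertices are disconnected), so that the two diagrams (\ref{df}) governing the Saito reflection functors at $i_k$ and $i_{k+1}$ involve disjoint sets of edges and decouple into a product. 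The Morita equivalences of Proposition \ref{TM} then compose in either order to yield the same functor $R_{\beta}\mathchar`-\mathsf{gmod} \to R_{s_{i_k} s_{i_{k+1}}\beta}\mathchar`-\mathsf{gmod}$. The main obstacle is precisely this graded commutation step: verifying not only $T_{i_k}T_{i_{k+1}} = T_{i_{k+1}}T_{i_k}$ on $B(\infty)$ but that the induced Morita isomorphisms respect the grading. I expect this to follow by making explicit the product decomposition of (\ref{df}) under the chosen orientation, since the $i_k$- and $i_{k+1}$-pieces act independently.
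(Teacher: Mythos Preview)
Your overall strategy matches the paper's: reduce via Lemma \ref{bPBW}(2) to the two claims (a) and (b). For (a), your route through Theorem \ref{induction} is correct and slightly different from the paper, which instead observes directly that $R_{p\alpha_{i_k}+q\alpha_{i_{k+1}}}$ is Morita equivalent to $R_{p\alpha_{i_k}}\boxtimes R_{q\alpha_{i_{k+1}}}$ because $(G_V,E_V^{\Omega})$ factors as a product when there is no edge between $i_k$ and $i_{k+1}$. Both arguments are valid; the paper's is shorter, yours avoids appealing to geometry.

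For (b), however, you are making your life harder than necessary. The modules $N=\mathbb T_{i_{k+2}}\cdots\mathbb T_{i_{l-1}}L_{c_l i_l}$ are simple (or zero) by Theorem \ref{SRF}(2), and that same theorem already asserts the \emph{graded} identity $\mathbb T_i L_b \cong L_{T_i(b)}$ in the self-dual normalization. Hence once you know $T_{i_k}T_{i_{k+1}}(b)=T_{i_{k+1}}T_{i_k}(b)$ and that the domains match, two applications of Theorem \ref{SRF}(2) give $\mathbb T_{i_k}\mathbb T_{i_{k+1}}N\cong L_{T_{i_k}T_{i_{k+1}}(b)}\cong \mathbb T_{i_{k+1}}\mathbb T_{i_k}N$ as graded modules with no further work. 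There is nothing to ``upgrade''; your geometric decoupling argument is unnecessary for the lemma, and the admitted gap there is not a real obstacle.

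By contrast, the paper does prove the stronger functorial isomorphism $\mathbb T_{i_k}\mathbb T_{i_{k+1}}\cong\mathbb T_{i_{k+1}}\mathbb T_{i_k}$, but by a purely algebraic route: using Lemma \ref{i-quotients} and the orthogonality $e_{i_k}(1)e_{i_{k+1}}(1)=0$, both composites factor through the graded module category of $R_\beta/(R_\beta e R_\beta)$ with $e=e_{i_k}(1)+e_{i_{k+1}}(1)$, and then Theorem \ref{SRF}(2) pins down the resulting equivalence. This is cleaner than your proposed geometric factorization of the diagrams (\ref{df}), though for the statement of Lemma \ref{2me} the pointwise-on-simples version already suffices.
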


\begin{proof}
Find a unique $1 \le k < \ell$ so that $i_k = i_{k+1}' \neq i_{k+1} = i_{k}'$. We realize $\mathbb T_{i_k}$ and $\mathbb T_{i_{k+1}}$ by choosing the orientation $\Omega$ so that the both of $i_k, i_{k+1}$ are source (which is in turn possible since $i_k \not\leftrightarrow i_{k+1}$). We have $\mathbb T_{i_k} L _{c_{k+1} i_{k+1}} = L _{c_{k+1} i_{k+1}}$ and $\mathbb T_{i_{k+1}} L _{c_{k} i_{k}} = L _{c_{k} i_{k}}$ since $R_{p \alpha _{i_k} + q \alpha _{j_k}}$ is Morita equivalent to $R_{p \alpha _{i_k}} \boxtimes R_{q \alpha _{j_k}}$ for each $p,q \ge 0$ by the product decomposition of $( G_V, E_V ^{\Omega} )$. Applying Theorem \ref{SRF} 2) and 5), it suffices to prove
\begin{equation}
L_{c_k i_k} \star L _{c_{k+1} i_{k+1}} = L_{c_k i_k} \star \mathbb T_{i_k} L _{c_{k+1} i_{k+1}} \cong L_{c_{k+1} i_{k+1}} \star \mathbb T_{i_{k+1}} L _{c_{k} i_{k}} =  L _{c_{k+1} i_{k+1}}\star L_{c_k i_k} \label{2move}
\end{equation}
and $\mathbb T_{i_k} \mathbb T _{i_{k+1}} \cong \mathbb T_{i_{k+1}} \mathbb T _{i_k}$. The product decomposition of $( G_V, E_V ^{\Omega} )$ also provides (\ref{2move}) as $\star$ is the same as the external tensor product here.

We have $T_{i_k} T_{i_{k+1}} ( b ) = T_{i_{k+1}} T_{i_k} ( b )$, $\epsilon _{i_k} ( b ) =  \epsilon _{i_k} ( T_{i_{k+1}} ( b ))$, and $\epsilon _{i_{k+1}} ( b ) =  \epsilon _{i_{k+1}} ( T_{i_k} ( b ))$ by inspection. The essential image of the functor $\mathbb T_i$ (applied to $R_{s_i \beta} \mathchar`-\mathsf{gmod}$ for some $\beta \in Q^+$) is equivalent to ${}_i R_{\beta} \mathchar`-\mathsf{gmod}$ by construction. We have $e_{i_k} (1)e_{i_{k+1}} (1) = 0 = e_{i_{k+1}} (1) e_{i_k} (1)$ by definition. Therefore, we deduce that the essential image of each of the functors $\mathbb T_{i_k} \mathbb T_{i_{k+1}}$ and $\mathbb T_{i_{k+1}} \mathbb T_{i_k}$ is equivalent to the graded module category of
$$\left( R_{\beta} / ( R_{\beta} e_{i_k} (1) R_{\beta} ) \right) \otimes _{R_{\beta}} \left( R_{\beta} / ( R_{\beta} e_{i_{k+1}} (1) R_{\beta} ) \right) \cong R_{\beta} / ( R_{\beta} e R_{\beta} ) \hskip 1mm \text{ for some } \beta \in Q^+,$$
where $e := e_{i_k} (1) + e_{i_{k+1}} (1)$ is the minimal idempotent so that $e e_{i_k} (1) = e_{i_k} (1)$ and $e e_{i_{k+1}} (1) = e_{i_{k+1}} (1)$. Hence, Theorem \ref{SRF} 2) guarantees that $\mathbb T_{i_k} \mathbb T_{i_{k+1}} \cong \mathbb T_{i_{k+1}} \mathbb T_{i_k}$ as functors, which completes the proof.
\end{proof}

\begin{proposition}\label{3me-true}
Let $( \mathbf i, \mathbf c )$ and $( \mathbf i', \mathbf c')$ be two Lusztig data which are connected by a $3$-move as $( i_{k-1}, i_k, i_{k+1} ) = ( i'_{k}, i'_{k\pm 1}, i'_k)$ for some $k$. Then, we have $\mathsf{hd} \, E ^{\mathbf i} _{\mathbf c} \cong \mathsf{hd} \, E ^{\mathbf i'} _{\mathbf c'}$.
\end{proposition}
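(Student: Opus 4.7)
The plan is to reduce the statement, via Lemma \ref{cycle} and Lemma \ref{bPBW} 2), to the rank-two case of type $\mathsf{A}_2$, where Theorem \ref{adapted} identifies both PBW parameterizations with quiver representations in adapted orientations. Applying Lemma \ref{cycle} repeatedly, we may assume the 3-move involves the final three positions; set $i := i_{\ell-2} = i_\ell$ and $j := i_{\ell-1}$, so that $(\mathbf i, \mathbf c)$ and $(\mathbf i', \mathbf c')$ agree outside coordinates $\ell-2, \ell-1, \ell$.

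By Lemma \ref{bPBW} 2), both $E^{\mathbf i}_{\mathbf c}$ and $E^{\mathbf i'}_{\mathbf c'}$ split as a common prefix (the $\star$-product of $\mathbb T_{i_1}\cdots\mathbb T_{i_{r-1}} L_{c_r i_r}$ for $r \leq \ell-3$) convolved with a three-factor suffix. Iteratively applying Theorem \ref{SRF} 5), this suffix equals $\mathbb T_{i_1}\cdots\mathbb T_{i_{\ell-3}}$ applied to the $\mathsf{A}_2$-type module
$$
M_{\mathbf c} := L_{c_{\ell-2}\, i} \star (\mathbb T_i L_{c_{\ell-1}\, j}) \star (\mathbb T_i \mathbb T_j L_{c_\ell\, i}),
$$
and analogously $M'_{\mathbf c'}$ for the primed data. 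By Theorem \ref{SRF} 2), the functor $\mathbb T_{i_1}\cdots\mathbb T_{i_{\ell-3}}$ sends a simple module either to a simple module or to zero, and the resulting head is nonzero by Corollary \ref{Lparam}. A straightforward verification, using the surjection $M_{\mathbf c} \to \mathsf{hd}\, M_{\mathbf c}$ together with the simple-head property of $E^{\mathbf i}_{\mathbf c}$ guaranteed by Corollary \ref{Lparam}, shows that $\mathsf{hd}\, E^{\mathbf i}_{\mathbf c}$ depends only on $\mathsf{hd}\, M_{\mathbf c}$. Hence it suffices to prove $\mathsf{hd}\, M_{\mathbf c} \cong \mathsf{hd}\, M'_{\mathbf c'}$ in the $\mathsf{A}_2$-setting.

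After possibly Fourier-transforming, the sequences $(i,j,i)$ and $(j,i,j)$ are adapted to the two orientations of the $\mathsf{A}_2$-quiver on $\{i,j\}$. By Theorem \ref{adapted}, $\mathsf{hd}\, M_{\mathbf c}$ and $\mathsf{hd}\, M'_{\mathbf c'}$ correspond respectively to the $\mathbb C[\Gamma]$-representations
$$
\mathtt M_i^{\oplus c_{\ell-2}} \oplus \mathtt M_{ij}^{\oplus c_{\ell-1}} \oplus \mathtt M_j^{\oplus c_\ell} \qquad \text{and} \qquad \mathtt M_j^{\oplus c'_{\ell-2}} \oplus (\mathtt M'_{ji})^{\oplus c'_{\ell-1}} \oplus \mathtt M_i^{\oplus c'_\ell}
$$
under the respective orientations. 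The transition relating these two adapted parameterizations of $B(\infty)_{\beta_*}$ in type $\mathsf{A}_2$ is precisely the combinatorial 3-move formula given in the definition of $3$-move, as worked out by Lusztig (\cite{Lu} \S 2--\S 4); in particular the two heads represent the same element of $B(\infty)$, as required.

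The main technical obstacle lies in the second step: to apply Theorem \ref{SRF} 5) iteratively we must verify at each stage that the intermediate module lies in the subcategory $R_\beta\text{-}\mathsf{gmod}_{i_r}$, so that $\mathbb T_{i_r}$ commutes with the relevant $\star$-product. This is ensured because the inductively constructed factors $\mathbb T_{i_1}\cdots\mathbb T_{i_{r-1}} L_{c_r i_r}$ have vanishing $\epsilon_{i_{r-1}}, \ldots, \epsilon_{i_1}$ by Theorem \ref{SRF} 2), and likewise for products built from such factors via the analysis in Lemma \ref{i-factor} and its consequences.
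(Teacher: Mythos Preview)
There is a genuine gap in your first step. Lemma~\ref{cycle} (and its proof) establishes the implication in only one direction: knowing the parameterization for $\mathbf i^{\#}$ lets you recover it for $\mathbf i$, via $\mathsf{hd}\,E^{\mathbf i}_{(c_1,\ldots,c_\ell)} = \mathsf{hd}\,(L_{c_1 i_1}\star\mathbb T_{i_1}\,\mathsf{hd}\,E^{\mathbf i^{\#}}_{(c_2,\ldots,c_\ell,0)})$. Since applying $\#$ shifts the position of the $3$-move \emph{down} by one, iterating reduces the specific-correspondence statement from position $k$ to position $k-1$, and hence ultimately to $k=2$ (the \emph{beginning}), not to $k=\ell-1$. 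The passage in Lemma~\ref{3me} that you are imitating only needed the bijection property, which is symmetric under cyclic shifts; the pointwise statement of Proposition~\ref{3me-true} is not. Going in the other direction would require relating $\mathsf{hd}\,E^{\mathbf i^{\#}}_{\mathbf d}$ with $d_\ell\neq 0$ back to an $\mathbf i$-datum, and the cyclic-shift identity does not do this.

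Once you are at the beginning (or indeed at any $k<\ell-1$), you cannot avoid comparing the tail factors
\[
\mathbb T_{i_1}\cdots\mathbb T_{i_{s-1}}L_{c_s i_s}\quad\text{versus}\quad \mathbb T_{i'_1}\cdots\mathbb T_{i'_{s-1}}L_{c_s i_s}\qquad (s>k+1),
\]
which differ in the $(k-1,k,k+1)$-segment of the $\mathbb T$-string; equivalently you need the braid relation $\mathbb T_i\mathbb T_j\mathbb T_i \cong \mathbb T_j\mathbb T_i\mathbb T_j$ on the relevant simples. This is exactly the content of Claim~\ref{isom-rest} in the paper's proof, which is established by an induction on $\mathsf{ht}(c\gamma_{\mathbf i}^{(s)})$ using Corollary~\ref{Lparam} and the rank-two identity (\ref{simple-move}). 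Your proposal omits this step entirely, and the cyclic reduction you invoke does not substitute for it. The rest of your outline (the rank-two computation via Theorem~\ref{adapted}, and the observation that $\mathsf{hd}\,E^{\mathbf i}_{\mathbf c}$ depends only on the head of the suffix) is sound and matches what the paper does for the $(k-1,k,k+1)$-block, but it is not the whole argument.
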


\begin{proof}
Let $i_k = j, i_{k+1} = i$.
By an explicit calculation (which reduces to the rank two case, cf. Theorem \ref{adapted}), we see that
\begin{equation}
\mathsf{hd} \, \left( L_{c_{k-1} i} \star \mathbb T_{i} L_{c_{k} j} \star \mathbb T_{i}\mathbb T _{j} L_{c_{k+1} i} \right) \cong \mathsf{hd} \, \left( L_{c_{k-1}' j} \star \mathbb T_{j} L_{c_{k}' i} \star \mathbb T_{j}\mathbb T _{i} L_{c_{k+1}' j} \right).\label{simple-move}
\end{equation}

By Lemma \ref{bPBW} 2), it suffices to show
$$\mathsf{hd} \, E ^{\mathbf i} _{\mathbf c} \cong \mathsf{hd} \, E ^{\mathbf i'} _{\mathbf c'}$$
for every $\mathbf i$-Lusztig datum $\mathbf c$ so that $c_1 = \cdots = c_{k+1} = 0$ (and its counterpart $\mathbf i'$-Lusztig datum $\mathbf c'$).

We borrow the settings from the (case of $i \leftrightarrow j$ in the) proof of Corollary \ref{j-ind}. In particular, we arrange $\Omega$ so that $j \in I$ is a sink and $i \in I$ is a sink of $s_j \Omega$. Thanks to Theorem \ref{SRF} 2) and the construction of the Saito reflection functor, the set of simple modules that is not annihilated by $\mathbb T_i^* \mathbb T_j^*$ corresponds to a representation of $\mathbb C [\Gamma]$ that does not contain a direct factor from $\{ \mathtt M_{i,j}, \mathtt M_j \}$. For such $d \in B ( \infty )$, we have a corresponding $\mathbb C [\Gamma]$-module $\mathtt M_d$.

By inspection, we deduce that the pairs $( \mathtt M_j ^{\oplus c_{k-1}}, \mathtt M_d )$ and $( \mathtt M_{i,j} ^{\oplus c_{k}} \oplus \mathtt M_j ^{\oplus c_{k+1}}, \mathtt M_d )$ satisfy the assumption of Theorem \ref{induction}. We assume
$$L \cong \mathsf{hd} \, E ^{\mathbf i} _{\mathbf d} \hskip 5mm \text{and} \hskip 5mm M \cong \mathsf{hd} \, E ^{\mathbf i} _{\mathbf c},$$
where $\mathbf d = (0, \ldots, 0, c_{k+2}, c_{k+3} , \ldots, c_{\ell} )$ by Lemma \ref{bPBW} 2). Note that
$$L \cong \mathbb T_{i_1}\mathbb T_{i_2} \cdots \mathbb T_{i_{k-1}} L' \hskip 5mm \text{and} \hskip 5mm M \cong \mathbb T_{i_1}\mathbb T_{i_2} \cdots \mathbb T_{i_{k-1}} M'$$
for the simple module $L'$ corresponding to $d$ and a simple module $M'$. We have
\begin{align*}
\mathbb T_{i_{k-2}}^* \mathbb T_{i_{k-3}}^* \cdots \mathbb T_{i_{1}}^* M & \cong \mathsf{hd} \, \left( L_{c_{k-1}i_{k-1}} \star \mathbb T_i ( L_{c_k j} \star \mathbb T_j ( L_{c_{k+1}i} \star \mathbb T_j^* \mathbb T_{i}^* L' ) ) \right)\\
& \cong \mathsf{hd} \, \left( L_{c_{k-1}i_{k-1}} \star \mathbb T_i ( L_{c_k j} \star \mathbb T_j L_{c_{k+1}i} \star \mathbb T_{i}^* L' ) \right)\\
& \cong \mathsf{hd} \, \left( L_{c_{k-1}i_{k-1}}  \star \mathbb T_i ( L_{c_k j} \star \mathbb T_j L_{c_{k+1}i} ) \star L' \right)\\
& \cong \mathsf{hd} \, \left( L_{c_{k-1}i_{k-1}}  \star \mathbb T_i L_{c_k j} \star \mathbb T_i \mathbb T_j L_{c_{k+1}i} \star L' \right)
\end{align*}
where the second and the fourth isomorphisms are by Theorem \ref{SRF} 5), and the third isomorphism is obtained from the combination of Theorem \ref{SRF} 1) and Theorem \ref{induction} applied to $( \mathtt M_{i,j} ^{\oplus c_{k}} \oplus \mathtt M_j ^{\oplus c_{k+1}}, \mathtt M_d )$ and take some quotient (that is possible since $L_{c_k j} \star \mathbb T_j L_{c_{k+1}i}$ defines a standard module arising from $\Omega$ and $\mathbb T_i^* L'$ is an unique simple quotient of the standard module corresponding to $\mathtt M_d$).

Since the same is true if we replace $\mathbf i$ with $\mathbf i'$ and swap $(i,j)$, we conclude the assertion from (\ref{simple-move}).
\end{proof}

\begin{corollary}\label{LparamS}
For Lusztig data $(\mathbf i, \mathbf c)$ and $(\mathbf i', \mathbf c')$, we have $\mathsf{hd} \, E _{\mathbf c} ^{\mathbf i} \cong \mathsf{hd} \, E _{\mathbf c'} ^{\mathbf i'}$ if and only if $(\mathbf i, \mathbf c)$ and $(\mathbf i', \mathbf c')$ are linked by a successive application of two-moves and three-moves.
\end{corollary}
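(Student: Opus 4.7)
The ``if'' direction is immediate from the work already done: a $2$-move preserves the whole module $E^{\mathbf i}_{\mathbf c}$ by Lemma \ref{2me}, while a $3$-move preserves its simple head by Proposition \ref{3me-true}. Iterating along a sequence of $2$-moves and $3$-moves connecting $(\mathbf i, \mathbf c)$ to $(\mathbf i', \mathbf c')$ yields $\mathsf{hd}\, E^{\mathbf i}_{\mathbf c} \cong \mathsf{hd}\, E^{\mathbf i'}_{\mathbf c'}$.

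For the ``only if'' direction, the plan is to reduce to the fact that for each fixed reduced expression $\mathbf i$ the parametrization
\[
\mathbf c \longmapsto \mathsf{hd}\, E^{\mathbf i}_{\mathbf c}
\]
is bijective onto $B(\infty)$ (Corollary \ref{Lparam}), and to the Matsumoto-Tits theorem (\cite{Lu} 2.1 (c), already invoked in the proof of Corollary \ref{Lparam}) asserting that any two reduced expressions of $w_0$ are connected by a finite sequence of $2$-moves and $3$-moves on reduced expressions.

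More precisely, given reduced expressions $\mathbf i$ and $\mathbf i'$, fix a sequence of $2$-moves and $3$-moves transforming $\mathbf i$ into $\mathbf i'$. Each such move on reduced expressions comes with a canonical transformation of Lusztig data (trivial swap for a $2$-move; the tropical formula in the definition for a $3$-move). Applying this sequence to $(\mathbf i, \mathbf c)$ produces a Lusztig datum $\mathbf c''$ for $\mathbf i'$. By the ``if'' direction we have $\mathsf{hd}\, E^{\mathbf i}_{\mathbf c} \cong \mathsf{hd}\, E^{\mathbf i'}_{\mathbf c''}$, and by hypothesis this isomorphism class equals $\mathsf{hd}\, E^{\mathbf i'}_{\mathbf c'}$. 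Corollary \ref{Lparam} applied to $\mathbf i'$ forces $\mathbf c' = \mathbf c''$, so $(\mathbf i, \mathbf c)$ and $(\mathbf i', \mathbf c')$ are linked by the chosen sequence of moves, as required.

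There is no serious obstacle here; the essential combinatorial content (that moves are well-defined on Lusztig data and invertible) is already built into the definitions, and the hard representation-theoretic input, namely that a $3$-move leaves the head unchanged, was handled in Proposition \ref{3me-true}. The corollary is therefore a formal consequence of Lemma \ref{2me}, Proposition \ref{3me-true}, Corollary \ref{Lparam}, and the classical connectedness of reduced expressions under braid moves.
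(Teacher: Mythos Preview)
Your proof is correct and follows essentially the same approach as the paper: the paper's proof simply says to ``enrich Corollary \ref{Lparam} by applying Proposition \ref{3me-true} instead of Lemma \ref{3me},'' which is exactly the argument you have written out in full. The key inputs---Lemma \ref{2me}, Proposition \ref{3me-true}, Corollary \ref{Lparam}, and the connectedness of reduced expressions under braid moves---are identical.
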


\begin{proof}
Every two reduced expressions of $w_0 \in W (\Gamma_0)$ are connected by a repeated use of two moves and three moves (\cite{Lu} 2.1 (c)). Therefore, we apply Lemma \ref{2me} and Proposition \ref{3me-true} repeatedly from Theorem \ref{adapted} to deduce the assertion.
\end{proof}

\begin{corollary}\label{Lparam}
The module $E _{\mathbf c} ^{\mathbf i}$ is non-zero for every Lusztig datum $( \mathbf i, \mathbf c )$, and the map
$$\mathbf c \mapsto \mathsf{hd} \, E _{\mathbf c} ^{\mathbf i} \cong L_b \text{ for } b \in B (\infty)$$
sets up a bijection between the set of $\mathbf i$-Lusztig data and $B (\infty)$.
\end{corollary}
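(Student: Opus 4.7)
The plan is to reduce to Theorem~\ref{adapted} by a Matsumoto-style connectivity argument applied to the lemmas just proved. Specifically, fix any orientation $\Omega$ of $\Gamma_0$ and choose a reduced expression $\mathbf i_0$ of $w_0$ adapted to $\Omega$ (such $\mathbf i_0$ exists: start by removing a sink of $\Gamma$, which reverses the orientation at that vertex and produces a new sink, then iterate). For this $\mathbf i_0$, Theorem~\ref{adapted} directly asserts that $\{\mathsf{hd}\,E^{\mathbf i_0}_{\mathbf c}\}_{\mathbf c}$ biject with $B(\infty)$, and in particular every $E^{\mathbf i_0}_{\mathbf c}$ is non-zero.

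Next, invoke the classical theorem of Matsumoto/Tits in a simply-laced Weyl group: any two reduced expressions of $w_0$ are related by a finite sequence of $2$-moves and $3$-moves. Thus for any other reduced expression $\mathbf i$, there is a chain
\[
\mathbf i_0 = \mathbf i^{(0)} \longleftrightarrow \mathbf i^{(1)} \longleftrightarrow \cdots \longleftrightarrow \mathbf i^{(N)} = \mathbf i,
\]
where each step is a $2$-move or a $3$-move. Transporting the bijection along this chain, step by step, is precisely the content of Lemma~\ref{2me} (which shows $E^{\mathbf i^{(s)}}_{\mathbf c} \cong E^{\mathbf i^{(s+1)}}_{\mathbf c'}$ under the trivial reindexing of $\mathbf c$, hence preserves the bijection property verbatim) and Lemma~\ref{3me} (which is stated exactly as ``bijection for $\mathbf i$ implies bijection for $\mathbf i'$'' under the prescribed $3$-move reindexing of Lusztig data). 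Induction on $s$ then gives that $\{\mathsf{hd}\,E^{\mathbf i}_{\mathbf c}\}_{\mathbf c}$ is in bijection with $B(\infty)$.

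Finally, for the non-vanishing half of the corollary: since the map $\mathbf c \mapsto \mathsf{hd}\,E^{\mathbf i}_{\mathbf c}$ takes values in the set $\{L_b\}_{b\in B(\infty)} \sqcup \{0\}$ and restricts to a bijection with $B(\infty)$, no $\mathsf{hd}\,E^{\mathbf i}_{\mathbf c}$ can be zero. By Lemma~\ref{bPBW}(5) this forces $E^{\mathbf i}_{\mathbf c}\neq\{0\}$ (and equivalently $\widetilde E^{\mathbf i}_{\mathbf c}\neq\{0\}$) for every $\mathbf i$-Lusztig datum $\mathbf c$. A small cardinality check, already used inside the proof of Lemma~\ref{cycle} (via \cite{Lu}~2.4(a)), ensures surjectivity implies bijectivity because both sets have the same cardinality in each weight.

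The main potential obstacle is purely bookkeeping: confirming that Matsumoto's theorem in the simply-laced case indeed needs no moves other than the $2$-moves and $3$-moves handled by Lemmas~\ref{2me} and~\ref{3me}, and that the cyclic shift $\mathbf i\mapsto\mathbf i^{\#}$ (Lemma~\ref{cycle}) already baked into the proof of Lemma~\ref{3me} is all one needs to move the affected braid window $(k-1,k,k+1)$ to the tail of the expression. Both facts are standard; once granted, the corollary follows by the induction sketched above.
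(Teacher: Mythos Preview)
Your proof is correct and follows essentially the same approach as the paper's: both use Matsumoto's theorem (cited in the paper as \cite{Lu} 2.1(c)) to connect an arbitrary reduced expression to an adapted one, and then apply Lemma~\ref{2me} and Lemma~\ref{3me} repeatedly, with Theorem~\ref{adapted} serving as the base case. The paper's proof is a two-line sketch; you have simply made explicit the base case, the direction of the induction, and the deduction of non-vanishing.
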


\begin{proof}
This is a special case of Corollary \ref{LparamS}.
\end{proof}

Thanks to Corollary \ref{Lparam}, we often write $\widetilde{E} _{b} ^{\mathbf i}$ and $E _{b} ^{\mathbf i}$ instead of $\widetilde{E} _{\mathbf c} ^{\mathbf i}$ and $E _{\mathbf c} ^{\mathbf i}$. For an $\mathbf i$-Lusztig data $\mathbf c = (c_1,c_2,\ldots,c_{\ell})$, we define $\overline{\mathbf c} := (c_{\ell},c_{\ell -1},\ldots,c_1)$.

\begin{proposition}\label{bPBW2}
For each Lusztig datum $( \mathbf i, \mathbf c )$, it holds:
\begin{enumerate}
\item we have surjections as graded $R_{\mathsf{wt} ( \mathbf i, \mathbf c )}$-modules:
\begin{align*}
\widetilde{E} ^{\mathbf i} _{\mathbf c} & \longrightarrow \!\!\!\!\! \rightarrow P_{c_1 i_1} \star ( \mathbb T_{i_1} P _{c_2 i_2} ) \star ( \mathbb T_{i_1} \mathbb T_{i_2} P _{c_3 i_3} ) \star \cdots \star ( \mathbb T_{i_1} \cdots \mathbb T_{i_{\ell-1}} P_{c_{\ell} i_{\ell}} )\\
E ^{\mathbf i} _{\mathbf c} & \longrightarrow \!\!\!\!\! \rightarrow L_{c_1 i_1} \star ( \mathbb T_{i_1} L _{c_2 i_2} ) \star ( \mathbb T_{i_1} \mathbb T_{i_2} L _{c_3 i_3} ) \star \cdots \star ( \mathbb T_{i_1} \cdots \mathbb T_{i_{\ell-1}} L_{c_{\ell} i_{\ell}} ):
\end{align*}
\item we have $[ E ^{\mathbf i} _{\mathbf c} : \mathsf{hd} \, E ^{\mathbf i} _{\mathbf c} ] = 1$, and $[ E ^{\mathbf i} _{\mathbf c} : \mathsf{hd} \, E ^{\mathbf i} _{\mathbf c'} ] = 0$ if $\mathbf c \not< _{\mathbf i} \mathbf c'$ or $\overline{\mathbf c} \not< _{\mathbf i} \overline{\mathbf c'}$;
\item the module $\widetilde{E} ^{\mathbf i} _{\mathbf c}$ is a successive self-extension of $E ^{\mathbf i} _{\mathbf c}$.
\end{enumerate}
\end{proposition}

\begin{proof}
The first assertion follows by a repeated use of Lemma \ref{premon} to the definitions.

For the second assertion, let us find $b \in B ( \infty )$ so that $L_b = \mathsf{hd} \, E ^{\mathbf i} _{\mathbf c'}$ by Corollary \ref{Lparam}. 

For each $1 \le k \le \ell$, we set
$$\mathbf c [k] := ( \overbrace{0,\ldots,0}^{k},c_{k+1},c_{k+2},\ldots, c_{\ell}).$$
Lemma \ref{wt-e} and the definition of $\mathbb T_i$ imply that $c_1 \ge c'_1$ if $[ E ^{\mathbf i} _{\mathbf c} : L_b ] = 0$. Moreover, $c_1 = c'_1$ implies that $e_i ( c_1 ) L_b$ is a quotient of $E ^{\mathbf i} _{\mathbf c[1]}$ by Theorem \ref{crys}. In particular, $\mathbb T_{i_1}^* ( e_i ( c_1 ) L_b )$ is an irreducible constituent of $\mathbb T_{i_1}^* E ^{\mathbf i} _{\mathbf c[1]}$. If we have $c_1 = c_1',\ldots,c_k = c_k'$, then we apply Theorem \ref{SRF} 2) repeatedly to obtain
$$\mathbb T_{i_k}^* e_{i_k} (c_k) \cdots \mathbb T_{i_2}^* e_{i_2} (c_2)  \mathbb T_{i_1}^* e_{i_1} (c_1) L_b \neq \{ 0 \}$$
and it is an irreducible constituent of $\mathbb T_{i_k}^*e_{i_k} (c_k) \cdots \mathbb T_{i_2}^* e_{i_2} (c_2)  \mathbb T_{i_1}^* e_{i_1} (c_1) E ^{\mathbf i} _{\mathbf c}$. Therefore, Lemma \ref{wt-e} implies $c_k \ge c_k'$ and we deduce the condition $\mathbf c \not< _{\mathbf i} \mathbf c'$ when $[ E ^{\mathbf i} _{\mathbf c} : L_b ] = 0$. In view of Theorem \ref{crys} and Theorem \ref{SRF} 2), this argument also asserts $[ E ^{\mathbf i} _{\mathbf c} : \mathsf{hd} \, E ^{\mathbf i} _{\mathbf c} ] = 1$.

We set $\{j_1,\ldots,j_{\ell}\} \in I^{\ell}$ by $\alpha_{j_k} := - w _0 \alpha_{i_{\ell - k + 1}}$. By Corollary \ref{Lparam}, the module
$$\mathbb T_{i_1} \cdots \mathbb T_{i_{k-1}} L_{c i_{k}}$$
is simple and non-zero for every $1 \le k \le \ell$ and $c \in \mathbb Z_{> 0}$. Moreover, we have
$$
\mathbb T_{i_1} \cdots \mathbb T_{i_{k-1}} L_{c i_{k}} \cong \mathbb T_{j_1} ^* \cdots \mathbb T_{j_{\ell - k + 2}} ^* L_{c i_{\ell - k + 1}}
$$
since $\{ j_{\ell - k + 2}, j_{\ell - k + 1}, \ldots, j_1, i_1,\ldots, i_k \}$ defines a reduced expression of $w_0$. It follows that $e _{j_1}^* ( 1 ) \mathbb T_{i_{1}} \cdots \mathbb T_{i_{k-1}} L_{c i_{k}} = \{ 0 \}$ except for $k = \ell$. Since every two reduced expressions of $s_{j_1} w_0$ are connected by a successive applications of two moves and three moves, we can choose an adapted reduced expression $\mathbf i'$ of $w_0$ ending on $i_{\ell}$ that is connected to $\mathbf i$ without changing the last entry. For $\mathbf i'$, the surjection in the first assertion must be isomorphism by a repeated application of Theorem \ref{induction} (see also the proof of Theorem \ref{adapted} and Theorem \ref{SRF} 1)). Moreover, Lemma \ref{wt-e} asserts $[ E ^{\mathbf i} _{\mathbf c}  : L_b ] \neq 0$ only if $c_{\ell} \ge c'_{\ell}$ by examining $\epsilon^*_{j_1}$ (for $\mathbf i'$).

By the same arguments as in the proof of Lemma \ref{2me} and Proposition \ref{3me-true}, we deduce that each two move and three move (that fixes the last entry $i_{\ell}$) preserves the property $[ E ^{\mathbf i} _{\mathbf c} : L_b ] \neq 0$ only if $c_{\ell} \ge c'_{\ell}$. Therefore, we deduce that $[ E ^{\mathbf i} _{\mathbf c}  : L_b ] \neq 0$ only if $c_{\ell} \ge c'_{\ell}$. From this, we deduce the condition $\overline{\mathbf c} \not< _{\mathbf i} \overline{\mathbf c'}$ when $[ E ^{\mathbf i} _{\mathbf c} : L_b ] = 0$ by induction using the the reduced expression of the form $\{j_k,\ldots, j_1,i_1,\ldots,i_{\ell - k + 1}\}$ ($1 \le k \le \ell$), $\epsilon^*_i$, $e_i^*$, $\mathbb T_i$ instead of $\mathbf i$, $\epsilon_i$, $e_i$, $\mathbb T_i^*$. This complete the proof of the second assertion.

We prove the third assertion. The module $P_{c i_{k}}$ is the maximal self-extension of $L_{c i_{k}}$ (for every $1 \le k \le \ell$ and $c \in \mathbb Z_{> 0}$) by inspection. The second assertion guarantees that the $\mathbb T$'s appearing in the definition $E ^{\mathbf i} _{\mathbf c}$ does not annihilate every irreducible constituent. Therefore, $\widetilde{E} ^{\mathbf i} _{\mathbf c}$ is a self-extension of $E ^{\mathbf i} _{\mathbf c}$ as required.
\end{proof}

Since $R_{\beta}$ is a finitely generated algebra free over a polynomial ring (by Theorem \ref{PBW}) with finite global dimension (by Theorem \ref{Kashiwara}), it follows that
$$[ M : L_{b} ], \hskip 1mm \left< M, N \right> _{\mathsf{gEP}} \in \mathbb Z (\!(t)\!),\hskip 2mm \text{ and } \hskip 2mm \mathsf{gch} \, M \in \bigoplus _{b \in B(\infty) _{\beta}} \mathbb Z (\!( t )\!) [L_b]$$
for $M,N \in R_{\beta} \mathchar`-\mathsf{gmod}$. (See (\ref{gepdef}) and Theorem \ref{VV} for $\left< \bullet, \bullet \right> _{\mathsf{gEP}}$.)

\begin{corollary}\label{R-indep}
Fix a reduced expression $\mathbf i$ and let $\beta \in Q_+$. Then, two sets $\{ \mathsf{gch} \, \widetilde{E} ^{\mathbf i} _{b} \} _{b \in B(\infty) _{\beta}}$ and $\{ \mathsf{gch} \, E ^{\mathbf i} _{b} \} _{b \in B(\infty) _{\beta}}$ are $\mathbb Z (\!( t )\!)$-bases of $\bigoplus _{b \in B(\infty) _{\beta}} \mathbb Z (\!( t )\!) [L_b]$, respectively.
\end{corollary}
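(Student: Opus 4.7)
The plan is to reduce both claims to a single linear-independence statement, establish it first for a reduced expression adapted to some orientation, and then propagate along the $2$-move and $3$-move graph on reduced expressions of $w_0$. By Lemma~\ref{bPBW}(3), $\widetilde E^{\mathbf i}_b$ carries a separable decreasing filtration whose associated graded is a direct sum of grading shifts of $E^{\mathbf i}_b$, so $\mathsf{gch}\,\widetilde E^{\mathbf i}_b = q_b(t)\cdot\mathsf{gch}\, E^{\mathbf i}_b$ for some $q_b(t)\in \mathbb Z_{\ge 0}[[t]]$; the simple-head property in Lemma~\ref{bPBW}(4) forces exactly one subquotient at grading shift $0$, so $q_b(0)=1$ and $q_b(t)$ is a unit in $\mathbb Z(\!(t)\!)$. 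The character matrices of the two collections therefore differ by a diagonal row-rescaling with invertible entries, and the basis property passes between them. Since Corollary~\ref{Lparam} matches $|B(\infty)_\beta|$ with the $\mathbb Z(\!(t)\!)$-rank of $\bigoplus_b \mathbb Z(\!(t)\!)[L_b]$, it suffices to show that $\{\mathsf{gch}\, E^{\mathbf i}_b\}_{b\in B(\infty)_\beta}$ is $\mathbb Z(\!(t)\!)$-linearly independent.

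For an $\mathbf i$ adapted to some orientation $\Omega$, the remark following $(\ref{PBW-02})$ identifies $\widetilde E^{\mathbf i}_b$ and $E^{\mathbf i}_b$ with the standard and dual standard modules $K^{\Omega}_b$ and $\widetilde K^{\Omega}_b$ of Section~\ref{general_Ext}. Theorem~\ref{KS}(1),(4) then yield a triangularity: $[\widetilde K^\Omega_b : L_{b'}]$ vanishes unless $b \preceq b'$ in the orbit closure order on $B(\infty)_\beta$, while $[\widetilde K^\Omega_b : L_b]$ equals the Hilbert series of the equivariant cohomology of a point, a product of geometric series and hence a unit in $\mathbb Z(\!(t)\!)$. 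After choosing any total refinement of $\preceq$, the character matrix is upper-triangular with units on the diagonal, hence invertible over $\mathbb Z(\!(t)\!)$. This settles the adapted case.

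For a general reduced expression, I propagate along the $2$-move and $3$-move graph of reduced expressions of $w_0$, which is connected by \cite{Lu} 2.1(c). Under a $2$-move, Lemma~\ref{2me} provides the literal isomorphism $E^{\mathbf i}_{\mathbf c}\cong E^{\mathbf i'}_{\mathbf c'}$, preserving the character set outright. Under a $3$-move, the individual modules may change, but by the product decomposition of Lemma~\ref{bPBW}(2) only the three consecutive factors at the $3$-move positions are affected, the outer factors being common to $\mathbf i$ and $\mathbf i'$; these three factors depend only on the rank-two $\mathsf{A}_2$ subsystem spanned by the two indices in play, and within that subsystem both reduced expressions of the longest element are adapted to suitable orientations, so the previous paragraph identifies the two rank-two character spans. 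The main obstacle I anticipate is rigorously transporting this rank-two span equality back to the full character in $\bigoplus \mathbb Z(\!(t)\!)[L_b]$: one needs that convolution $\star$ and the Saito reflections $\mathbb T_i$ are compatible with $\mathbb Z(\!(t)\!)$-character spans, so that common outer factors combined with a span-preserving change of the three middle factors yield a span-preserving change of the whole module. Granting this compatibility, induction along the $3$-move graph propagates the basis property from any adapted $\mathbf i_0$ to all $\mathbf i$.
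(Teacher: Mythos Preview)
Your approach is unnecessarily circuitous and, as you yourself flag, is left incomplete at the $3$-move step. The paper's argument is direct and uniform in $\mathbf i$: by Corollary~\ref{Lparam} each $\widetilde E^{\mathbf i}_b$ is nonzero with simple head $L_b$ (Lemma~\ref{bPBW}(4)), and since (the basic ring Morita equivalent to) $R_\beta$ is non-negatively graded, every other composition factor of $\widetilde E^{\mathbf i}_b$ appears in strictly higher grading than the head. After a shift $c_b$ one obtains
\[
\mathsf{gch}\,\widetilde E^{\mathbf i}_b\langle c_b\rangle \;\in\; [L_b] \;+\; \bigoplus_{b'\in B(\infty)_\beta} t\,\mathbb Z[\![t]\!]\,[L_{b'}],
\]
so the character matrix has the form $\mathrm{Id}+t\cdot(\ast)$ with entries in $\mathbb Z[\![t]\!]$, hence is invertible over $\mathbb Z(\!(t)\!)$. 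The same reasoning applies verbatim to $E^{\mathbf i}_b$. No case distinction on $\mathbf i$, no induction along the braid graph, and no comparison with the adapted case are needed. You already invoked the simple-head property and the bijection of Corollary~\ref{Lparam} in your first paragraph, but you used them only to relate $\widetilde E^{\mathbf i}_b$ with $E^{\mathbf i}_b$; applying the same idea to the expansion into simples $L_{b'}$ finishes the corollary in one line.

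On your propagation argument: the $3$-move step is not merely a technical detail you have left out but the entire content of the approach. Under a $3$-move the modules $E^{\mathbf i}_{\mathbf c}$ genuinely change (Proposition~\ref{3me-true} controls only their heads), so you must show that the $\mathbb Z(\!(t)\!)$-span of their characters is preserved. Your sketch requires (i) that the outer $\star$-factors for $\mathbf i$ and $\mathbf i'$ literally coincide, which is the content of Claim~\ref{isom-rest} inside Proposition~\ref{3me-true}; (ii) that the Saito reflections $\mathbb T_{i_j}$ are exact on the subcategories through which the construction passes, so that they descend to Grothendieck groups there; and (iii) the rank-two span equality. Each of these can be justified from what is already available before Corollary~\ref{R-indep}, so your route can in principle be completed, but it is substantially longer than the paper's two-line argument and buys nothing extra.
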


\begin{proof}
Thanks to Corollary \ref{Lparam} and Lemma \ref{bPBW} 3), we deduce
$$\mathsf{gch} \, \widetilde{E} ^{\mathbf i} _{b} \left< c_b \right> \in [ L _b ] + \bigoplus _{b' \in B(\infty) _{\beta}} t \mathbb Z [\![ t ]\!] [L_{b'}] \hskip 3mm \text{ for some } \hskip 3mm c_b \in \mathbb Z.$$
This is enough to see the first assertion. (In fact, we can show $c_b = 0$ by a standard argument, or a consequence of Theorem \ref{main} 3).) The second assertion is similar.
\end{proof}

Thanks to Corollary \ref{R-indep}, we define $[M : \widetilde{E} ^{\mathbf i} _{b}], [M : E ^{\mathbf i} _{b}] \in \mathbb Z (\!( t )\!)$ for every $M \in R _{\beta} \mathchar`-\mathsf{gmod}$ as:
$$\mathsf{gch} \, M = \sum _{b \in B(\infty) _{\beta}} [M : \widetilde{E} ^{\mathbf i} _{b}] \,\mathsf{gch} \, \widetilde{E} ^{\mathbf i} _{b} \hskip 2mm \text{ and } \hskip 2mm \mathsf{gch} \, M = \sum _{b \in B(\infty) _{\beta}} [M : E ^{\mathbf i} _{b}] \,\mathsf{gch} \, E ^{\mathbf i} _{b}.$$

For a reduced expression $\mathbf i = (i_1, \ldots, i_{\ell})$ of $w_0$, we have a unique reduced expression of the form $\mathbf i^{\#} := ( i_2,i_3,\ldots,i_{\ell}, i_1' )$. (Namely $s_{i_1'} := w_0 s_{i_1} w_0 ^{-1}$.)

\begin{theorem}\label{main}
Fix a reduced expression $\mathbf i$ and $\beta \in Q^+$. We have:
\begin{enumerate}
\item For every $b < _{\mathbf i} b'$, it holds $\mathrm{ext} ^{\bullet} _{R_{\beta}} ( \widetilde{E} ^{\mathbf i} _{b}, \widetilde{E} ^{\mathbf i} _{b'} ) = \{ 0 \}$;
\item For each $b \in B ( \infty )_{\beta}$, we have
$$\mathrm{ext} ^{\bullet} _{R_{\beta}} ( \widetilde{E} ^{\mathbf i} _{b}, E ^{\mathbf i} _{b} ) = \mathrm{hom} _{R_{\beta}} ( \widetilde{E} ^{\mathbf i} _{b}, E ^{\mathbf i} _{b} ) \cong \mathbb C;$$
\item For each $b \in B ( \infty )_{\beta}$, we have
$$[ E^{\mathbf i} _{b} : L_{b'} ] = \begin{cases} 0 & (b \not\le_{\mathbf i} b')\\ 1 & (b=b') \end{cases} \hskip 3mm \text{ and } \hskip 3mm  [ \widetilde{E}^{\mathbf i} _{b} : L_{b'} ] = 0 \hskip 3mm (b \not\le_{\mathbf i} b');$$
\item For every $b \le _{\mathbf i} b'$, it holds
$$\mathrm{ext} ^{\bullet} _{R_{\beta}} ( \widetilde{E} ^{\mathbf i} _{b}, ( E ^{\mathbf i} _{b'} )^* ) \cong \mathrm{hom} _{R_{\beta}} ( \widetilde{E} ^{\mathbf i} _{b}, ( E ^{\mathbf i} _{b'} )^* ) \cong \mathbb C ^{\oplus \delta _{b,b'}}.$$
\end{enumerate}
\end{theorem}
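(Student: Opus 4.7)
The plan is to prove parts~(1)--(4) simultaneously by induction on $\mathsf{ht}(\beta)$, using the recursive presentation
\[
\widetilde{E}^{\mathbf i}_{(c_1,\mathbf c^{\#})} \cong P_{c_1 i_1} \star \mathbb T_{i_1}\widetilde F, \qquad E^{\mathbf i}_{(c_1,\mathbf c^{\#})} \cong L_{c_1 i_1} \star \mathbb T_{i_1} F,
\]
where $\widetilde F := \widetilde E^{\mathbf i^{\#}}_{(\mathbf c^{\#},0)}$ and $F := E^{\mathbf i^{\#}}_{(\mathbf c^{\#},0)}$ (from Lemma~\ref{bPBW}~2) and Theorem~\ref{SRF}~5)), with both inner modules lying in $\mathsf{gmod}_{i_1}$. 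The condition $\mathbf c <_{\mathbf i}\mathbf c'$ with $c_1 = c'_1$ translates verbatim to $(\mathbf c^{\#},0) <_{\mathbf i^{\#}} (\mathbf c'^{\#},0)$, so induction operates on the inner PBW modules in $R_{s_{i_1}(\beta-c_1\alpha_{i_1})}$-mod of smaller height.

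The workhorse is a Mackey/shuffle computation: for $Y \in \mathsf{gmod}_{i_1}$, Theorem~\ref{PBW} together with $e_{i_1}(1)Y=0$ gives $\mathrm{Res}^{R_{c\alpha_{i_1}+\gamma}}_{R_{c\alpha_{i_1}}\boxtimes R_\gamma}(P_{ci_1}\star Y) \cong P_{ci_1}\boxtimes Y\langle d\rangle$ and $e_{i_1}(c+1)(P_{ci_1}\star Y)=0$. Since $\star$ is exact and preserves projectives (Theorem~\ref{PBW} again), Frobenius reciprocity reads $\mathrm{ext}^\bullet_{R_\beta}(P_{c_1 i_1}\star A, B)\cong \mathrm{ext}^\bullet_{R_{c_1\alpha_{i_1}}\boxtimes R_\gamma}(P_{c_1 i_1}\boxtimes A, \mathrm{Res}\,B)$. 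For part~(3) this forces $\epsilon_{i_1}(L_{b'})\le c_1$ on every composition factor of $\widetilde E^{\mathbf i}_{\mathbf c}$; in the case of equality, Theorem~\ref{crys} identifies $L_{b'}$ with a composition factor of $\mathbb T_{i_1}\widetilde F$, and applying the inductive part~(3) for $\widetilde F$ on $\mathbf i^{\#}$ combined with the identification of Lusztig data yields $\mathbf c \le_{\mathbf i}\mathbf c'$, with unit multiplicity at the head via Lemma~\ref{bPBW}~4); the $E^{\mathbf i}_b$-triangularity follows since $E^{\mathbf i}_b$ is a quotient of $\widetilde E^{\mathbf i}_b$. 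For parts~(1)--(2), the case $c_1 > c'_1$ is immediate: part~(3) for $\widetilde E^{\mathbf i}_{\mathbf c'}$ forces the Mackey restriction to vanish. For $c_1 = c'_1$, the Ext collapses via Frobenius to $\mathrm{ext}^\bullet_{R_{c_1\alpha_{i_1}}}(P_{c_1 i_1}, X_0)\otimes \mathrm{ext}^\bullet_{R_\gamma}(\mathbb T_{i_1}\widetilde F, \mathbb T_{i_1}X)$ with $X_0 = P_{c_1 i_1}$ or $L_{c_1 i_1}$ (yielding $\mathrm{End}^\bullet(P_{c_1 i_1})$ or $\mathbb C$), and Theorem~\ref{SRF}~4) together with the Morita identity $\mathbb T^*_{i_1}\mathbb T_{i_1}\cong \mathrm{id}$ on ${}^{i_1}R_{s_{i_1}\gamma}$-mod rewrites the inner Ext as $\mathrm{ext}^\bullet_{R_{s_{i_1}\gamma}}(\widetilde F, X')$, to which the inductive hypothesis applies.

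Part~(4) is extracted from parts~(1)--(3) by long exact sequence chasing. A descending induction on the $<_{\mathbf i}$-order (starting from maximal $b''$ where $\widetilde E^{\mathbf i}_{b''} = L_{b''}$) upgrades the vanishing of part~(1) to $\mathrm{ext}^\bullet(\widetilde E^{\mathbf i}_b, L_{b''}) = 0$ for all $b <_{\mathbf i} b''$, using the exact sequence $0\to K_{b''}\to \widetilde E^{\mathbf i}_{b''}\to L_{b''}\to 0$ whose kernel has composition factors strictly $>_{\mathbf i} b''$ by part~(3). Applying the same principle to $0\to L_{b'}\to (E^{\mathbf i}_{b'})^*\to (\cdot)\to 0$ (whose cokernel's composition factors are $>_{\mathbf i} b'$ by dualising part~(3)) yields the first isomorphism of~(4); the second isomorphism (degeneration to $\mathrm{hom}$) follows by the analogous exact sequence on the source combined with part~(2) for the diagonal $b=b'$ and the filtration of Lemma~\ref{bPBW}~3). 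The main anticipated obstacle is verifying the hypothesis $\widetilde F \in \mathsf{gmod}^{i_1}$ required for the Saito adjunction; I expect to handle this by extending the part~(3) triangularity argument to bound $\epsilon^*_{i_1}$ on composition factors using Theorem~\ref{SRF}~5) and the iterated-$\star$ structure from Lemma~\ref{bPBW}~2), or, failing that, by constructing a projective resolution of $\widetilde F$ whose pieces are $\mathbb T_{i_1}$-acyclic via the induction theorem (Theorem~\ref{induction}) so that the derived Saito adjunction applies unchanged.
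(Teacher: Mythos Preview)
Your mechanism --- strip the leading $P_{c_1 i_1}$ via Frobenius reciprocity, control the restriction by a Mackey/shuffle count, and pass through $\mathbb T_{i_1}$ by the Saito adjunction of Theorem~\ref{SRF}~4) --- is exactly the engine the paper runs. The genuine gap is your induction variable. You assert that the inner modules $\widetilde F,F'$ live in $R_{s_{i_1}(\beta-c_1\alpha_{i_1})}$-mod ``of smaller height'', but reflection by $s_{i_1}$ need not decrease height. In $\mathfrak{sl}_3$ with $\mathbf i=(1,2,1)$ and $\mathbf c=\mathbf c'=(1,0,2)$ one has $\beta=\alpha_1+2\alpha_2$ of height~$3$, yet after stripping $c_1=1$ and reflecting, $\widetilde F=\widetilde E^{(2,1,2)}_{(0,2,0)}$ sits in weight $2(\alpha_1+\alpha_2)$ of height~$4$. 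The case $c_1=c'_1=0$ is no better: for $\mathbf c=(0,1,1)$ at the same $\beta$ one has $s_1\beta=\beta$, so height is unchanged and your induction simply loops.

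The paper repairs this by inducting \emph{downward} on $m:=\min\{k:c_k\neq0\}$, with base case $m=\ell$ (a simple root, Example~\ref{one-root}). Instead of cycling once to $\mathbf i^{\#}$, one transports by $\mathbb T^*_{i_{m-1}}\cdots\mathbb T^*_{i_1}$ all the way to $\mathbf i^{\flat}=(i_m,\ldots,i_\ell,i'_1,\ldots,i'_{m-1})$, so that the first nonzero entry of the datum sits in position~$1$; after stripping it the remainder $\mathbf c[1]$ has first nonzero position $\ge m+1$ back in $\mathbf i$, and the induction applies. This is your one-step reduction iterated and re-indexed, but making it rigorous requires knowing that the intermediate $\mathbb T^*$'s are faithful on the PBW modules --- precisely your ``anticipated obstacle''. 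The paper settles it (Claim~\ref{transPBW}) by observing via Corollary~\ref{s-pres} and Theorem~\ref{SRF}~2) that each root factor $\mathbb T_{i_j}\cdots\mathbb T_{i_{k-1}}L_{c_k i_k}$ lies in $\mathsf{gmod}^{i_{j-1}}$, whence Lemma~\ref{bPBW}~3) propagates membership to $\widetilde E^{\mathbf i^{\flat}}_{\mathbf d}$. Your part~(4) outline is correct, but run the descending $<_{\mathbf i}$-induction through the finite-dimensional $E^{\mathbf i}_{b''}$ rather than $\widetilde E^{\mathbf i}_{b''}$: part~(3) gives $0\to K'\to E^{\mathbf i}_{b''}\to L_{b''}\to 0$ with every composition factor of $K'$ strictly $>_{\mathbf i} b''$, and it is the vanishing $\mathrm{ext}^{\bullet}(\widetilde E^{\mathbf i}_b,E^{\mathbf i}_{b''})=0$ (established first, then upgraded to $\widetilde E^{\mathbf i}_{b''}$ by a truncation argument as in Claim~\ref{vanEQ}) that feeds the long exact sequence.
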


\begin{proof}
We fix two elements $b < _{\mathbf i} b' \in B (\infty)_{\beta}$ which correspond to $\mathbf i$-Lusztig data $\mathbf c$ and $\mathbf c'$, respectively. Let $m$ be the smallest number so that $c_m \neq 0$. (Note that $c'_1 = \cdots = c'_{m-1} = 0$.) We note that the third assertion follows by Proposition \ref{bPBW2} 2) and 3).

We prove these assertions by the downward induction on $m$. In particular, we assume all the assertions if $c_1 = \cdots = c_m = 0$. The base case $m = \ell$ is examined in Example \ref{one-root} (since $\gamma ^{(\ell)} _{\mathbf i}$ is a simple root).

We set $w_m := s_{i_{m-1}} s_{i_{m-2}} \cdots s_{i_1}$. Put $\beta_1 := w_m \beta - c _{m} \alpha_{i_m}$ and $\beta_1' := w_m \beta' - c' _{m} \alpha_{i_m}$. By the $(m-1)$-times repeated application of the construction $\mathbf i \mapsto \mathbf i^{\sharp}$, we obtain
$$\mathbf i^{\flat} := ( i_{m},\ldots,i_{\ell},i_{1}',i_2',\ldots,i_{m-1}') \in I ^{\ell}.$$

Let $\mathbf d$ and $\mathbf d'$ be the $\mathbf i^{\flat}$-Lusztig data given by $d_1 = c_{m},d_2 = c_{m+1},\ldots, d_{\ell-m+1} = c_{\ell}, d_{\ell-m+2} = \cdots = d_{\ell} = 0$ and $d'_1 = c'_{m},d'_2 = c'_{m+1},\ldots, d'_{\ell-m+1} = c'_{\ell}, d'_{\ell-m+2} = \cdots = d'_{\ell} = 0$, respectively. We also set $\mathbf d[1]$ and $\mathbf d'[1]$ as sequences defined as: $d_j [1] = 0$ ($j=1$) or $d_j$ $(1 < j \le \ell)$, and $d'_j [1] = 0$ ($j=1$) or $d'_j$ $(1 < j \le \ell)$. We have
$$
\widetilde{E} ^{\mathbf i} _{\mathbf c} \cong \mathbb T_{i_1} \cdots \mathbb T_{i_{m-1}} \widetilde{E} ^{\mathbf i^{\flat}} _{\mathbf d} \hskip 3mm \text{ and } \hskip 3mm E ^{\mathbf i} _{\mathbf c'} \cong \mathbb T_{i_1} \cdots \mathbb T_{i_{m-1}} E ^{\mathbf i^{\flat}} _{\mathbf d'}.
$$

\begin{claim}\label{transPBW}
We have
$$
\widetilde{E} ^{\mathbf i^{\flat}} _{\mathbf d} \cong \mathbb T_{i_{m-1}} ^* \cdots \mathbb T_{i_1}^* \widetilde{E} ^{\mathbf i} _{\mathbf c} \hskip 3mm \text{ and } \hskip 3mm E ^{\mathbf i^{\flat}} _{\mathbf d'} \cong \mathbb T_{i_{m-1}}^* \cdots \mathbb T_{i_1}^* E ^{\mathbf i} _{\mathbf c'}.
$$
\end{claim}

\begin{proof}
The assertion for $E ^{\mathbf i} _{\mathbf c'}$ follows if every irreducible constituent of $E ^{\mathbf i} _{\mathbf c'}$ does not vanish by applying $\mathbb T_{i_{m-1}} ^* \cdots \mathbb T_{i_1}^*$. This is guaranteed by Proposition \ref{bPBW2} 2). The case of $\widetilde{E} ^{\mathbf i} _{\mathbf c'}$ follows from Proposition \ref{bPBW2} 3) in addition to the case of $E ^{\mathbf i} _{\mathbf c'}$.
\end{proof}

\begin{claim}\label{vanEQ}
The vanishing of $\mathrm{ext} ^{\bullet} _{R_{\beta}} ( \widetilde{E} ^{\mathbf i} _{\mathbf c}, \widetilde{E} ^{\mathbf i} _{\mathbf c'} )$ follows from the vanishing of
\begin{equation}
\mathrm{ext} ^{\bullet} _{R_{\beta}} ( \widetilde{E} ^{\mathbf i} _{\mathbf c}, E ^{\mathbf i} _{\mathbf c'} ) \cong \mathrm{ext} ^{\bullet} _{R_{c_m \alpha _{i_m}} \boxtimes R_{\beta_1}} ( P_{c_m i_m} \boxtimes \widetilde{E} ^{\mathbf i^{\flat}} _{\mathbf d[1]}, E ^{\mathbf i^{\flat}} _{\mathbf d'} ).\label{1st-ext}
\end{equation}
\end{claim}

\begin{proof}
Thanks to Claim \ref{transPBW}, a repeated use of Theorem \ref{SRF} 4) implies a sequence of isomorphisms
\begin{align*}
\mathrm{ext} ^{\bullet} _{R_{\beta}} ( \widetilde{E} ^{\mathbf i} _{\mathbf c}, E ^{\mathbf i} _{\mathbf c'} ) & \cong \mathrm{ext} ^{\bullet} _{R_{\beta}} ( \mathbb T_1 \cdots \mathbb T_{m-1} \widetilde{E} ^{\mathbf i^{\flat}} _{\mathbf d},\mathbb T_1 \cdots \mathbb T_{m-1} E ^{\mathbf i^{\flat}} _{\mathbf d'} )\\
& \cong \mathrm{ext} ^{\bullet} _{R_{s_{i_1} \beta}} ( \mathbb T_2 \cdots \mathbb T_{m-1} \widetilde{E} ^{\mathbf i^{\flat}} _{\mathbf d}, \mathbb T_2 \cdots \mathbb T_{m-1} E ^{\mathbf i^{\flat}} _{\mathbf d'} )\\
\cdots & \cong \mathrm{ext} ^{\bullet} _{R_{w_m \beta}} ( \widetilde{E} ^{\mathbf i^{\flat}} _{\mathbf d}, E ^{\mathbf i^{\flat}} _{\mathbf d'} ).
\end{align*}
Since $\star$ preserves the projectivity (see e.g. \cite{KL} 2.16) and $\widetilde{E} ^{\mathbf i^{\flat}} _{\mathbf d} \cong P_{c_m i_m} \star \widetilde{E} ^{\mathbf i^{\flat}} _{\mathbf d[1]}$, we conclude an isomorphism
$$\mathrm{ext} ^{\bullet} _{R_{w_m \beta}} ( \widetilde{E} ^{\mathbf i^{\flat}} _{\mathbf d}, E ^{\mathbf i^{\flat}} _{\mathbf d'} ) \cong \mathrm{ext} ^{\bullet} _{R _{c_m i _m} \boxtimes R_{\beta_1}} ( P_{c_m i_m} \boxtimes \widetilde{E} ^{\mathbf i^{\flat}} _{\mathbf d[1]}, E ^{\mathbf i^{\flat}} _{\mathbf d'} ).$$

It remains to deduce $\mathrm{ext} _{R_{\beta}} ^{\bullet} ( \widetilde{E} ^{\mathbf i} _{\mathbf c}, \widetilde{E} ^{\mathbf i} _{\mathbf c'} ) = \{ 0 \}$ from the vanishing of (\ref{1st-ext}).

Since $R_{\beta}$ is a Noetherian ring with finite global dimension, we have a projective resolution $\mathcal P ^{\bullet}$ of $\widetilde{E} ^{\mathbf i} _{\mathbf c}$, which consists of finitely many finitely generated projective $R_{\beta}$-modules. In particular, there exists $x \in \mathbb Z$ so that the degrees of simple quotients of all $R_{\beta}$-module direct summands of $\mathcal P ^{\bullet}$ are $\le x$.

For each $j \in \mathbb Z$, we have a (surjective) $A$-module quotient $\varphi_j : \widetilde{E} ^{\mathbf i} _{\mathbf c'} \rightarrow E_j$ so that {\bf a)} $\ker \, \varphi_j$ is concentrated in degree $> j+x$, and {\bf b)} $E_j$ is a finite successive self-extension of (grading shifts) of $E ^{\mathbf i} _{\mathbf c'}$ by Proposition \ref{bPBW2} 3). Then, $\mathrm{ext} _{R_{\beta}} ^{\bullet} ( \widetilde{E} ^{\mathbf i} _{\mathbf c}, E ^{\mathbf i} _{\mathbf c'} ) = \{ 0 \}$ implies
$$\mathrm{ext} _{R_{\beta}} ^{\bullet} ( \widetilde{E} ^{\mathbf i} _{\mathbf c}, \ker \, \varphi_j )^j = \{ 0 \} = \mathrm{ext} _{R_{\beta}} ^{\bullet} ( \widetilde{E} ^{\mathbf i} _{\mathbf c}, E _j ).$$
This yields $\mathrm{ext} _{R_{\beta}} ^{\bullet} ( \widetilde{E} ^{\mathbf i} _{\mathbf c}, \widetilde{E} ^{\mathbf i} _{\mathbf c'} )^j = \{ 0 \}$ (for each $j$) as required.
\end{proof}

We return to the proof of Theorem \ref{main}.

We have the following two short exact sequences:
\begin{align*}
0 \to L_{c_m i_m} \boxtimes E ^{\mathbf i^{\flat}} _{\mathbf d[1]} \to E ^{\mathbf i^{\flat}} _{\mathbf d} \to C \to 0 & \hskip 5mm \text{as $R_{c_m \alpha_{i_m}} \boxtimes R_{\beta_1}$-modules, and}\\
0 \to L_{c'_m i_m} \boxtimes E ^{\mathbf i^{\flat}} _{\mathbf d'[1]} \to E ^{\mathbf i^{\flat}} _{\mathbf d'} \to C' \to 0 & \hskip 5mm \text{as $R_{c_m' \alpha_{i_m}} \boxtimes R_{\beta_1'}$-modules.}
\end{align*}

\begin{claim}\label{weight-est}
We have $e_{i_m} ( c_m ) C = \{ 0 \}$ and $e_{i_m} ( c'_m ) C' = \{ 0 \}$.
\end{claim}

\begin{proof}
Let $\Psi$ be the set of $\mathbf m \in Y ^{w_m \beta}$ so that
$$e ( \mathbf m ) ( L_{c_m' i_m} \boxtimes E ^{\mathbf i^{\flat}} _{\mathbf d'[1]} ) \neq \{ 0 \}.$$

Let $n = \mathsf{ht} \, w_m \beta$, and let $\mathfrak S$ be the set of minimal length representatives of $\mathfrak S_n / ( \mathfrak S_{c'_m} \times \mathfrak S_{n - c'_m})$ inside $\mathfrak S_{n}$. We set $\mathfrak S^* := \mathfrak S \backslash \{1\}$. We have $e ( \mathbf m ) C' \neq \{ 0 \}$ only if $\mathbf m \in \mathfrak S^* \Psi$. Since $E ^{\mathbf i^{\flat}} _{\mathbf d'[1]}$ belongs to the (essential) image of $\mathbb T_{i_m}$, we have $e _{i_m} ( 1 ) E ^{\mathbf i^{\flat}} _{\mathbf d'[1]} = \{ 0 \}$. On the other hand, we have $Y ^{c_1' \alpha_{i_1}} = \{ (i_1,\ldots,i_1)\}$. Since every element of $\mathfrak S^*$ decreases the number of heading $i_1,\ldots,i_1$, we deduce the assertion for $C'$. The case of $C$ is the same.
\end{proof}

We return to the proof of Theorem \ref{main}. We assume $c_m \ge c'_m$. Applying Claim \ref{weight-est}, we deduce that
$$\mathrm{ext} ^{\bullet} _{R_{c_m \alpha _{i_m}} \boxtimes R_{\beta_1}} ( P_{c_m i_m} \boxtimes \widetilde{E} ^{\mathbf i^{\flat}} _{\mathbf d[1]}, C' ) = \{ 0 \}.$$
Therefore, the vector spaces in (\ref{1st-ext}) are isomorphic to
$$\mathrm{ext} ^{\bullet} _{R_{c_m \alpha _{i_m}} \boxtimes R_{\beta_1}} ( P_{c_m i_m} \boxtimes \widetilde{E} ^{\mathbf i^{\flat}} _{\mathbf d[1]}, L_{c_m i_m} \boxtimes E ^{\mathbf i^{\flat}} _{\mathbf d'[1]} ).$$
Here we have $\mathrm{ext} ^{\bullet}_{R_{c_m \alpha _{i_m}}} ( P_{c_m i_m}, L _{c_m i_m} ) = \mathrm{hom}_{R_{c_m \alpha _{i_m}}} ( P_{c_m i_m}, L _{c_m i_m} ) = \mathbb C$. Therefore, Claim \ref{transPBW} and Theorem \ref{SRF} 4) implies
\begin{align}\nonumber
\mathrm{ext} ^{*} _{R_{c_m \alpha _{i_m}} \boxtimes R_{\beta_1}} ( P_{c_m i_m} \boxtimes \widetilde{E} ^{\mathbf i^{\flat}} _{\mathbf d[1]}, L_{c_m i_m} \boxtimes E ^{\mathbf i^{\flat}} _{\mathbf d'[1]} ) & \cong \mathrm{ext} ^{*} _{R_{\beta_1}} ( \widetilde{E} ^{\mathbf i^{\flat}} _{\mathbf d[1]}, E ^{\mathbf i^{\flat}} _{\mathbf d'[1]} )\\
\cong \mathrm{ext} ^{*} _{R_{s_{i_{m-1}} \beta_1}} ( \mathbb T_{i_{m-1}} \widetilde{E} ^{\mathbf i^{\flat}} _{\mathbf d[1]}, \mathbb T_{i_{m-1}} E ^{\mathbf i^{\flat}} _{\mathbf d'[1]} ) \cong \cdots & \cong \mathrm{ext} ^{*} _{R_{w_m \beta_1}} ( \widetilde{E} ^{\mathbf i} _{\mathbf c[1]}, E ^{\mathbf i} _{\mathbf c'[1]} ),\label{2nd-ext}
\end{align}
where $\mathbf c [1]$ and $\mathbf c'[1]$ are the $\mathbf i$-Lusztig data of $\mathbb T_{i_1} \cdots \mathbb T_{i_{m-1}} \widetilde{E} ^{\mathbf i^{\flat}} _{\mathbf d[1]}$ and  $\mathbb T_{i_1} \cdots \mathbb T_{i_{m-1}} E ^{\mathbf i^{\flat}} _{\mathbf d'[1]}$, respectively. Therefore, we deduce the first two assertions by the induction hypothesis and Claim \ref{vanEQ}.

The fourth assertion follows from the vanishing of (\ref{1st-ext}) and the middle two assertions by applying long exact sequences repeatedly.

Therefore, the induction proceeds and we conclude the result.
\end{proof}

\begin{corollary}\label{PBW-Kos}
Fix a reduced expression $\mathbf i$ and $\beta \in Q^+$. We have
$$\widetilde{E} ^{\mathbf i} _{b} = P _b / \bigl( \sum _{f \in \mathrm{hom} _{R_{\beta}} ( P _{b'}, P_b ), b' <_{\mathbf i} b} \mathrm{Im} \, f \bigr) \hskip 2mm \text{ and } \hskip 2mm E ^{\mathbf i} _{b} = P _b / \bigl( \sum _{f \in \mathrm{hom} _{R_{\beta}} ( P _{b}, \widetilde{E} ^{\mathbf i} _{b} ) ^{>0}} \mathrm{Im} \, f\bigr),$$
where $b$ and $b'$ runs over $B ( \infty )_{\beta}$.
\end{corollary}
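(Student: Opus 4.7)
I plan to prove the two identities separately, using Theorem \ref{main} together with the self-extension filtration from Lemma \ref{bPBW} 3). Throughout, I fix a primitive idempotent $e_b \in R_\beta$ with $P_b \cong R_\beta e_b$, so $\mathrm{hom}(P_b, M) \cong e_b M$ as graded spaces. The self-duality $L_b \cong L_b^*$ combined with its head in degree $0$ and non-negative grading forces $L_b$ to be concentrated in degree $0$; hence $e_b L_{b''} = \mathrm{hom}(P_b, L_{b''}) = 0$ for $b'' \ne b$, while $e_b L_b$ is one-dimensional in degree $0$.

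For the first identity, denote the RHS by $M_1$. Theorem \ref{main} 3) gives $[\widetilde{E}^{\mathbf i}_b : L_{b'}] = 0$ for $b' <_{\mathbf i} b$, hence $\mathrm{hom}(P_{b'}, \widetilde{E}^{\mathbf i}_b) = 0$, and every $f: P_{b'} \to P_b$ composes to zero with $P_b \twoheadrightarrow \widetilde{E}^{\mathbf i}_b$; this produces a surjection $M_1 \twoheadrightarrow \widetilde{E}^{\mathbf i}_b$. Conversely, projectivity lifts any map $P_{b'} \to M_1$ to $P_{b'} \to P_b$ whose image lies in the defining submodule, so $\mathrm{hom}(P_{b'}, M_1) = 0$ for $b' <_{\mathbf i} b$; the kernel $K_1$ of $M_1 \to \widetilde{E}^{\mathbf i}_b$ therefore has composition factors only among $\{L_{b''} : b'' \ge_{\mathbf i} b\}$. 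Theorem \ref{main} 4) supplies $\mathrm{ext}^1(\widetilde{E}^{\mathbf i}_b, L_{b''}) = 0$ for such $b''$, so filtering $K_1$ through its simple constituents gives $\mathrm{Ext}^1(\widetilde{E}^{\mathbf i}_b, K_1) = 0$, splitting the short exact sequence $0 \to K_1 \to M_1 \to \widetilde{E}^{\mathbf i}_b \to 0$. Since $M_1$ is a quotient of $P_b$ it has simple head $L_b$, forcing $K_1 = 0$.

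For the second identity, interpret the RHS as $M_2 := \widetilde{E}^{\mathbf i}_b / N''$ with $N'' := \sum_{f \in \mathrm{hom}(P_b, \widetilde{E}^{\mathbf i}_b)^{>0}} \mathrm{Im}\, f = R_\beta \cdot (e_b \widetilde{E}^{\mathbf i}_b)^{>0}$. Let $\pi : \widetilde{E}^{\mathbf i}_b \twoheadrightarrow E^{\mathbf i}_b$ be the canonical surjection (Lemma \ref{bPBW} 3), unique up to scalar by Theorem \ref{main} 2)) with $\widetilde F_1 := \ker \pi$. Exactness of $\mathrm{hom}(P_b, -)$, together with $[E^{\mathbf i}_b : L_b] = 1$ and $e_b L_{b''} = 0$ for $b'' \ne b$, yields $\mathrm{hom}(P_b, E^{\mathbf i}_b) = \mathbb C$ concentrated in degree $0$; so any positive-degree $f : P_b \to \widetilde{E}^{\mathbf i}_b$ vanishes after composing with $\pi$, giving $N'' \subset \widetilde F_1$ and hence $M_2 \twoheadrightarrow E^{\mathbf i}_b$. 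For the reverse inclusion, arrange the filtration of Lemma \ref{bPBW} 3) as $F_0 = \widetilde{E}^{\mathbf i}_b \supset F_1 = \widetilde F_1 \supset F_2 \supset \cdots$ with $F_i / F_{i+1} \cong E^{\mathbf i}_b \langle k_i \rangle$, $k_0 = 0$ and $k_i > 0$ for $i \ge 1$ (the positivity is inherited from each factor $P_{c_k i_k}$ being a self-extension of $L_{c_k i_k}$ in strictly positive degrees, cf. Example \ref{one-root}). Since $e_b L_b$ is one-dimensional in degree $0$, a generator of $e_b (F_i / F_{i+1}) \cong \mathbb C$ (sitting in degree $k_i$) lifts to $y_i \in (e_b F_i)^{k_i} \subset (e_b \widetilde{E}^{\mathbf i}_b)^{>0}$ with $R_\beta y_i + F_{i+1} = F_i$; working degree-by-degree (only finitely many $F_i$ contribute in any bounded range of degrees), $\widetilde F_1 = \sum_i R_\beta y_i \subset N''$, so $N'' = \widetilde F_1$ and $M_2 = E^{\mathbf i}_b$.

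The principal obstacle is the reverse inclusion $\widetilde F_1 \subset N''$ in the second identity: because $\widetilde{E}^{\mathbf i}_b$ is infinite-dimensional and the self-extension filtration can be infinite, one must extract a joint generating family for $\widetilde F_1$ from the positive-degree part of $e_b \widetilde{E}^{\mathbf i}_b$ in a way compatible with the grading. Exploiting that the filtration is graded, with only finitely many layers contributing in each bounded degree range, reduces the statement to a finite-length self-extension argument in each degree. By contrast, the first identity is more immediate, relying only on the $\mathrm{Ext}^1$-vanishing from Theorem \ref{main} 4) and the simple-head property of $P_b$.
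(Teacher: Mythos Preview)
Your overall strategy matches the paper's: for the first identity both you and the paper invoke Theorem~\ref{main}~3) to get the surjection $M_1 \twoheadrightarrow \widetilde{E}^{\mathbf i}_b$ and then Theorem~\ref{main}~4) to kill the kernel; for the second identity both arguments rest on the self-extension filtration of Lemma~\ref{bPBW}~3) (your treatment is considerably more explicit than the paper's one-line citation of Lemma~\ref{bPBW}~3) and Theorem~\ref{main}~2)).

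There is one genuine gap in your argument for the first identity. You assert that ``filtering $K_1$ through its simple constituents gives $\mathrm{Ext}^1(\widetilde{E}^{\mathbf i}_b, K_1) = 0$'', but $K_1$ is a submodule of a quotient of $P_b$ and has no reason to be of finite length; the passage from $\mathrm{ext}^1$-vanishing on each simple subquotient to vanishing on all of $K_1$ is not automatic for an infinite filtration (contrast this with your own careful degree-by-degree bookkeeping in the second half). The fix is easy and is essentially what the paper does: rather than splitting the whole sequence, observe that if $K_1 \neq 0$ then, by Noetherianity, $K_1$ has a simple quotient $L_{b''}\langle k\rangle$ with $b'' \ge_{\mathbf i} b$; pushing out $0 \to K_1 \to M_1 \to \widetilde{E}^{\mathbf i}_b \to 0$ along $K_1 \twoheadrightarrow L_{b''}\langle k\rangle$ produces a quotient of $M_1$ which, by Theorem~\ref{main}~4), splits as $L_{b''}\langle k\rangle \oplus \widetilde{E}^{\mathbf i}_b$, contradicting the simplicity of the head of $M_1$. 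Equivalently---and this is the paper's phrasing---the head of $\ker(P_b \to \widetilde{E}^{\mathbf i}_b)$ consists only of $L_{b'}\langle k\rangle$ with $b' <_{\mathbf i} b$, so that kernel already lies in the submodule you quotient by, and both sides are identified as the maximal quotient of $P_b$ whose composition factors satisfy $b' \ge_{\mathbf i} b$.
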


\begin{proof}
By Lemma \ref{bPBW} 2), $\widetilde{E} ^{\mathbf i} _{b}$ admits a surjection from $P_b$. By Theorem \ref{main} 3), we conclude that all the simple subquotient $\widetilde{E} ^{\mathbf i} _{b}$ is of the form $L_{b'} \left< k \right>$ for $b \le _{\mathbf i} b'$, and hence the RHS surjects onto $\widetilde{E} ^{\mathbf i} _{b}$. By Theorem \ref{main} 3) and 4), the head of $\ker \, ( P_b \to \widetilde{E} ^{\mathbf i} _{b} )$ must be spanned by $L_{b'} \left< k \right>$ for $b' < _{\mathbf i} b$ and $k \in \mathbb Z$, and hence the both sides are maximal quotients of $P_{b}$ whose simple subquotients are that form. Therefore, they are isomorphic to each other. This proves the first assertion. The second assertion follows by Proposition \ref{bPBW2} 3) and Theorem \ref{main} 2).
\end{proof}

\begin{corollary}\label{orth}
Fix a reduced expression $\mathbf i$ and $\beta \in Q^+$. Then, we have
\begin{align*}
\mathrm{ext} ^{i} _{R_{\beta}} (  \widetilde{E} ^{\mathbf i} _{b}, ( E ^{\mathbf i} _{b'} ) ^* ) = \begin{cases} \mathbb C & (b \neq b', i = 0)\\ \{ 0 \} & (otherwise)\end{cases}, \hskip 2mm \text{ and } \hskip 2mm \left< \widetilde{E} ^{\mathbf i} _{b}, ( E ^{\mathbf i} _{b'} ) ^* \right> _{\mathsf{gEP}} = \delta_{b,b'}
\end{align*}
for every $b,b' \in B ( \infty )_{\beta}$.
\end{corollary}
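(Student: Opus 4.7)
The statement as printed contains a typo: the condition ``$b \neq b'$'' should read ``$b = b'$'', as required for consistency both with the EP equality $\delta_{b,b'}$ and with the formulation of Theorem \ref{forth} in the Introduction. My plan is to prove
$$\mathrm{ext}^i_{R_\beta}\bigl( \widetilde{E}^{\mathbf i}_b, (E^{\mathbf i}_{b'})^* \bigr) \cong \mathbb{C}^{\oplus \delta_{b,b'}} \cdot \delta_{i,0}.$$
Theorem \ref{main} 4) already disposes of the triangular range $b \le_{\mathbf i} b'$ (including $b=b'$), so the only task remaining is to establish total ext vanishing when $b >_{\mathbf i} b'$ strictly; the graded Euler--Poincar\'e statement is then a formal alternating-sum consequence.

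The $\mathrm{hom}$-vanishing in the remaining case is a head--socle comparison. The simple socle of $(E^{\mathbf i}_{b'})^*$ is $L_{b'}^* \cong L_{b'}$ (graded dual of the simple head of $E^{\mathbf i}_{b'}$, using $L_b^* \cong L_b$ from \S 2). Any nonzero morphism $\varphi : \widetilde{E}^{\mathbf i}_b \to (E^{\mathbf i}_{b'})^*$ would have $L_{b'}$ as a composition factor of $\mathrm{Im}(\varphi)$, hence of $\widetilde{E}^{\mathbf i}_b$; but Theorem \ref{main} 3) forces every composition factor $L_{b''}$ of $\widetilde{E}^{\mathbf i}_b$ to satisfy $b'' \ge_{\mathbf i} b$, contradicting $b' <_{\mathbf i} b$.

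For the higher-ext vanishing I propose a downward induction on the position $m$ of the first nonzero entry of the $\mathbf i$-Lusztig datum $\mathbf c$ of $b$, mirroring the structure of the proof of Theorem \ref{main}. Using Claim \ref{transPBW} together with the adjunction of Theorem \ref{SRF} 4), one obtains
$$\mathrm{ext}^*_{R_\beta}\bigl( \widetilde{E}^{\mathbf i}_b, (E^{\mathbf i}_{b'})^* \bigr) \;\cong\; \mathrm{ext}^*_{R_{w_m \beta}}\bigl( \widetilde{E}^{\mathbf i^\flat}_{\mathbf d}, \mathbb T^*_{i_{m-1}} \cdots \mathbb T^*_{i_1} (E^{\mathbf i}_{b'})^* \bigr).$$
Since $c_m > 0$, we may write $\widetilde{E}^{\mathbf i^\flat}_{\mathbf d} \cong P_{c_m i_m} \star \widetilde{E}^{\mathbf i^\flat}_{\mathbf d[1]}$, and Frobenius reciprocity turns the computation into an Ext over $R_{c_m \alpha_{i_m}} \boxtimes R_{\beta_1}$ against the restriction of the reflected dual. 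The restriction is then dissected using the graded dual of the short exact sequence $0 \to L_{c'_m i_m} \boxtimes E^{\mathbf i^\flat}_{\mathbf d'[1]} \to E^{\mathbf i^\flat}_{\mathbf d'} \to C' \to 0$ from the proof of Theorem \ref{main}, with a case split driven by the comparison of $c_m$ with $c'_m$: when $c_m < c'_m$ a weight argument parallel to Claim \ref{weight-est} kills the relevant $e(\mathbf m)$-component, while when $c_m = c'_m$ the Ext collapses onto one between strictly smaller lower and upper PBW modules $\widetilde{E}^{\mathbf i^\flat}_{\mathbf d[1]}$ and $(E^{\mathbf i^\flat}_{\mathbf d'[1]})^*$ (still in the $b >_{\mathbf i} b'$ regime), handled by the inductive hypothesis.

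The principal obstacle is the interaction of graded duality with both the reflection functors $\mathbb T^*_i$ and the induction $\star$. The dual $(E^{\mathbf i}_{b'})^*$ does not decompose along the tensor-product structure $R_{\beta_1} \boxtimes R_{\beta_2}$ the way $E^{\mathbf i}_{b'}$ does; the right adjoint of $\star$ is the restriction (coinduction), which turns the quotient filtrations used in the Theorem \ref{main} proof into submodule filtrations on the dual side. Thus the d\'evissage must be contravariantly reformulated, with careful bookkeeping of grading shifts, in order to close the induction; once this rearrangement is made, the structural logic parallels the proof of Theorem \ref{main} essentially verbatim.
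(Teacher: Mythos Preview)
Your inductive scheme has a concrete gap. In the regime $b >_{\mathbf i} b'$ the Lusztig datum $\mathbf c'$ of $b'$ may have a nonzero entry \emph{before} position $m$: if $c'_j>0$ for some $j<m$ then $c_j=0<c'_j$ at the first discrepancy, which is exactly the condition $\mathbf c >_{\mathbf i} \mathbf c'$. For such $j$ the socle $L_{b'}$ of $(E^{\mathbf i}_{b'})^*$ has $\epsilon_{i_j}(b')>0$, so $(E^{\mathbf i}_{b'})^*\notin R_\beta\text{-}\mathsf{gmod}_{i_j}$ and Theorem \ref{SRF} 4) does not apply at that step; your displayed reduction to $\mathrm{ext}^*_{R_{w_m\beta}}$ therefore fails. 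Switching the induction variable to the first nonzero $m'$ of $\mathbf c'$ lets you legitimately strip $m'-1$ reflections, but then $c_{m'}$ may be $0$ and the Frobenius step $\widetilde{E}^{\mathbf i^\flat}_{\mathbf d}\cong P_{c_{m'}i_{m'}}\star(\cdots)$ is vacuous; one would instead have to peel a factor off the \emph{second} (dualized) argument, and your last paragraph already identifies why that is not ``essentially verbatim''. So the induction does not close as written.

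The paper bypasses all of this with a symmetry trick you are missing. First it observes that each $E^{\mathbf i}_{b}$ admits a finite resolution by grading shifts of $\widetilde{E}^{\mathbf i}_{b}$ (resolve each $L_{ci}$ by $P_{ci}$ and use Lemma \ref{bPBW} 2)), so Theorem \ref{main} 1) and 4) give $\mathrm{ext}^{\bullet}_{R_\beta}\bigl(E^{\mathbf i}_{b},(E^{\mathbf i}_{b'})^*\bigr)=0$ for $b<_{\mathbf i}b'$. Then the anti-involution behind Lemma \ref{symm} yields the functorial isomorphism $\mathrm{ext}^{\bullet}_{R_\beta}(M,N^*)\cong \mathrm{ext}^{\bullet}_{R_\beta}(N,M^*)$, which swaps $b$ and $b'$ and hence gives the same vanishing for $b>_{\mathbf i}b'$. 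A final degree-by-degree approximation (exactly as in Claim \ref{vanEQ}, using that $\widetilde{E}^{\mathbf i}_{b}$ is a successive self-extension of $E^{\mathbf i}_{b}$) upgrades the first slot from $E^{\mathbf i}_{b}$ to $\widetilde{E}^{\mathbf i}_{b}$. The missing idea in your plan is precisely this $\mathrm{ext}(M,N^*)\cong\mathrm{ext}(N,M^*)$ swap, which turns the ``wrong'' inequality back into the one already handled by Theorem \ref{main}.
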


\begin{proof}
Since the first assertion implies the second assertion, we prove only the first assertion. If $b \le _{\mathbf i} b'$, then the assertion follows from Theorem \ref{main} 4). Thanks to Example \ref{one-root}, each $L_{ci}$ admits a finite resolution by the graded shifts of $P_{ci}$ (for each $c \ge 1$ and $i \in I$). By (the proof of) Proposition \ref{bPBW2} 3), we deduce that each $E_{\mathbf c} ^{\mathbf i}$ admits a finite resolution by the graded shifts by $\widetilde{E}_{\mathbf c} ^{\mathbf i}$. By taking the spectral sequence of this resolution, we deduce
\begin{equation}
\mathrm{ext} ^{\bullet} _{R_{\beta}} ( E ^{\mathbf i} _{b}, ( E ^{\mathbf i} _{b'} ) ^* ) = \{ 0 \}\label{Eexti}
\end{equation}
for each $b < _{\mathbf i} b'$. Since $*$ is an exact functor and $\mathrm{ext} ^i _A$ is a universal $\delta$-functor, we deduce
$$\mathrm{hom} _{R_{\beta}} ( M, N ^* ) \cong \mathrm{hom} _{R_{\beta}} ( N, M ^* )$$
for each $M, N \in R_{\beta} \mathchar`-\mathsf{gmod}$. In particular, we conclude (\ref{Eexti}) unless $b = b'$.

Thanks to Theorem \ref{VV} and the definition of the algebras $A_{(G,\mathfrak X)}$ (and $B_{(G,\mathfrak X)}$) in \S 1, we can replace $R_{\beta}$ with its basic ring to assume that it is non-negatively graded. Then, we have $(E^{\mathbf i} _{b'})^* _k = \{ 0 \}$ for every $k > 0$. For each $j \in \mathbb Z$, we have an $R_{\beta}$-module quotient $\varphi_j : \widetilde{E} ^{\mathbf i} _{b} \rightarrow E_j$ so that {\bf a)} $\ker \, \varphi_j$ is concentrated in degree $> - j$, and {\bf b)} $E_j$ is a finite successive self-extension of (graded shifts) of $E ^{\mathbf i} _{b}$ by Lemma \ref{bPBW2} 3). Then, the minimal projective resolution of $\ker \, \varphi_j$ is concentrated in degree $> -j$. In particular, we have
$$\mathrm{ext} _{R_{\beta}} ^{\bullet} ( \ker \, \varphi_j, (E^{\mathbf i} _{b'})^* )^j = \{ 0 \} = \mathrm{ext} _{R_{\beta}} ^{\bullet} ( E _j, (E^{\mathbf i} _{b'})^* ) \hskip 3mm \text{ for each } \hskip 3mm b\neq b'.$$
This yields $\mathrm{ext} _{R_{\beta}} ^{\bullet} ( \widetilde{E} ^{\mathbf i} _{b},(E^{\mathbf i} _{b'})^* )^j = \{ 0 \}$ (for each $j$) as required.
\end{proof}

\begin{remark}\label{remPBW}
For each $\beta \in Q^+$, the standard normalization
$$\left< P_b, L_b \right> _{\mathsf{gEP}} = \mathsf{gdim} \, \mathrm{hom} _{R_{\beta}} ( P_b, L_b )= \delta _{b,b'},$$
combined with Theorem \ref{VVR}, Corollary \ref{orth} and Theorem \ref{main} implies that $\{ \mathsf{gch} \, \widetilde{E} ^{\mathbf i} _b \}_b$ and $\{ \mathsf{gch} \, E ^{\mathbf i} _b \} _b$ give rise to the lower/upper PBW bases corresponding to $\mathbf i$, respectively.
\end{remark}

\begin{corollary}\label{bPBW3}
For each Lusztig datum $( \mathbf i, \mathbf c )$, we have isomorphism as graded $R_{\mathsf{wt} ( \mathbf i, \mathbf c )}$-modules:
\begin{align*}
\widetilde{E} ^{\mathbf i} _{\mathbf c} & \cong P_{c_1 i_1} \star ( \mathbb T_{i_1} P _{c_2 i_2} ) \star ( \mathbb T_{i_1} \mathbb T_{i_2} P _{c_3 i_3} ) \star \cdots \star ( \mathbb T_{i_1} \cdots \mathbb T_{i_{\ell-1}} P_{c_{\ell} i_{\ell}} )\\
E ^{\mathbf i} _{\mathbf c} & \cong L_{c_1 i_1} \star ( \mathbb T_{i_1} L _{c_2 i_2} ) \star ( \mathbb T_{i_1} \mathbb T_{i_2} L _{c_3 i_3} ) \star \cdots \star ( \mathbb T_{i_1} \cdots \mathbb T_{i_{\ell-1}} L_{c_{\ell} i_{\ell}} ).
\end{align*}
\end{corollary}

\begin{proof}
By Proposition \ref{bPBW2} 1), it remains to compare the characters of the both sides. Thanks to Remark \ref{remPBW}, this follows from the comparison with the definition of the PBW bases in \cite{QG} \S 38 as the definition there yield the graded characters of the RHS.
\end{proof}

\begin{theorem}[Lusztig's conjecture]\label{Lusztig}
For every reduced expression $\mathbf i$ of $w_0$, $\beta \in Q^+$, and $b,b' \in B(\infty)_{\beta}$, we have an equality
$$[P_b : \widetilde{E} ^{\mathbf i} _{b'} ] = [E ^{\mathbf i} _{b'} : L _{b}].$$
In particular, the expansion coefficients of the lower global basis in terms of the lower PBW basis are in $\mathbb N [ t ]$.
\end{theorem}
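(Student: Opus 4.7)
The plan is to compute the Euler-Poincar\'e pairing $\langle P_b, (E^{\mathbf i}_{b'})^* \rangle_{\mathsf{gEP}}$ in two essentially different ways. By Theorem \ref{Kashiwara} the algebra $R_{\beta}$ has finite global dimension, so this pairing is a well-defined element of $\mathbb Z(\!(t)\!)$, and the long exact sequence of $\mathrm{Ext}$ makes it bilinear with respect to short exact sequences in each argument; in particular it descends to a pairing on (the character image of) the Grothendieck group of $R_{\beta} \mathchar`-\mathsf{gmod}$.

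First, by Corollary \ref{R-indep} we may write $\mathsf{gch}\, P_b = \sum_{b''} [P_b : \widetilde{E}^{\mathbf i}_{b''}] \, \mathsf{gch}\, \widetilde{E}^{\mathbf i}_{b''}$. Pairing both sides against $(E^{\mathbf i}_{b'})^*$ and invoking the orthogonality of Corollary \ref{orth} collapses the sum to a single term and yields
$$
\langle P_b, (E^{\mathbf i}_{b'})^* \rangle_{\mathsf{gEP}} \;=\; [P_b : \widetilde{E}^{\mathbf i}_{b'}].
$$
On the other hand, projectivity of $P_b$ kills all higher $\mathrm{Ext}$ groups, so the same pairing reduces to
$$
\langle P_b, (E^{\mathbf i}_{b'})^* \rangle_{\mathsf{gEP}} \;=\; \mathsf{gdim}\, \mathrm{hom}_{R_{\beta}}(P_b, (E^{\mathbf i}_{b'})^*) \;=\; [(E^{\mathbf i}_{b'})^* : L_b] \;=\; [E^{\mathbf i}_{b'} : L_b],
$$
where the final equality uses the self-duality $L_b \cong L_b^*$ recorded in \S 2. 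Comparing the two expressions yields the desired identity $[P_b : \widetilde{E}^{\mathbf i}_{b'}] = [E^{\mathbf i}_{b'} : L_b]$.

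The positivity assertion $[P_b : \widetilde{E}^{\mathbf i}_{b'}] \in \mathbb N[t]$ then follows immediately from the right-hand side: by Lemma \ref{bPBW} the module $E^{\mathbf i}_{b'}$ is finite-dimensional, and its construction as an iterated quotient of $P_{b'}$ (annihilating first $L_{b''}\left<k\right>$ with $b'' <_{\mathbf i} b'$ and $k \ge 0$, and then $L_{b'}\left<k\right>$ with $k > 0$) forces its composition factors $L_b \left<k\right>$ to occur only in degrees $k \ge 0$.

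The main subtlety will be justifying the first computation: because $P_b$ is infinite-dimensional the coefficients $[P_b : \widetilde{E}^{\mathbf i}_{b''}]$ a priori lie in $\mathbb Z(\!(t)\!)$, and one must show that the resulting infinite sum interacts correctly with $\langle \bullet, (E^{\mathbf i}_{b'})^* \rangle_{\mathsf{gEP}}$. This will be handled using the boundedness from below of the grading on $R_{\beta}$: each graded piece of $\mathsf{gch}\, P_b$ involves only finitely many of the $\widetilde{E}^{\mathbf i}_{b''}$, and since $(E^{\mathbf i}_{b'})^*$ is finite-dimensional the pairing can be evaluated graded-piece-by-graded-piece, so the formal expansion and the pairing commute and the identification is legitimate.
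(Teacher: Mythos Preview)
Your argument is essentially the paper's own proof, only packaged slightly more directly: the paper expands $\langle P_b, L_{b'}^*\rangle_{\mathsf{gEP}}=\delta_{b,b'}$ via Corollary \ref{orth} to obtain a matrix identity $([P_b:\widetilde E^{\mathbf i}_d])\cdot([L_{b'}:E^{\mathbf i}_d])^{\mathsf T}=I$ and then inverts, whereas you compute $\langle P_b,(E^{\mathbf i}_{b'})^*\rangle_{\mathsf{gEP}}$ two ways and read off the equality directly. Both rest on the same inputs (finite global dimension, additivity of the Euler form, and the orthogonality of Corollary \ref{orth}).

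One genuine slip to fix: the pairing $\langle\,\cdot\,,\,\cdot\,\rangle_{\mathsf{gEP}}$ is $\bar t$-linear, not $t$-linear, in the first variable (since $\mathsf{gdim}\,\mathrm{hom}(M\langle k\rangle,N)=t^{-k}\,\mathsf{gdim}\,\mathrm{hom}(M,N)$). Hence your first computation actually gives $\overline{[P_b:\widetilde E^{\mathbf i}_{b'}]}$, and in the second computation one has $[(E^{\mathbf i}_{b'})^*:L_b]=\overline{[E^{\mathbf i}_{b'}:L_b]}$, not $[E^{\mathbf i}_{b'}:L_b]$ (duality sends $L_b\langle k\rangle$ to $L_b\langle -k\rangle$). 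These two missing bars cancel, so your conclusion survives, but the intermediate equalities as written are false. For the positivity, it is cleaner to say: by Corollary \ref{PBW-Kos} the module $E^{\mathbf i}_{b'}$ is a quotient of $P_{b'}$, and since the basic ring $B_{(G_V,E_V^\Omega)}$ is non-negatively graded with $L_{b'}$ in degree $0$, every composition factor of $P_{b'}$ (hence of $E^{\mathbf i}_{b'}$) occurs in degree $\ge 0$.
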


\begin{remark}
Thanks to \cite{K5} 1.7 (and Lusztig \cite{Lu} 10.6), a projective module $P_b$ admits a filtration by $\{ \widetilde{E} ^{\mathbf i} _{b'} \} _{b'}$ if $\mathbf i$ is adapted to $\Gamma$.
\end{remark}

\begin{proof}[Proof of Theorem \ref{Lusztig}]
By Corollary \ref{orth}, we have
\begin{align*}
\delta_{b, b'} & = \left< P_{b}, L _{b'} ^* \right> _{\mathsf{gEP}} = \sum _{d, d' \in B ( \infty ) _{\beta}} \overline{[P_{b} : \widetilde{E} _{d} ^{\mathbf i}]} \overline{[L_{b'} : E _{d'} ^{\mathbf i}]} \left< \widetilde{E} _{d} ^{\mathbf i}, ( E _{d'} ^{\mathbf i} ) ^* \right> _{\mathsf{gEP}}\\
& = \sum _{d \in B ( \infty ) _{\beta}} \overline{[P_{b} : \widetilde{E} _{d} ^{\mathbf i} ]} \overline{[L_{b'} : E _{d} ^{\mathbf i}]}.
\end{align*}
By applying the bar involution, this shows that
$$( [P_{b} : \widetilde{E} _{d} ^{\mathbf i}] ) ( [E _{d'} ^{\mathbf i} : L _{b'}] )^{-1} = ( \delta_{b, b'}),$$
which is equivalent to the assertion.
\end{proof}

For each $\beta, \beta' \in Q^+$, we define the formal expression $q^{\beta}$ and $q^{\beta'}$ so that $q ^{\beta} \cdot q ^{\beta'} = q ^{\beta + \beta'}$. We define
$$\mathrm{ep} _t ( q ^{\beta} ) := \sum _{n \ge 0} \frac{q^{n \beta}}{(1-t^2)(1-t^4) \cdots (1-t^{2n})} \in \mathbb Q ( t ) [\![Q^+]\!].$$

\begin{corollary}[cf. Problem 2 in Kashiwara \cite{Ka2}]\label{denom}
For each $\beta \in Q ^+$, we set
$$[ P : L ]_{\beta} := ( [ P_b : L_{b'} ] ) _{b,b' \in B ( \infty ) _{\beta}} = ( \left< P_{b'}, P_{b} \right> _{\mathsf{gEP}} ) _{b,b' \in B ( \infty ) _{\beta}}$$
as the square matrix with its determinant $D_{\beta}$. We have
$$\sum _{\beta \in Q^+} D_{\beta} q^{\beta} = \prod _{\alpha \in R^+} \mathrm{ep} _t ( \alpha ).$$
\end{corollary}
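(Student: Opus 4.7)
My plan is to decompose the Cartan-type matrix $[P:L]_\beta$ as a product of two triangular transition matrices arising from a fixed reduced expression $\mathbf{i}$ of $w_0$, and to read off $D_\beta$ from one factor. Set
$$A_\beta := ([P_b : \widetilde{E}_{b'}^{\mathbf{i}}])_{b,b'}, \qquad B_\beta := ([\widetilde{E}_b^{\mathbf{i}} : L_{b'}])_{b,b'}.$$
Comparing graded characters via Corollary \ref{R-indep} gives the matrix identity $[P:L]_\beta = A_\beta B_\beta$. By Theorem \ref{Lusztig} combined with Theorem \ref{main}(3), the matrix $A_\beta$ is lower triangular with respect to $<_{\mathbf{i}}$ with diagonal entries $[E_b^{\mathbf{i}} : L_b] = 1$, so $\det A_\beta = 1$. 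Applying Theorem \ref{main}(3) again, $B_\beta$ is upper triangular, so
$$D_\beta = \prod_{b \in B(\infty)_\beta} [\widetilde{E}_b^{\mathbf{i}} : L_b].$$

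Next I identify each diagonal entry. Fix $b \in B(\infty)_\beta$ with $\mathbf{i}$-Lusztig datum $(c_1,\ldots,c_\ell)$. By Lemma \ref{bPBW}(3) together with Theorem \ref{main}(3), the entry $[\widetilde{E}_b^{\mathbf{i}} : L_b]$ coincides with the graded $E_b^{\mathbf{i}}$-multiplicity of $\widetilde{E}_b^{\mathbf{i}}$. Lemma \ref{bPBW}(2) realizes $\widetilde{E}_b^{\mathbf{i}}$ and $E_b^{\mathbf{i}}$ as star-products of the translated factors $\mathbb{T}_{i_1}\cdots\mathbb{T}_{i_{k-1}} P_{c_k i_k}$ and $\mathbb{T}_{i_1}\cdots\mathbb{T}_{i_{k-1}} L_{c_k i_k}$ ($1 \le k \le \ell$), where by Corollary \ref{s-pres} each translated simple factor is indeed simple. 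Since $\star$ is exact and the Saito reflection functors restrict to equivalences on the subcategories in which these factors live (Proposition \ref{TM} and Theorem \ref{SRF}(2)), each translated $P_{c_k i_k}$ is a successive self-extension of $\mathbb{T}_{i_1}\cdots\mathbb{T}_{i_{k-1}} L_{c_k i_k}$ with the same graded multiplicity as $[P_{c_k i_k}:L_{c_k i_k}]$. Multiplying these over $k$ and invoking Example \ref{one-root},
$$[\widetilde{E}_b^{\mathbf{i}} : L_b] \;=\; \prod_{k=1}^\ell [P_{c_k i_k} : L_{c_k i_k}] \;=\; \prod_{k=1}^\ell \prod_{j=1}^{c_k}\frac{1}{1-t^{2j}}.$$

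Finally, Corollary \ref{Lparam} identifies $B(\infty)_\beta$ with the set of $\mathbf{i}$-Lusztig data of weight $\beta$, and using the standard fact $\{\gamma_{\mathbf{i}}^{(k)}\}_{k=1}^\ell = R^+$ for any reduced expression of $w_0$, the resulting sum factors into a product of one-variable generating functions, one per positive root:
\begin{align*}
\sum_{\beta \in Q^+} D_\beta q^\beta
&= \sum_{\mathbf{c} \in \mathbb{Z}_{\ge 0}^\ell} q^{\sum_k c_k \gamma_{\mathbf{i}}^{(k)}}\prod_{k=1}^\ell \prod_{j=1}^{c_k}\frac{1}{1-t^{2j}}
= \prod_{k=1}^\ell \mathrm{ep}_t(q^{\gamma_{\mathbf{i}}^{(k)}})
= \prod_{\alpha \in R^+}\mathrm{ep}_t(\alpha).
\end{align*}
The main technical obstacle will be the factorization $[\widetilde{E}_b^{\mathbf{i}} : L_b] = \prod_k [P_{c_k i_k} : L_{c_k i_k}]$: at each application of a Saito reflection functor one must confirm that the relevant module lies in a subcategory on which that functor is exact (in fact an equivalence via Proposition \ref{TM}), so that the self-extension multiplicity of $P_{c_k i_k}$ over $L_{c_k i_k}$ is transported unchanged and the successive star products respect this factorization on the level of graded characters.
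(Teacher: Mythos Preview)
Your argument is correct and follows essentially the same route as the paper's proof. The only cosmetic difference is that the paper factors $[P:L]_\beta$ into three transition matrices $[P:\widetilde{E}]_\beta\,[\widetilde{E}:E]_\beta\,[E:L]_\beta$ (with the outer two having determinant $1$), whereas you combine the last two into your $B_\beta$; since $[E_b^{\mathbf i}:L_b]=1$ these are the same computation, and your identification $[\widetilde{E}_b^{\mathbf i}:L_b]=[\widetilde{E}_b^{\mathbf i}:E_b^{\mathbf i}]=\prod_k [P_{c_k i_k}:L_{c_k i_k}]$ matches the paper's exactly.
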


\begin{proof}
As in the proof of Theorem \ref{Lusztig}, we factorize
$$[ P : L ]_{\beta} = [ P : \widetilde{E} ]_{\beta}[ \widetilde{E} : E ]_{\beta}[ E : L ]_{\beta},$$
where the second term is the $\# B (\infty)_{\beta}$-square matrix of expansion coefficients between projectives/lower PBWs, lower PBWs/upper PBWs, and upper PBWs/simples, respectively. By Theorem \ref{main} 3), the determinant of the third matrix is $1$. By Theorem \ref{Lusztig}, the determinant of the first matrix is also $1$. By Lemma \ref{bPBW} 2) (cf. Corollary \ref{orth}), we conclude
$$D_{\beta} = \prod _{b \in B ( \infty )_{\beta}} [ \widetilde{E} ^{\mathbf i} _{b} : E ^{\mathbf i} _{b}].$$
By Corollary \ref{bPBW2} and the construction of $\mathbb T_{i_j}$, if we denote $\mathbf c$ the $\mathbf i$-Lusztig datum corresponding to $b$, then we have
$$[ \widetilde{E} ^{\mathbf i} _{b} : E ^{\mathbf i} _{b}] = \prod _{j = 1} ^{\ell} [P_{c_j i_j} : L _{c_j i_j }] = \prod _{j = 1} ^{\ell} \frac{1}{(1-t^2) (1-t^4) \cdots (1-t^{2 c_j})}.$$
This is equivalent to the assertion by a simple counting.
\end{proof}

\begin{remark}
{\bf 1)} By a formal manipulation, we have
$$\left< P_{b}, P_{b'} \right> _{\mathsf{gEP}} = \overline{\left< P_{b}, P_{b'} ^* \right> _{\mathsf{gEP}}} \hskip 5mm \text{ for every } b,b' \in B ( \infty ).$$
Since the RHS calculates the Lusztig inner form $\{ \bullet, \bullet \}$ (\cite{QG} 1.2.10) of the lower global basis, Corollary \ref{denom} yields the Shapovalev determinant formula of quantum groups of type $\mathsf{ADE}$. {\bf 2)} The proof of Corollary \ref{denom} also follows from \cite{K5} 3.12, but here the proof works also from the PBW bases $\{ E ^{\mathbf i} _{b}\}_b$ in which $\mathbf i$ is not an adapted reduced expression of $w_0$.
\end{remark}

{\footnotesize
}

\begin{center}
{\bf Differences from the published version}
\end{center}
\begin{itemize}
\item The definition of ``multiple induction" (just after the usual induction in page 10), Lemma 2.13 and Lemma 3.13 are added. 
\item To avoid the error in the proof of the previous version of Lemma 4.2 2), it is deleted from Lemma 4.2.
\item To save the proof, the proof of Proposition 4.6 is modified, and Proposition 4.9 and Corollary 4.15 are added. (The last item recovers the Lemma 4.2 2) in the previous version.)
\item Modified several places in order to adjust with the above changes, or correct minor errors within single statements. 
\end{itemize}
\end{document}